\theoremstyle{plain}
\newtheorem{theorem}{Theorem}[section]
\newtheorem{remark}[theorem]{Remark}
\newtheorem{lemma}[theorem]{Lemma}
\newtheorem{proposition}[theorem]{Proposition}
\newtheorem{corollary}[theorem]{Corollary}
\numberwithin{equation}{section}
\begin{document}

\title[Non-cutoff Boltzmann equation]
{Sharp regularity properties for the non-cutoff\\
spatially homogeneous Boltzmann equation}

\author[L. Glang\'etas, H.-G. Li \& C.-J. Xu]
{L. Glangetas, H.-G. LI and C.-J. Xu}

\address{L\'eo Glangetas,
\newline\indent
Universit\'e de Rouen, CNRS UMR 6085, Math\'ematiques
\newline\indent
76801 Saint-Etienne du Rouvray, France}
\email{leo.glangetas@univ-rouen.fr}

\address{Hao-Guang Li,
\newline\indent
School of Mathematics and statistics, Wuhan University 430072,
Wuhan, P. R. China
\newline\indent
School of mathematics and statistics, South-Central University for Nationalities 430074,
\newline\indent
Wuhan, P. R. China}
\email{lihaoguang@mail.scuec.edu.cn}

\address{Chao-Jiang Xu,
\newline\indent
School of Mathematics and statistics, Wuhan University 430072,
Wuhan, P. R. China
\newline\indent
Universit\'e de Rouen, CNRS UMR 6085, Math\'ematiques
\newline\indent
76801 Saint-Etienne du Rouvray, France}
\email{chao-jiang.xu@univ-rouen.fr}

\date{\today}

\subjclass[2000]{35Q20, 36B65}

\keywords{Boltzmann equation, spectral decomposition, Gelfand-Shilov class}

\begin{abstract}
In this work, we study the Cauchy problem for the spatially homogeneous non-cutoff Boltzamnn equation with Maxwellian molecules.
We prove that this Cauchy problem
{\color{black} enjoys Gelfand-Shilov's regularizing effect},
{\color{black}meaning that} the smoothing properties
{\color{black} are the} same as the Cauchy problem defined by the evolution equation associated to a fractional harmonic oscillator.
The power of {\color{black} the fractional exponent}
is exactly the same as the singular index of {\color{black} the} non-cutoff collisional
kernel of {\color{black} the} Boltzmann equation.
{\color{black} Therefore}, we get the sharp regularity of solutions in the {\color{black} Gevrey} class
and {\color{black} also the sharp decay of solutions with an exponential {\color{black} weight}}.
We also give a method to construct the solution of the Boltzmann equation
by solving an infinite {\color{black} system} of ordinary differential equations.
The key {\color{black} tool} is the spectral decomposition of linear and non-linear Boltzmann operators.
\end{abstract}

\maketitle
\tableofcontents


\section{Introduction}\label{S1}
In this work, we consider the spatially homogeneous Boltzmann equation
\begin{equation}\label{eq1.10}
\left\{
\begin{array}{ll}
   \partial_t f= Q(f,f),\\
  f|_{t=0}=f_0,
\end{array}
\right.
\end{equation}
where $f=f(t,v)$ is the density distribution function {\color{black}depending} on the variables $t\geq0$~and $v\in\mathbb{R}^{3}$. The Boltzmann bilinear collision operator is given by
\begin{equation*}
Q(g,f)(v)
=\int_{\mathbb{R}^{3}}
\int_{\mathbb{S}^{2}}
B(v-v_{\ast},\sigma)
(g(v_{\ast}^{\prime})f(v^{\prime})-g(v_{\ast})f(v))
dv_{\ast}d\sigma,
\end{equation*}
where for $\sigma\in \mathbb{S}^{2}$,~the symbols~$v_{\ast}^{\prime}$~and~$v^{\prime}$~are abbreviations for the expressions,
$$
v^{\prime}=\frac{v+v_{\ast}}{2}+\frac{|v-v_{\ast}|}{2}\sigma,\,\,\,\,\, v_{\ast}^{\prime}
=\frac{v+v_{\ast}}{2}-\frac{|v-v_{\ast}|}{2}\sigma,
$$
which are obtained in such a way that collision preserves momentum and kinetic energy,~namely
$$
v_{\ast}^{\prime}+v^{\prime}=v+v_{\ast},\,\,\,\,\, |v_{\ast}^{\prime}|^{2}+|v^{\prime}|^{2}=|v|^{2}+|v_{\ast}|^{2}.
$$
The non-negative cross section $B(z,\sigma)$~depends only on $|z|$~and the scalar product $\frac{z}{|z|}\cdot\sigma$.~For physical models,~it usually takes the form
$$
B(v-v_{\ast},\sigma)=\Phi(|v-v_{\ast}|)b(\cos\theta),~~~~
\cos\theta=\frac{v-v_{\ast}}{|v-v_{\ast}|}\cdot\sigma,~~ 0\leq\theta\leq\frac{\pi}{2}.
$$

In this paper, we consider only the Maxwellian molecules case which {\color{black}corresponds} to {\color{black}the case} $\Phi\equiv 1$,
and we focus our attention on the angular part $b$ satisfying
\begin{equation}\label{beta}
\beta(\theta)=
2\pi b(\cos2\theta) |\sin2\theta| \approx |\theta|^{-1-2s},\,\,\mbox{when}~\theta\rightarrow0^{+},
\end{equation}
for some $0<s<1${\color{black}. Without} loss of generality, we may assume that $b(\cos\theta)$ is supported on the set $\cos\theta\geq0$.
{\color{black} See for instance \cite{NYKC1} for more details on $\beta(\,\cdot\,)$ and \cite{Villani} for a general collision kernel}.

We linearize the Boltzmann equation near the absolute Maxwellian distribution
$$
\mu(v)=(2\pi)^{-\frac 32}e^{-\frac{|v|^{2}}{2}}.
$$
{\color{black} We introduce $g(t,v)$ such that $f(t,v)=\mu(v)+\sqrt{\mu}(v)g(t,v)$ and
obtain}
$$
\frac{\partial g}{\partial t}+\mathcal{L}[g]={\bf \Gamma}(g, g)
$$
with
$$
{\bf \Gamma}(g, h)=\frac{1}{\sqrt{\mu}}Q(\sqrt{\mu}g,\sqrt{\mu}h),\,\,
\mathcal{L}(g)=-\frac{1}{\sqrt{\mu}}[Q(\sqrt{\mu}g,\mu)+Q(\mu,\sqrt{\mu}g)].
$$
{\color{black} Therefore} the Cauchy problem \eqref{eq1.10} can be re-writed in the form
\begin{equation} \label{eq-1}
\left\{ \begin{aligned}
         &\partial_t g+\mathcal{L}(g)={\bf \Gamma}(g, g),\,\\
         &g|_{t=0}=g_{0}.
\end{aligned} \right.
\end{equation}
The linear operator $\mathcal{L}$ is nonnegative (\cite{NYKC1,NYKC2,NYKC3}),\,with the null space
$$
\mathcal{N}=\text{span}\left\{\sqrt{\mu},\,\sqrt{\mu}v_1,\,\sqrt{\mu}v_2,\,
\sqrt{\mu}v_3,\,\sqrt{\mu}|v|^2\right\}.
$$

In the present work, we study the smoothing effect for the Cauchy problem asso\-cia\-ted
{\color{black} with} the  spatially homogeneous non-cutoff Boltzmann equation
{\color{black} in the case of} Maxwellian molecules.
It is well known that the non-cutoff spatially homogeneous Boltzmann equation
{\color{black} enjoys} an $\mathscr{S}(\mathbb{R}^3)$-regularizing effect
for the weak solutions to the Cauchy problem (see \cite{DW,YSCT}).
Regarding the Gevrey regularity, Ukai showed in \cite{Ukai}
that the Cauchy problem for the Boltzmann equation has a unique local solution in Gevrey classes.
{\color{black} Then} Desvillettes, Furioli and Terraneo proved in \cite{DFT}
the propagation of Gevrey regularity for solutions of the Boltzmann equation with Maxwellian mole\-cules.
For mild singularities, Morimoto and Ukai proved in \cite{MU} the Gevrey regularity
of smooth Maxwellian decay solutions to the Cauchy problem of the spatially homogeneous
Boltzmann equation with a modified kinetic factor
{\color{black} (see also \cite{TZ} for the non-modified case)}.
On the other hand, Lekrine and Xu proved in \cite{L-X} the property of
Gevrey smoothing effect for the weak solutions to the Cauchy problem associated to the radially symmetric spatially
homogeneous Boltzmann equation with Maxwellian molecules for $0 < s < 1/2$. This result was then completed by Glangetas
and Najeme who established in \cite{G-N} the analytic smoothing effect in the case when $1/2 \leq s <1$.
{\color{black} In \cite{ NYKC1}, for the radially symmetric case,
the linearized Boltzmann operator was shown to {\color{black} behave,}
essentially as a fractional harmonic oscillator $\mathcal{H}^s$,
with $\mathcal{H}=-\triangle +\frac{|v|^2}{4}$ and $0 < s <1$.
For the non-radial case, the linearized non-cutoff Boltzmann operator
{\color{black} behaves} as
$$\mathcal{L}=a(\mathcal{H},\Delta_{\mathbb{S}^2})\mathcal{L}^s_{L}$$
where $\Delta_{\mathbb{S}^2}$ is the Laplace-Beltrami operator on the sphere,
$\mathcal{L}_{L}$ is the linearized Landau operator and
$a(\mathcal{H},\Delta_{\mathbb{S}^2})$ is an isomorphism on $L^2(\mathbb{R}^3)$.
Here, the fractional power $\mathcal{L}_{L}^s$ is defined
through functional calculus.
We can refer to \cite{NYKC2}.
}
The solutions of the following Cauchy problem
$$
\left\{ \begin{aligned}
         &\partial_t g+\mathcal{L}(g)=0,\,\\
         &g|_{t=0}=g_{0}\in L^2,
\end{aligned} \right.
$$
belong to the symmetric Gelfand-Shilov {\color{black} space} $S^{1/2s}_{1/2s}(\mathbb{R}^3)$ for any positive time and
$$
{\color{black} \forall\, t>0},\quad \|e^{ct \mathcal{H}^s}g(t)\|_{L^2}\leq\,C\|g_0\|_{L^2},
$$
where the Gelfand-Shilov {\color{black} space} $S^{\mu}_{\nu}(\mathbb{R}^3)$, with $\mu,\,\nu>0,$\,$\mu+\nu\geq1,$\, is the space of smooth functions $f\in\,C^{+\infty}(\mathbb{R}^3)$ satisfying:
$$
\exists\, A>0,\, C>0,\,
\sup_{v\in\mathbb{R}^3}|v^{\beta}\partial^{\alpha}_vf(v)|\leq\,CA^{|\alpha|+
|\beta|}(\alpha!)^{\mu}(\beta!)^{\nu},\,\,\forall\,\alpha,\,\beta\in\mathbb{N}^3.
$$
{\color{black} This} Gelfand-Shilov space can {\color{black} also} be characterized as the
sub-space of Schwartz functions $f\in\,\mathscr{S}(\mathbb{R}^3)$ such that,
$$
\exists\, C>0,\,\epsilon>0,\,|f(v)|\leq Ce^{-\epsilon|v|^{\frac{1}{\nu}}},\,\,v\in\mathbb{R}^3\,\,\text{and}\,\,|\hat{f}(\xi)|\leq Ce^{-\epsilon|\xi|^{\frac{1}{\mu}}},\,\,\xi\in\mathbb{R}^3.
$$
The symmetric Gelfand-Shilov space $S^{\nu}_{\nu}(\mathbb{R}^3)$ with $\nu\geq\frac{1}{2}$ can {\color{black} also} be {\color{black} identified} with
$$
S^{\nu}_{\nu}(\mathbb{R}^{3})=\left\{f\in C^\infty (\mathbb{R}^3);  \exists \tau>0,
\|e^{\tau \, \mathcal{H}^{\frac{1}{2\nu}}}f\|_{L^2}<+\infty\right\}.
$$
See Appendix \ref{S6} for more properties of Gelfand-Shilov spaces.

From a historical point of view,~the spectral analysis is
{\color{black}an} {\color{black}important} method {\color{black} to study} the linear Boltzmann operator ({\color{black}see \cite{Cerci}}). In \cite{NYKC1}, the linearized non-cutoff radially symmetric Boltzmann operator is shown to be diagonal in the Hermite basis.
{\color{black} This property has been used to prove
in the continued work \cite{NYKC3} that the Cauchy problem
{\color{black} of} the non-cutoff spatially homogeneous Boltzmann equation
with a radial initial datum $g_0\in L^2(\mathbb{R}^3)$,
has a unique global radial solution which belongs to the Gelfand-Shilov class $S^{1/2s}_{1/2s}(\mathbb{R}^3)$.
}

{\color{black}The main theorem of this paper is given in the following.}
\begin{theorem}\label{trick}
Assume that the Maxwellian collision cross-section $b(\,\cdot\,)$
is given in~$\eqref{beta}$ with $0<s<1$,
then there exists $\varepsilon_0>0$ such that for any initial datum
$g_0\in\,L^2(\mathbb{R}^3)\bigcap\mathcal{N}^{\perp}$
with $\|g_0\|^2_{L^2(\mathbb{R}^3)}\le \varepsilon_0$,
the Cauchy problem \eqref{eq-1} {admits} a {\color{black} weak} solution,
{\color{black} which} belongs to the Gelfand-Shilov space $S^{1/2s}_{1/2s}(\mathbb{R}^3)$ for any $t>0$.
Moreover, there exists  $c_0>0$, such that, for any $t\ge 0$,
\begin{equation}\label{rate}
\|e^{{c_0t} \, \mathcal{H}^s}g(t)\|_{L^2(\mathbb{R}^3)}
\leq\,
Ce^{-\frac{\lambda_{2,0}}{4} t} \, \|g_0\|_{L^2(\mathbb{R}^3)},
\end{equation}
where
$$
\lambda_{2,0}=\int^{\pi/4}_{-\pi/4}\beta( \theta)(1-\sin^4\theta-\cos^4\theta)d\theta>0.
$$
\end{theorem}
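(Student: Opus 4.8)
The plan is to diagonalize the whole problem in the Hermite basis, convert \eqref{eq-1} into an infinite system of ordinary differential equations, and then run a weighted energy estimate in which the Gelfand--Shilov weight $e^{c_0 t\mathcal H^s}$ is balanced against the dissipation supplied by the spectral gap of $\mathcal L$. First I would fix the Hermite basis $\{\varphi_\gamma\}_{\gamma=(n,l,m)}$ of $L^2(\mathbb R^3)$ made of radial functions times spherical harmonics, so that $\mathcal H\varphi_\gamma=\lambda_\gamma\varphi_\gamma$ with $\lambda_\gamma=2n+l+\tfrac32$ (depending only on $(n,l)$) and hence $\mathcal H^s\varphi_\gamma=\lambda_\gamma^s\varphi_\gamma$. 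By the spectral analysis of \cite{NYKC1,NYKC2}, $\mathcal L$ is diagonal in this basis, $\mathcal L\varphi_\gamma=\mu_\gamma\varphi_\gamma$, with $\mu_\gamma\ge 0$, $\mu_\gamma=0$ exactly on the modes spanning $\mathcal N$, the comparison $\mu_\gamma\asymp\lambda_\gamma^s$ for large $2n+l$, and smallest nonzero value $\lambda_{2,0}$ on $\mathcal N^{\perp}$. Expanding $g(t)=\sum_\gamma g_\gamma(t)\varphi_\gamma$, the Cauchy problem becomes
\begin{equation*}
\dot g_\gamma(t)+\mu_\gamma g_\gamma(t)=\sum_{\alpha,\beta}\big\langle\mathbf\Gamma(\varphi_\alpha,\varphi_\beta),\varphi_\gamma\big\rangle\,g_\alpha(t)g_\beta(t).
\end{equation*}
The Maxwellian structure forces the triple products to vanish unless the spherical degrees obey a Clebsch--Gordan selection rule, and their magnitude is governed by explicit Bobylev-type factors. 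I would construct the weak solution by Galerkin truncation, deriving the estimates below uniformly in the truncation and passing to the limit.

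The core is a weighted a priori estimate. Set
\begin{equation*}
E(t)=\tfrac12\sum_\gamma e^{2c_0 t\lambda_\gamma^s}|g_\gamma(t)|^2=\tfrac12\big\|e^{c_0 t\mathcal H^s}g(t)\big\|_{L^2}^2 .
\end{equation*}
Differentiating and inserting the ODE system, the linear contribution equals $\sum_\gamma(c_0\lambda_\gamma^s-\mu_\gamma)e^{2c_0 t\lambda_\gamma^s}|g_\gamma|^2$. Using $\mu_\gamma\ge c\,\lambda_\gamma^s$ for a fixed $c>0$ together with $\mu_\gamma\ge\lambda_{2,0}$ on $\mathcal N^{\perp}$, and splitting so as to keep a quarter of the gap while reserving the remainder for the dissipation comparison, for $c_0$ small this is bounded above by
\begin{equation*}
-\tfrac{\lambda_{2,0}}{2}E(t)-\kappa\,D(t),\qquad D(t):=\big\|e^{c_0 t\mathcal H^s}\mathcal H^{s/2}g(t)\big\|_{L^2}^2 ,
\end{equation*}
for some $\kappa>0$: a decay term controlled by the spectral gap plus a genuine dissipation $D(t)$. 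It remains to absorb the nonlinear contribution into $D(t)$.

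The main obstacle is exactly this trilinear estimate in the presence of the time-growing weight. The mechanism is a subadditivity inequality: whenever $\langle\mathbf\Gamma(\varphi_\alpha,\varphi_\beta),\varphi_\gamma\rangle\ne 0$ one has $\lambda_\gamma\lesssim\lambda_\alpha+\lambda_\beta$ (a product of Hermite modes of given degrees cannot raise the degree beyond the sum), whence $\lambda_\gamma^s\le(\lambda_\alpha+\lambda_\beta)^s\le\lambda_\alpha^s+\lambda_\beta^s$ since $0<s<1$. This lets the weight factor as $e^{c_0 t\lambda_\gamma^s}\le e^{c_0 t\lambda_\alpha^s}e^{c_0 t\lambda_\beta^s}$ and distribute onto the two inputs $g_\alpha,g_\beta$. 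Combined with the decay of the spectral coefficients and a Cauchy--Schwarz summation of the Clebsch--Gordan series, this yields an estimate of the form
\begin{equation*}
\Big|\big(e^{c_0 t\mathcal H^s}\mathbf\Gamma(g,g),\,e^{c_0 t\mathcal H^s}g\big)_{L^2}\Big|\le C\,\big\|e^{c_0 t\mathcal H^s}g\big\|_{L^2}\,D(t),
\end{equation*}
so that the nonlinear term is quadratically small in the solution size measured by the weighted norm.

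Under the smallness hypothesis $\|g_0\|_{L^2}^2\le\varepsilon_0$, a continuation/bootstrap argument keeps $\|e^{c_0 t\mathcal H^s}g\|_{L^2}$ of size $\|g_0\|_{L^2}$, so that $C\|e^{c_0 t\mathcal H^s}g\|_{L^2}\le\kappa$ and the nonlinear term is swallowed by $\kappa\,D(t)$; since $E$ is then nonincreasing, the smallness propagates automatically. Discarding the favorable dissipation leaves the differential inequality $\tfrac{d}{dt}E(t)\le-\tfrac{\lambda_{2,0}}{2}E(t)$, whose integration gives $\|e^{c_0 t\mathcal H^s}g(t)\|_{L^2}\le\|g_0\|_{L^2}\,e^{-\lambda_{2,0}t/4}$, i.e.\ \eqref{rate}; the crude factor $\tfrac14$ reflects the room left for the nonlinear absorption. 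Finally, finiteness of $\|e^{c_0 t\mathcal H^s}g(t)\|_{L^2}$ for every $t>0$ is precisely the membership $g(t)\in S^{1/2s}_{1/2s}(\mathbb R^3)$ via the characterization recalled above, which completes the argument.
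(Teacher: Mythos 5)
Your overall strategy coincides with the paper's: expand in the basis $\varphi_{n,l,m}$, exploit the triangular/selection-rule structure of ${\bf \Gamma}$ (the degree identity $2(n+\tilde n+k)+(l+\tilde l-2k)=2n+l+2\tilde n+\tilde l$, which is exactly your subadditivity step via $(a+b)^s\le a^s+b^s$), truncate (the paper's $\mathbf S_N$ plays the role of your Galerkin projection), run the weighted energy estimate with $e^{c_0t\mathcal H^s}$, absorb the nonlinearity into the dissipation under the smallness hypothesis, and extract the rate from the gap $\lambda_{n,l}\ge\lambda_{2,0}$. However, there is a genuine gap in the way you set up the dissipation. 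Your premise $\mu_\gamma\asymp\lambda_\gamma^s$ is false in the non-radial case: by \eqref{eq:3.111} one has $\lambda_{n,l}\asymp(2n+l+\tfrac32)^s+l^{2s}$, so for spherical-harmonic-dominant modes ($l\gg n$) the eigenvalue is of size $l^{2s}$, strictly stronger than $\mathcal H^s$. Only the lower bound $\lambda_{n,l}\gtrsim(2n+l+\tfrac32)^s$ holds, which suffices to absorb $c_0\mathcal H^s$, but your dissipation functional $D(t)=\|e^{c_0t\mathcal H^s}\mathcal H^{s/2}g\|_{L^2}^2$ discards the anisotropic $l^{2s}$ part, and the trilinear estimate you then claim, $|(e^{c_0t\mathcal H^s}{\bf\Gamma}(g,g),e^{c_0t\mathcal H^s}g)|\le C\|e^{c_0t\mathcal H^s}g\|\,D(t)$, is strictly stronger than what the paper proves (Proposition \ref{trilinear}, which has $\mathcal L^{1/2}$, not $\mathcal H^{s/2}$, on both the second input and the test function) and would fail in the paper's scheme of estimation: in the bound of the $k=0$ block (the term ${\bf K_1}$ in Section \ref{S5-2}) the factor $(\tilde n+\tilde l)^{2s}/\lambda_{\tilde n,\tilde l}\lesssim(\tilde n+1)^s$ uses precisely $\lambda_{\tilde n,\tilde l}\gtrsim\tilde n^s+\tilde l^{2s}$; with only $(2\tilde n+\tilde l)^s$ in the denominator an uncontrolled factor of order $(l^\star)^{s_0}$ survives the summation over $l+\tilde l=l^\star$. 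The fix is structurally trivial — keep $D(t)=\|e^{c_0t\mathcal H^s}\mathcal L^{1/2}g\|^2$, which is what the equation actually supplies — but as written your key inequality is not justified.

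A second, related point: the sentence ``combined with the decay of the spectral coefficients and a Cauchy--Schwarz summation of the Clebsch--Gordan series'' compresses what is in fact the entire technical core of the paper. To make that step rigorous one needs (i) the exact spectral representation of ${\bf \Gamma}(\varphi_{n,l,m},\varphi_{\tilde n,\tilde l,\tilde m})$ with the finite $k$-sum of Proposition \ref{expansion}; (ii) the orthogonality relation of Proposition \ref{expansion2}, without which the square of the $k$-sum in the Cauchy--Schwarz step produces cross terms that cannot be summed; and (iii) the sharp Stirling-type bounds on the coefficients $\lambda^{rad,1}$, $\lambda^{rad,2}$, $\mu^{m,\tilde m,m^\star}_{n,\tilde n,l,\tilde l,k}$ of Proposition \ref{est}, including the exponential gain $e^{-\frac18 k^{\omega}}$ for large $k$ that makes the sum over the ``noise'' index $k$ converge. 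Also note that the paper closes the uniform bound not by a continuation argument in time but by induction on the truncation parameter $N$, exploiting that the trilinear estimate \eqref{tril-B} only involves $\mathbf S_{N-2}$ of the inputs; your bootstrap-in-time on the Galerkin system is an acceptable substitute, but the $N$-induction is what lets the paper avoid any continuity-in-time argument for the weighted norm altogether.
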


\begin{remark}
We have proved that for the Cauchy problem \eqref{eq1.10}, if the initial data is a small perturbation of Maxwellian in $L^2$, then the global
solution {\color{black}returns} to the equilibrium with an exponential rate with respect to Gelfand-Shilov norm, {\color{black} which is}
{\color{black}an} {exponentially} weighted norm
{\color{black} of both {\color{black} the} solution and the Fourier transformation of the solution.}
\end{remark}

The rest of the paper is arranged as follows:
In Section \ref{S2}, we introduce the spectral ana\-ly\-sis
of the linear and nonlinear Boltzmann {\color{black}operators},
and transform the nonlinear Cauchy problem of Boltzmann equation
to an infinite {\color{black} system} of ordinary differential
{\color{black}equations} which can be solved explicitly.
{\color{black} Then} we {\color{black} derive} the formal solution
of the Cauchy problem for Boltzmann equation.
In Section \ref{S3}, we establish an upper bounded estimates
of {\color{black} some} nonlinear operators with an exponential weighted {\color{black} norm},
which is crucial to {\color{black} obtain} the convergence
of {\color{black} the} formal solution in Gelfand-Shilov {\color{black} spaces}.
The proof of the main Theorem \ref{trick} will be presented in Section \ref{S4}.
Finally, {\color{black} Section \ref{S5-1} and Section \ref{S5-2} are}
devoted to the proof of {\color{black} some propositions} used in Section \ref{S4}.
In {\color{black} Section} \ref{S5-1}, we study the spectral re\-presentation
of {\color{black} the non linear Boltzmann operator},
and prove that it can be represented
by an ``inferior triangular matrix'' of infinite dimension
with three {\color{black}indices},
so that the presentation and the computations are very
{\color{black} complicated}.
This inferior triangular property is essential
for the construction of the formal solution
by solving an infinite system of ordinary differential equations.
In Section \ref{S5-2}, we
 prove some estimates on the entries
of the triangular matrix obtained in
 Section \ref{S5-1},
 and {\color{black}this is} a key point in proving the convergence
of the formal solution with respect to Gelfand-Shilov {\color{black} norms}.

\section{The spectral analysis of the Boltzmann operators }\label{S2}

\subsection{Diagonalization of {\color{black} the} linear operators}

We first recall the spectral decomposition of {\color{black} the} linear Boltzmann operator.
In the cutoff case, that is, when $b(\cos\theta)\sin\theta\in\,L^1([0,\frac{\pi}{2}])$, it was shown in \cite{WU} that
\begin{equation*}
\mathcal{L}(\varphi_{n, l, m})=\lambda_{n,l}\, \varphi_{n, l, m}, \,\,n,l\in\mathbb{N},\,m\in\mathbb{Z}, |m|\leq l.
\end{equation*}
This diagonalization of the linearized Boltzmann operator with Maxwellian molecules holds
as well in the non-cutoff case, (see \cite{Boby,Cerci,Dole,NYKC1,NYKC2}).\,\,
The eigenvalues are 
\begin{equation*}
\lambda_{n,l}=\int_{|\theta|\le \frac{\pi}{4}}\beta(\theta)\Big(1+\delta_{n, 0}\delta_{l, 0}
-(\sin\theta)^{2n+l}P_{l}(\sin\theta)-(\cos\theta)^{2n+l}P_{l}(\cos\theta)\Big)d\theta,
\end{equation*}
the eigenfunctions are
\begin{equation}\label{v}
\varphi_{n,l,m}(v)=\left(\frac{n!}{\sqrt{2}\Gamma(n+l+3/2)}\right)^{1/2}
\left(\frac{|v|}{\sqrt{2}}\right)^{l}e^{-\frac{|v|^{2}}{4}}
L^{(l+1/2)}_{n}\left(\frac{|v|^{2}}{2}\right)Y^{m}_{l}\left(\frac{v}{|v|}\right),
\end{equation}
where $\Gamma(\,\cdot\,)$ is the standard Gamma function, for any $x>0$,
$$\Gamma(x)=\int^{+\infty}_0t^{x-1}e^{-x}dx.$$
The $l^{th}$-Legendre polynomial~$P_{l}$ and the Laguerre polynomial $L^{(\alpha)}_{n}$~of order $\alpha$,~degree $n$ read,
\begin{align*}
&P_{l}(x)=\frac{1}{2^ll!}\frac{d^l}{dx^l}(x^2-1)^l,\,\,\text{where}\,|x|\leq1;\\
&L^{(\alpha)}_{n}(x)=\sum^{n}_{r=0}(-1)^{n-r}\frac{\Gamma(\alpha+n+1)}{r!(n-r)!
\Gamma(\alpha+n-r+1)}x^{n-r}.
\end{align*}
{\color{black} We refer the properties of these special functions to the classical book
\cite{San} (see $(7)$ of Sec.1 in Chap. III and $(3_1)$ of Sec. 1 in Chap. IV)}.
For any unit vector
$$\sigma=(\cos\theta,\sin\theta\cos\phi,\sin\theta\sin\phi)$$
with $\theta\in[0,\pi]$~and~$\phi\in[0,2\pi]$,~the orthonormal basis of spherical harmonics~$Y^{m}_{l}(\sigma)$ is
\begin{equation*}
Y^{m}_{l}(\sigma)=N_{l,m}P^{|m|}_{l}(\cos\theta)e^{im\phi},\,\,|m|\leq l,
\end{equation*}
where the normalisation factor is given by
$$
N_{l,m}=\sqrt{\frac{2l+1}{4\pi}\cdot\frac{(l-|m|)!}{(l+|m|)!}}
$$
and $P^{|m|}_{l}$~is the associated Legendre functions of the first kind of order $l$ and degree $|m|$ with
\begin{equation}\label{Plm}
P^{|m|}_{l}(x)= (1-x^2)^\frac{|m|}{2}
\left(\frac{\mathrm{d}}{\mathrm{d}x}\right)^{|m|} P_{l}(x).
\end{equation}
The family $\Big(Y^m_l(\sigma)\Big)_{l\geq0,|m|\leq\,l}$ constitutes an orthonormal basis of the space $L^2(\mathbb{S}^2,\,d\sigma)$ with $d\sigma$ being the surface measure on $\mathbb{S}^2$ (see {\color{black} $(16)$ of Chap.1 in the book} \cite{Jones}
).  Noting that $\left\{\varphi_{n,l,m}(v)\right\}$ consist an orthonormal basis of $L^2(\mathbb{R}^3)$ composed of eigenvectors of the harmonic oscillator
(see\cite{Boby}, \cite{NYKC2})
\begin{equation*}
\mathcal{H}(\varphi_{n, l, m})=(2n+l+\frac 32)\, \varphi_{n, l, m}.
\end{equation*}
As a special case, $\left\{\varphi_{n, 0, 0}(v)\right\}$ {\color{black}is} an orthonormal basis of $L^2_{rad}(\mathbb{R}^3)$, the radially sym\-me\-tric function space (see \cite{NYKC3}).
We have that, for suitable functions
$g$,
\begin{equation*}
\mathcal{L}(g)=\sum^{\infty}_{n=0}\sum^{\infty}_{l=0}\sum^{l}_{m=-l}
\lambda_{n,l}\, g_{n,l,m}\, \varphi_{n, l, m},
\end{equation*}
where $g_{n,l,m}=(g, \varphi_{n,l,m})_{L^2(\mathbb{R}^3)}$, and
\begin{equation*}
\mathcal{H}(g)=\sum^{\infty}_{n=0}\sum^{\infty}_{l=0}\sum^{l}_{m=-l}(2n+l+\frac 32)\, g_{n,l,m}\, \varphi_{n, l, m}\, .
\end{equation*}
Using this spectral decomposition, the definition of {\color{black}$\mathcal{H}^s,\, e^{c\mathcal{H}^s},\, e^{c\mathcal{L}}$} is then classical.

\smallskip
\subsection{Triangular effect of the non linear operators}
We {\color{black} now} study the algebra property of the nonlinear terms
$$
{\color{black}{\bf \Gamma}(\varphi_{n,l,m},
\varphi_{\tilde{n},\tilde{l},\tilde{m}}).}
$$
We have the following triangular effect for the nonlinear Boltzmann operators on the basis $\{\varphi_{n,l,m}\}$.
\begin{proposition}\label{expansion}
The following algebraic identities hold,
\begin{align}\label{k_0}
&(i_1) \quad\,\,\, {\bf \Gamma}(\varphi_{0,0,0},\varphi_{\tilde{n},\tilde{l},\tilde{m}})=
\lambda^{1}_{\tilde{n},\tilde{l}}\,\varphi_{\tilde{n},\tilde{l},\tilde{m}};\nonumber\\
&(i_2) \quad\,\,\,  {\bf \Gamma}(\varphi_{n,l,m},\varphi_{0,0,0})=\lambda^{2}_{n, l}\,\varphi_{n,l,m};\nonumber\\
&(ii_1) \quad\,\, {\bf \Gamma}(\varphi_{n,0,0},\varphi_{\tilde{n},\tilde{l},\tilde{m}})=\lambda^{rad, 1}_{n,\tilde{n},\tilde{l}}\,\varphi_{n+\tilde{n},\tilde{l},\tilde{m}},\, \,\mbox{for}\,\,n\geq1;\nonumber\\
&(ii_2) \quad\,\, {\bf \Gamma}(\varphi_{n,l,m},\varphi_{\tilde{n},0,0})\nonumber=\lambda^{rad, 2}_{n, \tilde{n}, l}\,\varphi_{n+\tilde{n},l,m},\, \, \,\mbox{for}\,\,n\in\mathbb{N},\,\,l\geq1;\qquad\quad\nonumber\\
&(iii) \quad\,\,{\bf \Gamma}(\varphi_{n,l,m},\varphi_{\tilde{n},\tilde{l},\tilde{m}})=
\sum^{k_0(l,\tilde{l},m,\tilde{m})}_{k=0}
\mu^{m,\tilde{m},m+\tilde{m}}_{n,\tilde{n},l,
\tilde{l},k}\,\,\varphi_{n+\tilde{n}+k,l+\tilde{l}-2k,m+\tilde{m}}\,\,\text{with}\,\nonumber\\
&\qquad\qquad\quad
k_0(l,\tilde{l},m,\tilde{m})=
\min\left(\Big[\frac{l+\tilde{l}-|m+\tilde{m}|}{2}\Big],l,\tilde{l}
\right)\\
&\qquad\quad \,\mbox{for}\,\, l\geq1,\,\tilde{l}\geq1,|m|\leq l,\,|\tilde{m}|\leq \tilde{l}\nonumber.
\end{align}
\end{proposition}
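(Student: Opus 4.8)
The plan is to isolate the three \emph{selection rules} that constrain which basis functions $\varphi_{N,L,M}$ may appear on the right-hand side, namely $M=m+\tilde m$, the degree relation $2N+L=(2n+l)+(2\tilde n+\tilde l)$, and the Clebsch--Gordan range $|l-\tilde l|\le L\le l+\tilde l$ with $L\equiv l+\tilde l \pmod 2$. Everything else is bookkeeping: substituting $L=l+\tilde l-2k$ forces $N=n+\tilde n+k$, the condition $L\ge|l-\tilde l|$ becomes $k\le\min(l,\tilde l)$, and $L\ge|m+\tilde m|$ becomes $k\le[\tfrac{l+\tilde l-|m+\tilde m|}{2}]$, which together give $0\le k\le k_0$. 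Since the coefficients $\mu^{m,\tilde m,m+\tilde m}_{n,\tilde n,l,\tilde l,k}$ are by definition whatever numbers occur (and are allowed to vanish), the genuine content of the statement is this \emph{containment of the support}, not the coefficient values, so it suffices to bound the support from above.

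First I would exploit rotational covariance. Since the cross-section depends only on the rotation-invariant quantities $|v-v_\ast|$ and $\tfrac{v-v_\ast}{|v-v_\ast|}\cdot\sigma$, and $\mu$ is radial, $\Gamma$ commutes with the natural $SO(3)$ action on functions. For fixed $(n,l)$ the family $(\varphi_{n,l,m})_{|m|\le l}$ spans a copy of the irreducible representation $V_l$, so $\Gamma$ restricted to $V_l\otimes V_{\tilde l}$ is an intertwiner. The Clebsch--Gordan decomposition $V_l\otimes V_{\tilde l}=\bigoplus_{L=|l-\tilde l|}^{l+\tilde l}V_L$ together with Schur's lemma confines the image to the components $V_L$ with $|l-\tilde l|\le L\le l+\tilde l$, while the axial $SO(2)$-weight forces $M=m+\tilde m$. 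This yields $m$-conservation and both angular bounds on $k$.

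The decisive remaining point is degree conservation, which is where the Maxwellian structure enters and which I expect to be the main obstacle. The cleanest route is Bobylev's Fourier identity applied to $G=\sqrt\mu\,\varphi_{n,l,m}$, $H=\sqrt\mu\,\varphi_{\tilde n,\tilde l,\tilde m}$, using $\sqrt\mu\,\Gamma(g,h)=Q(G,H)$ and $\widehat{Q(G,H)}(\xi)=\int_{\mathbb S^2}b(\tfrac{\xi\cdot\sigma}{|\xi|})\,[\hat G(\xi^-)\hat H(\xi^+)-\hat G(0)\hat H(\xi)]\,d\sigma$ with $\xi^\pm=\tfrac{\xi\pm|\xi|\sigma}{2}$. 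By the Hecke--Bochner formula each of $\hat G,\hat H$ is a solid harmonic times a polynomial-in-$|\xi|^2$ times a Gaussian, of the \emph{same} total polynomial degree $2n+l$, resp.\ $2\tilde n+\tilde l$, as $G,H$; since $\xi\mapsto(\xi^-,\xi^+)$ is linear with $|\xi^-|^2+|\xi^+|^2=|\xi|^2$ and $\xi^-\cdot\xi^+=0$, the product $\hat G(\xi^-)\hat H(\xi^+)$ is again a polynomial of total degree $(2n+l)+(2\tilde n+\tilde l)$ times the \emph{same} Gaussian, a form preserved by the $\sigma$-integration. Inverting the transform then shows $\Gamma(\varphi_{n,l,m},\varphi_{\tilde n,\tilde l,\tilde m})$ is a finite combination of $\varphi_{N,L,M}$ with $2N+L$ fixed (the loss term $\hat G(0)\hat H(\xi)$ is of the same type and in fact vanishes unless $(n,l,m)=(0,0,0)$, since $\hat G(0)=\int\sqrt\mu\,\varphi_{n,l,m}$). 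Equivalently one may establish a commutation identity of the form $\mathcal H\,\Gamma(g,h)=\Gamma(\mathcal Hg,h)+\Gamma(g,\mathcal Hh)-\tfrac32\Gamma(g,h)$ and read off that $\Gamma(\varphi_{n,l,m},\varphi_{\tilde n,\tilde l,\tilde m})$ is an $\mathcal H$-eigenvector of eigenvalue $2n+2\tilde n+l+\tilde l+\tfrac32$. In either route the delicate part is the rigorous radial (Hankel/Laguerre) computation and the verification that only finitely many modes survive.

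Finally, the degenerate identities follow by specialising. For $(i_1),(i_2)$, take $\varphi_{0,0,0}\in V_0$: then $V_0\otimes V_{\tilde l}=V_{\tilde l}$ is irreducible, Schur forces a scalar action, and degree conservation fixes $N=\tilde n$, $L=\tilde l$; this is consistent with the already-known diagonalisation of $\mathcal L$ via $\mathcal L(g)=-\kappa\big(\Gamma(g,\varphi_{0,0,0})+\Gamma(\varphi_{0,0,0},g)\big)$ (with $\sqrt\mu=\kappa\,\varphi_{0,0,0}$), which also yields $\lambda_{n,l}=-\kappa(\lambda^1_{n,l}+\lambda^2_{n,l})$. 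For $(ii_1),(ii_2)$, when one angular index vanishes the Clebsch--Gordan sum collapses to the single term $L=\tilde l$ (resp.\ $L=l$), so only $k=0$ survives and $N=n+\tilde n$.
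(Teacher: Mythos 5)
Your route is genuinely different from the paper's. The paper proves the proposition by direct computation: it applies the Bobylev formula to $\sqrt{\mu}\,{\bf \Gamma}(\varphi_{n,l,m},\varphi_{\tilde{n},\tilde{l},\tilde{m}})$, uses the exact Fourier transform $\widehat{\sqrt{\mu}\varphi_{n,l,m}}(\xi)=A_{n,l}(|\xi|/\sqrt{2})^{2n+l}e^{-|\xi|^2/2}Y^m_l(\xi/|\xi|)$ (Lemma \ref{Fouriertransform}), and then expands the resulting angular integral $G^{m,\tilde{m}}_{n,\tilde{n},l,\tilde{l}}(\kappa)$ as a \emph{finite} Laplace series via Funk--Hecke identities and iterated use of the product formula \eqref{represent}--\eqref{con} (Lemma \ref{lemma funk 2}, Corollary \ref{coroll int f Y+}, Lemma \ref{lemma:5.1}); this hands-on expansion is where the truncation at $k_0(l,\tilde{l},m,\tilde{m})$ \emph{and} the explicit integral formulas for $\lambda^1$, $\lambda^2$, $\lambda^{rad,1}$, $\lambda^{rad,2}$, $\mu^{m,\tilde{m},m^\star}_{n,\tilde{n},l,\tilde{l},k}$ come from. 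You replace the angular analysis by equivariance, Clebsch--Gordan and Schur, and the radial analysis by degree conservation; your selection-rule bookkeeping is correct and does reproduce exactly the asserted support (in particular $l+\tilde{l}-2k\geq|l-\tilde{l}|\iff k\leq\min(l,\tilde{l})$, and $|m+\tilde{m}|\leq L$ gives the bracket bound), and this is conceptually cleaner than Lemma \ref{lemma:5.1} if one only wants the triangular structure.

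Two concrete shortfalls, however. First, the degree-conservation step as written does not close: Hecke--Bochner gives $\hat{G}$ as (solid harmonic)$\times$(polynomial in $|\xi|^2$)$\times$(Gaussian) of total degree $2n+l$, but a polynomial with lower-order terms would only yield $2N+L\leq 2(n+\tilde{n})+l+\tilde{l}$, destroying the exact triangularity on which the induction of Theorem \ref{prop:ODE} rests. What is needed is \emph{homogeneity} --- the radial part is the single monomial $|\xi|^{2n+l}$ --- and that is precisely Lemma \ref{Fouriertransform}, the Bessel/Laguerre computation you flag but do not carry out; your alternative $\mathcal{H}$-commutation identity is equivalent to this fact, not a shortcut around it. (Also $\xi\mapsto(\xi^-,\xi^+)$ is not linear, since it involves $|\xi|$; it is homogeneous of degree one, which suffices once homogeneity of $\hat{G}$ is known. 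And the parity constraint $L\equiv l+\tilde{l}\ (\mathrm{mod}\ 2)$ does not follow from Clebsch--Gordan, which allows all $L\in[|l-\tilde{l}|,l+\tilde{l}]$; it follows automatically once $2N+L$ is pinned, so this misattribution is harmless.) Second, the proposition in the paper comes with explicit integral expressions for the coefficients, and these are consumed later: the estimates of Proposition \ref{est}, proved in Section \ref{S5-2}, are derived from exactly those formulas. Schur's lemma gives you existence of scalars only, and your $\mathcal{L}$-consistency argument yields the combination $\lambda^1_{n,l}+\lambda^2_{n,l}=-\lambda_{n,l}$ but not the individual integrals; so even completed, your proof establishes the support statement while forgoing the quantitative content the paper needs. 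A minor further caution: for the non-cutoff kernel the gain and loss terms of Bobylev's formula must be kept combined before integrating in $\sigma$ when $(n,l)=(0,0)$, since neither is separately integrable --- this is what the combined integrand in Corollary \ref{coroll int f Y+} encodes.
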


The notations in the above Proposition are as following:
\begin{align*}
\lambda^1_{\tilde{n},\tilde{l}}&=\int_{|\theta|\leq\frac{\pi}{4}}\beta(\theta)
((\cos\theta)^{2\tilde{n}+\tilde{l}}P_{\tilde{l}}(\cos\theta)-1)d\theta;\\
\lambda^2_{n,l}&=\int_{|\theta|\leq\frac{\pi}{4}}\beta(\theta)((\sin\theta)^{2n+l}
P_{l}(\sin\theta)-\delta_{0,n}\delta_{0,l})d\theta;
\\
\lambda^{rad, 1}_{n,\tilde{n},\tilde{l}}&
={\color{black} \frac{1}{\sqrt{4\pi}}}\Big(\frac{2\pi^{\frac{3}{2}}(n+\tilde{n})!\Gamma(n+\tilde{n}+\tilde{l}+\frac{3}{2})}
{\tilde{n}!\Gamma(\tilde{n}+\tilde{l}+\frac{3}{2})n!\Gamma(n+\frac{3}{2})}\Big)^{\frac{1}{2}}
\\
&\quad\times\int_{|\theta|\leq\frac{\pi}{4}}
\beta(\theta)(\sin\theta)^{2n}(\cos\theta)^{2\tilde{n}+\tilde{l}}
P_{\tilde{l}}(\cos\theta)d\theta;
\\
\lambda^{rad, 2}_{n, \tilde{n}, l}&={\color{black} \frac{1}{\sqrt{4\pi}}}\Big(\frac{2\pi^{\frac{3}{2}}(n+\tilde{n})!\Gamma(n+\tilde{n}+l+\frac{3}{2})}{\tilde{n}!\Gamma(\tilde{n}+\frac{3}{2})n!\Gamma(n+l+\frac{3}{2})}\Big)^{\frac{1}{2}}
\\
&\quad\times\int_{|\theta|\leq\frac{\pi}{4}}\beta(\theta)(\sin\theta)^{2n+l}
(\cos\theta)^{2\tilde{n}}P_{l}(\sin\theta)d\theta
\end{align*}
and
for $|m^{\star}|\leq l+\tilde{l}-2k$,
\begin{align*}
\mu^{m,\tilde{m},m^{\star}}_{n,\tilde{n},l,\tilde{l},k}
&=
(-1)^k
\Big(\frac{2\pi^{\frac{3}{2}}(n+\tilde{n}+k)!\Gamma(n+\tilde{n}+l+\tilde{l}-k+\frac{3}{2})}
{\tilde{n}!\Gamma(\tilde{n}+\tilde{l}+\frac{3}{2})n!\Gamma(n+l+\frac{3}{2})}\Big)^{\frac{1}{2}}
\\
&\quad\times
\int_{\mathbb{S}^2_{\kappa}}\Big[\int_{\mathbb{S}^2}b(\kappa\cdot\sigma)
\Big(\frac{|\kappa-\sigma|}{2}\Big)^{2n+l}
\Big(\frac{|\kappa+\sigma|}{2}\Big)^{2\tilde{n}+\tilde{l}}\\
&\quad\qquad\times
Y^{m}_l\Big(\frac{\kappa-\sigma}
{|\kappa-\sigma|}\Big) Y^{\tilde{m}}_{\tilde{l}}\Big(\frac{\kappa+\sigma}{|\kappa+\sigma|}\Big)\, d\sigma\Big]\,
\overline{Y^{m^{\star}}_{l+\tilde{l}-2k}}
(\kappa)d\kappa.
\end{align*}
We remark that
$\mu^{m,\tilde{m},m^{\star}}_{n,\tilde{n},l,\tilde{l},k}$ vanishes to 0 if $m+\tilde{m}\neq m^{\star}$, so
\begin{align}\label{remark-mu}
\mu^{m,\tilde{m},m^{\star}}_{n,\tilde{n},l,\tilde{l},k}=\mu^{m,\tilde{m},m^{\star}}_{n,\tilde{n},l,\tilde{l},k}
\delta_{m^{\star},m+\tilde{m}}.
\end{align}
The coefficient $\mu^{m,\tilde{m},m^{\star}}_{n,\tilde{n},l,\tilde{l},k}$ satisfies the following orthogonal property.
\begin{proposition}\label{expansion2}
For any integers $0\leq\,k_1,\,k_2\leq\,\min(l,\tilde{l})$, $|m^{\star}_1|\leq\,l+\tilde{l}-2k_1,\,|m^{\star}_2|\leq\,l+\tilde{l}-2k_2$,
we have
\begin{equation}\label{orth1}
\sum_{|m|\leq\,l}\sum_{|\tilde{m}|\leq\,\tilde{l}}
\mu^{m,\tilde{m},m_1^{\star}}_{n,\tilde{n},l,
\tilde{l},k_1}\overline{\mu^{m,\tilde{m},m_2^{\star}}_{n,\tilde{n},l,
\tilde{l},k_2}}=\sum_{|m|\leq\,l}\sum_{|\tilde{m}|\leq\,\tilde{l}}
\Big|\mu^{m,\tilde{m},m_1^{\star}}_{n,\tilde{n},l,
\tilde{l},k_1}\Big|^2\delta_{k_1,k_2}\delta_{m^{\star}_1,m^{\star}_2}.
\end{equation}
\end{proposition}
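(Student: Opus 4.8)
The plan is to exploit that $\mu^{m,\tilde m,m^\star}_{n,\tilde n,l,\tilde l,k}$ factorizes into a scalar prefactor times a single spherical-harmonic coefficient, and that the quadratic sum over $m,\tilde m$ collapses, via the addition theorem, into a kernel on $\mathbb{S}^2\times\mathbb{S}^2$ which is invariant under the diagonal action of $SO(3)$. First I would isolate the real prefactor
\[
C_{n,\tilde n,l,\tilde l,k}=(-1)^k\Big(\tfrac{2\pi^{3/2}(n+\tilde n+k)!\,\Gamma(n+\tilde n+l+\tilde l-k+3/2)}{\tilde n!\,\Gamma(\tilde n+\tilde l+3/2)\,n!\,\Gamma(n+l+3/2)}\Big)^{1/2},
\]
which depends only on $n,\tilde n,l,\tilde l,k$ and not on $m,\tilde m,m^\star$. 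Setting
\[
F^{m,\tilde m}(\kappa)=\int_{\mathbb{S}^2}b(\kappa\cdot\sigma)\Big(\tfrac{|\kappa-\sigma|}{2}\Big)^{2n+l}\Big(\tfrac{|\kappa+\sigma|}{2}\Big)^{2\tilde n+\tilde l}Y^m_l\Big(\tfrac{\kappa-\sigma}{|\kappa-\sigma|}\Big)Y^{\tilde m}_{\tilde l}\Big(\tfrac{\kappa+\sigma}{|\kappa+\sigma|}\Big)\,d\sigma,
\]
the definition becomes $\mu^{m,\tilde m,m^\star}_{n,\tilde n,l,\tilde l,k}=C_{n,\tilde n,l,\tilde l,k}\,a^{m,\tilde m}_{l+\tilde l-2k,\,m^\star}$, where $a^{m,\tilde m}_{L,M}=\int_{\mathbb{S}^2}F^{m,\tilde m}(\kappa)\overline{Y^M_L(\kappa)}\,d\kappa$ is the coefficient of $Y^M_L$ in the spherical-harmonic expansion of $F^{m,\tilde m}$. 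Everything therefore reduces to computing $\sum_{m,\tilde m}a^{m,\tilde m}_{L_1,M_1}\overline{a^{m,\tilde m}_{L_2,M_2}}$ with $L_i=l+\tilde l-2k_i$ and $M_i=m^\star_i$.

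Next I would write this quadratic sum as a double integral against the kernel $G(\kappa,\kappa')=\sum_{m,\tilde m}F^{m,\tilde m}(\kappa)\overline{F^{m,\tilde m}(\kappa')}$, i.e.\
\[
\sum_{m,\tilde m}a^{m,\tilde m}_{L_1,M_1}\overline{a^{m,\tilde m}_{L_2,M_2}}=\int_{\mathbb{S}^2}\int_{\mathbb{S}^2}G(\kappa,\kappa')\,\overline{Y^{M_1}_{L_1}(\kappa)}\,Y^{M_2}_{L_2}(\kappa')\,d\kappa\,d\kappa',
\]
the interchange of the finite sums and the integrals being justified by the integrability bounds already used for Proposition \ref{expansion}. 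The crucial claim is $G(R\kappa,R\kappa')=G(\kappa,\kappa')$ for every $R\in SO(3)$. To prove it I would first apply the addition theorem $\sum_{|m|\le l}Y^m_l(\omega)\overline{Y^m_l(\omega')}=\frac{2l+1}{4\pi}P_l(\omega\cdot\omega')$ separately to the sums over $m$ and over $\tilde m$; after this the integrand of $G$ (now a double $\sigma,\sigma'$ integral) is a function of the mutual inner products of $\kappa,\kappa',\sigma,\sigma'$ only. Replacing $\kappa,\kappa'$ by $R\kappa,R\kappa'$ and substituting $\sigma\mapsto R\sigma$, $\sigma'\mapsto R\sigma'$, using rotation invariance of the surface measure, leaves every such inner product unchanged, which gives the invariance. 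Since the only diagonal $SO(3)$-invariant of a pair of unit vectors is their inner product, $G(\kappa,\kappa')=g(\kappa\cdot\kappa')$ for some function $g$ on $[-1,1]$, and $g$ is real because $G(\kappa',\kappa)=\overline{G(\kappa,\kappa')}$.

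Finally I would expand $g$ in Legendre polynomials and reapply the addition theorem to get $G(\kappa,\kappa')=\sum_{L\ge0}c_L\sum_{|M|\le L}Y^M_L(\kappa)\overline{Y^M_L(\kappa')}$ with real coefficients $c_L$. Substituting into the double integral and using orthonormality of $\{Y^M_L\}$ collapses both integrals to Kronecker deltas, yielding $\sum_{m,\tilde m}a^{m,\tilde m}_{L_1,M_1}\overline{a^{m,\tilde m}_{L_2,M_2}}=c_{L_1}\delta_{L_1,L_2}\delta_{M_1,M_2}$. As $L_i=l+\tilde l-2k_i$ is strictly decreasing in $k_i$, one has $\delta_{L_1,L_2}=\delta_{k_1,k_2}$, and $M_i=m^\star_i$; multiplying back by the real prefactors gives $C_{n,\tilde n,l,\tilde l,k_1}C_{n,\tilde n,l,\tilde l,k_2}\,c_{L_1}\,\delta_{k_1,k_2}\delta_{m^\star_1,m^\star_2}=C^2_{n,\tilde n,l,\tilde l,k_1}c_{L_1}\,\delta_{k_1,k_2}\delta_{m^\star_1,m^\star_2}$, and comparison with the diagonal case $k_1=k_2,\ m^\star_1=m^\star_2$ identifies $C^2_{n,\tilde n,l,\tilde l,k_1}c_{L_1}=\sum_{m,\tilde m}\big|\mu^{m,\tilde m,m^\star_1}_{n,\tilde n,l,\tilde l,k_1}\big|^2$, which is precisely \eqref{orth1}. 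The main obstacle is the invariance step: recognizing that summing over $m,\tilde m$ is exactly what removes all frame dependence through the addition theorem, so that $G$ is a zonal kernel. Equivalently, this is the statement that the nonnegative operator $\sum_{m,\tilde m}|F^{m,\tilde m}\rangle\langle F^{m,\tilde m}|$ on $L^2(\mathbb{S}^2)$ commutes with rotations and hence, by Schur's lemma, acts as a scalar on each irreducible block of spherical harmonics of fixed degree.
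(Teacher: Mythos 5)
Your proof is correct, and it genuinely diverges from the paper's after one shared opening move. The paper's proof (Proposition \ref{orth}) also begins by applying the addition theorem \eqref{addition theorem Y} to collapse the sums over $m$ and $\tilde{m}$ into $P_l$ and $P_{\tilde{l}}$ of inner products of the two parametrized arguments; but from there it proceeds by explicit computation: parametrizing $\sigma$ in moving frames, averaging over the azimuthal angles via the technical Lemma \ref{lem int P+P-}, invoking the Funk--Hecke formula \eqref{funk-hecke}, and finally extracting the factor $\int_{\mathbb{S}^2}Y^{m^{\star}}_{l^{\star}}\overline{Y^{m^{\prime}}_{l^{\prime}}}$ in \eqref{B_qq 2}, whose vanishing gives the Kronecker deltas. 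You replace all of that machinery by a symmetry argument: after the addition-theorem collapse the kernel $G(\kappa,\kappa')=\sum_{m,\tilde m}F^{m,\tilde m}(\kappa)\overline{F^{m,\tilde m}(\kappa')}$ depends only on mutual inner products, hence is invariant under the diagonal $SO(3)$ action, hence zonal (using that $SO(3)$ acts transitively on pairs of unit vectors with prescribed inner product), so the nonnegative operator it represents acts as a scalar $c_L$ on each degree block -- Schur's lemma in concrete form. What each route buys: yours is shorter, frame-free, and makes the structural mechanism transparent; as a byproduct it immediately gives the independence of the diagonal sum from $m^{\star}$, which the paper must record separately as \eqref{GYm=Gm0}. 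The paper's heavier computation, on the other hand, produces a closed-form expression for your scalar $c_L$ (Remark \ref{rem sum |GY|}, refined in Propositions \ref{orth2} and \ref{prop reduc}), and it is exactly this explicit formula -- not merely the orthogonality -- that Section \ref{S5-2} consumes to estimate $\sum_{m,\tilde m}|\mu^{m,\tilde m,m^{\star}}_{n,\tilde n,l,\tilde l,k}|^2$; your argument establishes existence of $c_L$ but would have to be supplemented by a computation equivalent to the paper's to obtain the quantitative bounds. One minor caveat: since $\beta(\theta)\approx|\theta|^{-1-2s}$ the kernel inherits the same borderline integrability as the coefficients themselves, so your interchange of sums and integrals and the $L^2$ Legendre expansion of $g$ stand on the same (acceptable) footing as the paper's own formal manipulations, as you correctly note.
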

{\color{black} We prove Proposition \ref{expansion}, Proposition \ref{expansion2} and the claim \eqref{remark-mu} in Section \ref{S5-1}}.
\begin{remark}\label{remark-decomp}
1) Similar to the radially symmetric case, the property $(iii)$
of {\color{black} Proposition} \ref{expansion} and
the above Proposition \ref{expansion2} imply that we have also a ``triangular effect'' but with a noise
of order $k_0(l,\tilde{l},m,\tilde{m})$.
{\color{black} In the 3-dimensional case, this effect is not easy to understand. For more details, we refer to subsection 2.3.}

2) We have also
\begin{equation}\label{eigenvalue-from-Fourier}
\lambda_{n,l}=-\lambda^1_{n,l}-\lambda^2_{n,l}.
\end{equation}
It is trivial to obtain that $\lambda_{0,0}=\lambda_{1,0}=\lambda_{0,1}=0$~and the others are strictly positive,~
since when $l\neq0$,~and for $n\neq0,1$,~
$$
\lambda_{n,l}\geq2\int^{\frac{\pi}{4}}_{0}(1-\sin^{2n}\theta-\cos^{2n}
\theta)\beta(\theta)d\theta=\lambda_{n,0}>0.
$$
{\color{black} Moreover, from Theorem 2.2 in \cite{NYKC2}\,(see also Theorem 2.3 in \cite{HAOLI})},
there exists a constant  $0<c_1<1$ dependent on $s$ such that, for any $n,l\in\mathbb{N}$ and $n+l\geq2$,
\begin{equation}\label{eq:3.111}
c_1\Big((2n+l+\frac 32)^s+l^{2s}\Big)\le \lambda_{n,l}\le \frac{1}{c_1}\Big((2n+l+\frac 32)^s+l^{2s}\Big)\,   .
\end{equation}
\end{remark}

\subsection{Formal and explicit solution of the Cauchy problem}\label{subsection2,3}

Now we solve explicitly the Cauchy problem associated to the non-cutoff spatially homogeneous Boltzmann equation with Maxwellian molecules.
Consider the solution {\color{black} of} the Cauchy problem \eqref{eq-1} in the form
$$
g(t)=\sum^{+\infty}_{n=0}\sum^{+\infty}_{l=0}\sum_{|m|\leq l}g_{n,l,m}(t)\varphi_{n,l,m},
$$
with initial data
$$
g(0)=\sum^{+\infty}_{n=0}\sum^{+\infty}_{l=0}\sum_{|m|\leq l}g_{n,l,m}^0\varphi_{n,l,m}\in L^2(\mathbb{R}^3),
$$
where
$$
g_{n,l,m}(t)=\left(g(t),\, \varphi_{n,l,m}\right)_{L^2(\mathbb{R}^3)},\qquad
g_{n,l,m}^0=\left(g_0,\, \varphi_{n,l,m}\right)_{L^2(\mathbb{R}^3)}.
$$
In the following, we will use the short notation
$$
\sum^{+\infty}_{n,l,m}=\sum^{+\infty}_{n=0}\sum^{+\infty}_{l=0}\sum_{|m|\leq l}\,\, .
$$
This summation is divided into three terms, which {\color{black} are}
\begin{equation}\label{summation}
\sum^{+\infty}_{n=0}\sum^{+\infty}_{l=0}\sum_{|m|\leq l}f_{n,l,m}=f_{0,0,0}
+\sum^{+\infty}_{n=1}f_{n,0,0}+\sum^{+\infty}_{n=0}\sum^{+\infty}_{l=1}\sum_{|m|\leq l}f_{n,l,m}.
\end{equation}
It follows from {\color{black} ${\bf \Gamma}(\varphi_{0,0,0},\varphi_{0,0,0})={\bf \Gamma}(\sqrt{\mu},\sqrt{\mu})=0$},
Proposition \ref{expansion} and
the above decomposition \eqref{summation}

that, for suitable function $g$,
\begin{align*}
{\bf \Gamma}(g,g)
&=\sum^{+\infty}_{\tilde{n}=0}\sum^{+\infty}_{\tilde{l}=0}\sum_{|\tilde{m}|\leq \tilde{l}}
g_{0,0,0}(t)g_{\tilde{n},\tilde{l},\tilde{m}}(t)
\Big(\lambda^{1}_{\tilde{n},\tilde{l}}+\lambda^2_{\tilde{n},\tilde{l}}\Big)\varphi_{\tilde{n},
\tilde{l},\tilde{m}}\\
&\quad+\sum^{+\infty}_{n=1}\sum^{+\infty}_{\tilde{n}=1}g_{n,0,0}(t)g_{\tilde{n},0,0}(t)\lambda^{rad, 1}_{n, \tilde{n},0}
\varphi_{\tilde{n}+n,0,0}\\
&\quad+\sum^{+\infty}_{n=1}\sum^{+\infty}_{\tilde{n}=0}
\sum^{+\infty}_{\tilde{l}=1}\sum_{|\tilde{m}|\leq\,\tilde{l}}g_{n,0,0}(t)g_{\tilde{n},\tilde{l},
\tilde{m}}(t)\lambda^{rad, 1}_{n, \tilde{n},\tilde{l}}
\varphi_{\tilde{n}+n,\tilde{l},\tilde{m}}\\
&\quad+\sum^{+\infty}_{n=0}
\sum^{+\infty}_{l=1}\sum^{+\infty}_{\tilde{n}=1}\sum_{|m|\leq l}g_{n,l,m}(t) g_{\tilde{n},0,0}(t)\lambda^{rad, 2}_{n, \tilde{n},l} \varphi_{\tilde{n}+n,l,m}\\
&\quad+\sum^{+\infty}_{n=0}\sum^{+\infty}_{l=1}
\sum^{+\infty}_{\tilde{n}=0}\sum^{+\infty}_{\tilde{l}=1}
\sum_{|m|\leq l}\sum_{|\tilde{m}|\leq \tilde{l}}
\sum^{k_0(l,\tilde{l},m,\tilde{m})}_{k=0}\,
g_{n,l,m}(t)g_{\tilde{n},\tilde{l},\tilde{m}}(t)
\mu^{m,\tilde{m},m+\tilde{m}}_{n,\tilde{n},l,
\tilde{l},k}\,\,\varphi_{n+\tilde{n}+k,\,l+\tilde{l}-2k,\,m+\tilde{m}},
\end{align*}
where  $k_0(l,\tilde{l},m,\tilde{m})$ was given in \eqref{k_0}.
Since for fixed {\color{black}$l,\tilde{l}\in \mathbb{N}$},
\begin{align*}
&\left\{(m,\tilde{m},k)\in\mathbb{Z}^2\times\mathbb{N};
|m|\leq l,|\tilde{m}|\leq \tilde{l},0\leq k\leq k_0(l,\tilde{l},m,\tilde{m})\right\}\\
&=
\left\{(m,\tilde{m},k)\in\mathbb{Z}^2\times\mathbb{N};
|m|\leq l,|\tilde{m}|\leq \tilde{l},0\leq k\leq\min(l,\tilde{l}),|m+\tilde{m}|\leq l+\tilde{l}-2k\right\},
\end{align*}
we obtain by {\color{black}changing} the order of the summation
\begin{align}\label{remark-k}
\sum_{|m|\leq l}\sum_{|\tilde{m}|\leq \tilde{l}}
\sum^{k_0(l,\tilde{l},m,\tilde{m})}_{k=0}
&=\sum^{\min(l,\tilde{l})}_{k=0}
\sum_{\substack{|m|\leq l,|\tilde{m}|\leq \tilde{l}\\|m+\tilde{m}|\leq l+\tilde{l}-2k}}
\end{align}
where $\sum_{\substack{|m|\leq l,|\tilde{m}|\leq \tilde{l}\\|m+\tilde{m}|\leq l+\tilde{l}-2k}}$ is the double summation of $m$ and $\tilde{m}$ with constraints $|m+\tilde{m}|\leq l+\tilde{l}-2k$.
Using {\color{black}the equality \eqref{eigenvalue-from-Fourier}}
{\color{black} (that is $\lambda^{1}_{\tilde{n},\tilde{l}}+\lambda^2_{\tilde{n},\tilde{l}}=-\lambda_{\tilde{n},\tilde{l}}$)}
and \eqref{remark-k}, ${\bf \Gamma}(g,g)$ can be rewritten as
\begin{align*}
{\bf \Gamma}(g,g)&=-g_{0,0,0}(t)\sum^{+\infty}_{\tilde{n}=0}\sum^{+\infty}_{\tilde{l}=0}\sum_{|\tilde{m}|\leq \tilde{l}}
g_{\tilde{n},\tilde{l},\tilde{m}}(t)\lambda_{\tilde{n},\tilde{l}}\varphi_{\tilde{n},
\tilde{l},\tilde{m}}\\
&\quad+\sum^{+\infty}_{n=1}\sum^{+\infty}_{\tilde{n}=1}g_{n,0,0}(t)g_{\tilde{n},0,0}(t)\lambda^{rad, 1}_{n, \tilde{n},0}
\varphi_{\tilde{n}+n,0,0}\\
&\quad+\sum^{+\infty}_{n=1}\sum^{+\infty}_{\tilde{n}=0}
\sum^{+\infty}_{\tilde{l}=1}\sum_{|\tilde{m}|\leq\,\tilde{l}}g_{n,0,0}(t)g_{\tilde{n},\tilde{l},
\tilde{m}}(t)\left(\lambda^{rad, 1}_{n, \tilde{n},\tilde{l}}+\lambda^{rad, 2}_{\tilde{n}, n,\tilde{l}}\right)
\varphi_{\tilde{n}+n,\tilde{l},\tilde{m}}\\
&\quad+\sum^{+\infty}_{n=0}\sum^{+\infty}_{l=1}
\sum^{+\infty}_{\tilde{n}=0}\sum^{+\infty}_{\tilde{l}=1}
\sum^{\min(l,\tilde{l})}_{k=0}
\sum_{\substack{|m|\leq l,|\tilde{m}|\leq \tilde{l}\\|m+\tilde{m}|\leq l+\tilde{l}-2k}}\,
g_{n,l,m}(t)g_{\tilde{n},\tilde{l},\tilde{m}}(t)
\mu^{m,\tilde{m},m+\tilde{m}}_{n,\tilde{n},l,
\tilde{l},k}\,\,\varphi_{n+\tilde{n}+k,\,l+\tilde{l}-2k,\,m+\tilde{m}}.
\end{align*}
For {\color{black} suitable} function $g$, we also have
\begin{equation*}
\mathcal{L}g=\sum^{+\infty}_{n,l,m}\lambda_{n,l}\,g_{n,l,m}(t)\,\varphi_{n,l,m}.
\end{equation*}
Formally, {\color{black} taking} {\color{black} the} inner product with $\overline{\varphi_{n^{\star},l^{\star},m^{\star}}}$ on both sides of \eqref{eq-1},
we find that the functions $\{g_{n^{\star},l^{\star},m^{\star}}(t)\}$ satisfy the following infinite system
of the differential equations
\begin{align*}
\partial_t g_{n^{\star},l^{\star},m^{\star}}(t)
&+\lambda_{n^{\star},l^{\star}}(1+g_{0,0,0}(t))
g_{n^{\star},l^{\star},m^{\star}}(t)\nonumber
\\
&=\delta_{l^{\star},0}\sum_{\substack{n+\tilde{n}=n^{\star}\\n\geq1,\tilde{n}\geq1}}
  \lambda^{rad, 1}_{n,\tilde{n},0}
  g_{n,0,0}(t)
  g_{\tilde{n},0,0}(t)\nonumber\\
&\quad+\sum_{\substack{n+\tilde{n}=n^{\star}\\ n\geq1,\tilde{n}\geq0 }}\sum_{\tilde{l}\geq1}
 \delta_{\tilde{l},l^{\star}}
 \left(\lambda^{rad, 1}_{n, \tilde{n},l^{\star}}+\lambda^{rad, 2}_{\tilde{n}, n,l^{\star}}\right)
  g_{\tilde{n},l^{\star},m^{\star}}(t)
  g_{n,0,0}(t)
\\
&\quad+
\sum_{(n,\tilde{n},l,\tilde{l},k,m,\tilde{m})\in\Delta_{n^{\star},l^{\star},m^{\star}}}
\mu^{m,\tilde{m},m^{\star}}_{n,\tilde{n},l,\tilde{l},k}
  g_{n,l,m}(t)
  g_{\tilde{n},\tilde{l},\tilde{m}}(t)\nonumber
\end{align*}
with initial data
\begin{equation}\label{ODE-I}
g_{n^{\star},l^{\star},m^{\star}}(0)=g^0_{n^{\star},l^{\star},m^{\star}}
\end{equation}
and where
\begin{align*}
\Delta_{n^{\star},l^{\star},m^{\star}}
&=\Big\{
(n,\tilde{n},l,\tilde{l},k,m,\tilde{m})\in\mathbb{N}^5\times\mathbb{Z}^2;\\
&\qquad l\geq1,\tilde{l}\geq1,0\leq k\leq \min(l,\tilde{l}),|m|\leq l, |\tilde{m}|\leq \tilde{l}\\
&\qquad\qquad\text{and}\,\,n+\tilde{n}+k=n^{\star},\,l+\tilde{l}-2k=l^{\star},\,m+\tilde{m}=m^{\star}
\Big\},
\end{align*}
which is a subset of a hyperplane of dimension 4.
\begin{remark}\label{remark-sum}
The summation in the last term of the previous
equation for $\partial_t g_{n^{\star},l^{\star},m^{\star}}(t)$  
is a bit complicated.
For the sake of simplicity,
it will be convenient to abuse the notation in this summation and write
\begin{align}\label{ODE}
\begin{split}
\partial_t g_{n^{\star},l^{\star},m^{\star}}(t)
&+\lambda_{n^{\star},l^{\star}}(1+g_{0,0,0}(t))
g_{n^{\star},l^{\star},m^{\star}}(t)
\\
&=\delta_{l^{\star},0}\sum_{\substack{n+\tilde{n}=n^{\star}\\n\geq1,\tilde{n}\geq1}}
  \lambda^{rad, 1}_{n,\tilde{n},0}
  g_{n,0,0}(t)
  g_{\tilde{n},0,0}(t)\\
&\quad+\sum_{\substack{n+\tilde{n}=n^{\star}\\n\geq1,\tilde{n}\geq0}}
 \sum_{\tilde{l}\geq1}
 \delta_{\tilde{l},l^{\star}}
 \left(\lambda^{rad, 1}_{n, \tilde{n},l^{\star}}+\lambda^{rad, 2}_{\tilde{n}, n,l^{\star}}\right)
  g_{\tilde{n},l^{\star},m^{\star}}(t)
  g_{n,0,0}(t)
\\
&\quad+\sum_{\substack{n+\tilde{n}+k=n^{\star}\\n\geq0,\tilde{n}\geq0}}
\sum_{\substack{l+\tilde{l}-2k=l^{\star} \\ l\geq1,\tilde{l}\geq1\\0\leq k\leq \min(l,\tilde{l})}}
\sum_{\substack{m+\tilde{m}=m^{\star} \\ |m|\leq l,|\tilde{m}|\leq\tilde{l}}}
\mu^{m,\tilde{m},m^{\star}}_{n,\tilde{n},l,\tilde{l},k}
  g_{n,l,m}(t)
  g_{\tilde{n},\tilde{l},\tilde{m}}(t).
\end{split}
\end{align}
Here and after, we always use this notation.
\end{remark}
\begin{theorem}\label{prop:ODE}
For any initial data $\{g^0_{n^{\star},l^{\star}, m^{\star}};\, n^{\star},l^{\star}\in\mathbb{N}, |m^{\star}|\le  l^{\star}\}$ with
\begin{equation}\label{eq:2.55}
g_{0,0,0}^0=g_{1,0,0}^0=g_{0,1,0}^0=g_{0,1,1}^0=g_{0,1,-1}^0=0,
\end{equation}
the system \eqref{ODE} admits a global solution $\{g_{n^{\star},l^{\star}, m^{\star}}(t);\, n^{\star},l^{\star}
\in\mathbb{N}, |m^{\star}|\le  l^{\star}\}$.
\end{theorem}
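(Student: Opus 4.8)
The plan is to exploit the \emph{triangular effect} of Proposition \ref{expansion} by grading the modes according to the harmonic-oscillator level $N=2n+l$, and then to integrate the system \eqref{ODE} level by level, reducing it at each level to a family of decoupled scalar linear ODEs that can be solved explicitly by variation of constants.

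First I would record the grading structure. Reading off the output indices in Proposition \ref{expansion}, every product $\varphi_{n+\tilde n+k,\,l+\tilde l-2k,\,m+\tilde m}$ produced by ${\bf \Gamma}(\varphi_{n,l,m},\varphi_{\tilde n,\tilde l,\tilde m})$ satisfies
$$2(n+\tilde n+k)+(l+\tilde l-2k)=(2n+l)+(2\tilde n+\tilde l),$$
so ${\bf \Gamma}$ sends level $a$ times level $b$ into level $a+b$, and the same additivity holds in the cases $(i_1),(i_2),(ii_1),(ii_2)$. Moreover $(i_1)$ and $(i_2)$ show that ${\bf \Gamma}(\varphi_{0,0,0},\cdot)$ and ${\bf \Gamma}(\cdot,\varphi_{0,0,0})$ act as multiples of the identity. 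Consequently, in the equation \eqref{ODE} for $g_{n^{\star},l^{\star},m^{\star}}$ (with $N^{\star}=2n^{\star}+l^{\star}$) the only grade-preserving contribution to the target level $N^{\star}$ is the one carrying the unique level-$0$ factor $g_{0,0,0}$, which acts on the target mode itself and has already been absorbed into the left-hand coefficient $\lambda_{n^{\star},l^{\star}}(1+g_{0,0,0}(t))$; every remaining genuinely quadratic term is a product $g_{n,l,m}\,g_{\tilde n,\tilde l,\tilde m}$ with $(2n+l)+(2\tilde n+\tilde l)=N^{\star}$ and both factors of level $\geq 1$, hence both strictly below $N^{\star}$. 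A short check using the nonnegativity of the indices and the constraints $n+\tilde n+k=n^{\star}$, $l+\tilde l-2k=l^{\star}$ (together with the reindexing \eqref{remark-k}) shows that only finitely many such terms occur.

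With this in hand I would argue by induction on $N^{\star}=0,1,2,\dots$. For the base level $N^{\star}=0$ the only mode is $g_{0,0,0}$; the right-hand side of \eqref{ODE} is empty and $\lambda_{0,0}=0$ (Remark \ref{remark-decomp}), so $g_{0,0,0}(t)\equiv g_{0,0,0}^0=0$ by \eqref{eq:2.55}. The same direct inspection shows that the remaining null modes $g_{1,0,0},g_{0,1,0},g_{0,1,\pm1}$, for which the eigenvalue vanishes, have vanishing forcing once the lower levels are known to be zero, hence stay identically zero. For the inductive step, suppose all modes of level $<N^{\star}$ have been determined as continuous functions on $[0,\infty)$. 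Since $g_{0,0,0}\equiv 0$, the left-hand coefficient reduces to the constant $\lambda_{n^{\star},l^{\star}}\ge 0$, and the right-hand side becomes a fixed finite sum of products of the already-known lower-level functions, i.e. a known continuous forcing $F_{n^{\star},l^{\star},m^{\star}}(t)$. The equation for $g_{n^{\star},l^{\star},m^{\star}}$ then decouples from the other modes of the same level and becomes the scalar linear ODE
$$\partial_t g_{n^{\star},l^{\star},m^{\star}}+\lambda_{n^{\star},l^{\star}}\,g_{n^{\star},l^{\star},m^{\star}}=F_{n^{\star},l^{\star},m^{\star}}(t),$$
solved globally by
$$g_{n^{\star},l^{\star},m^{\star}}(t)=e^{-\lambda_{n^{\star},l^{\star}}t}\,g_{n^{\star},l^{\star},m^{\star}}^0+\int_0^t e^{-\lambda_{n^{\star},l^{\star}}(t-\tau)}\,F_{n^{\star},l^{\star},m^{\star}}(\tau)\,d\tau .$$
This yields a smooth solution on all of $[0,\infty)$, completing the induction and producing the full collection $\{g_{n^{\star},l^{\star},m^{\star}}(t)\}$ as a global solution of \eqref{ODE}.

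The one point requiring genuine care — the step I would treat as the main obstacle — is the verification of the triangular/grading structure itself: one must confirm, from the index constraints in Proposition \ref{expansion}, that \emph{every} nonlinear contribution strictly lowers the level $N=2n+l$, the single exception being the level-preserving $g_{0,0,0}$ term, which is linear in the target mode and is precisely why the normalization $g_{0,0,0}^0=0$ in \eqref{eq:2.55} is imposed. Once this is established, global existence is immediate from elementary linear ODE theory, with no fixed-point argument or a priori bound needed at this stage; the quantitative control, namely convergence of the resulting series in the Gelfand--Shilov norm, is a separate issue handled later via the estimates of Section \ref{S5-2}.
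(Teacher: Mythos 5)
Your proposal is correct and follows essentially the same route as the paper: both rest on the triangular effect of Proposition \ref{expansion} and reduce each component to a decoupled scalar linear ODE with known continuous forcing, integrated globally by variation of constants (the paper even leaves the Duhamel step implicit). The only difference is bookkeeping: you run a single induction on the oscillator level $N^{\star}=2n^{\star}+l^{\star}$, under which ${\bf \Gamma}$ is additive, whereas the paper runs a nested induction, first on $n^{\star}$ and then on $l^{\star}$ (hypotheses {\bf (H-1)}--{\bf (H-3)}); both well-orderings work because, as you verify, every genuinely quadratic term involves only strictly earlier modes, the sole level-preserving contribution being the $g_{0,0,0}$ factor already absorbed into the linear coefficient.
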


\begin{remark}
Using the triangular effect property of Proposition \ref{expansion}, we solve explicitly the infinite system of differential {\color{black} equations} \eqref{ODE} with any initial data $\{g^0_{n^{\star},l^{\star}, m^{\star}};\, n^{\star},l^{\star}\in\mathbb{N}, |m^{\star}|\le  l^{\star}\}$.
{\color{black}
Note that the initial data doesn't need to belong to $\ell^2$. That means we can construct the formal solution for any initial data $g_0\in\mathscr{S}^{\prime}(\mathbb{R}^3)$.
}
\end{remark}

\begin{proof}
Formally, the system \eqref{ODE} is non linear of quadratic form, but the infinite matrix
of this quadratic form is in fact inferior triangular (see \cite{NYKC3} for
{\color{black} the} radially symmetric case {\color{black} involving a} simple index).
Since the sequence is defined by {\color{black}multi-indices}, we prove this property by the following different case, and in each case by induction.\smallskip
\\
{\bf Induction on $n^{\star}\in \mathbb{N}$:}\smallskip
 \\
{\bf (1) the case : $n^{\star}=0$.} Now we prove the existence of $\{g_{0,l^{\star}, m^{\star}}(t);\,l^{\star}\in\mathbb{N}, |m^{\star}|\le  l^{\star}\}$ by induction on
$l^{\star}\in \mathbb{N}$. From the assumption \eqref{eq:2.55},
and $\lambda_{0,0}=\lambda_{0,1}=\lambda_{1,0}=0$, one {\color{black}gets} that
$$
g_{0,0,0}(t)=g_{1,0,0}(t)=g_{0,1,0}(t)=g_{0,1,1}(t)=g_{0,1,-1}(t)=0.
$$
Let now $l^{\star}\ge 1$, we put the following
{\color{black} induction assumption}:
\\
{\bf (H-1) :}
{\em For any $l\le l^{\star}-1$,\, $|m|\le l$,
the functions $g_{0, l, m}(t)$ solve the equation \eqref{ODE}
with initial data \eqref{ODE-I}.}
\smallskip

We consider the following equation
\begin{align*}
\partial_tg_{0,l^{\star},m^{\star}}(t)+\lambda_{0,l^{\star}}
g_{0,l^{\star},m^{\star}}(t)=\sum_{\substack{l+\tilde{l}=l^{\star}\\l\geq1,\tilde{l}\geq1}}
\sum_{\substack{m+\tilde{m}=m^{\star}\\|m|\leq l,|\tilde{m}|\leq \tilde{l}}}\mu^{m,\tilde{m},m^{\star}}_{0,0,l,\tilde{l},0}g_{0,l,m}(t)
g_{0,\tilde{l},\tilde{m}}(t).
\end{align*}
This differential equation can be solved since the functions $g_{0,l,m}(t)$ on the right hand side are only involving the functions $\{g_{0,l,m}(t)\}_{l\leq l^{\star}-1}$ which have been already known by {\color{black} induction assumption} {\bf (H-1)}.

In particular,\,for any $|m|\leq2$,
\begin{equation}\label{2-0}
g_{0,2,m}(t)=e^{-\lambda_{0,2}t}g_{0,2,m}(0)
\end{equation}
{\color{black} and we compute easily from the identity
$P_2(x)=\frac{3}{2}x^2-\frac{1}{2}$}
$$
{\color{black}\lambda_{0,2}=3\int_{|\theta|\le \frac{\pi}{4}} \beta(\theta)\sin^2\theta\cos^2\theta d\theta>0.}
$$

{\bf (2) the case : $n^{\star}\ge 1$.} Now we put the following induction assumption:
\\
{\bf (H-2) :} {\em For any $n\leq n^{\star}-1$, $l\in\mathbb{N}$ and $|m|\leq l$, the functions $\{ g_{n, l, m}(t)\}$ solve the equation \eqref{ODE} with initial data \eqref{ODE-I}.}
\smallskip

First, we want to solve the function $g_{n^{\star},0,0}$ in \eqref{ODE}.\,\,Since $l^{\star}=m^{\star}=0$, \eqref{ODE} can be written as
\begin{align*}
\partial_tg_{n^{\star},0,0}(t)&+\lambda_{n^{\star},0}
g_{n^{\star},0,0}(t)
\\
&=\sum_{\substack{n+\tilde{n}=n^{\star}
\\
n\geq1,\tilde{n}\geq1}}\lambda^{rad, 1}_{n,\tilde{n},0}g_{n,0,0}(t)
g_{\tilde{n},0,0}(t)
\\
&\quad+\sum_{\substack{n+\tilde{n}+k=n^{\star}\\n\geq0,\tilde{n}\geq0}}
\sum_{\substack{l+\tilde{l}-2k=0 \\ l\geq1,\tilde{l}\geq1\\0\leq k\leq \min(l,\tilde{l})}}
\sum_{\substack{m+\tilde{m}=0\\|m|\leq l,|\tilde{m}|\leq\tilde{l}}}
  \mu^{m,\tilde{m},0}_{n,\tilde{n},l,\tilde{l},k}
  g_{n,l,m}(t)
  g_{\tilde{n},\tilde{l},\tilde{m}}(t).
\end{align*}
In the last term, when $k=0$, we have
$$l+\tilde{l}=0\,\,\text{with\,\,constraints}\,\, l\geq1,\tilde{l}\geq1,$$
which is impossible.  Therefore, the equation \eqref{ODE} is:
\begin{align*}
\partial_tg_{n^{\star},0,0}(t)&+\lambda_{n^{\star},0}
g_{n^{\star},0,0}(t)
\\
&=\sum_{\substack{n+\tilde{n}=n^{\star}
\\
n\geq1,\tilde{n}\geq1}}\lambda^{rad, 1}_{n,\tilde{n},0}g_{n,0,0}(t)
g_{\tilde{n},0,0}(t)
\\
&\quad+\sum_{\substack{n+\tilde{n}+k=n^{\star}\\0\leq n,\tilde{n}\leq n^{\star}-1}}
\sum_{\substack{l+\tilde{l}-2k=0 \\ l\geq1,\tilde{l}\geq1\\1\leq k\leq \min(l,\tilde{l})}}
\sum_{\substack{m+\tilde{m}=0\\|m|\leq l,|\tilde{m}|\leq\tilde{l}}}
  \mu^{m,\tilde{m},0}_{n,\tilde{n},l,\tilde{l},k}
  g_{n,l,m}(t)
  g_{\tilde{n},\tilde{l},\tilde{m}}(t).
\end{align*}
This equation can be also solved since the functions on the right hand side are only involving $\{g_{n,l,m}(t)\}_{n\leq n^{\star}-1,l\in\mathbb{N},|m|\leq l}$, which have been already given in the induction assumption {\bf (H-2)}.

Finally, let $l^{\star}\geq1$, we can improve the
{\color{black} induction assumption } as following:
\\
{\bf (H-3) :}
{\em For any
$n\leq n^{\star}-1,\,l\in\mathbb{N},\,|m|\leq l$
or 
$n=n^{\star}, l\leq l^{\star}-1,\, |m|\leq l, $
the functions $\{ g_{n, l, m}(t)\}$ solve the equation \eqref{ODE} with initial data \eqref{ODE-I}.
}

\smallskip
We want to solve the functions $g_{n^{\star} ,l^{\star}, m^{\star}}(t)$ for all $|m^{\star}|\leq l^{\star}$ in \eqref{ODE},\,which is
\begin{align*}
&\partial_t g_{n^{\star}, l^{\star}, m^{\star}}(t)+\lambda_{n^{\star}, l^{\star}}g_{n^{\star}, l^{\star}, m^{\star}}(t)\\
&=
\sum_{\substack{n+\tilde{n}=n^{\star}\\ n\geq1,\tilde{n}\geq0}}
 \left(\lambda^{rad, 1}_{n, \tilde{n},l^{\star}}+\lambda^{rad, 2}_{\tilde{n}, n,l^{\star}}\right)
  g_{\tilde{n},l^{\star},m^{\star}}(t)
  g_{n,0,0}(t)\\
&\qquad+\sum_{\substack{n+\tilde{n}=n^{\star}\\n\geq0,\tilde{n}\geq0}}  \,\,
\sum_{\substack{l+\tilde{l}=l^{\star} \\ l\geq1,\tilde{l}\geq 1}}  \,\,
\sum_{\substack{m+\tilde{m}=m^{\star}\\|m|\leq l,|\tilde{m}|\leq\tilde{l}}}\mu^{m,\tilde{m},m^{\star}}_{n,\tilde{n},l,\tilde{l},0}
g_{n,l,m}(t)g_{\tilde{n},\tilde{l},\tilde{m}}(t)\\
&\qquad+\sum_{\substack{n+\tilde{n}+k=n^{\star}\\ n\geq0,\tilde{n}\geq0}}
\sum_{\substack{l+\tilde{l}-2k=l^{\star} \\ l\geq1,\tilde{l}\geq1\\1\leq k\leq \min(l,\tilde{l})}}
\sum_{\substack{m+\tilde{m}=m^{\star}\\|m|\leq l,|\tilde{m}|\leq\tilde{l}}}\mu^{m,\tilde{m},m^{\star}}_{n,
\tilde{n},l,\tilde{l},k}g_{n,l,m}(t)g_{\tilde{n},\tilde{l},\tilde{m}}(t).
\end{align*}
Here the summation in the last two terms is understanding as Remark \ref{remark-sum}.
 This equation can be also solved since the functions on the right hand side are only involving $\{g_{n,l,m}(t)\}_{n\leq n^{\star}-1, l\in\mathbb{N}}$ and
$\{g_{n,l,m}(t)\}_{n=n^{\star}, l\leq l^{\star}-1}$ which is given by the improved {\color{black} induction assumption} {\bf (H-3)}.
\end{proof}

Now the proof of Theorem \ref{trick} is reduced to prove the convergence of following series
\begin{equation}\label{ODE-b}
g(t)=\sum^{+\infty}_{n, l, m}g_{n, l, m}(t)\varphi_{n, l, m}
\end{equation}
in the {\color{black} suitable} function space.


\section{The estimate of the non linear operators}\label{S3}

\subsection{{\color{black}The estimate of the trilinear term}}
To prove the convergence of the formal solution obtained in the precedent section, we need to estimate the following trilinear terms
$$
\left({\bf \Gamma}(f,g),h\right)_{L^2(\mathbb{R}^3)},
\,\,\,f,g,h\in\mathscr{S}(\mathbb{R}^3)\cap\mathcal{N}^{\perp}.
$$

Using the spectral representation of ${\bf \Gamma}(\,\cdot,\,\cdot\,)$ given in Proposition \ref{expansion},
we need to estimate {\color{black}their} coefficients.

\begin{proposition}\label{est}.

\noindent
1) For $n\geq1$, $\tilde{n},\tilde{l}\in\mathbb{N}$, we have,
\begin{equation*}
|\lambda^{rad, 1}_{n,\tilde{n},\tilde{l}}|^2\lesssim
\tilde{n}^s(\tilde{n}+\tilde{l})^sn^{-\frac{5}{2}-2s}.
\end{equation*}
2) For all $\tilde{n}\geq1$, $n,l\in\mathbb{N}$, $n+l\geq2$, we have
\begin{equation*}
|\lambda^{rad, 2}_{n,\tilde{n},l}|^2\lesssim \frac{\tilde{n}^{2s}}{(n+1)^s(n+l)^{\frac{5}{2}+s}}.
\end{equation*}
3) For any $n^{\star},l^{\star}\in\mathbb{N}$, $|m^{\star}|\leq l^{\star}$, we have also
$$
\sum_{\substack{n+\tilde{n}+k=n^{\star}\\n+l\geq2,\tilde{n}+\tilde{l}\geq2\\n\geq0,\tilde{n}\geq0}}
\sum_{\substack{l+\tilde{l}-2k=l^{\star}\\l\geq1,\tilde{l}\geq1\\0\leq\,k\leq\,\min(l,\tilde{l})}}\,\,
\sum_{|m|\leq\,l} \,\,
\sum_{|\tilde{m}|\leq\,\tilde{l}}
\frac{|\mu^{m,\tilde{m},m^{\star}}_{n,\tilde{n},l,\tilde{l},k}|^2}{\lambda_{\tilde{n},\tilde{l}}}
\,\lesssim\,
 \lambda_{n^{\star},l^{\star}}.
$$
\end{proposition}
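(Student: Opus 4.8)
The plan is to estimate each of the three coefficients by controlling the relevant angular integrals against $\beta(\theta)$, using the singularity $\beta(\theta)\approx|\theta|^{-1-2s}$ near $\theta=0$ together with the polynomial behavior of the Gamma-function prefactors. For parts 1) and 2), the structure is the same: the coefficients $\lambda^{rad,1}_{n,\tilde n,\tilde l}$ and $\lambda^{rad,2}_{n,\tilde n,l}$ are products of a Gamma-ratio prefactor and an angular integral of the form $\int_{|\theta|\le\pi/4}\beta(\theta)(\sin\theta)^{2n+\cdots}(\cos\theta)^{\cdots}P_{\cdot}(\cdot)\,d\theta$. First I would use Stirling's formula on the Gamma-ratio to extract the polynomial growth in $n,\tilde n,\tilde l$; the factor $\left(\frac{(n+\tilde n)!\,\Gamma(n+\tilde n+\tilde l+\frac32)}{\tilde n!\,\Gamma(\tilde n+\tilde l+\frac32)\,n!\,\Gamma(n+\frac32)}\right)^{1/2}$ should contribute powers like $\tilde n^{?}(\tilde n+\tilde l)^{?}$ after the factorials partially cancel. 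Then I would bound the angular integral: since $(\sin\theta)^{2n}$ vanishes like $\theta^{2n}$ at the origin, it cancels the singularity $\theta^{-1-2s}$ of $\beta$, and the integral $\int_0^{\pi/4}\theta^{-1-2s}(\sin\theta)^{2n}\,d\theta\lesssim n^{-s}$ by a standard scaling/Gamma-function estimate (substitute $u=n\theta^2$ or compare with $\int_0^\infty\theta^{-1-2s}e^{-cn\theta^2}d\theta$). This decay in $n$ is the source of the $n^{-5/2-2s}$ in 1) and the $(n+l)^{-5/2-s}$ in 2).

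For the bound $\int_{|\theta|\le\pi/4}\theta^{-1-2s}(\sin\theta)^{2n+l}\,d\theta\lesssim (n+1)^{-s-?}$, I expect to use $(\sin\theta)^{2n+l}\le e^{-c(2n+l)\theta^2}$ on $[0,\pi/4]$ and then scale; the extra $(\cos\theta)^{2\tilde n}\le 1$ factor is harmless, and $|P_l(\sin\theta)|\le 1$ since Legendre polynomials are bounded by $1$ on $[-1,1]$. The Gamma-ratio in part 2), namely $\left(\frac{(n+\tilde n)!\,\Gamma(n+\tilde n+l+\frac32)}{\tilde n!\,\Gamma(\tilde n+\frac32)\,n!\,\Gamma(n+l+\frac32)}\right)^{1/2}$, should produce the $\tilde n^{2s}$ factor (after squaring) together with the denominator powers of $(n+1)$ and $(n+l)$; the matching of exponents here is a careful but routine Stirling computation. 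The asymmetry between 1) and 2) (the $-5/2-2s$ versus $-5/2-s$ and $(n+1)^{-s}$) comes from whether the power $(\sin\theta)^{2n}$ or $(\sin\theta)^{2n+l}$ appears, i.e. whether the singularity-killing factor carries the index $l$.

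For part 3), the main new ingredient is the orthogonality relation \eqref{orth1} of Proposition \ref{expansion2}, which lets me replace $\sum_{|m|\le l}\sum_{|\tilde m|\le\tilde l}|\mu^{m,\tilde m,m^\star}_{n,\tilde n,l,\tilde l,k}|^2$ by a more tractable sum; I would then bound the square of $\mu$ by separating its Gamma prefactor (treated as in 1)--2) via Stirling) from the double spherical integral. The angular part $\int_{\mathbb{S}^2}b(\kappa\cdot\sigma)\bigl(\frac{|\kappa-\sigma|}2\bigr)^{2n+l}\bigl(\frac{|\kappa+\sigma|}2\bigr)^{2\tilde n+\tilde l}\cdots d\sigma$ again carries a factor $\bigl(\frac{|\kappa-\sigma|}2\bigr)^{2n+l}=(\sin\tfrac{\theta}{2})^{2n+l}$-type vanishing that tames the $b$-singularity, producing decay in the indices; summing over $k$ and over $n,\tilde n,l,\tilde l$ constrained to $n+\tilde n+k=n^\star$, $l+\tilde l-2k=l^\star$, and dividing by $\lambda_{\tilde n,\tilde l}\gtrsim(2\tilde n+\tilde l)^s+\tilde l^{2s}$ via the lower bound \eqref{eq:3.111}, should collapse the whole expression to something of size $\lambda_{n^\star,l^\star}\approx(2n^\star+l^\star)^s+(l^\star)^{2s}$. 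The hard part will be organizing this last summation: the convergence of the series over the four-dimensional constraint set requires the per-term decay from the angular integrals to beat the polynomial prefactor growth, and bookkeeping the $Y^m_l$ normalization factors $N_{l,m}$ inside the spherical integral so that the orthogonality \eqref{orth1} can be applied cleanly. I expect the detailed estimate of $\mu$, and in particular extracting the correct power of $\lambda_{\tilde n,\tilde l}$ in the denominator so that the final bound matches $\lambda_{n^\star,l^\star}$ exactly, to be the principal technical obstacle, and this is presumably why the authors defer it to Section \ref{S5-2}.
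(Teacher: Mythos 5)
Your outline for parts 1) and 2) contains a step that fails outright: you propose to bound the angular integral by discarding the weight $(\cos\theta)^{2\tilde n+\tilde l}\leq 1$ and using only the scaling estimate $\int_0^{\pi/4}\theta^{-1-2s}(\sin\theta)^{2n}\,d\theta\lesssim n^{-s}$, treating the Gamma-ratio prefactor separately by Stirling. This destroys the uniformity in $\tilde n,\tilde l$ that the proposition requires. Take $\tilde n=n$, $\tilde l=0$ in part 1): the squared prefactor $\frac{(2n)!\,\Gamma(2n+\frac32)}{(n!)^2\,\Gamma(n+\frac32)^2}\approx\binom{2n}{n}^2$ grows like $16^n$ up to polynomial factors, while your bound on the squared integral is only the polynomially small $n^{-2s}$. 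The true integral is exponentially small precisely because of the discarded cosine factor, since $(\sin\theta\cos\theta)^{2n}\leq 4^{-n}$, and the claimed bound $n^{-5/2}$ emerges only from the exact cancellation of $16^{\pm n}$ between the two factors. The paper retains this coupling: with $t=\sin^2\theta$ the integral becomes a truncated Beta integral, a Cauchy--Schwarz splitting into two Beta functions \eqref{Beta function} produces denominators $\Gamma(n+\tilde n+\cdots)$ that cancel the large numerator $\Gamma(n+\tilde n+\tilde l+\frac32)$ and $(n+\tilde n)!$ of the prefactor exactly, and only after that cancellation do the Stirling-type estimates \eqref{Gamma function2}, combined with the recurrence $\Gamma(x+1)=x\Gamma(x)$, yield the stated powers (Lemma \ref{est1-b}). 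So the exponent matching is not a ``routine Stirling computation'' bolted onto your integral bound; with prefactor and integral decoupled the estimate is false.

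For part 3), invoking the orthogonality \eqref{orth1} is only the first step, and your sketch stops where the actual work begins. The paper needs three further ingredients that your proposal does not supply: (i) the reduction formula of Proposition \ref{prop reduc}, which expresses $\sum_{m,\tilde m}\bigl|\int G^{m,\tilde m}_{n,\tilde n,l,\tilde l}\,\overline{Y^{m'}_{l'}}\bigr|^2$ through the single value $G^{q,-q}_{n,\tilde n,l,\tilde l}(e_1)$ and a one-dimensional Legendre integral; (ii) Lemma \ref{cr}, which via the addition theorem and the uniform bound $\sqrt{l}\,(1-x^2)^{1/4}\,|P_l(x)|\lesssim 1$ produces the factor $\frac{\tilde l\sqrt{l}}{l+\tilde l-2k+1}$ in \eqref{lambdadef} --- bounding Legendre polynomials merely by $1$, as you do in 1)--2), loses the $l^{-1/2}$ gain without which the sum over $l$ along the constraint set does not converge against $\lambda_{n^\star,l^\star}$; and (iii) the large-$k$ refinement of Lemma \ref{relambda}, resting on the inequality $\frac{\Gamma(n+\frac k2)\Gamma(n+l-\frac k2+\frac52)}{n!\,\Gamma(n+l+\frac32)}\leq e^{-\frac{lk}{8(n+l)}}$, which supplies exponential decay in $k$ in the regime $n+l\leq k^{3/4}l$; the paper states explicitly that the cruder per-term bound \eqref{estlambdak-b} ``is not enough accurate,'' and indeed the final proof splits the constraint set into four regimes ($k=0$, $1\leq k\leq 19$, and the two large-$k$ regimes) with separate mechanisms in each, including the elementary but non-obvious inequality $(\tilde n+\tilde l)^{2s}\lesssim(\tilde n+1)^s(\tilde n^s+\tilde l^{2s})$ used with \eqref{eq:3.111} in the $k=0$ case. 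Your ``should collapse to $\lambda_{n^\star,l^\star}$'' names none of these, so while your compass heading (singularity killed by $\sin^{2n+l}\theta$, Stirling for Gamma ratios, orthogonality in $m,\tilde m$) agrees with the paper's, the proposal has a failing step in 1)--2) and leaves the decisive estimates of 3) unproved.
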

The constraint of the above summation is
{\color{black}
\begin{align}\label{remark-summation}
\Delta_{n^{\star},l^{\star}}
&=\Big\{
(n,\tilde{n},l,\tilde{l},k,m,\tilde{m})\in\mathbb{N}^5\times\mathbb{Z}^2;\,n+l\geq2,\,\tilde{n}+\tilde{l}\geq2,\nonumber\\
&\qquad l\geq1,\tilde{l}\geq1,|m|\leq l, |\tilde{m}|\leq \tilde{l},0\leq k\leq \min(l,\tilde{l})\\
&\qquad\qquad\text{and}\,\,n+\tilde{n}+k=n^{\star},\,l+\tilde{l}-2k=l^{\star}
\Big\}\nonumber.
\end{align}
}
{\color{black} Following Remark \ref{remark-sum}},
we will always write the complicated summation
$$\sum_{(n,\tilde{n},l,\tilde{l},k,m,\tilde{m})\in\Delta_{n^{\star},l^{\star}}}$$
in the simplified form :
$$\sum_{\substack{n+\tilde{n}+k=n^{\star}\\n+l\geq2,\tilde{n}+\tilde{l}\geq2\\n\geq0,\tilde{n}\geq0}}
\sum_{\substack{l+\tilde{l}-2k=l^{\star}\\l\geq1,\tilde{l}\geq1\\0\leq\,k\leq\,\min(l,\tilde{l})}}\, \sum_{|m|\leq\,l}\sum_{|\tilde{m}|\leq\,\tilde{l}}.$$
{\color{black}Since the proof of Proposition \ref{est} is technical, we prove it in Section \ref{S5-2}}.
Now we prove the following trilinear estimates for the non linear Boltzmann operator.

\begin{proposition}\label{prop:3.2}
For all $f,g,h\in\mathscr{S}(\mathbb{R}^3)\bigcap\mathcal{N}^{\perp},$
$$
\left|\left({\bf \Gamma}(f,g),\,h\right)_{L^2(\mathbb{R}^3)}
\right|\lesssim\|f\|_{L^2(\mathbb{R}^3)}\|\mathcal{L}^{\frac{1}{2}}g\|_{L^2(\mathbb{R}^3)}
\|\mathcal{L}^{\frac{1}{2}}h\|_{L^2(\mathbb{R}^3)}\, .
$$
\end{proposition}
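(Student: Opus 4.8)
The plan is to work entirely on the spectral side. Expanding $f=\sum f_{n,l,m}\varphi_{n,l,m}$, $g=\sum g_{n,l,m}\varphi_{n,l,m}$, $h=\sum h_{n,l,m}\varphi_{n,l,m}$, and recalling that $\mathcal{N}=\mathrm{span}\{\varphi_{0,0,0},\varphi_{0,1,m},\varphi_{1,0,0}\}$ so that membership in $\mathcal{N}^{\perp}$ forces every coefficient with $n+l\le 1$ to vanish, I would expand $\left({\bf \Gamma}(f,g),h\right)_{L^2}$ by means of the algebraic identities of Proposition \ref{expansion}. Exactly as in the computation of ${\bf \Gamma}(g,g)$ in Subsection \ref{subsection2,3}, the contributions $(i_1)$ and $(i_2)$ disappear (they carry the vanishing factor $f_{0,0,0}$ or $g_{0,0,0}$), and the inner product splits into a \emph{radial family}, built from the coefficients $\lambda^{rad, 1}_{n,\tilde{n},\tilde{l}}$ and $\lambda^{rad, 2}_{n,\tilde{n},l}$ and pairing a purely radial mode of $f$ or of $g$ against a general mode, and a genuinely three-dimensional \emph{main term}, built from the coefficients $\mu^{m,\tilde{m},m+\tilde{m}}_{n,\tilde{n},l,\tilde{l},k}$. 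The guiding principle throughout is to distribute the weight $\sqrt{\lambda}$ onto the factors $g$ and $h$ while leaving $f$ in plain $L^2$, so that the three coefficient bounds of Proposition \ref{est} can be inserted.

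For the radial family, consider for instance the piece $\sum \lambda^{rad, 1}_{n,\tilde{n},\tilde{l}}\,f_{n,0,0}\,g_{\tilde{n},\tilde{l},\tilde{m}}\,\overline{h_{n+\tilde{n},\tilde{l},\tilde{m}}}$ with $n\ge 2$. I would insert the factor $\sqrt{\lambda_{\tilde{n},\tilde{l}}\,\lambda_{n+\tilde{n},\tilde{l}}}$ together with its reciprocal and apply Cauchy--Schwarz. Using part~1) of Proposition \ref{est} and the two-sided bound \eqref{eq:3.111}, the normalised kernel obeys
\[
\frac{|\lambda^{rad, 1}_{n,\tilde{n},\tilde{l}}|}{\sqrt{\lambda_{\tilde{n},\tilde{l}}\,\lambda_{n+\tilde{n},\tilde{l}}}}\lesssim n^{-\frac{5}{4}-s},
\]
which is square-summable in $n$; since the shift $\tilde{n}\mapsto n+\tilde{n}$ is injective, a Cauchy--Schwarz in $(\tilde{n},\tilde{l},\tilde{m})$ followed by one in $n$ yields the bound $\lesssim \|f\|_{L^2}\|\mathcal{L}^{1/2}g\|_{L^2}\|\mathcal{L}^{1/2}h\|_{L^2}$ for this piece, and the term carrying $\lambda^{rad, 2}$ is handled identically through part~2). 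The decisive feature is the strong decay in the radial index supplied by Proposition \ref{est}, so the radial family is harmless, just as in the radially symmetric treatment of \cite{NYKC3}.

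The heart of the matter is the main term $\sum_{n^\star,l^\star,m^\star}A_{n^\star,l^\star,m^\star}\,\overline{h_{n^\star,l^\star,m^\star}}$, where $A_{n^\star,l^\star,m^\star}=\sum_{\Delta_{n^\star,l^\star,m^\star}}\mu^{m,\tilde{m},m^\star}_{n,\tilde{n},l,\tilde{l},k}\,f_{n,l,m}\,g_{\tilde{n},\tilde{l},\tilde{m}}$ is the coefficient of $\varphi_{n^\star,l^\star,m^\star}$. A first Cauchy--Schwarz over the index set $\Delta_{n^\star,l^\star,m^\star}$ gives
\[
|A_{n^\star,l^\star,m^\star}|^2\le\Big(\sum_{\Delta_{n^\star,l^\star,m^\star}}\frac{|\mu^{m,\tilde{m},m^\star}_{n,\tilde{n},l,\tilde{l},k}|^2}{\lambda_{\tilde{n},\tilde{l}}}\Big)\Big(\sum_{\Delta_{n^\star,l^\star,m^\star}}\lambda_{\tilde{n},\tilde{l}}\,|f_{n,l,m}|^2\,|g_{\tilde{n},\tilde{l},\tilde{m}}|^2\Big),
\]
and part~3) of Proposition \ref{est} bounds the first factor by a constant times $\lambda_{n^\star,l^\star}$. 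Since the triangular effect keeps every output mode in $\mathcal{N}^{\perp}$ (the constraints $n+l\ge 2$, $\tilde{n}+\tilde{l}\ge 2$ force $n^\star+l^\star\ge 2$), one has $\lambda_{n^\star,l^\star}>0$ wherever $A_{n^\star,l^\star,m^\star}\ne 0$, so a second Cauchy--Schwarz in $(n^\star,l^\star,m^\star)$, pairing $\sqrt{\lambda_{n^\star,l^\star}}\,|h_{n^\star,l^\star,m^\star}|$ against $|A_{n^\star,l^\star,m^\star}|/\sqrt{\lambda_{n^\star,l^\star}}$, produces the factor $\|\mathcal{L}^{1/2}h\|_{L^2}$ and reduces everything to
\[
\sum_{n^\star,l^\star,m^\star}\ \sum_{\Delta_{n^\star,l^\star,m^\star}}\lambda_{\tilde{n},\tilde{l}}\,|f_{n,l,m}|^2\,|g_{\tilde{n},\tilde{l},\tilde{m}}|^2\ \lesssim\ \|f\|_{L^2}^2\,\|\mathcal{L}^{1/2}g\|_{L^2}^2.
\]

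This last step is where I expect the genuine difficulty to lie. Unfolding the sum over all targets turns it into a free sum over $(n,l,m,\tilde{n},\tilde{l},\tilde{m},k)$; since the summand is independent of the noise index $k$, a naive count would cost a spurious factor $\min(l,\tilde{l})+1$ arising from the range $0\le k\le \min(l,\tilde{l})$ of the triangular effect described in Remark \ref{remark-decomp}, and this factor cannot be absorbed into $\lambda_{\tilde{n},\tilde{l}}$. Removing it is precisely the role of the orthogonality relation of Proposition \ref{expansion2}: the families $\big(\mu^{m,\tilde{m},m^\star}_{n,\tilde{n},l,\tilde{l},k}\big)_{m,\tilde{m}}$ attached to distinct pairs $(k,m^\star)$ are mutually orthogonal, so the several $k$-channels issuing from a fixed pair of source modes do not accumulate when one returns to the intrinsic $L^2$ norm. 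Reconciling the $k$-summation already contained in Proposition \ref{est}~3) with this orthogonality, so as to recover exactly $\|f\|_{L^2}^2\|\mathcal{L}^{1/2}g\|_{L^2}^2$ without loss, is the principal technical obstacle; the radial family and the two Cauchy--Schwarz inequalities above are otherwise routine.
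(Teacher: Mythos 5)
Your treatment of the radial pieces is sound and essentially coincides with the paper's estimates of $I_1$, $I_2$, $I_3$ (the normalised-kernel bound $n^{-\frac54-s}$ follows from part 1) of Proposition \ref{est} and \eqref{eq:3.111}, and the injectivity of the shift does the rest). The genuine gap is in the main term, and it sits exactly where you place it --- but your route makes it unbridgeable rather than merely technical. Once you apply Cauchy--Schwarz over the whole set $\Delta_{n^{\star},l^{\star},m^{\star}}$, \emph{including} the noise index $k$ and the pair $(m,\tilde{m})$, the coefficients $\mu^{m,\tilde{m},m^{\star}}_{n,\tilde{n},l,\tilde{l},k}$ have disappeared from the second factor, so the orthogonality of Proposition \ref{expansion2} has nothing left to act on: \eqref{orth1} cancels \emph{cross terms} between distinct channels $(k_1,m^{\star}_1)$ and $(k_2,m^{\star}_2)$, and your modulus-squared step has already discarded those cross terms. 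Indeed the reduction you arrive at is false as stated, not just difficult: take $f=\varphi_{0,L,L}$ and $g=\varphi_{0,L,-L}$ with $L\geq 2$; then $k$ runs over $0\leq k\leq k_0(L,L,L,-L)=L$, the left-hand side equals $(L+1)\,\lambda_{0,L}$ while the right-hand side is $\lambda_{0,L}$, so the multiplicity factor $\min(l,\tilde{l})+1$ you identify is real and cannot be removed at that stage by any means.

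The paper closes the argument with the same tools in a different order. In $I_4$ one applies Cauchy--Schwarz only in $(\tilde{n},\tilde{l},\tilde{m})$ (producing $\|\mathcal{L}^{\frac12}g\|_{L^2}$, with the weight $\lambda_{\tilde{n},\tilde{l}}$ inserted) and then in $(n,l,m)$ (producing $\|f\|_{L^2}$), while keeping the \emph{coherent} sum over the $k$-channels intact inside a square; only then is Proposition \ref{expansion2} invoked to diagonalize that square:
\begin{align*}
\sum_{|m|\leq l}\sum_{|\tilde{m}|\leq \tilde{l}}
\Bigg|\sum^{\min(l,\tilde{l})}_{k=0}\sum_{|m^{\star}|\leq l+\tilde{l}-2k}
\mu^{m,\tilde{m},m^{\star}}_{n,\tilde{n},l,\tilde{l},k}\,
\overline{h_{n+\tilde{n}+k,l+\tilde{l}-2k,m^{\star}}}\Bigg|^2
=\sum^{\min(l,\tilde{l})}_{k=0}\sum_{m^{\star}}
\big|h_{n+\tilde{n}+k,l+\tilde{l}-2k,m^{\star}}\big|^2
\sum_{|m|\leq l}\sum_{|\tilde{m}|\leq \tilde{l}}
\big|\mu^{m,\tilde{m},m^{\star}}_{n,\tilde{n},l,\tilde{l},k}\big|^2,
\end{align*}
after which regrouping by the target index $(n^{\star},l^{\star},m^{\star})$ and applying part 3) of Proposition \ref{est} yields the factor $\lambda_{n^{\star},l^{\star}}$, hence $\|\mathcal{L}^{\frac12}h\|_{L^2}$, with no accumulation over $k$. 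In short: the orthogonality must be used \emph{before} absolute values are taken across the $k$-channels, not after; with your ordering the step you call the principal obstacle is not an obstacle to be reconciled but a dead end, and the proof must be restructured as above.
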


\begin{proof}
For any $f, g, h\in \mathscr{S}(\mathbb{R}^3)\bigcap\mathcal{N}^{\perp}$, we use the following spectral
decomposition,
$$
f=\sum_{\substack{n+l\geq2\\n\geq0,l\geq0}}\sum_{|m|\leq l}f_{n,l,m}\varphi_{n,l,m},\,
\,g=\sum_{\substack{n+l\geq2\\n\geq0,l\geq0}}\sum_{|m|\leq l}g_{n,l,m}\varphi_{n,l,m},\,
\,h=\sum_{\substack{n+l\geq2\\n\geq0,l\geq0}}\sum_{|m|\leq l}h_{n,l,m}\varphi_{n,l,m}.
$$
Using the orthogonality of basis $\{  \varphi_{n,l,m}  \}$ and the formula \eqref{remark-k}, we deduce from Proposition \ref{expansion} that,
\begin{align*}
&\left({\bf \Gamma}(f,g),\, h\right)_{L^2(\mathbb{R}^3)}\\
&=
\sum_{\substack{ n^{\star}\geq0,l^{\star}\geq0\\n^{\star}+l^{\star}\geq2}}
\sum_{|m^{\star}|\leq l^{\star}}\overline{h_{n^{\star},l^{\star},m^{\star}}}
\left(\delta_{l^{\star},0}\sum_{\substack{n+\tilde{n}=n^{\star}\\n\geq2,\tilde{n}\geq2}}
  \lambda^{rad, 1}_{n,\tilde{n},0}
f_{n,0,0}\,g_{\tilde{n},
0,0}\right)\\
&\quad
+
\sum_{\substack{ n^{\star}\geq0,l^{\star}\geq0\\n^{\star}+l^{\star}\geq2}}
\sum_{|m^{\star}|\leq l^{\star}}\overline{h_{n^{\star},l^{\star},m^{\star}}}
\left(
\sum_{\substack{n+\tilde{n}=n^{\star},\tilde{l}\geq1\\ n\geq2,\tilde{n}\geq0,\tilde{n}+\tilde{l}\geq2}}
 \delta_{\tilde{l},l^{\star}}
 \lambda^{rad, 1}_{n, \tilde{n},l^{\star}}f_{n,0,0}g_{\tilde{n},l^{\star},m^{\star}}\right)\\
&\quad+
 \sum_{\substack{ n^{\star}\geq0,l^{\star}\geq0\\n^{\star}+l^{\star}\geq2}}
 \sum_{|m^{\star}|\leq l^{\star}}\overline{h_{n^{\star},l^{\star},m^{\star}}}
\left(\sum_{\substack{n+\tilde{n}=n^{\star},l\geq1\\ n\geq0,\tilde{n}\geq2,n+l\geq2}}
 \delta_{l,l^{\star}}
 \lambda^{rad, 2}_{n,\tilde{n} ,l^{\star}}
  f_{n,l^{\star},m^{\star}}
  g_{\tilde{n},0,0}
  \right)
\\
&\quad+
\sum_{\substack{ n^{\star}\geq0,l^{\star}\geq0\\n^{\star}+l^{\star}\geq2}}
\sum_{|m^{\star}|\leq l^{\star}}\overline{h_{n^{\star},l^{\star},m^{\star}}}\left(
\sum_{\substack{n+l\geq2\\n\geq0,l\geq1}}
\sum_{\substack{\tilde{n}+\tilde{l}\geq2\\\tilde{n}\geq0,\tilde{l}\geq1}}
\sum^{\min(l,\tilde{l})}_{k=0}
\sum_{\substack{|m|\leq l,|\tilde{m}|\leq \tilde{l}\\|m+\tilde{m}|\leq l+\tilde{l}-2k}}
\mu^{m,\tilde{m},m+\tilde{m}}_{n,\tilde{n},l,
\tilde{l},k}\,
f_{n,l,m}g_{\tilde{n},\tilde{l},\tilde{m}}\right)\\
&\qquad\times\delta_{n^{\star},n+\tilde{n}+k}\delta_{l^{\star},l+\tilde{l}-2k}\delta_{m^{\star},m+\tilde{m}}.
\end{align*}
For brevity, using the formula \eqref{remark-mu}, we have
\begin{align*}
&\left({\bf \Gamma}(f,g),\, h\right)_{L^2(\mathbb{R}^3)}
\\
&=\sum^{+\infty}_{n^{\star}=4}\overline{h_{n^{\star},0,0}}
\left(\sum_{\substack{n+\tilde{n}=n^{\star}\\n\geq2,\tilde{n}\geq2}}
  \lambda^{rad, 1}_{n,\tilde{n},0}
f_{n,0,0}\,g_{\tilde{n},
0,0}\right)
\\
&\quad+
\sum_{\substack{ n^{\star}\geq0,{\color{black}l^{\star}\geq1}\\n^{\star}+l^{\star}\geq2}}
\sum_{|m^{\star}|\leq l^{\star}}\overline{h_{n^{\star},l^{\star},m^{\star}}}
\left(
\sum_{\substack{n+\tilde{n}=n^{\star}\\ n\geq2,\tilde{n}\geq0,{\color{black}\tilde{n}\geq2-l^{\star}}}}
 \lambda^{rad, 1}_{n, \tilde{n},l^{\star}}f_{n,0,0}g_{\tilde{n},l^{\star},m^{\star}}\right)
\\
&\quad+
\sum_{\substack{ n^{\star}\geq0,{\color{black}l^{\star}\geq1}\\n^{\star}+l^{\star}\geq2}}
\sum_{|m^{\star}|\leq l^{\star}}\overline{h_{n^{\star},l^{\star},m^{\star}}}
\left(\sum_{\substack{n+\tilde{n}=n^{\star}\\ n\geq0,\tilde{n}\geq2,{\color{black}n\geq2-l^{\star}}}}
 \lambda^{rad, 2}_{n,\tilde{n} ,l^{\star}}
  f_{n,l^{\star},m^{\star}}
  g_{\tilde{n},0,0}
  \right)
\\
&\quad+
\sum_{\substack{n\geq0,l\geq1\\n+l\geq2 }}
\sum_{\substack{\tilde{n}\geq0,\tilde{l}\geq1\\\tilde{n}+\tilde{l}\geq2}}
\sum_{|m|\leq l}\sum_{|\tilde{m}|\leq \tilde{l}}
\sum^{\min(l,\tilde{l})}_{k=0}
\sum_{|m^{\star}|\leq l+\tilde{l}-2k}
\mu^{m,\tilde{m},m^{\star}}_{n,\tilde{n},l,
\tilde{l},k}\,
f_{n,l,m}g_{\tilde{n},\tilde{l},\tilde{m}}
\overline{h_{n+\tilde{n}+k,l+\tilde{l}-2k,m^{\star}}}
\\
&=I_1+I_2 +I_3+I_4.
\end{align*}
For the term $I_1$, since
$\lambda_{n,0}\approx n^s$ in \eqref{eq:3.111},
we deduce from Cauchy-Schwarz inequality and $1)$ of Proposition \ref{est} that,
\begin{align*}
|I_1|&\leq
\sum_{n^{\star}\geq4}|h_{n^{\star},0,0}|
\Big(\sum_{\substack{n+\tilde{n}=n^{\star}\\n\geq2,\tilde{n}\geq2}}|\lambda^{rad, 1}_{n,\tilde{n},0}|\, |f_{n,0,0}|\, |g_{\tilde{n},0,0}|\Big)\\
&\leq\|f\|_{L^2}\|\mathcal{L}^{\frac{1}{2}}g\|_{L^2}
\Big(\sum^{+\infty}_{\tilde{n}=2}\sum^{+\infty}_{n=2}|h_{n+\tilde{n},
0,0}|^2\frac{|\lambda^{rad, 1}_{n,\tilde{n},0}|^2}{\lambda_{\tilde{n},0}}\Big)^{\frac{1}{2}}\\
&\lesssim\|f\|_{L^2}\|\mathcal{L}^{\frac{1}{2}}g\|_{L^2}
\Big[\sum^{\infty}_{n^{\star}=2}|h_{n^{\star},0,0}|^2
\Big(\sum_{\substack{n+\tilde{n}=n^{\star}\\
n\geq2,\tilde{n}\geq2}}\frac{\tilde{n}^s}{n^{\frac{5}{2}+2s}}
\Big)\Big]^{\frac{1}{2}}\lesssim\|f\|_{L^2}\|\mathcal{L}^{\frac{1}{2}}g\|_{L^2}\|\mathcal{L}^{\frac{1}{2}}h\|_{L^2}.
\end{align*}
For the term $I_2$, we use Cauchy-Schwarz inequality,
\begin{align*}
|I_2|&\leq
\sum_{\substack{n^{\star}\geq0,l^{\star}\geq1\\n^{\star}+l^{\star}\geq2}}
\sum^{l^{\star}}_{m^{\star}=-l^{\star}}
|h_{n^{\star},l^{\star},m^{\star}}|
\Big(\sum_{\substack{n+\tilde{n}=n^{\star}\\n\geq2,{\color{black}\tilde{n}\geq\max(0,2-l^{\star})}}}|\lambda^{rad, 1}_{n,\tilde{n},l^{\star}}|\, |f_{n,0,0}|\, |g_{\tilde{n},l^{\star},m^{\star}}|\Big)\\
&\leq{\color{black}
\sum_{\substack{\tilde{n}\geq0,l^{\star}\geq1\\\tilde{n}+l^{\star}\geq2}}\sum_{|m^{\star}|\leq l^{\star}}\sum_{n\geq2}
|\lambda^{rad, 1}_{n,\tilde{n},l^{\star}}|\, |f_{n,0,0}|\, |g_{\tilde{n},l^{\star},m^{\star}}|\, |h_{n+\tilde{n},l^{\star},m^{\star}}|}
\\
&\leq{\color{black}\|\mathcal{L}^{\frac{1}{2}}g\|_{L^2}
\Big(
\sum_{\substack{\tilde{n}\geq0,l^{\star}\geq1\\\tilde{n}+l^{\star}\geq2}}\sum_{|m^{\star}|\leq l^{\star}}
\frac{1}{\lambda_{\tilde{n},l^{\star}}}
\Big|\sum_{n\geq2}
|\lambda^{rad, 1}_{n,\tilde{n},l^{\star}}|\, |f_{n,0,0}|\, |h_{n+\tilde{n},l^{\star},m^{\star}}|\Big|^2
\Big)^{\frac{1}{2}}}\\
&\leq\|f\|_{L^2}\|\mathcal{L}^{\frac{1}{2}}g\|_{L^2}
\Big(
{\color{black}\sum_{\substack{\tilde{n}\geq0,l^{\star}\geq1\\\tilde{n}+l^{\star}\geq2}}\sum_{|m^{\star}|\leq l^{\star}}}
\sum_{n\geq2}|h_{n+\tilde{n},
l^{\star},m^{\star}}|^2\frac{|\lambda^{rad, 1}_{n,\tilde{n},l^{\star}}|^2}{\lambda_{\tilde{n},l^{\star}}}\Big)^{\frac{1}{2}}\\
&\leq\|f\|_{L^2}\|\mathcal{L}^{\frac{1}{2}}g\|_{L^2}
\Big[{\color{black}\sum_{n^{\star}\geq2,l^{\star}\geq1}\sum_{|m^{\star}|\leq l^{\star}}}|h_{n^{\star},l^{\star},m^{\star}}|^2
\Big(\sum_{\substack{n+\tilde{n}=n^{\star}\\
n\geq2,{\color{black}\tilde{n}\geq\max(0,2-l^{\star})}}}\frac{1}{\lambda_{\tilde{n},l^{\star}}}
|\lambda^{rad, 1}_{n,\tilde{n},l^{\star}}|^2\Big)\Big]^{\frac{1}{2}}.
\end{align*}
{\color{black}Since $\tilde{n}+l^{\star}\geq2$, we can deduce from the formula \eqref{eq:3.111} that,
$$\lambda_{\tilde{n},l^{\star}}\gtrsim (\tilde{n}+l^{\star})^s+(l^{\star})^{2s}.$$
It follows from $1)$ of Proposition \ref{est} and the formula \eqref{eq:3.111} that,}
\begin{equation}\label{estimateI-2}
\sum_{\substack{n+\tilde{n}=n^{\star}\\n\geq2,{\color{black}\tilde{n}\geq\max(0,2-l^{\star})}}}
\frac{|\lambda^{rad, 1}_{n,\tilde{n},l^{\star}}|^2}{\lambda_{\tilde{n},l^{\star}}}
\lesssim\sum_{\substack{n+\tilde{n}=n^{\star}\\n\geq2,\tilde{n}\geq0}}\tilde{n}^sn^{-\frac{5}{2}
-2s}\lesssim(n^{\star})^s\lesssim \lambda_{n^{\star},l^{\star}}.
\end{equation}
Therefore,
$$
|I_2|\leq\|f\|_{L^2}\|\mathcal{L}^{\frac{1}{2}}g\|_{L^2}\|\mathcal{L}^{\frac{1}{2}}h\|_{L^2}.
$$
Similarly, for the term $I_3$, by using Cauchy-Schwarz inequality, we obtain,
{\color{black}
\begin{align*}
|I_3|
&\leq\sum_{\substack{ n^{\star}\geq0,l^{\star}\geq1\\n^{\star}+l^{\star}\geq2}}
\sum_{|m^{\star}|\leq l^{\star}}|h_{n^{\star},l^{\star},m^{\star}}|
\Big(\sum_{\substack{n+\tilde{n}=n^{\star}\\ \tilde{n}\geq2,n\geq\max(0,2-l^{\star})}}
 |\lambda^{rad, 2}_{n,\tilde{n} ,l^{\star}}|
  |f_{n,l^{\star},m^{\star}}|
  |g_{\tilde{n},0,0}|
  \Big)\\
&\leq\sum_{\tilde{n}\geq2}\sum_{\substack{n\geq0,l^{\star}\geq1\\n+l^{\star}\geq2}}\sum_{|m^{\star}|\leq l^{\star}}
|\lambda^{rad, 2}_{n,\tilde{n} ,l^{\star}}|
  |f_{n,l^{\star},m^{\star}}|
  |g_{\tilde{n},0,0}|
  |h_{n+\tilde{n},l^{\star},m^{\star}}|\\
&\leq\|\mathcal{L}^{\frac{1}{2}}g\|_{L^2}
\Big(\sum_{\tilde{n}\geq2}\frac{1}{\lambda_{\tilde{n},0}}\Big|\sum_{\substack{n\geq0,l^{\star}\geq1\\n+l^{\star}\geq2}}\sum_{|m^{\star}|\leq l^{\star}}
|\lambda^{rad, 2}_{n,\tilde{n} ,l^{\star}}|
  |f_{n,l^{\star},m^{\star}}|
  |h_{n+\tilde{n},l^{\star},m^{\star}}|\Big|^2\Big)^{\frac{1}{2}}\\
&\leq\|\mathcal{L}^{\frac{1}{2}}g\|_{L^2}\|f\|_{L^2}
\Big(\sum_{\tilde{n}\geq2}\sum_{\substack{n\geq0,l^{\star}\geq1\\n+l^{\star}\geq2}}\sum_{|m^{\star}|\leq l^{\star}}
\frac{|\lambda^{rad, 2}_{n,\tilde{n} ,l^{\star}}|^2}{\lambda_{\tilde{n},0}}|h_{n+\tilde{n},l^{\star},m^{\star}}|^2\Big)^{\frac{1}{2}}\\
&\leq\|\mathcal{L}^{\frac{1}{2}}g\|_{L^2}\|f\|_{L^2}
\Bigg(\sum_{n^{\star}\geq2,l^{\star}\geq1}\sum_{|m^{\star}|\leq l^{\star}}|h_{n^{\star},l^{\star},m^{\star}}|^2
\Bigg[\sum_{\substack{n+\tilde{n}=n^{\star}\\\tilde{n}\geq2,n\geq\max(0, 2-l^{\star})}}\frac{|\lambda^{rad, 2}_{n,\tilde{n} ,l^{\star}}|^2}{\lambda_{\tilde{n},0}}\Bigg]\Bigg)^{\frac{1}{2}}.
\end{align*}
}
We use {\color{black}$2)$ of} Proposition \ref{est} and $\lambda_{\tilde{n},0}\approx\tilde{n}^{s}$ in \eqref{eq:3.111}
\begin{align}\label{estimateI-3}
\sum_{\substack{n+\tilde{n}=n^{\star}\\\tilde{n}\geq2,n\geq\max(0,2-l^{\star})}}
\frac{|\lambda^{rad, 2}_{n,\tilde{n},l^{\star}}|^2}{\lambda_{\tilde{n},0}}
&\lesssim\sum_{\substack{n+\tilde{n}=n^{\star}\\\tilde{n}\geq2,n\geq\max(0,2-l^{\star})}}\frac{\tilde{n}^{s}}{(n+1)^s(n+l^{\star})^{\frac{5}{2}+s}}\nonumber\\
&\lesssim\sum_{\substack{n+\tilde{n}=n^{\star}\\n=0}}(n^{\star})^s
+\sum_{\substack{n+\tilde{n}=n^{\star}\\\tilde{n}\geq2,n\geq1}}\frac{(n^{\star})^s}{n^2}
\lesssim(n^{\star})^s\leq \lambda_{n^{\star},l^{\star}},
\end{align}
which gives
$$
|I_3|\lesssim \|f\|_{L^2}\|\mathcal{L}^{\frac{1}{2}}g\|_{L^2}\|\mathcal{L}^{\frac{1}{2}}h\|_{L^2}.
$$
For the term $I_4$, we note that $l\geq1,\,\tilde{l}\geq1$,
\begin{align*}
|I_4|
&\leq\sum_{\substack{\tilde{n}\geq0,\tilde{l}\geq1\\\tilde{n}+\tilde{l}\geq2}}
\sum_{|\tilde{m}|\leq\tilde{l}}
|g_{\tilde{n},\tilde{l},\tilde{m}}|\sum_{\substack{ n\geq0,l\geq1\\n+l\geq2}}
\sum_{|m|\leq l}|f_{n,l,m}|
\\
&\qquad\times\,
\left|
\sum^{\min(l,\tilde{l})}_{k=0}
\sum_{|m^{\star}|\leq l+\tilde{l}-2k}
 \mu^{m,\tilde{m},m^{\star}}_{n,\tilde{n},l,
\tilde{l},k}\,
\overline{h_{n+\tilde{n}+k,l+\tilde{l}-2k,m^{\star}}}
 \right|.
\end{align*}
Applying the Cauchy-Schwarz inequality, we get
\begin{align*}
|I_4|&\leq\Big(
\sum_{\substack{\tilde{n}+\tilde{l}\geq2\\\tilde{n}\geq0,\tilde{l}\geq1}}
\sum_{|\tilde{m}|\leq\,\tilde{l}}
\lambda_{\tilde{n},\tilde{l}}|g_{\tilde{n},\tilde{l},\tilde{m}}|^2\Big)^{\frac{1}{2}}
\Bigg[\sum_{\substack{\tilde{n}+\tilde{l}\geq2\\\tilde{n}\geq0,\tilde{l}\geq1}}\sum_{|\tilde{m}|\leq\,\tilde{l}}\frac{1}{\lambda_{\tilde{n},\tilde{l}}}
\\
&\qquad\Big(\sum_{\substack{n+l\geq2\\n\geq0,l\geq1}}
\sum_{|m|\leq\,l}|f_{n,l,m}|
\Bigg|
\sum^{\min(l,\tilde{l})}_{k=0}
\sum_{|m^{\star}|\leq l+\tilde{l}-2k}\,
\mu^{m,\tilde{m},m^{\star}}_{n,\tilde{n},l,
\tilde{l},k}\,
\overline{h_{n+\tilde{n}+k,l+\tilde{l}-2k,m^{\star}}}
\Bigg|
\Big)^2\Bigg]^{\frac{1}{2}}
\\
&\leq\|\mathcal{L}^{\frac{1}{2}}g\|_{L^2}\|f\|_{L^2}
\\
&\times
\left[\sum_{\substack{\tilde{n}+\tilde{l}\geq2\\\tilde{n}\geq0,\tilde{l}\geq1}}
\sum_{\substack{n+l\geq2\\n\geq0,l\geq1}}
\frac{1}{\lambda_{\tilde{n},\tilde{l}}}
\sum_{|m|\leq\,l}
\sum_{|\tilde{m}|\leq\,\tilde{l}}
\Bigg|
\sum^{\min(l,\tilde{l})}_{k=0}
\sum_{|m^{\star}|\leq l+\tilde{l}-2k}
\mu^{m,\tilde{m},m^{\star}}_{n,\tilde{n},l,
\tilde{l},k}\,
\overline{h_{n+\tilde{n}+k,l+\tilde{l}-2k,m^{\star}}}
 \Bigg|^2\right]^{\frac{1}{2}}.
\end{align*}
We observe the summation
\begin{align*}
&\sum_{|m|\leq\,l}
\sum_{|\tilde{m}|\leq\,\tilde{l}}
\Bigg|
\sum^{\min(l,\tilde{l})}_{k=0}
\sum_{|m^{\star}|\leq l+\tilde{l}-2k}
\,
\mu^{m,\tilde{m},m^{\star}}_{n,\tilde{n},l,
\tilde{l},k}\,
\overline{h_{n+\tilde{n}+k,l+\tilde{l}-2k,m^{\star}}}
 \Bigg|^2\\
&=\sum^{\min(l,\tilde{l})}_{k_1=0}\sum_{|m^{\star}_1|\leq l+\tilde{l}-2k_1}\sum^{\min(l,\tilde{l})}_{k_2=0}\sum_{|m^{\star}_2|\leq l+\tilde{l}-2k_2}
\overline{h_{n+\tilde{n}+k_1,l+\tilde{l}-2k_1,m_1^{\star}}}h_{n+\tilde{n}+k_2,l+\tilde{l}-2k_2,m_2^{\star}}\,\\
&\qquad\times
\left(\sum_{|m|\leq\,l}
\sum_{|\tilde{m}|\leq\,\tilde{l}}\, \,\mu^{m,\tilde{m},m_1^{\star}}_{n,\tilde{n},l,\tilde{l},k_1}
 \overline{\mu^{m,\tilde{m},m_2^{\star}}_{n,\tilde{n},l,\tilde{l},k_2}}\right).
\end{align*}
By using the formula \eqref{orth1}
in {Proposition \ref{expansion2}}, we obtain
$$\sum_{|m|\leq\,l}
\sum_{|\tilde{m}|\leq\,\tilde{l}}\mu^{m,\tilde{m},m_1^{\star}}_{n,\tilde{n},l,\tilde{l},k_1}
 \overline{\mu^{m,\tilde{m},m_2^{\star}}_{n,\tilde{n},l,\tilde{l},k_2}}=\sum_{|m|\leq\,l}
\sum_{|\tilde{m}|\leq\,\tilde{l}}\Big|\mu^{m,\tilde{m},m_1^{\star}}_{n,\tilde{n},l,\tilde{l},k_1}
 \Big|^2\delta_{k_1,k_2}\delta_{m^{\star}_1,m^{\star}_2}.$$
{\color{black}Therefore, the summation can be re-written as}
\begin{align*}
&\sum_{|m|\leq\,l}
\sum_{|\tilde{m}|\leq\,\tilde{l}}
\Bigg|
\sum^{\min(l,\tilde{l})}_{k=0}
\sum_{|m^{\star}|\leq l+\tilde{l}-2k}
\mu^{m,\tilde{m},m^{\star}}_{n,\tilde{n},l,
\tilde{l},k}\,
\overline{h_{n+\tilde{n}+k,l+\tilde{l}-2k,m^{\star}}}
 \Bigg|^2\\
&=
\sum^{\min(l,\tilde{l})}_{k=0}\sum_{|m^{\star}|\leq l+l-2k}
\Big|h_{n+\tilde{n}+k,l+\tilde{l}-2k,m^{\star}}\Big|^2
\sum_{|m|\leq\,l}\sum_{|\tilde{m}|\leq\,\tilde{l}}\, \,\Big|\mu^{m,\tilde{m},m^{\star}}_{n,\tilde{n},l,\tilde{l},k}\Big|^2.
\end{align*}
It follows that
\begin{align*}
|I_4|&\leq\|f\|_{L^2}\|\mathcal{L}^{\frac{1}{2}}g\|_{L^2}
\Big(\sum_{\substack{\tilde{n}+\tilde{l}\geq2\\\tilde{n}\geq0,\tilde{l}\geq1}}
\sum_{\substack{n+l\geq2\\n\geq0,l\geq1}}
\sum_{0\leq\,k\leq\min(l,\tilde{l})}\sum_{|m^{\star}|\leq l+\tilde{l}-2k} \Big|h_{n+\tilde{n}+k,l+\tilde{l}-2k,m^{\star}}\Big|^2\\
&\times\, \frac{1}{\lambda_{\tilde{n},\tilde{l}}}
\sum_{|m|\leq\,l}\sum_{|\tilde{m}|\leq\,\tilde{l}}\, \,
\Big|\,\mu^{m,\tilde{m},m^{\star}}_{n,\tilde{n},l,\tilde{l},k} \Big|^2\Big)^{\frac{1}{2}}\\
&\leq\|f\|_{L^2}\|\mathcal{L}^{\frac{1}{2}}g\|_{L^2}
\Bigg[\sum^{+\infty}_{n^{\star}=0}\sum^{+\infty}_{l^{\star}=0}\sum_{|m^{\star}|\leq\,l^{\star}}|h_{n^{\star},l^{\star},m^{\star}}|^2\\
&\qquad\times\Big(\sum_{\substack{n+\tilde{n}+k=n^{\star}\\n+l\geq2,
\tilde{n}+\tilde{l}\geq2\\
n\geq0,
\tilde{n}\geq0}}
\sum_{\substack{l+\tilde{l}-2k=l^{\star}\\l\geq1,\tilde{l}\geq1\\0\leq\,k\leq\,\min(l,\tilde{l})}}
\sum_{|m|\leq\,l}\sum_{|\tilde{m}|\leq\,\tilde{l}}\, \,
\frac{1}{\lambda_{\tilde{n},\tilde{l}}}\Big|\,\mu^{m,\tilde{m},m^{\star}}_{n,\tilde{n},l,\tilde{l},k} \Big|^2\Big)\Bigg]^{\frac{1}{2}},
\end{align*}
where the last summation is understanding as \eqref{remark-summation}.
Using {\color{black}$3)$ of} Proposition \ref{est}, we have
$$\sum_{\substack{n+\tilde{n}+k=n^{\star}\\n+l\geq2,\tilde{n}+\tilde{l}\geq2\\
n\geq0,
\tilde{n}\geq0}}
\sum_{\substack{l+\tilde{l}-2k=l^{\star}\\l\geq1,\tilde{l}\geq1\\0\leq\,k\leq\,\min(l,\tilde{l})}}
\sum_{|m|\leq\,l}\sum_{|\tilde{m}|\leq\,\tilde{l}}\, \,\frac{1}{\lambda_{\tilde{n},\tilde{l}}} \,\Big|\,\mu^{m,\tilde{m},m^{\star}}_{n,\tilde{n},l,\tilde{l},k} \Big|^2\leq C\lambda_{n^{\star},l^{\star}}.$$
We get then
$$
|I_4|\lesssim\\|f\|_{L^2}\|\mathcal{L}^{\frac{1}{2}}g\|_{L^2}
\Big[\sum^{+\infty}_{n^{\star}=0}\sum^{+\infty}_{l^{\star}=0}\sum_{|m^{\star}|\leq\,l^{\star}} \lambda_{n^{\star},l^{\star}}|h_{n^{\star},l^{\star},m^{\star}}|^2
\Big]^{\frac{1}{2}},
$$
which ends the proof of {\color{black} Proposition} \ref{prop:3.2}.
\end{proof}

\subsection{{\color{black} The estimate of the trilinear term with exponential weight}}
To prove the regularity in {\color{black} Gelfand-Shilov spaces},
{\color{black}we need an upper bound on }non linear operators with exponential {\color{black} weight}.

\begin{proposition}\label{trilinear}
For any $f$,$g$,$h\in\mathscr{S}(\mathbb{R}^3)\bigcap\mathcal{N}^{\perp}$, any $N\geq0$, and for any $c>0$, we have
\begin{align}\label{tril-B}
\begin{split}
|({\bf \Gamma}&(f,g),e^{c\,
\mathcal{H}^s}\mathbf{S}_Nh)_{L^2}|\\
&\leq C\|e^{\frac{c}{2}\mathcal{H}^s}\mathbf{S}_{N-2}f\|_{L^2}
\|e^{\frac{c}{2}\mathcal{H}^s}\mathcal{L}^{\frac{1}{2}}\mathbf{S}_{N-2}g\|_{L^2}
\|e^{\frac{c}{2}\mathcal{H}^s}\mathcal{L}^{\frac{1}{2}}\mathbf{S}_{N}h\|_{L^2},
\end{split}
\end{align}
where C is a positive constant only dependent on $s$, and $\mathbf{S}_N$ is the
orthogonal projector such that,
\begin{align*}
&\mathbf{S}_N f =
  \sum_{\substack{2n+l\leq N\\n\geq0,l\geq0}} \,\,
  \sum_{|m|\leq l}(f,\varphi_{n,l,m})_{L^2} \, \varphi_{n,l,m},\\
&e^{c\,\mathcal{H}^s}\mathbf{S}_Nf =
  \sum_{\substack{2n+l\leq N\\n\geq0,l\geq0}} \,\,
  \sum_{|m|\leq l}e^{c\, \, (2n+l+\frac{3}{2})^s}
    (f,\varphi_{n,l,m})_{L^2} \, \varphi_{n,l,m}.
\end{align*}
\end{proposition}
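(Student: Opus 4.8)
The plan is to deduce the weighted estimate \eqref{tril-B} from the unweighted trilinear bound of Proposition \ref{prop:3.2}, by absorbing the exponential weight onto the three arguments via the subadditivity of $t\mapsto t^s$. I would begin from the spectral expansion of $({\bf \Gamma}(f,g),e^{c\mathcal{H}^s}\mathbf{S}_Nh)_{L^2}$ exactly as in the proof of Proposition \ref{prop:3.2}, writing it as $I_1+I_2+I_3+I_4$ over the same index sets, but now carrying on each coefficient $\overline{h_{n^\star,l^\star,m^\star}}$ the extra factor $e^{c(2n^\star+l^\star+\frac32)^s}$ coming from $e^{c\mathcal{H}^s}\mathbf{S}_N$. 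The decisive structural fact, read off from Proposition \ref{expansion}, is that in every one of these terms the output index obeys
$$
2n^\star+l^\star=(2n+l)+(2\tilde n+\tilde l),
$$
so that the weight equals $e^{c((2n+l)+(2\tilde n+\tilde l)+\frac32)^s}$, which in particular is independent of the noise index $k$ and of $m,\tilde m,m^\star$ in the term $I_4$.

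Next I would insert the projectors. Since $\mathbf{S}_N$ truncates to $2n^\star+l^\star\le N$, and since $f,g\in\mathcal{N}^\perp$ force $n+l\ge2$ and $\tilde n+\tilde l\ge2$, hence $2n+l\ge2$ and $2\tilde n+\tilde l\ge2$, the identity above gives $2n+l\le N-2$ and $2\tilde n+\tilde l\le N-2$; thus $f,g$ may be replaced by $\mathbf{S}_{N-2}f,\mathbf{S}_{N-2}g$ without altering the inner product. To distribute the weight I use subadditivity of $t\mapsto t^s$ for $0<s<1$,
$$
(2n^\star+l^\star+\tfrac32)^s\le(2n+l+\tfrac32)^s+(2\tilde n+\tilde l+\tfrac32)^s,
$$
whence $e^{c(2n^\star+l^\star+\frac32)^s}\le e^{\frac c2(2n+l+\frac32)^s}\,e^{\frac c2(2\tilde n+\tilde l+\frac32)^s}\,e^{\frac c2(2n^\star+l^\star+\frac32)^s}$. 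The first two half-weights depend only on $(n,l)$ and on $(\tilde n,\tilde l)$, so they attach to the coefficients of $f$ and $g$; the third half-weight, being constant in $k$, is absorbed into $\overline{h_{n^\star,l^\star,m^\star}}$.

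After this absorption each term is, up to the triangle inequality, the absolute-value form of the corresponding term in the expansion of $({\bf \Gamma}(\tilde f,\tilde g),\tilde h)_{L^2}$ with
$$
\tilde f=e^{\frac c2\mathcal{H}^s}\mathbf{S}_{N-2}f,\qquad \tilde g=e^{\frac c2\mathcal{H}^s}\mathbf{S}_{N-2}g,\qquad \tilde h=e^{\frac c2\mathcal{H}^s}\mathbf{S}_Nh.
$$
I would then rerun the estimates of Proposition \ref{prop:3.2} line by line on $\tilde f,\tilde g,\tilde h$: the bounds of Proposition \ref{est} and, for $I_4$, the orthogonality relation \eqref{orth1} of Proposition \ref{expansion2} apply unchanged, since the coefficients $\mu^{m,\tilde m,m^\star}_{n,\tilde n,l,\tilde l,k}$ are untouched and the $h$-weight sits inside $\overline{\tilde h}$ in a $k$-independent fashion. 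This yields $C\|\tilde f\|_{L^2}\|\mathcal{L}^{\frac12}\tilde g\|_{L^2}\|\mathcal{L}^{\frac12}\tilde h\|_{L^2}$, which is precisely the right-hand side of \eqref{tril-B}, because $e^{\frac c2\mathcal{H}^s}$ and $\mathcal{L}^{\frac12}$ commute on the basis $\{\varphi_{n,l,m}\}$.

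The main obstacle is to make the weight-splitting compatible with the orthogonality underlying $I_4$. The point to check carefully is that the output weight is constant in the noise index $k$ (exactly the identity $2n^\star+l^\star=(2n+l)+(2\tilde n+\tilde l)$), so that it can be brought inside the inner sum $\sum_{k,m^\star}\mu\,\overline{h}$ \emph{before} applying \eqref{orth1}; were the weight $k$-dependent, the cross terms in $k$ would not cancel and the clean factorization into three half-weighted norms would break. The terms $I_1,I_2,I_3$ need no orthogonality, and the same splitting passes through directly.
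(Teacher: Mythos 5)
Your proposal is correct and follows essentially the same route as the paper's proof: the paper likewise expands into the four terms, splits the weight via $(2n+2\tilde n+l+\tilde l+\frac{3}{2})^s\leq(2n+l+\frac{3}{2})^s+(2\tilde n+\tilde l+\frac{3}{2})^s$, exploits exactly the identity $2n^{\star}+l^{\star}=(2n+l)+(2\tilde n+\tilde l)$ (so the weight is constant in $k$ and Proposition \ref{expansion2} applies unchanged), and then concludes ``exactly as that of Proposition \ref{prop:3.2}.'' Your identification of the $k$-independence of the output weight as the point that must be checked before invoking \eqref{orth1} is precisely the crux of the paper's argument.
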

\begin{remark}\,\,
1) For $h\in\mathscr{S}(\mathbb{R}^3)$, we can't use $e^{\frac{c}{2}\mathcal{H}^s} h$ as test function, since {\color{black} we don't know if it belongs}
to $\mathscr{S}(\mathbb{R}^3)$.
However, for any $h\in\mathscr{S}^{\prime}(\mathbb{R}^3)$, we have  $e^{\frac{c}{2}\mathcal{H}^s}\mathbf{S}_Nh\,\in \mathscr{S}(\mathbb{R}^3)$.

2) In the right hand side of \eqref{tril-B}, the projector of $f$ and $g$ with $\mathbf{S}_{N-2}$ {\color{black} shows} more {\color{black} clearly} the triangular effect of
 ${\bf \Gamma}(\,\cdot,\,\cdot\,)$.
\end{remark}

\begin{proof}
Since $f$,$g$,$h\in\mathscr{S}(\mathbb{R}^3)\bigcap\mathcal{N}^{\perp}$,\,
similarly to {\color{black} Proposition} \ref{prop:3.2},
we have
\begin{align*}
&\left({\bf \Gamma}(f,g),\, e^{c\,\mathcal{H}^s}\mathbf{S}_N h\right)_{L^2(\mathbb{R}^3)}
\\
&=\sum^{[\frac{N}{2}]}_{n^{\star}=2}
e^{c\,(2n^{\star}+\frac{3}{2})^s} \overline{h_{n^{\star},0,0}}
\left(\sum_{\substack{n+\tilde{n}=n^{\star}\\n\geq2,\tilde{n}\geq2}}
\lambda^{rad, 1}_{n,\tilde{n},0}f_{n,0,0}\,g_{\tilde{n},
0,0}\right)
\\
&\quad+
\sum_{\substack{2n^{\star}+l^{\star}\leq N\\
n^{\star}\geq0,l^{\star}\geq1,{\color{black}n^{\star}+l^{\star}\geq2}}}
\sum_{|m^{\star}|\leq l^{\star}} e^{c\,(2n^{\star}+l^{\star}+\frac{3}{2})^s} \overline{h_{n^{\star},l^{\star},m^{\star}}}
\left(\sum_{\substack{n+\tilde{n}=n^{\star}\\n\geq2,\tilde{n}\geq0,{\color{black}\tilde{n}\geq2-l^{\star}}}}
\lambda^{rad, 1}_{n,\tilde{n},l^{\star}}f_{n,0,0}\,g_{\tilde{n},
l^{\star},m^{\star}}\right)
\\
&\quad+
\sum_{\substack{2n^{\star}+l^{\star}\leq N\\
n^{\star}\geq0,l^{\star}\geq1,{\color{black}n^{\star}+l^{\star}\geq2}}}
\sum_{|m^{\star}|\leq l^{\star}}e^{c\,(2n^{\star}+l^{\star}+\frac{3}{2})^s}\overline{h_{n^{\star},l^{\star},m^{\star}}}
\left(\sum_{\substack{n+\tilde{n}=n^{\star}\\n\geq0,\tilde{n}\geq2,{\color{black}n\geq2-l^{\star}}}}
\lambda^{rad, 2}_{n,\tilde{n},l^{\star}}f_{n,l^{\star},m^{\star}}\,g_{\tilde{n},
0, 0}\right)
\\
&\quad
+\sum_{\substack{2n+2\tilde{n}+l+\tilde{l}\leq N\\n+l\geq2,\tilde{n}+\tilde{l}\geq2\\n\geq0,l\geq1,\tilde{n}\geq0,\tilde{l}\geq1}}
\sum_{|m|\leq l}\sum_{|\tilde{m}|\leq \tilde{l}}
f_{n,l,m}g_{\tilde{n},\tilde{l},\tilde{m}}
e^{c\,(2n+2\tilde{n}+l+\tilde{l}+\frac{3}{2})^s}
\\
&\qquad\qquad\times\left(\sum^{\min(l,\tilde{l})}_{k=0}
\sum_{|m^{\star}|\leq l+\tilde{l}-2k}\,
\mu^{m,\tilde{m},m^{\star}}_{n,\tilde{n},l,
\tilde{l},k}\,
\overline{h_{n+\tilde{n}+k,l+\tilde{l}-2k,m^{\star}}}\right)
\\
&=J_1+J_2 +J_3+J_4.
\end{align*}
{\color{black}
For the term $J_1$, by using $\lambda_{n,0}\approx n^s$
and Cauchy-Schwarz inequality, we obtain,
\begin{align*}
|J_1|&\leq
\sum_{\substack{n+\tilde{n}\leq[\frac{N}{2}]\\n\geq2,\tilde{n}\geq2}}e^{c\,(2(n+\tilde{n})+\frac{3}{2})^s}|\lambda^{rad, 1}_{n,\tilde{n},0}|\, |f_{n,0,0}|\, |g_{\tilde{n},0,0}||h_{n+\tilde{n},0,0}|\\
&\leq
\left(\sum^{[\frac{N}{2}]-2}_{\tilde{n}=2}e^{c\,(2\tilde{n}+\frac{3}{2})^s}\lambda_{\tilde{n},0}|g_{\tilde{n},0,0}|^2\right)^{1/2}
\\
&\qquad\times\left(\sum^{[\frac{N}{2}]-2}_{\tilde{n}=2}e^{-c\,(2\tilde{n}+\frac{3}{2})^s}\frac{1}{\lambda_{\tilde{n},0}}
\Bigg|\sum^{[\frac{N}{2}]-\tilde{n}}_{n=2}e^{c\,(2(n+\tilde{n})+\frac{3}{2})^s}|\lambda^{rad, 1}_{n,\tilde{n},0}|\, |f_{n,0,0}|\, |h_{n+\tilde{n},0,0}|\Bigg|^2\right)^{\frac{1}{2}}\\
&\leq
\|e^{\frac{c}{2}\mathcal{H}^s}\mathcal{L}^{\frac{1}{2}}\mathbf{S}_{N-2}g\|_{L^2}
\|e^{\frac{c}{2}\mathcal{H}^s}\mathbf{S}_{N-2}f\|_{L^2}\\
&\qquad\times
\left(\sum^{[\frac{N}{2}]-2}_{\tilde{n}=2}\sum^{[\frac{N}{2}]-\tilde{n}}_{n=2}e^{2c\,(2n+2\tilde{n}+\frac{3}{2})^s-c\,(2\tilde{n}+\frac{3}{2})^s-c(2n+\frac{3}{2})^s}|h_{n+\tilde{n},
0,0}|^2\frac{|\lambda^{rad, 1}_{n,\tilde{n},0}|^2}{\lambda_{\tilde{n},0}}\right)^{\frac{1}{2}}.
\end{align*}
For any $0<s<1$,$n,\tilde{n}\in \mathbb{N}$,
$$(2n+2\tilde{n}+\frac{3}{2})^s\leq(2\tilde{n}+\frac{3}{2})^s+(2n+\frac{3}{2})^s.$$
Then
\begin{align*}
|J_1|\leq\|e^{\frac{c}{2}\mathcal{H}^s}\mathcal{L}^{\frac{1}{2}}\mathbf{S}_{N-2}g\|_{L^2}
\|e^{\frac{c}{2}\mathcal{H}^s}\mathbf{S}_{N-2}f\|_{L^2}\left(\sum^{[\frac{N}{2}]-2}_{\tilde{n}=2}\sum^{[\frac{N}{2}]-\tilde{n}}_{n=2}
e^{c\,(2n+2\tilde{n}+\frac{3}{2})^s}|h_{n+\tilde{n},
0,0}|^2\frac{|\lambda^{rad, 1}_{n,\tilde{n},0}|^2}{\lambda_{\tilde{n},0}}\right)^{\frac{1}{2}}.
\end{align*}
It follows by the same estimate as $I_1$,
\begin{align*}
|J_1|
&\lesssim\|e^{\frac{c}{2}\mathcal{H}^s}\mathcal{L}^{\frac{1}{2}}\mathbf{S}_{N-2}g\|_{L^2}
\|e^{\frac{c}{2}\mathcal{H}^s}\mathbf{S}_{N-2}f\|_{L^2}
\left(\sum^{[\frac{N}{2}]}_{n^{\star}=4}e^{c\,(2n^{\star}+\frac{3}{2})^s}|h_{n^{\star},0,0}|^2
\Bigg(\sum_{\substack{n+\tilde{n}=n^{\star}\\
n\geq2,\tilde{n}\geq2}}\frac{\tilde{n}^s}{n^{\frac{5}{2}+2s}}
\Bigg)\right)^{\frac{1}{2}}\\
&\lesssim\|e^{\frac{c}{2}\mathcal{H}^s}\mathcal{L}^{\frac{1}{2}}\mathbf{S}_{N-2}g\|_{L^2}
\|e^{\frac{c}{2}\mathcal{H}^s}\mathbf{S}_{N-2}f\|_{L^2}\|e^{\frac{c}{2}\mathcal{H}^s}\mathcal{L}^{\frac{1}{2}}\mathbf{S}_{N}h\|_{L^2}.
\end{align*}
For the term $J_2$,
\begin{align*}
|J_2|\leq&\sum_{\substack{2\tilde{n}+l^{\star}\leq\,N-4\\\tilde{n}\geq0,l^{\star}\geq1,\tilde{n}+l^{\star}\geq2}}\sum_{|m^{\star}|\leq l^{\star}}
e^{\frac{c}{2}(2\tilde{n}+l^{\star}+\frac{3}{2})^s}
|g_{\tilde{n},l^{\star},m^{\star}}|
\sum_{\substack{n\geq2\\2n+2\tilde{n}+l^{\star}\leq N}}e^{\frac{c}{2}(2n+\frac{3}{2})^s}|f_{n,0,0}|\\
&\qquad\times\,e^{c\,(2n+2\tilde{n}+l^{\star}+\frac{3}{2})^s-\frac{c(2n+\frac{3}{2})^s}{2}
-\frac{c(2\tilde{n}+l^{\star}+\frac{3}{2})^s}{2}} |\lambda^{rad, 1}_{n,\tilde{n},l^{\star}}h_{n+\tilde{n},l^{\star},m^{\star}}|.
\end{align*}
Since for any $0<s<1$, $n,\tilde{n},l^{\star}\in \mathbb{N}$,
\begin{equation*}
(2n+2\tilde{n}+l^{\star}+\frac{3}{2})^s\leq(2n+\frac{3}{2})^s+(2\tilde{n}+l^{\star}+\frac{3}{2})^s,
\end{equation*}
we can deduce from Cauchy-Schwarz inequality that,
\begin{align*}
|J_2|&\leq
\left(\sum_{\substack{2\tilde{n}+l^{\star}\leq\,N-4\\\tilde{n}\geq0,l^{\star}\geq1,\tilde{n}+l^{\star}\geq2}}\sum_{|m^{\star}|\leq l^{\star}}
e^{c(2\tilde{n}+l^{\star}+\frac{3}{2})^s}
\lambda_{\tilde{n},l^{\star}}|g_{\tilde{n},l^{\star},m^{\star}}|^2\right)^{\frac{1}{2}}\\
&\qquad\times\Bigg(\sum_{\substack{2\tilde{n}+l^{\star}\leq\,N-4\\\tilde{n}\geq0,l^{\star}\geq1,\tilde{n}+l^{\star}\geq2}}\sum_{|m^{\star}|\leq l^{\star}}
\frac{1}{\lambda_{\tilde{n},l^{\star}}}\Bigg|\sum_{\substack{n\geq2\\2n+2\tilde{n}+l^{\star}\leq N}}e^{\frac{c}{2}(2n+\frac{3}{2})^s}|f_{n,0,0}|\\
&\quad\qquad\times\,e^{\frac{c}{2}\,(2n+2\tilde{n}+l^{\star}+\frac{3}{2})^s} |\lambda^{rad, 1}_{n,\tilde{n},l^{\star}}h_{n+\tilde{n},l^{\star},m^{\star}}|\Bigg|^2\Bigg)^{\frac{1}{2}}
\\
&\leq\|e^{\frac{c}{2}\mathcal{H}^s}\mathcal{L}^{\frac{1}{2}}\mathbf{S}_{N-2}g\|_{L^2}
\|e^{\frac{c}{2}\mathcal{H}^s}\mathbf{S}_{N-2}f\|_{L^2}\\
&\qquad\times
\left(\sum_{\substack{2\tilde{n}+l^{\star}\leq\,N-4\\\tilde{n}\geq0,l^{\star}\geq1,\tilde{n}+l^{\star}\geq2}}
\sum_{|m^{\star}|\leq l^{\star}}\sum_{\substack{n\geq2\\2n+2\tilde{n}+l^{\star}\leq N}}
e^{c(2n+2\tilde{n}+l^{\star}+\frac{3}{2})^s}|h_{n+\tilde{n},
l^{\star},\tilde{m}}|^2\frac{|\lambda^{rad, 1}_{n,\tilde{n},l^{\star}}|^2}{\lambda_{\tilde{n},l^{\star}}}\right)^{\frac{1}{2}}\\
&\leq\|e^{\frac{c}{2}\mathcal{H}^s}\mathcal{L}^{\frac{1}{2}}\mathbf{S}_{N-2}g\|_{L^2}
\|e^{\frac{c}{2}\mathcal{H}^s}\mathbf{S}_{N-2}f\|_{L^2}\\
&\qquad\times\Bigg[\sum_{\substack{4\leq2n^{\star}+l^{\star}\leq\,N\\n^{\star}\geq0,l^{\star}\geq1}}
\sum_{|m^{\star}|\leq\,l^{\star}}
e^{c(2n^{\star}+l^{\star}+\frac{3}{2})^s}|h_{n^{\star},l^{\star},m^{\star}}|^2
\Bigg(\sum_{\substack{n+\tilde{n}=n^{\star}\\
n\geq2,\tilde{n}\geq\max(0,2-l^{\star})}}\frac{1}{\lambda_{\tilde{n},l^{\star}}}
|\lambda^{rad, 1}_{n,\tilde{n},l^{\star}}|^2\Bigg)\Bigg]^{\frac{1}{2}}.
\end{align*}
Recall the estimate \eqref{estimateI-2} that,
\begin{align*}
\sum_{\substack{n+\tilde{n}=n^{\star}\\
n\geq2,\tilde{n}\geq\max(0,2-l^{\star})}}\frac{1}{\lambda_{\tilde{n},l^{\star}}}
|\lambda^{rad, 1}_{n,\tilde{n},l^{\star}}|^2\lesssim(n^{\star})\lesssim\lambda_{n^{\star},l^{\star}},
\end{align*}
we obtain,
$$
|J_2|\lesssim\|e^{\frac{c}{2}\mathcal{H}^s}\mathcal{L}^{\frac{1}{2}}\mathbf{S}_{N-2}g\|_{L^2}
\|e^{\frac{c}{2}\mathcal{H}^s}\mathbf{S}_{N-2}f\|_{L^2}
\|e^{\frac{c}{2}\mathcal{H}^s}\mathcal{L}^{\frac{1}{2}}\mathbf{S}_{N}h\|_{L^2}.
$$
Similarly, for the third term $J_3$,
\begin{align*}
|J_3|&\leq\sum^{\frac{N}{2}-1}_{\tilde{n}=2}e^{\frac{c}{2}(2\tilde{n}+\frac{3}{2})^s}|g_{\tilde{n},0,0}|
\sum_{\substack{n\geq0,l^{\star}\geq1,n+l^{\star}\geq2\\2n+2\tilde{n}+l^{\star}\leq N}}\sum_{|m^{\star}|\leq l^{\star}}e^{\frac{c}{2}(2n+l^{\star}+\frac{3}{2})^s}|f_{n,l^{\star},m^{\star}}|\\
&\qquad\times\,
e^{c\,(2n+2\tilde{n}+l^{\star}+\frac{3}{2})^s-\frac{c(2\tilde{n}+\frac{3}{2})^s}{2}-\frac{c(2n+l^{\star}+\frac{3}{2})^s}{2}
} |\lambda^{rad, 2}_{n,\tilde{n},l^{\star}}h_{n+\tilde{n},l^{\star},m^{\star}}|.
\end{align*}
By using Cauchy-Schwarz inequality and the inequality
\begin{equation*}
(2n+2\tilde{n}+l^{\star}+\frac{3}{2})^s\leq(2\tilde{n}+\frac{3}{2})^s+(2n+l^{\star}+\frac{3}{2})^s,  \forall\,\,0<s<1,
\end{equation*}
we obtain,
\begin{align*}
|J_3|&\leq\|e^{\frac{c}{2}\mathcal{H}^s}\mathcal{L}^{\frac{1}{2}}\mathbf{S}_{N-2}g\|_{L^2}
\|e^{\frac{c}{2}\mathcal{H}^s}\mathbf{S}_{N-2}f\|_{L^2}\\
&\quad\times \Bigg[\sum_{\substack{4\leq2n^{\star}+l^{\star}\leq\,N\\n^{\star}\geq0,l^{\star}\geq1}}
\sum_{|m^{\star}|\leq\,l^{\star}}
e^{c(2n^{\star}+l^{\star}+\frac{3}{2})^s}|h_{n^{\star},l^{\star},m^{\star}}|^2
\Bigg(\sum_{\substack{n+\tilde{n}=n^{\star}\\\tilde{n}\geq2,n\geq\max(0, 2-l^{\star})}}\frac{|\lambda^{rad, 2}_{n,\tilde{n} ,l^{\star}}|^2}{\lambda_{\tilde{n},0}}\Bigg)\Bigg]^{\frac{1}{2}}.
\end{align*}
By using the estimate \eqref{estimateI-3}, one has,
$$
|J_3|\lesssim\|e^{\frac{c}{2}\mathcal{H}^s}\mathcal{L}^{\frac{1}{2}}\mathbf{S}_{N-2}g\|_{L^2}
\|e^{\frac{c}{2}\mathcal{H}^s}\mathbf{S}_{N-2}f\|_{L^2}
\|e^{\frac{c}{2}\mathcal{H}^s}\mathcal{L}^{\frac{1}{2}}\mathbf{S}_{N}h\|_{L^2}.
$$
Consider the fourth term $J_4$,
\begin{align*}
|J_4|&\leq
\sum_{\substack{2\tilde{n}+\tilde{l}\leq N-2\\ \tilde{n}+\tilde{l}\geq2,\,\tilde{n}\geq0, \tilde{l}\geq1}}
\sum_{|\tilde{m}|\leq \tilde{l}}
(\lambda_{\tilde{n},\tilde{l}})^{\frac{1}{2}}
e^{\frac{c(2\tilde{n}+\tilde{l}+\frac{3}{2})^s}{2}}|g_{\tilde{n},\tilde{l},\tilde{m}}| \sum_{\substack{2n+l\leq N-2\\n+l\geq2,\,l\geq1,n\geq0\\2n+2\tilde{n}+l+\tilde{l}\leq N}}
\sum_{|m|\leq l}e^{\frac{c\,(2n+l+\frac{3}{2})^s}{2}}|f_{n,l,m}|\\
&\qquad\times
e^{c\,(2n+2\tilde{n}+l+\tilde{l}+\frac{3}{2})^s-\frac{c(2n+l+\frac{3}{2})^s}{2}
-\frac{c(2\tilde{n}+\tilde{l}+\frac{3}{2})^s}{2}}\\
&\qquad\times\Big|
\sum^{\min(l,\tilde{l})}_{k=0}
\sum_{|m^{\star}|\leq l+\tilde{l}-2k}
\mu^{m,\tilde{m},m^{\star}}_{n,\tilde{n},l,
\tilde{l},k}\Big(\lambda_{\tilde{n},\tilde{l}}\Big)^{-\frac{1}{2}}\,
\overline{h_{n+\tilde{n}+k,l+\tilde{l}-2k,m^{\star}}}\Big|.
\end{align*}
Using the fact that} for any $0<s<1$, $n,\tilde{n},l,\tilde{l}\in \mathbb{N}$,

$$
(2n+l+2\tilde{n}+\tilde{l}+\frac{3}{2})^s\leq{\color{red}(2n+l+\frac{3}{2})^s+(2\tilde{n}+\tilde{l}+\frac{3}{2})^s,}
$$
we can deduce from Cauchy-Schwarz inequality that,
\begin{align*}
|J_4|&\leq\|e^{\frac{c}{2}\mathcal{H}^s}\mathcal{L}^{\frac{1}{2}}\mathbf{S}_{N-2}g\|_{L^2}
\|e^{\frac{c}{2}\mathcal{H}^s}\mathbf{S}_{N-2}f\|_{L^2}
\\
&\quad\times\Big[\sum_{\substack{4\leq2n+2\tilde{n}+l+\tilde{l}\leq N\\2\leq n+l,2\leq \tilde{n}+\tilde{l}\\n\geq0,l\geq1,\tilde{n}\geq0,\tilde{l}\geq1}}
\sum^{\min(l,\tilde{l})}_{k=0}\sum_{|m^{\star}|\leq l+\tilde{l}-2k}e^{c\,(2n+2\tilde{n}+l+\tilde{l}+\frac{3}{2})^s}\\
&\qquad\quad\times|h_{n+\tilde{n}+k,l+\tilde{l}-2k,m^{\star}}|^2
\sum_{|\tilde{m}|\leq \tilde{l}}\sum_{|m|\leq l}\frac{|\mu^{m,\tilde{m},m^{\star}}_{n,\tilde{n},l,
\tilde{l},k}|^2}{\lambda_{\tilde{n},\tilde{l}}}\Big]^{\frac{1}{2}}\\
&\leq\|e^{\frac{c}{2}\mathcal{H}^s}\mathcal{L}^{\frac{1}{2}}\mathbf{S}_{N-2}g\|_{L^2}
\|e^{\frac{c}{2}\mathcal{H}^s}\mathbf{S}_{N-2}f\|_{L^2}\\
&\qquad\times\Bigg[\sum_{4\leq2n^{\star}+l^{\star}\leq N}\sum_{|m^{\star}|\leq l+\tilde{l}-2k}e^{c\,(2n^{\star}+l^{\star}+\frac{3}{2})^s}|h_{n^{\star},l^{\star},m^{\star}}|^2\\
&\qquad\quad\times\Bigg(\sum_{\substack{n+\tilde{n}+k=n^{\star}\\n+l\geq2,\tilde{n}+\tilde{l}\geq2\\
n\geq0,
\tilde{n}\geq0}}
\sum_{\substack{l+\tilde{l}-2k=l^{\star}\\l\geq1,\tilde{l}\geq1\\0\leq\,k\leq\,\min(l,\tilde{l})}}
\sum_{|m|\leq\,l}
\sum_{|\tilde{m}|\leq\,\tilde{l}}
\frac{\Big|\,\mu^{m,\tilde{m},m^{\star}}_{n,\tilde{n},l,\tilde{l},k} \Big|^2}{\lambda_{\tilde{n},\tilde{l}}}\Bigg)\Bigg]^{\frac{1}{2}},
\end{align*}
The last summation is understanding as \eqref{remark-summation}.
We use again $3)$ of Proposition \ref{est},
$$\sum_{\substack{n+\tilde{n}+k=n^{\star}\\n+l\geq2,\tilde{n}+\tilde{l}\geq2\\
n\geq0,
\tilde{n}\geq0}}
\sum_{\substack{l+\tilde{l}-2k=l^{\star}\\l\geq1,\tilde{l}\geq1\\0\leq\,k\leq\,\min(l,\tilde{l})}}
\sum_{|m|\leq\,l}
\sum_{|\tilde{m}|\leq\,\tilde{l}}
\frac{\Big|\,\mu^{m,\tilde{m},m^{\star}}_{n,\tilde{n},l,\tilde{l},k} \Big|^2}{\lambda_{\tilde{n},\tilde{l}}}\leq C\lambda_{n^{\star},l^{\star}}.$$
We can finish the proof exactly as that of Proposition \ref{prop:3.2}.
\end{proof}


\section{The proof of the main Theorem}\label{S4}

In this section, we study the convergence of the formal solutions obtained on Section \ref{S2} with small $L^2$ initial data
{\color{black} and this ends} the proof of Theorem  \ref{trick}.

\subsection{The uniform estimate}
Let $\{g_{n,l,m}(t)\}$ be the solution of \eqref{ODE}. For any $N\in \mathbb{N}$, set
\begin{equation}\label{SN}
\mathbf{S}_Ng(t)=
\sum_{\substack{ 2n+l\leq N\\n\geq0,l\geq0}} \,\,
\sum_{|m|\leq l}g_{n,l,m}(t)\varphi_{n,l,m}.
\end{equation}
Then $\mathbf{S}_Ng(t),  e^{c_0\,t\,\mathcal{H}^s} \mathbf{S}_Ng(t) \in\mathscr{S}(\mathbb{R}^3)\bigcap\mathcal{N}^{\perp}$,

Multiplying $e^{c_0t(2n^\star+l^\star+\frac{3}{2})^s}\overline{g_{n^\star,l^\star,m^{\star}}}(t)$ on both sides of \eqref{ODE}
and {\color{black} taking} summation for $ 2n^\star+l^\star\leq N$, then Proposition \ref{expansion} and the orthogonality of the basis $(\varphi_{n,l,m})_{n,l\geq0,|m|\leq l}$ imply
\begin{align*}
&\Big(\partial_t (\mathbf{S}_Ng)(t),e^{c_0 t \mathcal{H}^s}\mathbf{S}_N g(t)\Big)_{L^2(\mathbb{R}^3)}+\Big(\mathcal{L}(\mathbf{S}_Ng)(t), e^{c_0 t \mathcal{H}^s}\mathbf{S}_N g(t)\Big)_{L^2(\mathbb{R}^3)}\\
&\qquad=\Big(\mathbf{\Gamma}(\mathbf{S}_Ng,\mathbf{S}_Ng), e^{c_0 t \mathcal{H}^s}\mathbf{S}_N g(t)\Big)_{L^2(\mathbb{R}^3)}.
\end{align*}
Since $\mathbf{S}_Ng(t)\in\mathscr{S}(\mathbb{R}^3)\bigcap\mathcal{N}^{\perp}$, we have
$$
\Big(\mathcal{L}(\mathbf{S}_Ng)(t), e^{c_0 t \mathcal{H}^s}\mathbf{S}_N g(t)\Big)_{L^2(\mathbb{R}^3)}=
\|e^{\frac{c_0 t}{2} \mathcal{H}^s}\mathcal{L}^{\frac 12}\mathbf{S}_Ng(t)\|_{L^2(\mathbb{R}^3)},
$$
we then obtain that
\begin{align*}
\frac{1}{2}\frac{d}{dt}\|e^{\frac{c_0t}{2}\mathcal{H}^s}\mathbf{S}_Ng(t)\|^2_{L^2}
&-\frac{c_0}{2}\|e^{\frac{c_0t}{2}\mathcal{H}^s}\mathcal{H}^{\frac{s}{2}}
\mathbf{S}_{N}g\|^2_{L^2}+{\color{black}\|e^{\frac{c_0 t}{2} \mathcal{H}^s}\mathcal{L}^{\frac 12}\mathbf{S}_Ng(t)\|^2_{L^2}}\\
&=\Big(\mathbf{\Gamma}((\mathbf{S}_Ng),(\mathbf{S}_Ng)),e^{c_0t\mathcal{H}^s}
\mathbf{S}_Ng(t)\Big)_{L^2}.
\end{align*}
It follows from \eqref{eq:3.111} and {\color{black} Proposition} \ref{trilinear} that, for $0\le c_0\le c_1$  and any $N\geq2$, $t\geq0$,
\begin{align}\label{ele}
\begin{split}
\frac{1}{2}\frac{d}{dt}\|e^{\frac{c_0t}{2}\mathcal{H}^s}\mathbf{S}_Ng(t)\|^2_{L^2}
&+\frac{1}{2}\|e^{\frac{c_0t}{2}\mathcal{H}^s}\mathcal{L}^{\frac{1}{2}}\mathbf{S}_{N}g\|^2_{L^2}\\
&\leq C\|e^{\frac{c_0t}{2}\mathcal{H}^s}\mathbf{S}_{N-2}g\|_{L^2}
\|e^{\frac{c_0t}{2}\mathcal{H}^s}\mathcal{L}^{\frac{1}{2}}\mathbf{S}_{N}g\|^2_{L^2}.
\end{split}
\end{align}

\begin{proposition}\label{induction}
There exist $\epsilon_0>0$ and $\tilde{c}_0>0$ such that for all $0<\epsilon\leq\epsilon_0, 0\le c_0\le \tilde{c}_0$,\, $g_0\in L^2\bigcap\mathcal{N}^{\perp}$ with $\|g_0\|_{L^2}\leq\epsilon$, then,
\begin{equation*}
\|e^{\frac{c_0t}{2}\mathcal{H}^s}\mathbf{S}_{N}g(t)\|^2_{L^2(\mathbb{R}^3)}
+\frac{1}{2}\int^t_0\|e^{\frac{c_0 \tau }{2}\mathcal{H}^s}\mathcal{L}^{\frac{1}{2}}
\mathbf{S}_{N}g(\tau)\|^2_{L^2}d\tau
\le\|g_0\|^2_{L^2(\mathbb{R}^3)},
\end{equation*}
for any $t\geq0,\, N\geq 0$.
\end{proposition}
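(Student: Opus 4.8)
The plan is to prove the estimate by induction on $N$, exploiting the fact that the nonlinear feedback at level $N$ involves only the strictly lower truncation $\mathbf{S}_{N-2}g$ on the right-hand side of the energy inequality \eqref{ele}. Throughout I fix $\tilde c_0=c_1$, so that \eqref{ele} is available for every $0\le c_0\le\tilde c_0$, and I write for brevity
\begin{equation*}
y_N(t)=\|e^{\frac{c_0t}{2}\mathcal{H}^s}\mathbf{S}_Ng(t)\|^2_{L^2},\qquad
D_N(t)=\|e^{\frac{c_0t}{2}\mathcal{H}^s}\mathcal{L}^{\frac12}\mathbf{S}_Ng(t)\|^2_{L^2}.
\end{equation*}
I choose $\epsilon_0>0$ so small that $C\epsilon_0\le\tfrac14$, where $C$ is the constant appearing in \eqref{ele}. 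Note first that $y_N(0)=\|\mathbf{S}_Ng_0\|^2_{L^2}\le\|g_0\|^2_{L^2}\le\epsilon^2\le\epsilon_0^2$, since the weight is the identity at $t=0$ and $\mathbf{S}_N$ is an orthogonal projection.

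For the base cases $N=0$ and $N=1$, the assumption \eqref{eq:2.55}, together with the resulting vanishing of $g_{0,0,0},g_{1,0,0},g_{0,1,0},g_{0,1,\pm1}$ established in Theorem \ref{prop:ODE}, forces $\mathbf{S}_Ng\equiv0$; hence both terms on the left-hand side vanish and the inequality holds trivially. This supplies the two parities needed to run the step $N\mapsto N-2$.

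For the inductive step I take $N\ge2$ and assume the conclusion at level $N-2$. Discarding the nonnegative dissipation integral there gives the pointwise bound $y_{N-2}(t)\le\|g_0\|^2_{L^2}\le\epsilon_0^2$, that is $\|e^{\frac{c_0t}{2}\mathcal{H}^s}\mathbf{S}_{N-2}g(t)\|_{L^2}\le\epsilon_0$ for all $t\ge0$. Substituting this into the right-hand side of \eqref{ele} and using $C\epsilon_0\le\tfrac14$ yields
\begin{equation*}
\frac12\frac{d}{dt}y_N(t)+\frac12 D_N(t)\le C\epsilon_0\,D_N(t)\le\frac14 D_N(t),
\end{equation*}
hence $\frac{d}{dt}y_N(t)+\tfrac12 D_N(t)\le0$. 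Integrating on $[0,t]$ and using $y_N(0)\le\|g_0\|^2_{L^2}$ gives exactly
\begin{equation*}
y_N(t)+\frac12\int_0^t D_N(\tau)\,d\tau\le y_N(0)\le\|g_0\|^2_{L^2},
\end{equation*}
which is the claimed bound at level $N$. By induction the estimate holds for every $N\ge0$.

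The decisive structural point, and the only place where real care is needed, is that the cubic term in \eqref{ele} carries the factor $\mathbf{S}_{N-2}g$ rather than $\mathbf{S}_Ng$: this is precisely the triangular effect recorded in Proposition \ref{trilinear}. Because of it, once the lower level $N-2$ is controlled the differential inequality for $y_N$ becomes \emph{linear} in $y_N$ (the nonlinear factor is frozen as a fixed small constant), and the dissipation $\tfrac12 D_N$ genuinely absorbs the right-hand side \emph{uniformly} in both $N$ and $t$. Were the factor $\mathbf{S}_Ng$ instead, the argument would be circular and one could only hope for a local-in-time bound; it is the smallness of $\|g_0\|_{L^2}$ that makes the single threshold $C\epsilon_0\le\tfrac14$ propagate through all scales. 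I note finally that \eqref{eq:3.111} has already been used in deriving \eqref{ele} to absorb the Gelfand--Shilov weight term $-\tfrac{c_0}{2}\|e^{\frac{c_0t}{2}\mathcal{H}^s}\mathcal{H}^{s/2}\mathbf{S}_Ng\|^2_{L^2}$ into the dissipation, which is where the constraint $c_0\le c_1$ enters.
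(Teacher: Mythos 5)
Your proof is correct and takes essentially the same route as the paper's: the uniform-in-$N$ energy inequality \eqref{ele}, absorption of the trilinear term under $C\epsilon_0\le\frac14$, and induction on $N$ driven by the triangular structure (only $\mathbf{S}_{N-2}g$ appears on the right-hand side). The only deviation is the case $N=2$, which you obtain from \eqref{ele} with $\mathbf{S}_0g\equiv0$ (allowing $\tilde c_0=c_1$), whereas the paper computes it from the explicit solution \eqref{2-0} and shrinks $\tilde c_0$ so that $\tilde c_0(2+\frac32)^s\le 2\lambda_{0,2}$; since \eqref{ele} is valid for all $N\ge2$ and $0\le c_0\le c_1$, your variant is a legitimate, slightly cleaner simplification.
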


\begin{proof} We prove the Proposition by induction on $N$.

{\bf 1). For $N\le 2$.} we have
$\|e^{\frac{c_0t}{2}\mathcal{H}^s}\mathbf{S}_{0}g\|^2_{L^2}=|g_{0,0,0}(t)|^2=0,\,$
$$\|e^{\frac{c_0t}{2}\mathcal{H}^s}\mathbf{S}_{1}g\|^2_{L^2}=
|g_{0,0,0}(t)|^2+\sum_{|m|\leq1}e^{c_0\,t}|g_{0,1,m}(t)|^2=0,$$
and
$$
\|e^{\frac{c_0t}{2}\mathcal{H}^s}\mathbf{S}_{2}g\|^2_{L^2}=
{\color{black}e^{c_0t(2+\frac{3}{2})^s}}|g_{1,0,0}(t)|^2+\sum_{|m|\leq2}e^{c_0t(2+\frac{2}{3})^s}|g_{0,2,m}(t)|^2.
$$
{\color{black} Recalling from \eqref{2-0}\,\text{ in Section 2.2}} that for all $t>0$
$$
g_{0,2,m}(t)=e^{-\lambda_{0,2}t}g_{0,2,m}(0),
$$
we choose $0<\tilde{c}_0$ small such that $\tilde{c}_0(2+3/2)^s-2\lambda_{0, 2}\le 0$.
{\color{black} Therefore}
$$
\|e^{\frac{c_0t}{2}\mathcal{H}^s}\mathbf{S}_{2}g\|^2_{L^2}\leq\sum_{|m|\leq2}|g_{0,2,m}(0)|^2\leq\|g_0\|^2_{L^2}\leq\epsilon^2
$$
for all $0\le c_0\le \tilde{c}_0, 0<\epsilon\le \epsilon_0$.

{\bf 2). For $N> 2$.} We want to prove that
$$
\|e^{\frac{c_0t}{2}\mathcal{H}^s}\mathbf{S}_{N-1}g\|_{L^2}\leq\epsilon\leq\epsilon_0
$$
{\color{black} implies}
$$
\|e^{\frac{c_0t}{2}\mathcal{H}^s}\mathbf{S}_{N}g\|_{L^2}\leq\epsilon.
$$
Take now $\epsilon_0>0$ such that
$$
0<\epsilon_0\leq\frac{1}{4C}.
$$
Then we deduce from \eqref{ele} that
\begin{align*}
\frac{1}{2}\frac{d}{dt}\|e^{\frac{c_0t}{2}\mathcal{H}^s}\mathbf{S}_Ng(t)\|^2_{L^2}
&+\frac{1}{2}\|e^{\frac{c_0t}{2}\mathcal{H}^s}\mathcal{L}^{\frac{1}{2}}
\mathbf{S}_{N}g\|^2_{L^2}\\
&\leq C\|e^{\frac{c_0t}{2}\mathcal{H}^s}\mathbf{S}_{N-2}g\|_{L^2}
\|e^{\frac{c_0t}{2}\mathcal{H}^s}\mathcal{L}^{\frac{1}{2}}
\mathbf{S}_{N}g\|^2_{L^2}\\
&\leq\frac{1}{4}\|e^{\frac{c_0t}{2}\mathcal{H}^s}
\mathcal{L}^{\frac{1}{2}}\mathbf{S}_{N}g\|^2_{L^2}.
\end{align*}
{\color{black} Therefore}
\begin{equation}\label{indu1}
\frac{d}{dt}\|e^{\frac{c_0t}{2}\mathcal{H}^s}\mathbf{S}_Ng(t)\|^2_{L^2}
+\frac{1}{2}\|e^{\frac{c_0t}{2}\mathcal{H}^s}\mathcal{L}^{\frac{1}{2}}
\mathbf{S}_{N}g\|^2_{L^2}\le 0,
\end{equation}
{\color{black} and this ends} the proof of the Proposition.
\end{proof}

\subsection{Existence of the weak solution}\,\,
Now we prove the convergence of the sequence
$$
g(t)=\sum_{\substack{n,l\geq0}} \,\,
\sum_{|m|\leq l}g_{n,l,m}(t)\varphi_{n,l,m}
$$
defined in \eqref{ODE-b}.

Multiplying $\varphi_{n^\star,l^\star,m^{\star}}(v)$ on both sides of \eqref{ODE}
and {\color{black} taking the} summation for $ 2n^\star+l^\star\leq N$,
then for all $N\geq2$,
$\mathbf{S}_{N}g(t)$ satisfies the following Cauchy problem
\begin{equation}\label{eq-2}
\left\{ \begin{aligned}
         &\partial_t \mathbf{S}_{N}g+\mathcal{L}(\mathbf{S}_{N}g)=\mathbf{S}_{N}\mathbf{\Gamma}(\mathbf{S}_{N}g, \mathbf{S}_{N}g),\,\\
         &
         \mathbf{S}_{N}g(0)=
         \sum_{\substack{n\geq0, l\geq0\\2n+l \leq N}}
         \,\,
         \sum_{|m|\leq\,l}
         g^0_{n,l,m}\varphi_{n,l,m}.
\end{aligned} \right.
\end{equation}
By Proposition \ref{induction} with $c_0=0$, we have for all $t>0$, $N\in\mathbb{N}$,
\begin{equation*}
\|\mathbf{S}_{N}g(t)\|^2_{L^2(\mathbb{R}^3)}
+\frac{1}{2}\int^t_0\|\mathcal{L}^{\frac{1}{2}}
\mathbf{S}_{N}g(\tau)\|^2_{L^2}d\tau
\le\|g_0\|^2_{L^2(\mathbb{R}^3)}.
\end{equation*}
The orthogonality of the basis $(\varphi_{n,l,m})_{n,l\geq0,|m|\leq l}$ implies that
$$
\|\mathbf{S}_{N}g(t)\|^2_{L^2(\mathbb{R}^3)}=
\sum_{\substack{ 2n+l\leq N\\n\geq0,l\geq0}} \,\,
\sum_{|m|\leq l}|g_{n,l,m}(t)|^2.
$$
By using the monotone convergence theorem, the sequence
$$
g(t)=\sum_{n,l\geq0}\,\,
\sum_{|m|\leq l}g_{n,l,m}(t)\varphi_{n,l,m}
$$
is convergent, for any $t\ge 0$,
$$
\lim_{N\to \infty}\|\mathbf{S}_{N}g-g\|_{L^\infty([0, t]; L^2(\mathbb{R}^3))}=0
$$
and
$$
\lim_{N\to \infty}\|\mathcal{L}^{\frac{1}{2}}(\mathbf{S}_{N}g-g)\|_{L^2([0, t]; L^2(\mathbb{R}^3))}=0.
$$
For any $\phi(t)\in\,C^1\Big(\mathbb{R}_+,\mathscr{S}(\mathbb{R}^3)\Big)$,  we have
\begin{align*}
&\Big|\int^t_0\Big(\mathbf{S}_{N}\mathbf{\Gamma}(\mathbf{S}_{N}g, \mathbf{S}_{N}g)-\mathbf{\Gamma}(g, g),
\phi(\tau)\Big)_{L^2(\mathbb{R}^3)}d\tau\Big|\\
&=\Big|\int^t_0\Big(\mathbf{\Gamma}(\mathbf{S}_{N}g, \mathbf{S}_{N}g),
\mathbf{S}_{N}\phi(\tau)\Big)_{L^2(\mathbb{R}^3)}-\Big(\mathbf{\Gamma}(g, g),\phi(\tau)\Big)_{L^2(\mathbb{R}^3)}d\tau\Big|\\
&\le\Big|\int^t_0\Big(\mathbf{\Gamma}(\mathbf{S}_{N}g, \mathbf{S}_{N}g),\mathbf{S}_{N}\phi(\tau)-\phi(\tau)\Big)_{L^2}d\tau\Big|\\
&\qquad+\Big|\int^t_0\Big(\mathbf{\Gamma}(\mathbf{S}_{N}g-g, \mathbf{S}_{N}g),\phi(\tau)\Big)_{L^2}d\tau\Big|+\Big|\int^t_0\Big(\mathbf{\Gamma}(g, \mathbf{S}_{N}g-g),\phi(\tau)\Big)_{L^2}d\tau\Big|.
\end{align*}
By Proposition \ref{prop:3.2}, one can verify that
\begin{align*}
&\Big|\int^t_0\Big(\mathbf{S}_{N}\mathbf{\Gamma}(\mathbf{S}_{N}g, \mathbf{S}_{N}g)-\mathbf{\Gamma}(g, g),
\phi(\tau)\Big)_{L^2(\mathbb{R}^3)}d\tau\Big|\\
&\leq\,C\int^t_0\|\mathbf{S}_{N}g\|_{L^2}\|\mathcal{L}^{\frac 12}\mathbf{S}_{N}g\|_{L^2} \Big\|\mathcal{L}^{\frac 12}(
\mathbf{S}_{N}\phi-\phi)\Big\|_{L^2(\mathbb{R}^3)} dt\\
&\qquad+C\int^t_0 \|\mathbf{S}_{N}g-g\|_{L^2}\|\mathcal{L}^{\frac 12}\mathbf{S}_{N}g\|_{L^2} \|\mathcal{L}^{\frac 12}\phi\|_{L^2(\mathbb{R}^3)}
dt\\
&\qquad+C\int^t_0\|g\|_{L^2}\|\mathcal{L}^{\frac 12}(\mathbf{S}_{N}g-g)\|_{L^2(\mathbb{R}^3)}\|\mathcal{L}^{\frac 12}\phi\|_{L^2(\mathbb{R}^3)}
dt.
\end{align*}
Using Proposition \ref{induction} with $c_0=0$
\begin{align*}
&\Big|\int^t_0\Big(\mathbf{S}_{N}\mathbf{\Gamma}(\mathbf{S}_{N}g, \mathbf{S}_{N}g)-\mathbf{\Gamma}(g, g),
\phi(\tau)\Big)_{L^2(\mathbb{R}^3)}d\tau\Big|\\
&\leq\,C\|\mathbf{S}_{N}g\|_{L^\infty([0, t]; L^2)}\|\mathcal{L}^{\frac 12}\mathbf{S}_{N}g\|_{L^2([0, t]; L^2)} \Big\|\mathcal{L}^{\frac 12}(
\mathbf{S}_{N}\phi-\phi)\Big\|_{L^2([0, t]; L^2(\mathbb{R}^3))} \\
&\qquad+C\|\mathbf{S}_{N}g-g\|_{L^\infty([0, t]; L^2)}\|\mathcal{L}^{\frac 12}\mathbf{S}_{N}g\|_{L^2([0, t]; L^2)} \|\mathcal{L}^{\frac 12}\phi\|_{L^2([0, t]; L^2)}\\
&\qquad+C\|g\|_{L^\infty([0, t]; L^2)}\|\mathcal{L}^{\frac 12}(\mathbf{S}_{N}g-g)\|_{L^2([0, t]; L^2)}\|\mathcal{L}^{\frac 12}\phi\|_{L^2([0, t]; L^2)},
\end{align*}
then
\begin{align*}
&\Big|\int^t_0\Big(\mathbf{S}_{N}\mathbf{\Gamma}(\mathbf{S}_{N}g, \mathbf{S}_{N}g)-\mathbf{\Gamma}(g, g),
\phi(\tau)\Big)_{L^2(\mathbb{R}^3)}d\tau\Big|\\
&\leq\,{\color{black}\sqrt{2}C}\|g_0\|^2_{L^2} \Big\|\mathcal{L}^{\frac 12}(
\mathbf{S}_{N}\phi-\phi)\Big\|_{L^2([0, t]; L^2(\mathbb{R}^3))} \\
&\qquad+{\color{black}\sqrt{2}C}\|\mathbf{S}_{N}g-g\|_{L^\infty([0, t]; L^2)}\|g_0\|_{L^2} \|\mathcal{L}^{\frac 12}\phi\|_{L^2([0, t]; L^2)}\\
&\qquad+C\|g_0\|_{L^2}\|\mathcal{L}^{\frac 12}(\mathbf{S}_{N}g-g)\|_{L^2([0, t]; L^2)}\|\mathcal{L}^{\frac 12}\phi\|_{L^2([0, t]; L^2)}.
\end{align*}
{\color{black} Letting} $N\rightarrow+\infty$ in \eqref{eq-2}, we conclude that, for any $\phi(t)\in\,C^1\Big(\mathbb{R}_+,\mathscr{S}(\mathbb{R}^3)\Big)$,
\begin{align*}
&\Big(g(t), \phi(t)\Big)_{L^2(\mathbb{R}^3)}-\Big(g(0), \phi(0)\Big)_{L^2(\mathbb{R}^3)}
{\color{black} -\int^t_{0} \Big(g(\tau), \partial_t \phi(\tau)\Big)_{L^2(\mathbb{R}^3)}
d\tau}
\\
&=-\int^t_{0}\Big(\mathcal{L}g(\tau), \phi(\tau)\Big)_{L^2(\mathbb{R}^3)}d\tau+\int^t_{0}\Big(\mathbf{\Gamma}(g(\tau),g(\tau)), \phi(\tau)\Big)_{L^2(\mathbb{R}^3)}d\tau,
\end{align*}
which shows that $g\in L^\infty(]0, +\infty[; L^2(\mathbb{R}^3))$ is a global weak solution of the Cauchy problem \eqref{eq-1}.

\subsection{Regularity of the solution.}\,\, For $\mathbf{S}_{N}g$ defined in \eqref{SN},\,since
$$
\lambda_{n,l}\geq\lambda_{2,0}>0, \, \forall\,n+l\geq2,
$$
we deduce from the formulas  \eqref{indu1} that
\begin{align*}
&\frac{d}{dt}\|e^{\frac{c_0t}{2}\mathcal{H}^s}\mathbf{S}_Ng(t)\|^2_{L^2}
+\frac{\lambda_{2,0}}{2}\|e^{\frac{c_0t}{2}\mathcal{H}^s}\mathbf{S}_{N}g\|^2_{L^2}\\
&\leq \frac{d}{dt}\|e^{\frac{c_0t}{2}\mathcal{H}^s}\mathbf{S}_Ng(t)\|^2_{L^2}
+\frac{1}{2}\|e^{\frac{c_0t}{2}\mathcal{H}^s}\mathcal{L}^{\frac{1}{2}}\mathbf{S}_{N}g\|^2_{L^2}\leq0.
\end{align*}
We {\color{black} then} have
$$
\frac{d}{dt}\Big(e^{\frac{\lambda_{2,0}t}{2}}\|e^{\frac{c_0t}{2}\mathcal{H}^s}
\mathbf{S}_Ng(t)\|^2_{L^2}\Big)
\leq 0,
$$
and we derive that for any $t>0$, and $N\in\mathbb{N}$,
{\color{black}\begin{equation*}
\|e^{\frac{c_0t}{2}\mathcal{H}^s}\mathbf{S}_{N}g(t)\|_{L^2(\mathbb{R}^3)}
\le\,e^{-\frac{\lambda_{2,0}t}{4}}\|g_0\|_{L^2(\mathbb{R}^3)}.
\end{equation*}
}
The {\color{black} orthogonality} of the basis $(\varphi_{n,l,m})_{n,l\geq0,|m|\leq l}$ implies that
$$
\|e^{\frac{c_0t}{2}\mathcal{H}^s}\mathbf{S}_{N}g(t)\|^2_{L^2(\mathbb{R}^3)}=
\sum_{\substack{2n+l\leq N\\n\geq0,l\geq0}}\sum_{|m|\leq l}
{\color{black}e^{c_0t(2n+l+\frac 32)^s}}|g_{n,l,m}(t)|^2.
$$
Using the monotone convergence theorem, we conclude that
{\color{black}
$$
\|e^{\frac{c_0t}{2}\mathcal{H}^s}g(t)\|_{L^2(\mathbb{R}^3)}\le\,
e^{-\frac{\lambda_{2,0}t}{4}}\|g_0\|_{L^2(\mathbb{R}^3)}.
$$
}
This ends the proof of Theorem \ref{trick}.


\section{The spectral representation}\label{S5-1}

This section is devoted to the proof of {\color{black} Proposition}
\ref{expansion}, {\color{black} Proposition} \ref{expansion2}
and some Propositions used in section \ref{S5-2}.

\subsection{Harmonic identities}

We prepare some technical computation.
In all this section, $n$, $l$, $\tilde{n}$, $\tilde{l}$
will be fixed integers of $\mathbb{N}$, {\color{black}$m\in\mathbb{Z}, |m|\leq l$} and
we will use the following notation in this section :
\begin{equation*}
 a_{l,m}=\frac{(l-|m|)!}{(l+|m|)!}.
\end{equation*}
For any unit vector
\[
\sigma = (\sigma_1,\sigma_2,\sigma_3)
= (\cos\theta,\sin\theta\cos\phi,\sin\theta\sin\phi)\in\mathbb{S}^2
\]
with $\theta\in[0,\pi]$ and $\phi\in[0,2\pi]$,
the orthonormal basis of spherical harmo\-nics $Y^{m}_{l}(\sigma)$
($|m|\leq l$) is (see the definition \eqref{Plm} of $P_l^{|k|}$)
\begin{align}\label{Ylm2}
\begin{split}
Y^{m}_{l}(\sigma)
&=
\sqrt{\frac{2l+1}{4\pi} \, \frac{(l-|m|)!}{(l+|m|)!}} \,
P^{|m|}_{l}(\cos\theta) e^{im\phi},
\\
&=
\sqrt{\frac{2l+1}{4\pi} \, \frac{(l-|m|)!}{(l+|m|)!}} \,
\left( \frac{d^{|m|} P_l}{dx^{|m|}}\right)(\sigma_1) \,
(\sigma_2 + i\, \operatorname{sgn}(m) \, \sigma_3)^{|m|}.
\end{split}
\end{align}
We recall the following properties
(see {\color{black}$(7-34)$ of Chapter 7 in the book \cite{JCSlater},
(VIII) of Sec.19, Chap.III in the book \cite{San}
and Theorem 1 of Sec.4, Chap 1 in the book \cite{CM}}):
\\
- Addition theorem:
For any integer $l\ge0$ and
$\alpha_1$, $\alpha_2$ in $\mathbb{S}^2$,
\begin{align}\label{addition theorem Y}
P_{l}(\alpha_1\cdot\alpha_2)
=\frac{4\pi}{2\,l+1}\sum^{l}_{m=-l}
Y^{m}_{l}(\alpha_1)\,
Y^{-m}_{l}(\alpha_2).
\end{align}
If we set the following coordinates
{\color{black}
$$\alpha_j=(\cos\theta_j, \sin\theta_j\cos\phi_j,
  \sin\theta_j\sin\phi_j)$$
}
for $j=1,2$,
the previous addition theorem reads as follows
\begin{align}\label{addition theorem}
P_{l}(\cos\theta_1
 &\cos\theta_2  +\sin\theta_1\sin\theta_2 \cos(\phi_1-\phi_2))\nonumber\\
&=\sum^{l}_{m=-l}
\frac{(l-|m|)!}{(l+|m|)!}
P^{|m|}_{l}(\cos\theta_1)\,
P^{|m|}_{l}(\cos\theta_2)\,e^{i m(\phi_1-\phi_2)}.
\end{align}
- Integral form of the addition theorem:
For any integer $l\ge0$ and $m$, $|m|\leq l$,
any $\sigma\in \mathbb{S}^2$,
\begin{align}\label{addition theorem int}
Y_l^m(\sigma) = \frac{2\,l+1}{4\,\pi}
\int_{\mathbb{S}^2_{\eta}} P_l(\sigma\cdot\eta) \, Y_l^m(\eta) \, d\eta.
\end{align}
- Funk-Hecke Formula: For any continuous function $f\in \mathcal{C}([-1,1])$, any $\sigma\in \mathbb{S}^2$ and integers $l\ge0$, $|m|\leq l$,
\begin{align}\label{funk-hecke}
\int_{\mathbb{S}^2_{\eta}} f(\sigma\cdot\eta) \, Y_l^m(\eta) \, d\eta=
\left(2\pi \int^1_{-1} f(x)\, P_{l}(x)\,dx\right) \, Y_l^m(\sigma) .
\end{align}

For $\kappa\in \mathbb{S}^2$ fixed,
we can find $\theta_0\in[0,\pi]$, $\phi_0\in[0, 2\pi])$ such that
\begin{equation}
\kappa=(\cos\theta_0,\sin\theta_0\cos\phi_0,\sin\theta_0\sin\phi_0).\nonumber
\end{equation}
Construct the orthogonal vectors
with respect to $\kappa$
\begin{equation}\label{k,k1,k2}
\kappa^1=(-\sin\theta_0,\cos\theta_0\cos\phi_0,\cos\theta_0\sin\phi_0),\,\, \kappa^2=(0,\sin\phi_0,-\cos\phi_0),
\end{equation}
and for $\phi\in\mathbb{R}$
\begin{equation}\label{k orthog}
\kappa^{\bot}(\phi)=\kappa^1\cos\phi+\kappa^2\sin\phi.
\end{equation}
Then $\kappa,\kappa^1,\kappa^2$ constitute
an orthonormal frame in $\mathbb{R}^3$.
For any unit vector $\sigma$,
we can find $\theta\in[0,\pi]$
and $\phi\in[0,2\pi]$ such that
\begin{equation}\label{tran}
\sigma = \kappa\cos\theta + \kappa^1\sin\theta\cos\phi
                          + \kappa^2\sin\theta\sin\phi.
\end{equation}
It is easy to verify
\begin{align}\label{k+s}
\frac{\kappa+\sigma}{|\kappa+\sigma|}
&= \kappa \, \cos\frac{\theta}{2} \,
+ \sin\frac{\theta}{2} \,
  \left( \kappa^1\cos\phi+\kappa^2\sin\phi \right),
\\
\frac{\kappa-\sigma}{|\kappa-\sigma|}
&= \kappa \, \sin\frac{\theta}{2}
- \cos\frac{\theta}{2} \,
  \left( \kappa^1\cos\phi+\kappa^2\sin\phi \right). \quad
  \label{k-s}
\end{align}

In the proof of {\color{black} Proposition} \ref{expansion},
we need the following lemma.
\begin{lemma}\label{lemma funk 2}
For any function $f$ in $C([-1,1])$ any $\kappa\in\mathbb{S}^2$,
$l \in\mathbb{N}$\,and\,$|m|\leq l$,\,we have
\begin{align*}
(i)\,\,\,
\int_{\mathbb{S}^2}
f(\kappa\cdot\sigma) \,
Y^{m}_{l}
  \left({\textstyle\frac{\kappa+\sigma}{|\kappa+\sigma|}}\right)
d\sigma\nonumber
&=
\left( 2 \,\pi
\int_{0}^{\pi}
f(\cos\theta) \,\sin\theta  \,
 P_{l}
   \left(\cos{\textstyle \frac{\theta}{2}} \right)
d\theta
\right)\,
Y^{m}_{l}(\kappa),
\\
(ii)\,\,\,
\int_{\mathbb{S}^2}
f(\kappa\cdot\sigma) \,
Y^{m}_{l}
  \left({\textstyle\frac{\kappa-\sigma}{|\kappa-\sigma|}}\right)
d\sigma\nonumber
&=
\left(2 \,\pi
\int_{0}^{\pi}
f(\cos\theta) \, \sin\theta  \,
 P_{l}
   \left(\sin{\textstyle \frac{\theta}{2}} \right)
d\theta
\right)\,
Y^{m}_{l}(\kappa).
\end{align*}
\end{lemma}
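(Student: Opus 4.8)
The plan is to reduce both identities to the classical addition theorem \eqref{addition theorem} together with its integral form \eqref{addition theorem int}. The difficulty is that $Y^m_l$ is evaluated not at $\sigma$ but at the bisector direction $\frac{\kappa+\sigma}{|\kappa+\sigma|}$, which is naturally expressed in the moving frame $\{\kappa,\kappa^1,\kappa^2\}$ rather than in the fixed frame defining $Y^m_l$; hence the Funk--Hecke formula \eqref{funk-hecke} cannot be applied directly. To circumvent this, I would first use the integral form of the addition theorem \eqref{addition theorem int} to write, for every unit vector,
$$
Y^m_l\!\left(\tfrac{\kappa+\sigma}{|\kappa+\sigma|}\right)
=\frac{2l+1}{4\pi}\int_{\mathbb{S}^2_\zeta}
P_l\!\left(\tfrac{\kappa+\sigma}{|\kappa+\sigma|}\cdot\zeta\right)Y^m_l(\zeta)\,d\zeta .
$$
Since $f$ is continuous (hence bounded) and the sphere has finite measure, Fubini's theorem permits exchanging the order of integration, so that $(i)$ becomes $\frac{2l+1}{4\pi}\int_{\mathbb{S}^2_\zeta}Y^m_l(\zeta)\,G(\kappa,\zeta)\,d\zeta$, where $G(\kappa,\zeta)=\int_{\mathbb{S}^2}f(\kappa\cdot\sigma)P_l\!\left(\tfrac{\kappa+\sigma}{|\kappa+\sigma|}\cdot\zeta\right)d\sigma$.

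The core of the argument is the evaluation of the inner integral $G(\kappa,\zeta)$. Here I would parametrize $\sigma$ in the orthonormal frame through \eqref{tran}, so that $d\sigma=\sin\theta\,d\theta\,d\phi$ and $\kappa\cdot\sigma=\cos\theta$, and insert the expression \eqref{k+s} for the bisector, $\frac{\kappa+\sigma}{|\kappa+\sigma|}=\cos\frac{\theta}{2}\,\kappa+\sin\frac{\theta}{2}\,\kappa^\perp(\phi)$ with $\kappa^\perp(\phi)$ as in \eqref{k orthog}. Writing $\zeta=\cos\beta\,\kappa+\sin\beta(\cos\gamma\,\kappa^1+\sin\gamma\,\kappa^2)$ in the same frame gives $\frac{\kappa+\sigma}{|\kappa+\sigma|}\cdot\zeta=\cos\frac{\theta}{2}\cos\beta+\sin\frac{\theta}{2}\sin\beta\cos(\phi-\gamma)$. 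Applying the addition theorem \eqref{addition theorem} with polar angles $\frac{\theta}{2}$ and $\beta$ and then integrating in $\phi$ over $[0,2\pi]$ annihilates every term except $m'=0$ (because $\int_0^{2\pi}e^{im'(\phi-\gamma)}d\phi=2\pi\delta_{m',0}$), which collapses the sum to a single product and yields $G(\kappa,\zeta)=c_l\,P_l(\kappa\cdot\zeta)$ with $c_l=2\pi\int_0^\pi f(\cos\theta)\,P_l(\cos\frac{\theta}{2})\,\sin\theta\,d\theta$. This azimuthal collapse is the key step, and the main obstacle is simply organizing the frame computation cleanly enough that the addition theorem applies verbatim.

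Finally, substituting $G$ back and invoking the integral form of the addition theorem \eqref{addition theorem int} once more, now with $\sigma$ replaced by $\kappa$, gives $\frac{2l+1}{4\pi}\int_{\mathbb{S}^2_\zeta}P_l(\kappa\cdot\zeta)Y^m_l(\zeta)\,d\zeta=Y^m_l(\kappa)$, so that $(i)$ equals $c_l\,Y^m_l(\kappa)$, exactly as claimed. For $(ii)$ the argument is identical, using \eqref{k-s} in place of \eqref{k+s}: one then meets $\frac{\kappa-\sigma}{|\kappa-\sigma|}\cdot\zeta=\sin\frac{\theta}{2}\cos\beta-\cos\frac{\theta}{2}\sin\beta\cos(\phi-\gamma)$, and the only additional care is the minus sign, which I would absorb by the shift $\phi\mapsto\phi+\pi$; the surviving $m'=0$ term is unaffected by this shift, so the same computation produces the constant $2\pi\int_0^\pi f(\cos\theta)\,P_l(\sin\frac{\theta}{2})\,\sin\theta\,d\theta$ in front of $Y^m_l(\kappa)$.
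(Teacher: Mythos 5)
Your proposal is correct and follows essentially the same route as the paper's proof: both use the integral form of the addition theorem \eqref{addition theorem int} to replace $Y^m_l$ at the bisector by $P_l$, exchange the order of integration, expand $P_l$ via the addition theorem \eqref{addition theorem} in the frame $(\kappa,\kappa^1,\kappa^2)$ so that the azimuthal integral kills all modes except $m'=0$, and then recover $Y^m_l(\kappa)$ by a final application of \eqref{addition theorem int}. Your explicit justification of Fubini and the shift $\phi\mapsto\phi+\pi$ for the sign in $(ii)$ are minor refinements of details the paper leaves implicit.
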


\begin{proof}
For $\kappa\in \mathbb{S}^2$ fixed,
we can find $\theta_0\in[0,\pi]$,
$\phi_0\in[0,2\pi]$ such that
\begin{equation}
\kappa=(\cos\theta_0,\sin\theta_0\cos\phi_0,\sin\theta_0\sin\phi_0).\nonumber
\end{equation}
In the orthonormal frame $(\kappa,\kappa^1,\kappa^2)$
constructed in \eqref{k,k1,k2}, for any $\sigma\in\mathbb{S}^2$
we have
\begin{equation*}
\sigma = \kappa\cos\theta + \kappa^1\sin\theta\cos\phi
                          + \kappa^2\sin\theta\sin\phi
\end{equation*}
with $\theta\in[0,\pi]$ and $\phi\in[0,2\pi]$.\,\,
Therefore, $\kappa\cdot\sigma=\cos\theta$,
and for any $\eta\in\mathbb{S}^2$ with
\begin{equation*}
\eta = \kappa\cos\theta_1 + \kappa^1\sin\theta_1\cos\phi_1
                          + \kappa^2\sin\theta_1\sin\phi_1.
\end{equation*}
we deduce from \eqref{k+s}-\eqref{k-s}
\begin{align}
\frac{\kappa-\sigma}{|\kappa-\sigma|}\cdot\eta
&= \sin\frac{\theta}{2}\,\cos\theta_1
 -\cos\frac{\theta}{2}\,\sin\theta_1\, \cos(\phi-\phi_1),
\label{k-s.eta}\\
\frac{\kappa+\sigma}{|\kappa+\sigma|}\cdot\eta
&=\cos\frac{\theta}{2} \cos\theta_1
+ \sin\frac{\theta}{2} \sin\theta_1  \cos(\phi-\phi_1).
\label{k+s.eta}
\end{align}
{Proof of (i).}
Applying the formula \eqref{addition theorem int}  for
$Y^{m}_{l}
  \left(\frac{\kappa+\sigma}{|\kappa+\sigma|}\right)$,
we have
\begin{align*}
\int_{\mathbb{S}^2_\sigma}
  f(\kappa\cdot\sigma) \,
  &Y^{m}_{l}
  \left(\frac{\kappa+\sigma}{|\kappa+\sigma|}\right)
d\sigma
\\
&=\frac{2\,l+1}{4\,\pi}
    \int_{\mathbb{S}^2_\sigma} f(\kappa\cdot\sigma) \,
    \int_{\mathbb{S}^2_\eta}
    P_{l}
  \left(\frac{\kappa+\sigma}{|\kappa+\sigma|}\cdot\eta\right) \,
    Y^{m}_{l}(\eta)
  d\eta \,
d\sigma
\\
&={\color{black}  \frac{2\,l+1}{4\,\pi}\int_{\mathbb{S}^2_\eta}
  Y^{m}_{l}(\eta)
  \, A(\eta)
d\eta}
\end{align*}
where
\begin{equation*}
  A(\eta) = \int_{\mathbb{S}^2}
  f(\kappa\cdot\sigma) \,
  P_{l}
  \left(\frac{\kappa+\sigma}{|\kappa+\sigma|}\cdot\eta\right) \,
d\sigma.
\end{equation*}
Then, applying the addition theorem \eqref{addition theorem}
and \eqref{k+s.eta}
\begin{align*}
P_{l}\left(\frac{\kappa+\sigma}{|\kappa+\sigma|}\cdot\eta\right)
&=
P_{l}(
  \cos\frac{\theta}{2} \cos\theta_1
+ \sin\theta_1 \sin\frac{\theta}{2} \cos(\phi-\phi_1)
)\\
&=
\sum^{l}_{{m}=-l}
\frac{(l-|m|)!}{(l+|m|)!}
  P^{|m|}_{l}(\cos\frac{\theta}{2})
  P^{|m|}_{l}(\cos\theta_1)
  e^{im(\phi-\phi_1)},
\end{align*}
direct calculation shows that
\begin{align*}
A(\eta)
&=\left(
2 \, \pi\int_{0}^{\pi}
  f(\cos\theta) \, \sin\theta \,
  P_{l} \left(\cos\frac{\theta}{2} \right)
  d\theta
\right) \,
P_{l}(\kappa\cdot\eta).
\end{align*}
Henceforth, we get that
\begin{align*}
&\int_{\mathbb{S}^2_\sigma}
f(\kappa\cdot\sigma) \,
  Y^{m}_{l}
  \left(\frac{\kappa+\sigma}{|\kappa+\sigma|}\right)
d\sigma
\\
&=
\left(2 \,\pi
\int_{0}^{\pi}
f(\cos\theta)  \,\sin\theta  \,
 P_{l}\left(\cos \frac{\theta}{2}\right)
d\theta
\right)\,
\frac{2\,l+1}{4\,\pi}
\int_{\mathbb{S}^2_\eta}
  Y^{m}_{l}(\eta) \,
P_{l}(\kappa\cdot\eta)
d\eta
\end{align*}
and we conclude by formula \eqref{addition theorem int}.

The proof of $(ii)$ is similar by using \eqref{k-s.eta}.
\end{proof}

As a direct consequence of part $(i)$ of the previous lemma, we have :
\begin{corollary}\label{coroll int f Y+}
For $\tilde{l},\tilde{m}\in\mathbb{N}$\,
and\,$|\tilde{m}|\leq \tilde{l}$,\,we have
for the cross section $b(\cos\theta)$ satisfying \eqref{beta} {\color{black}with $\cos\theta\geq0$} ,
\begin{align}\label{funk 2}
\begin{split}
\int_{\mathbb{S}^2}
b(\kappa\cdot\sigma)
&\left(
  Y^{\tilde{m}}_{\tilde{l}}
  \left(\frac{\kappa+\sigma}{|\kappa+\sigma|}\right)
  \left(\frac{1+\kappa\cdot\sigma}{2}\right)^{\frac{2\tilde{n}+\tilde{l}}{2}}
  -Y^{\tilde{m}}_{\tilde{l}}(\kappa)
\right)d\sigma\\
&=\left[
\int_{|\theta|\leq\frac{\pi}{4}}
\beta(\theta)\left((\cos\theta)^{2\tilde{n}+\tilde{l}}
 P_{\tilde{l}}(\cos\theta)-1\right)d\theta
\right]Y^{\tilde{m}}_{\tilde{l}}(\kappa).
\end{split}
\end{align}
\end{corollary}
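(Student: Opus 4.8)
The plan is to split the integrand into its two summands and recognize the first as an instance of Lemma \ref{lemma funk 2}(i) for a suitable choice of $f$. Concretely, I would set
$$f(x) = b(x)\left(\frac{1+x}{2}\right)^{\frac{2\tilde{n}+\tilde{l}}{2}},$$
which is continuous on $[-1,1]$ because $b$ is supported where its argument is nonnegative. Writing $\cos\theta = \kappa\cdot\sigma$ and using the half-angle identity $\frac{1+\kappa\cdot\sigma}{2} = \cos^2\frac{\theta}{2}$, one has $f(\kappa\cdot\sigma) = b(\kappa\cdot\sigma)\big(\frac{1+\kappa\cdot\sigma}{2}\big)^{\frac{2\tilde{n}+\tilde{l}}{2}}$, so that Lemma \ref{lemma funk 2}(i) yields
$$\int_{\mathbb{S}^2} f(\kappa\cdot\sigma)\, Y^{\tilde{m}}_{\tilde{l}}\!\left(\frac{\kappa+\sigma}{|\kappa+\sigma|}\right)d\sigma = \left(2\pi\int_0^\pi f(\cos\theta)\sin\theta\, P_{\tilde{l}}\!\left(\cos\frac{\theta}{2}\right)d\theta\right)Y^{\tilde{m}}_{\tilde{l}}(\kappa),$$
where, by the same identity, $f(\cos\theta) P_{\tilde{l}}(\cos\frac{\theta}{2}) = b(\cos\theta)(\cos\frac{\theta}{2})^{2\tilde{n}+\tilde{l}}P_{\tilde{l}}(\cos\frac{\theta}{2})$.

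Next I would convert the scalar prefactor into an integral against $\beta$ via the substitution $\theta = 2\psi$. Under this change $\cos\frac{\theta}{2} = \cos\psi$ and $\sin\theta\,d\theta = 2\sin 2\psi\,d\psi$, and the combination $2\pi\, b(\cos 2\psi)\sin 2\psi$ is exactly $\beta(\psi)$ by the definition \eqref{beta}. This produces
$$2\int_0^{\pi/2}\beta(\psi)(\cos\psi)^{2\tilde{n}+\tilde{l}}P_{\tilde{l}}(\cos\psi)\,d\psi.$$
Since $b(\cos\theta)$ is supported on $\cos\theta\ge0$, the kernel $\beta(\psi)$ vanishes for $\psi>\pi/4$, so the range collapses to $[0,\pi/4]$; using that $\beta$ is even I would rewrite this as $\int_{|\theta|\le\pi/4}\beta(\theta)(\cos\theta)^{2\tilde{n}+\tilde{l}}P_{\tilde{l}}(\cos\theta)\,d\theta$, which is the first term of the claimed bracket (multiplied by $Y^{\tilde{m}}_{\tilde{l}}(\kappa)$).

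For the second summand, $Y^{\tilde{m}}_{\tilde{l}}(\kappa)$ is independent of $\sigma$, so I only need $\int_{\mathbb{S}^2}b(\kappa\cdot\sigma)\,d\sigma$. Passing to polar coordinates about $\kappa$ and applying the same substitution $\theta = 2\psi$ gives $\int_{|\theta|\le\pi/4}\beta(\theta)\,d\theta$, which supplies the ``$-1$'' inside the bracket. Subtracting the two contributions yields the stated identity. The argument is essentially routine; the only point requiring care is the change of variables $\theta\mapsto 2\psi$, where one must correctly track the factor $2$ built into the definition \eqref{beta} of $\beta$ (through $\cos 2\theta$ and $\sin 2\theta$) and invoke the support hypothesis on $b$ to reduce the domain to $|\theta|\le\pi/4$. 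I expect no genuine obstacle beyond this bookkeeping.
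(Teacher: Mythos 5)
Your route is the paper's own: Corollary \ref{coroll int f Y+} is obtained by applying Lemma \ref{lemma funk 2}$(i)$ with $f(x)=b(x)\bigl(\frac{1+x}{2}\bigr)^{\frac{2\tilde n+\tilde l}{2}}$, changing variables $\theta=2\psi$ so that $2\pi\,b(\cos2\psi)\sin2\psi=\beta(\psi)$, and invoking the support hypothesis together with evenness to reduce the range to $|\theta|\le\pi/4$; your tracking of the factor $2$ hidden in the definition \eqref{beta} is exactly the intended computation.

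The one genuine gap is your justification for applying the lemma and, with it, the legitimacy of integrating the two summands separately. Under \eqref{beta} one has $\beta(\theta)\approx|\theta|^{-1-2s}$ near $\theta=0$, equivalently $b(x)\approx c\,(1-x)^{-1-s}$ as $x\to1^-$; so your claim that $f$ is continuous on $[-1,1]$ is false --- the support condition only forces $b$ to vanish for $x<0$ and says nothing about the singularity at $x=1$. In particular both of your separate integrals diverge: $\int_{\mathbb{S}^2}b(\kappa\cdot\sigma)\,d\sigma=\int_{|\theta|\le\pi/4}\beta(\theta)\,d\theta=+\infty$ for every $0<s<1$, and likewise for the first summand, since $(\cos\theta)^{2\tilde n+\tilde l}P_{\tilde l}(\cos\theta)\to1$ as $\theta\to0$. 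The identity \eqref{funk 2} is meaningful only because of the cancellation that the corollary keeps inside a single integral: after the azimuthal average carried out in Lemma \ref{lemma funk 2}, the combined weight is $\beta(\theta)\bigl((\cos\theta)^{2\tilde n+\tilde l}P_{\tilde l}(\cos\theta)-1\bigr)=O(\theta^{1-2s})$, which is integrable for all $0<s<1$ because $(\cos\theta)^{2\tilde n+\tilde l}P_{\tilde l}(\cos\theta)-1=O(\theta^2)$. The standard repair --- and what the paper implicitly does by never separating the bracket --- is to run your computation for a cutoff kernel, e.g.\ $b_M=\min(b,M)$, to which Lemma \ref{lemma funk 2}$(i)$ legitimately applies, keep the two contributions together as one integral of the difference, and let $M\to\infty$ by dominated convergence. (For $s\ge\frac12$ the left-hand side of \eqref{funk 2} must itself be read as an iterated integral, the azimuthal integration supplying the extra cancellation, since the pointwise integrand is only $O(\theta)\cdot\beta(\theta)$.) With this adjustment the remaining bookkeeping in your proposal --- half-angle identity, $\theta=2\psi$, support, evenness --- is correct and coincides with the paper's derivation.
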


\begin{lemma}\label{lemma:5.1}
Let $\kappa\in \mathbb{S}^2$ and the cross section $b(\cos\theta)$ satisfies \eqref{beta} {\color{black}with $\cos\theta\geq0$}.
Assume also that $n,l,\tilde{n},\tilde{l}\in\mathbb{N}$
with $l\geq1$, $\tilde{l}\geq1$, $|m|\leq l$, $|\tilde{m}|\leq\tilde{l}$.
Then there {\color{black} exist} some constants
$c_{n,l,m,\tilde{n},\tilde{l},\tilde{m}}^k$ such that
\begin{align}\label{nonlinear}
\begin{split}
\int_{\mathbb{S}^2}
  b(\kappa\cdot\sigma)
  &Y^{m}_l\left(\frac{\kappa-\sigma}{|\kappa-\sigma|}\right)
  Y^{\tilde{m}}_{\tilde{l}}\left(\frac{\kappa+\sigma}{|\kappa+\sigma|}\right)
  \left(\frac{1-\kappa\cdot\sigma}{2}\right)^{\frac{2n+l}{2}}
  \left(\frac{1+\kappa\cdot\sigma}{2}\right)^{\frac{2\tilde{n}+\tilde{l}}{2}}
d\sigma\\
&=
\sum_{k=0}^{k_0(l,\tilde{l},m,\tilde{m})}
  c_{n,l,m,\tilde{n},\tilde{l},\tilde{m}}^k \,
  Y^{m+\tilde{m}}_{l+\tilde{l}-2k}(\kappa).
\end{split}
\end{align}
\end{lemma}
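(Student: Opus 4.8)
The plan is to regard the left-hand side as a function $F(\kappa)$ of $\kappa\in\mathbb{S}^2$ and to show it is a polynomial lying in a finite sum of spaces of spherical harmonics, after which one reads off exactly which degrees $L=l+\tilde l-2k$ and which azimuthal index $m^{\star}=m+\tilde m$ can occur. First I would strip off the scalar powers using homogeneity. Since $\kappa\cdot\sigma=\cos\theta$ gives $|\kappa-\sigma|=2\sin\frac\theta2$, $|\kappa+\sigma|=2\cos\frac\theta2$, and hence $\frac{1-\kappa\cdot\sigma}2=\sin^2\frac\theta2$, $\frac{1+\kappa\cdot\sigma}2=\cos^2\frac\theta2$, I view $Y^m_l,Y^{\tilde m}_{\tilde l}$ as the restrictions of the solid harmonics, which are homogeneous of degrees $l,\tilde l$. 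Writing $Y^m_l\!\big(\tfrac{\kappa-\sigma}{|\kappa-\sigma|}\big)=|\kappa-\sigma|^{-l}Y^m_l(\kappa-\sigma)$ and similarly for the other factor absorbs the powers, and the claim reduces to
\[
F(\kappa)=2^{-l-\tilde l}\int_{\mathbb{S}^2}\tilde b(\kappa\cdot\sigma)\,Y^m_l(\kappa-\sigma)\,Y^{\tilde m}_{\tilde l}(\kappa+\sigma)\,d\sigma,\qquad \tilde b(t)=b(t)\Big(\tfrac{1-t}2\Big)^{n}\Big(\tfrac{1+t}2\Big)^{\tilde n},
\]
with $Y^m_l,Y^{\tilde m}_{\tilde l}$ now homogeneous harmonic polynomials.

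Next I would extract the degree bound and the covariance. The product $Y^m_l(\kappa-\sigma)\,Y^{\tilde m}_{\tilde l}(\kappa+\sigma)$ is jointly homogeneous of degree $l+\tilde l$ in $(\kappa,\sigma)$; expanding it as $\sum_{\alpha,\beta}c_{\alpha\beta}\kappa^\alpha\sigma^\beta$ with $|\alpha|+|\beta|=l+\tilde l$ and noting that $\int_{\mathbb{S}^2}\sigma^\beta\tilde b(\kappa\cdot\sigma)\,d\sigma$ is, by rotational symmetry, a polynomial in $\kappa$ of degree $\le|\beta|$, I get that $F$ is a polynomial of degree $\le l+\tilde l$ on $\mathbb{S}^2$, so $F\in\bigoplus_{L=0}^{l+\tilde l}\mathcal{H}_L$. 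To pin down the admissible $L$ and $m^{\star}$ I use covariance: for any $R\in SO(3)$, the change of variables $\sigma\mapsto R\sigma$ together with invariance of $d\sigma$ and of $\kappa\cdot\sigma$, the identity $R\kappa\pm R\sigma=R(\kappa\pm\sigma)$, and the rotation law $Y^m_l(Rx)=\sum_{m'}D^l_{m'm}(R)Y^{m'}_l(x)$ yield
\[
F_{m,\tilde m}(R\kappa)=\sum_{m',\tilde m'}D^l_{m'm}(R)\,D^{\tilde l}_{\tilde m'\tilde m}(R)\,F_{m',\tilde m'}(\kappa).
\]
Thus $(m,\tilde m)\mapsto F_{m,\tilde m}$ is an $SO(3)$-equivariant map from $D^l\otimes D^{\tilde l}$ into $\bigoplus_L\mathcal{H}_L$, and the Clebsch--Gordan rule $D^l\otimes D^{\tilde l}=\bigoplus_{L=|l-\tilde l|}^{l+\tilde l}D^L$ with Schur's lemma shows only $|l-\tilde l|\le L\le l+\tilde l$ can occur. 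Specializing $R$ to rotations about a fixed axis makes $D^l_{m'm}$ diagonal with entry $e^{\pm im\psi}$, which forces every harmonic appearing in $F_{m,\tilde m}$ to have azimuthal index $m+\tilde m$.

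The remaining parity constraint I would obtain from the map $\sigma\mapsto-\sigma$, $\kappa\mapsto-\kappa$: using $\tfrac{-\kappa+\sigma}{|-\kappa+\sigma|}=-\tfrac{\kappa-\sigma}{|\kappa-\sigma|}$ and $Y(-w)=(-1)^{\deg}Y(w)$ one finds $F(-\kappa)=(-1)^{l+\tilde l}F(\kappa)$, so $L\equiv l+\tilde l\pmod 2$, i.e. $L=l+\tilde l-2k$. Intersecting the three conditions $|l-\tilde l|\le L\le l+\tilde l$, $|m+\tilde m|\le L$, and $L=l+\tilde l-2k$ gives exactly $0\le k\le\min\!\big([\tfrac{l+\tilde l-|m+\tilde m|}2],\,l,\,\tilde l\big)=k_0(l,\tilde l,m,\tilde m)$. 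Defining $c^k_{n,l,m,\tilde n,\tilde l,\tilde m}$ to be the coefficient of $Y^{m+\tilde m}_{l+\tilde l-2k}$ in the now-finite harmonic expansion of $F$ finishes the argument.

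The easy half is clear: the upper bound $L\le l+\tilde l$ and the parity and azimuthal selections follow from plain homogeneity and the two sign symmetries. The hard part will be the lower bound $L\ge|l-\tilde l|$, equivalently $k\le\min(l,\tilde l)$, which is the genuinely restrictive part of $k_0$ and is precisely the Clebsch--Gordan selection rule stating that no harmonic of degree below $|l-\tilde l|$ survives; it is where the representation-theoretic input is unavoidable. If one prefers to stay within the toolkit of this section, the same cancellation can be produced by the explicit frame computation using \eqref{k+s}--\eqref{k-s} and the solid-harmonic addition theorem, reducing the $\sigma$-integral to scalar integrals handled by Lemma~\ref{lemma funk 2} and then checking that the low-degree coefficients vanish. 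In either route the surviving coefficients $c^k$ are computed by projecting $F$ against $\overline{Y^{m+\tilde m}_{l+\tilde l-2k}}$, which is what produces the explicit coefficients $\mu$ in Proposition~\ref{expansion}.
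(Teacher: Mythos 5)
Your proof is correct in substance, but it follows a genuinely different route from the paper's. The paper proves \eqref{nonlinear} by brute-force expansion: it first establishes the explicit identity \eqref{muti2}, writing $Y^{\tilde{m}}_{\tilde{l}}\bigl(\frac{\kappa+\sigma}{|\kappa+\sigma|}\bigr)$ in terms of $\kappa$ and $\frac{\kappa-\sigma}{|\kappa-\sigma|}$ via the integral addition theorem \eqref{addition theorem int} and the Legendre expansion \eqref{x^l=sum Pl}, then repeatedly applies the product formula \eqref{represent} with the Gaunt selection rule \eqref{con} and the Funk--Hecke-type Lemma \ref{lemma funk 2}, tracking indices (with the symmetry reduction $\tilde{l}\le l$) to conclude $l''=l+\tilde{l}-2(j_1+j_2)$ with $0\le j_1+j_2\le\min(l,\tilde{l})$ and $m''=m+\tilde{m}$. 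You instead argue structurally: solid-harmonic homogeneity absorbs the scalar powers, $SO(3)$-equivariance makes $(m,\tilde m)\mapsto F_{m,\tilde m}$ an intertwiner from $D^l\otimes D^{\tilde l}$ into $\bigoplus_L\mathcal{H}_L$, and Schur's lemma with Clebsch--Gordan kills every $L$ outside $[\,|l-\tilde l|,\,l+\tilde l\,]$, while axis rotations force $m^{\star}=m+\tilde m$ and the parity of $\sigma\mapsto-\sigma$, $\kappa\mapsto-\kappa$ forces $L\equiv l+\tilde l\pmod 2$; intersecting these constraints gives exactly $k_0(l,\tilde l,m,\tilde m)$. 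Note that your representation-theoretic input is morally the same fact the paper imports as \eqref{con} from the literature, applied directly to the integral rather than threaded through explicit expansions. What each approach buys: yours is shorter, symmetric in $(l,\tilde l)$ without a WLOG, and makes transparent why both bounds on $k$ hold; the paper's computation, though heavier, produces along the way the explicit integral representations of the coefficients (the functions $G^{m,\tilde m}_{n,\tilde n,l,\tilde l}$ and ultimately the $\mu$'s of Proposition \ref{expansion}), which are precisely what Sections \ref{S5-1}--\ref{S5-2} need for the orthogonality and size estimates — your closing remark that the coefficients come from projecting against $\overline{Y^{m+\tilde m}_{l+\tilde l-2k}}$ recovers this definition but not the workable formulas.

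One caveat on your degree-bound step: expanding $Y^m_l(\kappa-\sigma)Y^{\tilde m}_{\tilde l}(\kappa+\sigma)$ into monomials $\kappa^\alpha\sigma^\beta$ and integrating term by term against $\tilde b(\kappa\cdot\sigma)=b(\kappa\cdot\sigma)\bigl(\frac{1-\kappa\cdot\sigma}{2}\bigr)^{n}\bigl(\frac{1+\kappa\cdot\sigma}{2}\bigr)^{\tilde n}$ fails for the actual non-cutoff kernel when $n=0$ (or $\tilde n=0$): since $b(x)\approx(1-x)^{-1-s}$ near $x=1$, the individual monomial integrals diverge, the monomial split having destroyed the vanishing factor $\bigl(\frac{1-\kappa\cdot\sigma}{2}\bigr)^{l/2}$ that makes the full integral converge. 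This is not fatal: the step is redundant, because your own Clebsch--Gordan argument already yields the upper bound $L\le l+\tilde l$ (the intertwiner space $\mathrm{Hom}(D^l\otimes D^{\tilde l},D^L)$ vanishes above the range as well as below it), and the equivariance, azimuthal, and parity arguments apply directly to $F$ without any interchange of sum and integral; alternatively one can first prove the identity for a cutoff kernel and pass to the limit. Either repair leaves your argument complete.
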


{\color{black} 
\begin{proof}
Without loss of generality, we can assume that $\min(l,\tilde{l})=\tilde{l}$
(the integral in \eqref{nonlinear} is invariant if $\sigma$ is changed to $-\sigma$ and the integers $(l,m)$ and $(\tilde{l},\tilde{m})$ are exchanged).
%
\\
- {\sl Step 1}. We first claim {\color{black}that} there exist some constants $C_{\tilde{l},l_1,l_2}$
such that
\begin{align}\label{muti2}
\begin{split}
Y^{\tilde{m}}_{\tilde{l}}
  \left(\frac{\kappa+\sigma}{|\kappa+\sigma|}\right)
=
&\sum_{ l_1+l_2=\tilde{l} }
  C_{\tilde{l},l_1,l_2} \,
  \frac{(\frac{1-\kappa\cdot\sigma}{2})^{\frac{l_2}{2}}}
       {(\frac{1+\kappa\cdot\sigma}{2})^{\frac{\tilde{l}}{2}}}\\
&\times
\sum^{l_1}_{m_1=-l_1}
\sum^{l_2}_{m_2=-l_2}
\left(
  \int_{\mathbb{S}^2_\eta}
    Y^{\tilde{m}}_{\tilde{l}}(\eta)
    Y^{-m_1}_{l_1}(\eta)
    Y^{-m_2}_{l_2}(\eta)d\eta\right)
    Y^{m_1}_{l_1}(\kappa)
    Y^{m_2}_{l_2}\left(\frac{\kappa-\sigma}{|\kappa-\sigma|}\right)
\end{split}
\end{align}
where, in all the sequel of the proof, $l_1$ and $l_2$ in the summation will be {\color{black}non-negative} integers.

Proof of \eqref{muti2} :
From the integral addition theorem \eqref{addition theorem int}
we have
\begin{align*}
Y^{\tilde{m}}_{\tilde{l}}
  \left(\frac{\kappa+\sigma}{|\kappa+\sigma|}\right)
=
\frac{2\tilde{l}+1}{4\pi}
  \int_{\mathbb{S}^2_{\eta}}
    P_{\tilde{l}}
      \left(\frac{\kappa+\sigma}{|\kappa+\sigma|}\cdot\eta\right)
    Y^{\tilde{m}}_{\tilde{l}}(\eta)d\eta.
\end{align*}
From the formula (see $(43)$ in Chapter III in \cite{San}),
there is some real constants $c_{\tilde{l},q}$ such that
\begin{equation}\label{x^l=sum Pl}
x^{\tilde{l}}  =  
\sum_{0\leq q \leq \tilde{l}/2} c_{{\tilde{l}},q}  P_{{\tilde{l}}-2q}(x)
\end{equation}
{\color{black}
where $$c_{\tilde{l},0}=\frac{\tilde{l}!}{1\times3\times5\times\cdots\times(2\tilde{l}-1)}\neq0.$$
}
Writing  $P_{{\tilde{l}}}(x)  =
(1/c_{{\tilde{l}},0})  x^{\tilde{l}}
- (1/c_{{\tilde{l}},0})
\sum_{1\leq q \leq \tilde{l}/2}
  c_{{\tilde{l}},q}  P_{{\tilde{l}}-2q}(x)$
  in the previous integral,
we obtain
\begin{equation}\label{Ylm=1}
Y^{\tilde{m}}_{\tilde{l}}
  \left(\frac{\kappa+\sigma}{|\kappa+\sigma|}\right)
=
\frac{2\tilde{l}+1}{4\pi \, c_{\tilde{l},0}}
  \int_{\mathbb{S}^2_{\eta}}
      \left(\frac{\kappa+\sigma}{|\kappa+\sigma|}\cdot\eta\right)^{\tilde{l}}
    Y^{\tilde{m}}_{\tilde{l}}(\eta)d\eta + R_1
\end{equation}
where
\begin{equation*}
R_1 = - \frac{2\tilde{l}+1}{4\pi\, c_{\tilde{l},0}}
\sum_{{\color{black}1\leq q< \tilde{l}/2}} c_{\tilde{l},q}
  \int_{\mathbb{S}^2_{\eta}}
      P_{\tilde{l}-2q}\left(\frac{\kappa+\sigma}{|\kappa+\sigma|}\cdot\eta\right)
    Y^{\tilde{m}}_{\tilde{l}}(\eta)d\eta.
\end{equation*}
We observe that $R_1=0$. Indeed,
for any $q\neq \tilde{l}$ and $\gamma\in\mathbb{S}^2$, we have from the Funck-Hecke
Formula \eqref{funk-hecke} and the orthogonality of the polynomials $(P_l)_l$,
\begin{align*}
\int_{\mathbb{S}^2_{\eta}}
 P_q(\gamma\cdot\eta) \,  Y^{\tilde{m}}_{\tilde{l}}(\eta)d\eta
 = \left(2\pi \int_{-1}^1 P_q(x) P_{\tilde{l}}(x) \, dx \right)
 Y^{\tilde{m}}_{\tilde{l}}(\gamma) =0.
\end{align*}
Set $\gamma=\frac{\kappa-\sigma}{|\kappa-\sigma|}$.
Therefore we have
\[
\kappa+\sigma  = 2\kappa-2(\kappa\cdot\gamma)\gamma,\,\,
  |\kappa+\sigma| = 2\sqrt{1-(\kappa\cdot\gamma)^2}
\]
and
\begin{equation*} 
\left(\frac{\kappa+\sigma}{|\kappa+\sigma|} \cdot \eta\right)^{\tilde{l}}
= \frac{\left((\kappa\cdot\eta)
  - (\kappa\cdot\gamma)\,(\gamma \cdot \eta)\right)^{\tilde{l}}}
     {(\sqrt{1-(\kappa\cdot\gamma)^2})^{\tilde{l}}}.
\end{equation*}
Expanding this identity thanks to the binomial formula
and plugging it in \eqref{Ylm=1},
we get \begin{align}\label{muti1}
Y^{\tilde{m}}_{\tilde{l}}
  \left(\frac{\kappa+\sigma}{|\kappa+\sigma|}\right)
&=
\frac{2\tilde{l}+1}{4\pi \, c_{\tilde{l},0}}
\sum_{l_1+l_2=\tilde{l}}
  \frac{\tilde{l}!}{l_1!l_2!}
 \frac{(-\kappa\cdot\gamma)^{l_2}}{(\sqrt{1-(\kappa\cdot\gamma)^2})^{\tilde{l}}}
  \int_{\mathbb{S}^2_{\eta}}
    (\kappa\cdot\eta)^{l_1}  (\gamma\cdot\eta)^{l_2}
    Y^{\tilde{m}}_{\tilde{l}}(\eta)d\eta . 
\end{align}
Using the expansion \eqref{x^l=sum Pl} for $x^{l_1}$ and $x^{l_2}$,
we express the previous integrals :
\begin{align*} 
\int_{\mathbb{S}^2_{\eta}}
  (\kappa\cdot\eta)^{l_1}  (\gamma\cdot\eta)^{l_2}
  Y^{\tilde{m}}_{\tilde{l}}(\eta)d\eta
= \sum_{\substack{0\leq q_1 \leq l_1/2 \\ 0\leq q_2 \leq l_2/2}}
c_{l_1,q_1} c_{l_2,q_2}
\int_{\mathbb{S}^2_{\eta}}
  P_{l_1-2q_1}(\kappa\cdot\eta)  P_{l_2-2q_2}(\gamma\cdot\eta)
  Y^{\tilde{m}}_{\tilde{l}}(\eta)d\eta.
\end{align*}
{\color{black}
From the addition Theorem \eqref{addition theorem Y} applied to
both $P_{l_1-2q_1}(\kappa\cdot\eta)$ and $P_{l_2-2q_2}(\gamma\cdot\eta)$,
}
we derive
\begin{align*}
\int_{\mathbb{S}^2_{\eta}}
&    P_{l_1-2q_1}(\kappa\cdot\eta) \, P_{l_2-2q_2}(\gamma\cdot\eta)\,
    Y^{\tilde{m}}_{\tilde{l}}(\eta) \, d\eta
= {\frac{4\pi}{2(l_1-2q_1)+1} \, \frac{4\pi}{2(l_2-2q_2)+1}}
\nonumber\\
&\times
\sum^{l_1-2q_1}_{m_1=-(l_1-2q_1)}
\sum^{l_2-2q_2}_{m_2=-(l_2-2q_2)}
\left(
  \int_{\mathbb{S}^2_\eta}
    Y^{-m_1}_{l_1-2q_1}(\eta)
    Y^{-m_2}_{l_2-2q_2}(\eta)
    Y^{\tilde{m}}_{\tilde{l}}(\eta)
  d\eta\right)
Y^{m_1}_{l_1-2q_1}(\kappa)Y^{m_2}_{l_2-2q_2}(\gamma).
\end{align*}
From \eqref{con},
the integral
$(\int_{\mathbb{S}^2}
Y^{-m_1}_{l_1-2q_1} Y^{-m_2}_{l_2-2q_2} Y^{\tilde{m}}_{\tilde{l}})$
is equal to 0 when the parameters do not satisfy
\begin{equation*}
\tilde{m}=m_1+m_2\,\,\text{and}\,\,\tilde{l}=(l_1-2q_1)+(l_2-2q_2)-2j
\,\,\text{with}\,\,0\leq j\leq\min(l_1-2q_1,l_2-2q_2),
\end{equation*}
consequently for $(q_1,q_2)\neq(0,0)$ (recall that $l_1+l_2=\tilde{l}$).
Therefore we obtain
\begin{align*}
\int_{\mathbb{S}^2_{\eta}}
  (\kappa\cdot\eta)^{l_1}  (\gamma\cdot\eta)^{l_2}
&  Y^{\tilde{m}}_{\tilde{l}}(\eta)d\eta
= {\frac{4\pi \, c_{l_1,0} }{2l_1+1} \, \frac{4\pi \, c_{l_2,0}}{2l_2+1}}
\\
&\times
\sum^{l_1}_{m_1=-l_1}
\sum^{l_2}_{m_2=-l_2}
\left(
  \int_{\mathbb{S}^2_\eta}
    Y^{-m_1}_{l_1}(\eta)
    Y^{-m_2}_{l_2}(\eta)
    Y^{\tilde{m}}_{\tilde{l}}(\eta)
  d\eta\right)
Y^{m_1}_{l_1}(\kappa)Y^{m_2}_{l_2}(\gamma).
\end{align*}
Plugging the previous identity 
into \eqref{muti1}, we have for some constants
$C_{\tilde{l},l_1,l_2}$
\begin{align*}
Y^{\tilde{m}}_{\tilde{l}}
  \left(\frac{\kappa+\sigma}{|\kappa+\sigma|}\right)
=
\sum_{l_1+l_2=\tilde{l}}
&  C_{\tilde{l},l_1,l_2}
  \frac{(-\kappa\cdot\gamma)^{l_2}}
    {(\sqrt{1-(\kappa\cdot\gamma)^2})^{\tilde{l}}}
\nonumber\\
&\times
\sum^{l_1}_{m_1=-l_1}
\sum^{l_2}_{m_2=-l_2}
\left(
  \int_{\mathbb{S}^2_\eta}
    Y^{\tilde{m}}_{\tilde{l}}(\eta)
    Y^{-m_1}_{l_1}(\eta)
    Y^{-m_2}_{l_2}(\eta)d\eta\right)
    Y^{m_1}_{l_1}(\kappa)Y^{m_2}_{l_2}(\gamma).
\end{align*}
Remembering that
$\gamma=\frac{\kappa-\sigma}{|\kappa-\sigma|}$
and checking that
$\kappa\cdot\gamma = (\frac{1-\kappa\cdot\sigma}{2})^{\frac{1}{2}}$,
$\sqrt{1-(\kappa\cdot\gamma)^2} =
  (\frac{1+\kappa\cdot\sigma}{2})^{\frac{1}{2}}$,
we conclude the proof of  \eqref{muti2}.

- {\sl Step 2}. We now prove \eqref{nonlinear}. We note
\begin{align*}
{\bf N} =
\int_{\mathbb{S}^2}
  b(\kappa\cdot\sigma)
  &Y^{m}_l\left(\frac{\kappa-\sigma}{|\kappa-\sigma|}\right)
  Y^{\tilde{m}}_{\tilde{l}}\left(\frac{\kappa+\sigma}{|\kappa+\sigma|}\right)
  \left(\frac{1-\kappa\cdot\sigma}{2}\right)^{\frac{2n+l}{2}}
  \left(\frac{1+\kappa\cdot\sigma}{2}\right)^{\frac{2\tilde{n}+\tilde{l}}{2}}
d\sigma.
\end{align*}
From step 1, we derive
\begin{align*}
{\bf N}
&= \sum_{l_1+l_2=\tilde{l}}
C_{\tilde{l},l_1,l_2} \,
\sum^{l_1}_{m_1=-l_1}
\sum^{l_2}_{m_2=-l_2}
\left(
  \int_{\mathbb{S}^2}
    Y^{\tilde{m}}_{\tilde{l}}
    Y^{-m_1}_{l_1}
    Y^{-m_2}_{l_2}\right)
\\
&\times\left(
\int_{\mathbb{S}^2}
  b(\kappa\cdot\sigma)
  \left(\frac{1-\kappa\cdot\sigma}{2}\right)^{\frac{2n+l+l_2}{2}}
  \left(\frac{1+\kappa\cdot\sigma}{2}\right)^{\frac{2\tilde{n}}{2}}
  Y^{m}_l\left(\frac{\kappa-\sigma}{|\kappa-\sigma|}\right)
 Y^{m_2}_{l_2}\left(\frac{\kappa-\sigma}{|\kappa-\sigma|}\right)
d\sigma    \right)
Y^{m_1}_{l_1}(\kappa).
\end{align*}
Moreover we have from \eqref{represent} and \eqref{con}
\begin{align*}
Y_l^{m}\left(\frac{\kappa-\sigma}{|\kappa-\sigma|}\right)
Y^{m_2}_{l_2}\left(\frac{\kappa-\sigma}{|\kappa-\sigma|}\right)
&=\sum_{l^{\prime}}
\sum_{m^{\prime}=-l^{\prime}}^{l^{\prime}}
\left(
  \int_{\mathbb{S}^2}Y_l^{m}Y^{m_2}_{l_2}
  \overline{Y^{m^{\prime}}_{l^{\prime}}}
\right)
Y^{m^{\prime}}_{l^{\prime}}\left(\frac{\kappa-\sigma}{|\kappa-\sigma|}\right)
\end{align*}
where $l^{\prime}$ is defined by $l^{\prime}=l+l_2-2j_1$
with $0 \leq j_1 \leq\min(l,l_2)$.
{\color{black}
Therefore
}%
\begin{align*}
{\bf N}
=
&\sum_{l_1+l_2=\tilde{l}}
C_{\tilde{l},l_1,l_2} \,
\sum^{l_1}_{m_1=-l_1}
\sum^{l_2}_{m_2=-l_2}
\left(
  \int_{\mathbb{S}^2}
    Y^{\tilde{m}}_{\tilde{l}}
    Y^{-m_1}_{l_1}
    Y^{-m_2}_{l_2}\right)
\sum_{l^{\prime}}\sum_{m^{\prime}=-l^{\prime}}^{l^{\prime}}
\left(
  \int_{\mathbb{S}^2}Y_l^{m}Y^{m_2}_{l_2}
  \overline{Y^{m^{\prime}}_{l^{\prime}}}
\right)
\\
&\times
\left(\int_{\mathbb{S}^2}
  b(\kappa\cdot\sigma)
  \left(\frac{1-\kappa\cdot\sigma}{2}\right)^{\frac{2n+l+l_2}{2}}
  \left(\frac{1+\kappa\cdot\sigma}{2}\right)^{\frac{2\tilde{n}}{2}}
  Y^{m^{\prime}}_{l^{\prime}}\left(\frac{\kappa-\sigma}{|\kappa-\sigma|}\right)
d\sigma
\right)   \,
Y^{m_1}_{l_1}(\kappa).
\end{align*}
We apply the part $(ii)$ of lemma \ref{lemma funk 2} with
$f(x)= b(x) \left(\frac{1-x}{2}\right)^{\frac{2n+l+l_2}{2}}
  \left(\frac{1+x}{2}\right)^{\frac{2\tilde{n}}{2}} $,
 {\color{black}the assumption $\cos\theta\geq0$ and the notation $\beta(\theta)=2\pi b(\cos2\theta)\sin2\theta$,
\begin{align*}
&\int_{\mathbb{S}^2}
  b(\kappa\cdot\sigma)
  \left(\frac{1-\kappa\cdot\sigma}{2}\right)^{\frac{2n+l+l_2}{2}}
  \left(\frac{1+\kappa\cdot\sigma}{2}\right)^{\frac{2\tilde{n}}{2}}
  Y^{m^{\prime}}_{l^{\prime}}\left(\frac{\kappa-\sigma}{|\kappa-\sigma|}\right)
d\sigma\\
&=\Big(2\pi\int^{\frac{\pi}{2}}_0b(\cos\theta)
  (\sin(\theta/2))^{2n+l+l_2}
  (\cos(\theta/2))^{2\tilde{n}}P_{l'}(\sin(\theta/2))\sin\theta d\theta\Big)Y^{m^{\prime}}_{l^{\prime}}(\kappa)\\
&=\Big(4\pi\int^{\frac{\pi}{4}}_0b(\cos2\theta)\sin2\theta
  (\sin\theta)^{2n+l+l_2}
  (\cos\theta)^{2\tilde{n}}P_{l'}(\sin\theta) d\theta\Big)Y^{m^{\prime}}_{l^{\prime}}(\kappa)\\
&=\Big(2\int^{\frac{\pi}{4}}_0\beta(\theta)
  (\sin\theta)^{2n+l+l_2}
  (\cos\theta)^{2\tilde{n}}P_{l'}(\sin\theta) d\theta\Big)Y^{m^{\prime}}_{l^{\prime}}(\kappa).
\end{align*}
}Therefore,%
\begin{align*}
{\bf N}
=
&\sum_{l_1+l_2=\tilde{l}}
C_{\tilde{l},l_1,l_2} \,
\sum^{l_1}_{m_1=-l_1}
\sum^{l_2}_{m_2=-l_2}
\left(
  \int_{\mathbb{S}^2}
    Y^{\tilde{m}}_{\tilde{l}}
    Y^{-m_1}_{l_1}
    Y^{-m_2}_{l_2}\right)
\sum_{l^{\prime}}\sum_{m^{\prime}=-l^{\prime}}^{l^{\prime}}
\left(
  \int_{\mathbb{S}^2}Y_l^{m}Y^{m_2}_{l_2}
  \overline{Y^{m^{\prime}}_{l^{\prime}}}
\right)
\\
&\times
\left(
\left(
2\int^{\frac{\pi}{4}}_0
  \beta(\theta)(\sin\theta)^{2n+l+l_2}
  (\cos\theta)^{2\tilde{n}}
  P_{l^{\prime}}(\sin\theta)
d\theta
\right)
Y^{m^{\prime}}_{l^{\prime}}(\kappa)
\right)
Y^{m_1}_{l_1}(\kappa).
\end{align*}
We again derive from \eqref{represent} and \eqref{con}
\begin{align}
Y_{l^{\prime}}^{m^{\prime}}(\kappa)
Y^{m_1}_{l_1}(\kappa)
&=\sum_{l^{\prime\prime}}
\sum_{m^{\prime\prime} =- l^{\prime\prime}}^{l^{\prime\prime}}
\left(
  \int_{\mathbb{S}^2}Y^{m^{\prime}}_{l^{\prime}}
  Y^{m_1}_{l_1}
  \overline{Y^{m^{\prime\prime}}_{l^{\prime\prime}}}
\right)
Y^{m^{\prime\prime}}_{l^{\prime\prime}}(\kappa)\nonumber
\end{align}
where $l^{\prime\prime}$ is defined by $l^{\prime\prime}=l^{\prime}+l_1-2j_2$
with $0 \leq j_2 \leq \min(l',l_1)$.
We conclude to
\begin{align*}
{\bf N}
=
&\sum_{l_1+l_2=\tilde{l}}
C_{\tilde{l},l_1,l_2} \,
\sum^{l_1}_{m_1=-l_1}
\sum^{l_2}_{m_2=-l_2}
\left(
  \int_{\mathbb{S}^2}
    Y^{\tilde{m}}_{\tilde{l}}
    Y^{-m_1}_{l_1}
    Y^{-m_2}_{l_2}\right)
\sum_{l^{\prime}}\sum_{m^{\prime}=-l^{\prime}}^{l^{\prime}}
\left(
  \int_{\mathbb{S}^2}Y_l^{m}Y^{m_2}_{l_2}
  \overline{Y^{m^{\prime}}_{l^{\prime}}}
\right)
\\
&\times
\left(
2\int^{\frac{\pi}{4}}_0
  \beta(\theta)(\sin\theta)^{2n+l+l_2}
  (\cos\theta)^{2\tilde{n}}
  P_{l^{\prime}}(\sin\theta)
d\theta
\right)
\sum_{l^{\prime\prime}}
\sum_{m^{\prime\prime} =- l^{\prime\prime}}^{l^{\prime\prime}}
\left(
  \int_{\mathbb{S}^2}Y^{m^{\prime}}_{l^{\prime}}
  Y^{m_1}_{l_1}
  \overline{Y^{m^{\prime\prime}}_{l^{\prime\prime}}}
\right)
Y^{m^{\prime\prime}}_{l^{\prime\prime}}(\kappa)
\end{align*}
which is nonzero from \eqref{con} when
$$m^{\prime\prime}=m^{\prime}+m_1=m+m_2+m_1=m+\tilde{m}.$$
From the previous expressions of $l^{\prime\prime}$ and $l^{\prime}$,
$$l^{\prime\prime}=l+l_1+l_2-2(j_1+j_2)=\tilde{l}+l-2(j_1+j_2)$$
and
$0\leq j_1+j_2\leq  \min(l,l_2) +  \min(l',l_1)$.
From $l_1+l_2=\tilde{l}$ and the assumption $\tilde{l}\leq l$,
we get
$$  \min(l,l_2) +  \min(l',l_1) = l_2 +  \min(l',l_1)
= \min(l_2+l',\tilde{l}) \leq \tilde{l}.$$
We derive
$0\leq j_1+j_2\leq \tilde{l} = \min(l,\tilde{l})$ .\,\,
For $l$, $\tilde{l}$, $m$, $\tilde{m}$ fixed,
we can define $l''=l+\tilde{l}-2k$ with
$0\leq k\leq  \min(l,\tilde{l})$.
Then the coefficient of $Y^{m^{\prime\prime}}_{l^{\prime\prime}}(\kappa)$
is nonzero when
 $$|m+\tilde{m}|\leq l+\tilde{l}-2k.$$
 Therefore,
 $$k\leq\frac{l+\tilde{l}-|m+\tilde{m}|}{2}. $$
 In conclusion,
 $$0\leq k\leq k_0(l,\tilde{l},m,\tilde{m})$$
 where $k_0(l,\tilde{l},m,\tilde{m})$ was given in \eqref{k_0}.
This ends the proof of \eqref{nonlinear}.
\end{proof}
}

\subsection{The proof of {\color{black} Proposition} \ref{expansion} }

The spectral representation will be based on the Bobylev formula, which is the Fourier transform of the Boltzmann operator
{\color{black}in the Maxwellian molecules case}:
\begin{equation*}
\mathcal{F}(Q(g,f))(\xi)=\int_{\mathbb{S}^2}b\left(\frac{\xi}{|\xi|}
\cdot\sigma\right)
\Big[\hat{g}(\xi^-)\hat{f}(\xi^+)-\hat{g}(0)\hat{f}(\xi)\Big]d\sigma,
\end{equation*}
where
$$
\xi^-=\frac{\xi-|\xi|\sigma}{2}=\frac{|\xi|}{2}(\kappa-\sigma),\qquad \xi^+=\frac{\xi+|\xi|\sigma}{2}=\frac{|\xi|}{2}(\kappa+\sigma)
$$
with $ \kappa =\frac{\xi}{|\xi|}$. Remark that
$$
\kappa\cdot\sigma =\cos\theta,
\quad
|\xi^-|
=|\xi|\,|\sin (\theta/2)|,\quad
|\xi^+|
=|\xi|\cos (\theta/2).
$$
Let $\varphi_{n,l,{m}}$ be the functions defined in \eqref{v}, then for $n,l\in\mathbb{N},\,|m|\leq l$, we have
(see Lemma \ref{Fouriertransform})
\begin{equation}\label{four-1}
\widehat{\sqrt{\mu}\varphi_{n,l,{m}}}(\xi)=
A_{n,l}
\Big(\frac{|\xi|}{\sqrt{2}}\Big)^{2n+l}e^{-\frac{|\xi|^2}{2}}Y_l^{m}\Big(\frac{\xi}{|\xi|}\Big),
\end{equation}
where
\begin{equation*}
A_{n,l}=(-i)^l(2\pi)^\frac{3}{4}\left(\frac{1}{\sqrt{2}n!
\Gamma(n+l+\frac{3}{2})}\right)^\frac{1}{2}.
\end{equation*}
In the special case $l=0$, {\color{black}this is the Hermite function,}
\begin{equation}\label{four-2}
\widehat{\sqrt{\mu}\varphi_{n,0,0}}(\xi)
=\frac{1}{\sqrt{(2n+1)!}}|\xi|^{2n}e^{-\frac{|\xi|^2}{2}}\,.
\end{equation}
We deduce from the Bobylev formula that,
$\forall\,n,\,l,\,m,\,\tilde{n},\,\tilde{l},\,\tilde{m}\in
\mathbb{N}$, with $|m|\leq l$,\,$|\tilde{m}|\leq \tilde{l}$,
\begin{align}\label{F G phi phi}
\begin{split}
&\mathcal{F}\left(\sqrt{\mu} \,
{\bf \Gamma}(\varphi_{n,l,m},
  \varphi_{\tilde{n},\tilde{l},\tilde{m}}\right)(\xi)
= \mathcal{F}(Q(\sqrt{\mu}\varphi_{n,l,m},\sqrt{\mu}
\varphi_{\tilde{n},\tilde{l},\tilde{m}}))(\xi)
\\
&=\int_{\mathbb{S}^2}b\left(\frac{\xi}{|\xi|}\cdot\sigma\right)
\Big[\widehat{\sqrt{\mu}
\varphi_{n,l,m}}(\xi^-)\widehat{\sqrt{\mu}
\varphi_{\tilde{n},\tilde{l},\tilde{m}}}(\xi^+)-
\widehat{\sqrt{\mu}\varphi_{n,l,m}}(0)\widehat{\sqrt{\mu}
\varphi_{\tilde{n},\tilde{l},\tilde{m}}}(\xi)\Big]d\sigma.
\end{split}
\end{align}
In the next propositions, we will compute the terms
${\bf \Gamma}(\varphi_{n,l,m},
  \varphi_{\tilde{n},\tilde{l},\tilde{m}})$
and proposition \ref{expansion} will follows.

\begin{proposition}\label{expansion 000 nlm}
The following algebraic identities hold,
{\color{black}
\begin{align}
(i) \quad\,\, &{\bf \Gamma}(\varphi_{0,0,0},\varphi_{\tilde{n},\tilde{l},\tilde{m}})\nonumber
\\
&=\left(
\int_{|\theta|\leq\frac{\pi}{4}}
  \beta(\theta)
  \left(
    (\cos\theta)^{2\tilde{n}+\tilde{l}}
    P_{\tilde{l}}(\cos\theta) - 1
  \right)
d\theta
\right)
\varphi_{\tilde{n},\tilde{l},\tilde{m}};\nonumber
\\
(ii) \quad\,\,
&{\bf \Gamma}(\varphi_{n,l,m},\varphi_{0,0,0})\nonumber
\\
&=\left(
\int_{|\theta|\leq\frac{\pi}{4}}
  \beta(\theta)
  \left(
    (\sin\theta)^{2n+l}
    P_{l}(\sin\theta) - \delta_{0,n} \, \delta_{0,l}
    \right)
d\theta
\right)\varphi_{n,l,m}.
\nonumber
\end{align}
}
\end{proposition}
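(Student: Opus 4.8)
The plan is to read off both identities directly from the Bobylev representation \eqref{F G phi phi}, using the fact that $\varphi_{0,0,0}=\sqrt{\mu}$, so that one of the two Fourier factors degenerates to a pure Gaussian. Recall from \eqref{four-2} that $\widehat{\sqrt{\mu}\,\varphi_{0,0,0}}(\xi)=e^{-|\xi|^2/2}$, and from the relations following Bobylev's formula that $\xi^{\pm}=\tfrac{|\xi|}{2}(\kappa\pm\sigma)$ with $\kappa=\xi/|\xi|$, whence $|\xi^-|^2+|\xi^+|^2=|\xi|^2$, $\tfrac{|\xi^\pm|}{|\xi|}=\bigl(\tfrac{1\pm\kappa\cdot\sigma}{2}\bigr)^{1/2}$ and $\tfrac{\xi^\pm}{|\xi^\pm|}=\tfrac{\kappa\pm\sigma}{|\kappa\pm\sigma|}$. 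In each case I would substitute the explicit transforms \eqref{four-1}, factor out $A_{\bullet,\bullet}\bigl(\tfrac{|\xi|}{\sqrt2}\bigr)^{2\bullet+\bullet}e^{-|\xi|^2/2}$, reduce the right-hand side of \eqref{F G phi phi} to (a scalar depending on $\xi$ only through an $\mathbb{S}^2$-integral) times the Fourier transform of the target eigenfunction, and finally invert the Fourier transform and divide by $\sqrt{\mu}$.

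For part $(i)$ I take $g=\sqrt\mu\,\varphi_{0,0,0}$ and $f=\sqrt\mu\,\varphi_{\tilde n,\tilde l,\tilde m}$, so that $\widehat g(\xi^-)=e^{-|\xi^-|^2/2}$ and $\widehat g(0)=1$. Using the relations above, the gain term becomes $A_{\tilde n,\tilde l}\bigl(\tfrac{|\xi|}{\sqrt2}\bigr)^{2\tilde n+\tilde l}e^{-|\xi|^2/2}\bigl(\tfrac{1+\kappa\cdot\sigma}{2}\bigr)^{(2\tilde n+\tilde l)/2}Y^{\tilde m}_{\tilde l}\bigl(\tfrac{\kappa+\sigma}{|\kappa+\sigma|}\bigr)$ and the loss term the same prefactor times $Y^{\tilde m}_{\tilde l}(\kappa)$. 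Their difference, integrated against $b(\kappa\cdot\sigma)$ over $\mathbb{S}^2$, is exactly the left-hand side of \eqref{funk 2}, so Corollary \ref{coroll int f Y+} evaluates it to $\bigl[\int_{|\theta|\le\pi/4}\beta(\theta)((\cos\theta)^{2\tilde n+\tilde l}P_{\tilde l}(\cos\theta)-1)\,d\theta\bigr]Y^{\tilde m}_{\tilde l}(\kappa)$. Since $A_{\tilde n,\tilde l}\bigl(\tfrac{|\xi|}{\sqrt2}\bigr)^{2\tilde n+\tilde l}e^{-|\xi|^2/2}Y^{\tilde m}_{\tilde l}(\kappa)=\widehat{\sqrt\mu\,\varphi_{\tilde n,\tilde l,\tilde m}}(\xi)$, inverting yields $(i)$ at once.

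For part $(ii)$ I take $g=\sqrt\mu\,\varphi_{n,l,m}$, $f=\sqrt\mu\,\varphi_{0,0,0}$, so $\widehat f(\xi^+)=e^{-|\xi^+|^2/2}$, $\widehat f(\xi)=e^{-|\xi|^2/2}$, and crucially $\widehat g(0)=\widehat{\sqrt\mu\,\varphi_{n,l,m}}(0)=\delta_{0,n}\delta_{0,l}$, because the factor $|\xi|^{2n+l}$ in \eqref{four-1} annihilates the loss term unless $n=l=0$. After factoring, the gain term reduces to
\begin{equation*}
A_{n,l}\Bigl(\tfrac{|\xi|}{\sqrt2}\Bigr)^{2n+l}e^{-|\xi|^2/2}
\int_{\mathbb{S}^2}b(\kappa\cdot\sigma)\Bigl(\tfrac{1-\kappa\cdot\sigma}{2}\Bigr)^{\frac{2n+l}{2}}
Y^{m}_l\Bigl(\tfrac{\kappa-\sigma}{|\kappa-\sigma|}\Bigr)\,d\sigma,
\end{equation*}
and part $(ii)$ of Lemma \ref{lemma funk 2}, applied with $f(x)=b(x)\bigl(\tfrac{1-x}{2}\bigr)^{(2n+l)/2}$, turns the integral into $\bigl(2\pi\int_0^\pi b(\cos\theta)(\sin\tfrac\theta2)^{2n+l}\sin\theta\,P_l(\sin\tfrac\theta2)\,d\theta\bigr)Y^m_l(\kappa)$, so that the whole gain term equals that scalar times $\widehat{\sqrt\mu\,\varphi_{n,l,m}}(\xi)$. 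I then perform the change of variable $\theta\mapsto2\theta$ together with the support condition $\cos\theta\ge0$ and the definition $\beta(\theta)=2\pi b(\cos2\theta)\sin2\theta$ from \eqref{beta}, exactly as in the concluding computation of the proof of Lemma \ref{lemma:5.1}, obtaining $2\int_0^{\pi/4}\beta(\theta)(\sin\theta)^{2n+l}P_l(\sin\theta)\,d\theta=\int_{|\theta|\le\pi/4}\beta(\theta)(\sin\theta)^{2n+l}P_l(\sin\theta)\,d\theta$ (the extension to $[-\tfrac\pi4,\tfrac\pi4]$ being justified by the evenness of $\beta$ and of $\theta\mapsto(\sin\theta)^{2n+l}P_l(\sin\theta)$). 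The same change of variable applied to the loss contribution $\delta_{0,n}\delta_{0,l}\,e^{-|\xi|^2/2}\int_{\mathbb{S}^2}b(\kappa\cdot\sigma)\,d\sigma$ produces $\delta_{0,n}\delta_{0,l}\int_{|\theta|\le\pi/4}\beta(\theta)\,d\theta$; since $e^{-|\xi|^2/2}=\widehat{\sqrt\mu\,\varphi_{0,0,0}}(\xi)$ agrees with $\widehat{\sqrt\mu\,\varphi_{n,l,m}}(\xi)$ precisely on the support $n=l=0$ of the Kronecker symbol, both contributions combine under one bracket multiplying $\widehat{\sqrt\mu\,\varphi_{n,l,m}}(\xi)$, giving the stated factor $\int_{|\theta|\le\pi/4}\beta(\theta)\bigl((\sin\theta)^{2n+l}P_l(\sin\theta)-\delta_{0,n}\delta_{0,l}\bigr)\,d\theta$; inverting and dividing by $\sqrt\mu$ finishes $(ii)$.

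The substantive harmonic analysis is already packaged in Lemma \ref{lemma funk 2} and Corollary \ref{coroll int f Y+}, so part $(i)$ is essentially immediate. I expect the only delicate point to lie in $(ii)$: namely the correct bookkeeping of the loss term, recognizing that $\widehat{\sqrt\mu\,\varphi_{n,l,m}}(0)=\delta_{0,n}\delta_{0,l}$ so it survives only in the radial ground state, and carrying out the half-angle reduction so that $P_l(\sin\tfrac\theta2)$ becomes $P_l(\sin\theta)$ integrated against $\beta$ on $[-\tfrac\pi4,\tfrac\pi4]$, with the parity argument supplying the symmetric range.
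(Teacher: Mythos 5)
Your proposal is correct and follows essentially the same route as the paper: substitute the explicit Fourier transforms \eqref{four-1}--\eqref{four-2} into the Bobylev representation \eqref{F G phi phi}, evaluate the resulting spherical integral by Corollary \ref{coroll int f Y+} for $(i)$ and by part $(ii)$ of Lemma \ref{lemma funk 2} for $(ii)$, and invert the Fourier transform. The paper dispatches $(ii)$ with ``similar arguments''; your explicit bookkeeping of the loss term via $\widehat{\sqrt{\mu}\varphi_{n,l,m}}(0)=\delta_{0,n}\delta_{0,l}$, and the half-angle substitution with the parity argument turning $2\int_0^{\pi/4}$ into $\int_{|\theta|\leq\pi/4}$, is exactly the computation the paper carries out inside the proof of Lemma \ref{lemma:5.1}, so nothing is missing.
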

This is exactly $(i_1)$ and $(i_2)$ of {\color{black} Proposition} \ref{expansion}.

\begin{proof} Since
$$
\widehat{\sqrt{\mu}\varphi_{n,l,m}}(0)=\delta_{n,0}\delta_{l,0},
$$
then when $n=0,l=0$,\,by using \eqref{four-1} and \eqref{four-2},  {\color{black}recall} that $\kappa=\xi/|\xi|$, we have
\begin{align}
&\mathcal{F}(Q(\sqrt{\mu}\varphi_{0,0,0},\sqrt{\mu}
\varphi_{\tilde{n},\tilde{l},\tilde{m}}))(\xi)\nonumber\\
&=\int_{\mathbb{S}^2}b
\left(\frac{\xi}{|\xi|}\cdot\sigma\right)
\left[
  \widehat{\sqrt{\mu}\varphi_{0,0,0}}(\xi^-)\widehat{\sqrt{\mu}
  \varphi_{\tilde{n},\tilde{l},\tilde{m}}}(\xi^+) -
  \widehat{\sqrt{\mu}\varphi_{0,0,0}}(0)
  \widehat{\sqrt{\mu}\varphi_{\tilde{n},\tilde{l},\tilde{m}}}(\xi)
\right] d\sigma.\nonumber\\
&=A_{\tilde{n},\tilde{l}}
e^{-\frac{|\xi|^2}{2}}
\left(\frac{|\xi|}{\sqrt{2}}\right)^{2\tilde{n}+\tilde{l}}
\int_{\mathbb{S}^2}b(\kappa\cdot\sigma)
\left[
  Y^{\tilde{m}}_{\tilde{l}}
  \left(\frac{\kappa+\sigma}{|\kappa+\sigma|}\right)
  \left(\frac{|\kappa+\sigma|}{2}\right)^{2\tilde{n}
  +\tilde{l}}-Y^{\tilde{m}}_{\tilde{l}}(\kappa)
\right]d\sigma.\nonumber
\end{align}
Apply now the identity \eqref{funk 2} of {\color{black} Corollary} \ref{coroll int f Y+}, one can find that,
\begin{align*}
&\mathcal{F}(Q(\sqrt{\mu}\varphi_{0,0,0},\sqrt{\mu}
\varphi_{\tilde{n},\tilde{l},\tilde{m}}))(\xi)\\
&=\int_{|\theta|\leq\frac{\pi}{4}}\beta(\theta)
\Big[(\cos\theta)^{2\tilde{n}+\tilde{l}}P_{\tilde{l}}
(\cos\theta)-1\Big]d\theta \widehat{\sqrt{\mu}\varphi_{\tilde{n},\tilde{l},
\tilde{m}}}(\xi).
\end{align*}
Hence by the inverse Fourier transform
\begin{align*}
{\bf \Gamma}(\varphi_{0,0,0},\varphi_{\tilde{n},\tilde{l},\tilde{m}})
&=\frac{1}{\sqrt{\mu}}Q(\sqrt{\mu}\varphi_{0,0,0},\sqrt{\mu}
\varphi_{\tilde{n},\tilde{l},\tilde{m}})\\
&=\left(\int_{|\theta|\leq\frac{\pi}{4}}\beta(\theta)
\Big((\cos\theta)^{2\tilde{n}+\tilde{l}}P_{\tilde{l}}
(\cos\theta)-1\Big)d\theta\right) \varphi_{\tilde{n},\tilde{l},
\tilde{m}}.
\end{align*}
The result of $(i)$ follows. Similar arguments apply to the case $(ii)$,
and this ends the proof of Proposition \ref{expansion 000 nlm}.
\end{proof}

\begin{proposition}\label{expansion n00 nlm}
The following algebraic identities hold,
\begin{align}
(i) \quad\,\,
&{\bf \Gamma}(\varphi_{n,0,0},\varphi_{\tilde{n},\tilde{l},\tilde{m}})\nonumber
\\
&={\color{black} \frac{1}{\sqrt{4\pi}}}\frac{A_{\tilde{n},\tilde{l}}A_{n,0}}{A_{n+\tilde{n},\tilde{l}}}
\left(
\int_{|\theta|\leq\frac{\pi}{4}}
  \beta(\theta)(\sin\theta)^{2n}
  (\cos\theta)^{2\tilde{n}+\tilde{l}}
  P_{\tilde{l}}(\cos\theta)
d\theta
\right)
\varphi_{n+\tilde{n},\tilde{l},\tilde{m}},\, \,\,n\geq1;\nonumber
\end{align}
\begin{align}
(ii) \quad\,\, &{\bf \Gamma}(\varphi_{n,l,m},\varphi_{\tilde{n},0,0})\nonumber
\\
&={\color{black} \frac{1}{\sqrt{4\pi}}}\frac{A_{\tilde{n},0}A_{n,l}}{A_{n+\tilde{n},l}}
\left(
\int_{|\theta|\leq\frac{\pi}{4}}
  \beta(\theta)(\sin\theta)^{2n+l}
  P_{l}(\sin\theta)
  (\cos\theta)^{2\tilde{n}}
d\theta
\right)
\varphi_{n+\tilde{n},l,m},\, \,\,l\geq1.\nonumber
\end{align}
\end{proposition}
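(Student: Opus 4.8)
The plan is to compute each Fourier side $\mathcal{F}\bigl(\sqrt{\mu}\,{\bf \Gamma}(\cdot,\cdot)\bigr)(\xi)$ directly from the Bobylev formula \eqref{F G phi phi} and then invert. The crucial simplification is that in both cases the ``loss'' contribution $\widehat{\sqrt{\mu}\varphi_{\cdot}}(0)\,\widehat{\sqrt{\mu}\varphi_{\cdot}}(\xi)$ vanishes: for $(i)$ we have $n\ge1$, so $\widehat{\sqrt{\mu}\varphi_{n,0,0}}(0)=0$ by \eqref{four-2}, and for $(ii)$ we have $l\ge1$, so $\widehat{\sqrt{\mu}\varphi_{n,l,m}}(0)=\delta_{n,0}\delta_{l,0}=0$. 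Hence only the bilinear ``gain'' integral over $\mathbb{S}^2$ survives in each case.

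For $(i)$, I would insert the explicit transforms \eqref{four-1}--\eqref{four-2}, evaluating the radial factor of $\varphi_{n,0,0}$ at $\xi^-$ and that of $\varphi_{\tilde{n},\tilde{l},\tilde{m}}$ at $\xi^+$. Since $Y^0_0=1/\sqrt{4\pi}$ is constant, this produces the prefactor $1/\sqrt{4\pi}$, and the relations $|\xi^-|=|\xi|\sin(\theta/2)$, $|\xi^+|=|\xi|\cos(\theta/2)$, $|\xi^-|^2+|\xi^+|^2=|\xi|^2$ let me pull out the factor $A_{n,0}A_{\tilde{n},\tilde{l}}(|\xi|/\sqrt{2})^{2(n+\tilde{n})+\tilde{l}}e^{-|\xi|^2/2}$. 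The remaining angular integral then has the form $\int_{\mathbb{S}^2}f(\kappa\cdot\sigma)\,Y^{\tilde{m}}_{\tilde{l}}(\tfrac{\kappa+\sigma}{|\kappa+\sigma|})\,d\sigma$ with $f(\cos\theta)=b(\cos\theta)(\sin(\theta/2))^{2n}(\cos(\theta/2))^{2\tilde{n}+\tilde{l}}$, to which I apply part $(i)$ of Lemma \ref{lemma funk 2}; this collapses it to a scalar multiple of $Y^{\tilde{m}}_{\tilde{l}}(\kappa)$. Comparing the resulting expression with $\widehat{\sqrt{\mu}\varphi_{n+\tilde{n},\tilde{l},\tilde{m}}}(\xi)$ identifies the constant $A_{n+\tilde{n},\tilde{l}}$, and taking the inverse Fourier transform and dividing by $\sqrt{\mu}$ yields the claimed identity. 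Case $(ii)$ is entirely parallel, with the roles of $\xi^+$ and $\xi^-$ interchanged: here the surviving harmonic is $Y^m_l(\tfrac{\kappa-\sigma}{|\kappa-\sigma|})$, so I use part $(ii)$ of Lemma \ref{lemma funk 2} with $f(\cos\theta)=b(\cos\theta)(\sin(\theta/2))^{2n+l}(\cos(\theta/2))^{2\tilde{n}}$, and compare with $\widehat{\sqrt{\mu}\varphi_{n+\tilde{n},l,m}}(\xi)$.

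The one bookkeeping step needing care is rewriting the angular integral in terms of $\beta$. After applying Lemma \ref{lemma funk 2}, in case $(i)$ the integral reads $2\pi\int_0^\pi f(\cos\theta)\sin\theta\,P_{\tilde{l}}(\cos(\theta/2))\,d\theta$; restricting to the support $\cos\theta\ge0$, i.e. $\theta\in[0,\pi/2]$, and performing the half-angle substitution $\theta\mapsto2\theta$ turns $2\pi\,b(\cos2\theta)\,|\sin2\theta|$ into $\beta(\theta)$, converts the half-angle powers into $(\sin\theta)^{2n}$ and $(\cos\theta)^{2\tilde{n}+\tilde{l}}$, and replaces $P_{\tilde{l}}(\cos(\theta/2))$ by $P_{\tilde{l}}(\cos\theta)$. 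Because the resulting integrand is even in $\theta$, the factor $2\int_0^{\pi/4}$ becomes $\int_{|\theta|\le\pi/4}$, matching the stated formula. I expect this change of variables, together with the correct tracking of the constants $A_{n,l}$ and of the single $1/\sqrt{4\pi}$ coming from $Y^0_0$, to be the only delicate point; all remaining steps are direct substitutions into the Bobylev identity and Lemma \ref{lemma funk 2}.
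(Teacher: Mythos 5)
Your proposal is correct and follows essentially the same route as the paper: kill the loss term via $\widehat{\sqrt{\mu}\varphi_{n,0,0}}(0)=0$ (resp.\ $\widehat{\sqrt{\mu}\varphi_{n,l,m}}(0)=\delta_{n,0}\delta_{l,0}=0$), insert the explicit transforms \eqref{four-1}--\eqref{four-2} into the Bobylev identity \eqref{F G phi phi}, collapse the angular integral with Lemma \ref{lemma funk 2} (the paper phrases this step through Corollary \ref{coroll int f Y+}, which is the same computation), perform the half-angle substitution producing $\beta$, and conclude by inverse Fourier transform. Your bookkeeping of the $1/\sqrt{4\pi}$ factor from $Y^0_0$, of the constants $A_{n,l}$, and of the evenness argument turning $2\int_0^{\pi/4}$ into $\int_{|\theta|\le\pi/4}$ matches the paper exactly.
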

This is exactly $(ii_1)$ and $(ii_2)$ of {\color{black} Proposition} \ref{expansion}.

\begin{proof}
For $n\geq1$, using \eqref{four-2},
we obtain
 $$\widehat{\sqrt{\mu}\varphi_{n,0,0}}(0)=0.$$
By using \eqref{four-1} for \eqref{F G phi phi}, {\color{black}recall} that $\kappa=\xi/|\xi|$, $Y^0_0=1/\sqrt{4\pi}$,  one gets
\begin{align*}
&\mathcal{F}
(Q(\sqrt{\mu}\varphi_{n,0,0},\sqrt{\mu}\varphi_{\tilde{n},
\tilde{l},\tilde{m}}))(\xi)\nonumber
= A_{\tilde{n},\tilde{l}}
A_{n,0}e^{-\frac{|\xi|^2}{2}}
\left(\frac{|\xi|}{\sqrt{2}}\right)^{2(n+\tilde{n})+\tilde{l}}
\\
&\quad\times
{\color{black} \frac{1}{\sqrt{4\pi}}}\int_{\mathbb{S}^2}b(\kappa\cdot\sigma)
\Big(\frac{1-\kappa\cdot\sigma}{2}\Big)^{n}
\Big(\frac{|\kappa+\sigma|}{2}\Big)^{2\tilde{n}+\tilde{l}}
Y^{\tilde{m}}_{\tilde{l}}
\Big(\frac{\kappa+\sigma}{|\kappa+\sigma|}\Big)
d\sigma.  \nonumber
\end{align*}
We then apply the identity \eqref{funk 2} of corollary \ref{coroll int f Y+}
and again \eqref{four-1} and derive
\begin{align*}
&\mathcal{F}(Q(\sqrt{\mu}\varphi_{n,0,0},\sqrt{\mu}\varphi_{\tilde{n},
\tilde{l},\tilde{m}}))(\xi)\nonumber\\
&={\color{black} \frac{1}{\sqrt{4\pi}}}\frac{A_{\tilde{n},\tilde{l}}A_{n,0}}{A_{n+\tilde{n},
\tilde{l}}}\Big[\int_{|\theta|\leq\frac{\pi}{4}}
\beta(\theta)\Big((\sin\theta)^{2n}(\cos\theta)^{2\tilde{n}
+\tilde{l}}P_{\tilde{l}}(\cos\theta)\Big)d\theta\Big] \widehat{\sqrt{\mu}\varphi_{\tilde{n}+n,\tilde{l},\tilde{m}}}(\xi).\nonumber
\end{align*}
We obtain that by the inverse Fourier transform
\begin{align*}
{\bf \Gamma}(\varphi_{n,0,0},\varphi_{\tilde{n},
\tilde{l},\tilde{m}})&=\frac{1}{\sqrt{\mu}}Q(\sqrt{\mu}\varphi_{n,0,0},\sqrt{\mu}\varphi_{\tilde{n},
\tilde{l},\tilde{m}})\\
&={\color{black} \frac{1}{\sqrt{4\pi}}}\frac{A_{\tilde{n},\tilde{l}}A_{n,0}}{A_{n+\tilde{n},
\tilde{l}}}\Big[\int_{|\theta|\leq\frac{\pi}{4}}
\beta(\theta)\Big((\sin\theta)^{2n}(\cos\theta)^{2\tilde{n}
+\tilde{l}}P_{\tilde{l}}(\cos\theta)\Big)d\theta\Big] \varphi_{\tilde{n}+n,\tilde{l},\tilde{m}}.
\end{align*}
Thus $(i)$ follows. Analogously, $(ii)$ holds true.
This ends the proof of Proposition \ref{expansion n00 nlm}.
\end{proof}

\begin{proposition}\label{expansion nlm nlm}
The following algebraic identities hold
for $l\geq1,\,\tilde{l}\geq1$ :
\begin{align}
&{\bf \Gamma}(\varphi_{n,l,m},\varphi_{\tilde{n},\tilde{l},\tilde{m}})\nonumber\\
&=
\sum^{k_0(l,\tilde{l},m,\tilde{m})}_{k=0}
\frac{A_{\tilde{n},\tilde{l}}A_{n,l}}{A_{n+\tilde{n}+k,l+\tilde{l}-2k}}
\left(\int_{\mathbb{S}^2_{\kappa}}G^{m,\tilde{m}}_{n,\tilde{n},l,\tilde{l}}(\kappa)
\overline{Y^{m+\tilde{m}}_{l+\tilde{l}-2k}}
(\kappa)d\kappa\right)\varphi_{n+\tilde{n}+k,l+\tilde{l}-2k,m+\tilde{m}},\,\nonumber
\end{align}
where $k_0(l,\tilde{l},m,\tilde{m})$ is given in \eqref{k_0} and $G^{m,\tilde{m}}_{n,\tilde{n},l,\tilde{l}}$ is defined by
\begin{align}\label{Gamma1}
\begin{split}
G^{m,\tilde{m}}_{n,\tilde{n},l,\tilde{l}}(\kappa)&=
\int_{\mathbb{S}^2}b(\kappa\cdot\sigma)\Big(|\kappa-\sigma|/2\Big)^{2n+l}
\Big(|\kappa+\sigma|/2\Big)^{2\tilde{n}+
\tilde{l}}\\
&\quad\qquad\times
Y^{m}_l\Big(\frac{\kappa-\sigma}
{|\kappa-\sigma|}\Big) Y^{\tilde{m}}_{\tilde{l}}\Big(\frac{\kappa+\sigma}{|\kappa+\sigma|}\Big)\, d\sigma.
\end{split}
\end{align}
\end{proposition}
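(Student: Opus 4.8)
The plan is to compute the Fourier transform of $\sqrt{\mu}\,{\bf \Gamma}(\varphi_{n,l,m},\varphi_{\tilde{n},\tilde{l},\tilde{m}})$ through the Bobylev formula \eqref{F G phi phi}, and then to recognize the outcome as a finite superposition of the Fourier transforms \eqref{four-1} of the basis functions. First I would note that, since $l\geq1$, the loss term in \eqref{F G phi phi} disappears: indeed $\widehat{\sqrt{\mu}\varphi_{n,l,m}}(0)=\delta_{n,0}\delta_{l,0}=0$, so only the gain term $\widehat{\sqrt{\mu}\varphi_{n,l,m}}(\xi^-)\,\widehat{\sqrt{\mu}\varphi_{\tilde{n},\tilde{l},\tilde{m}}}(\xi^+)$ contributes.

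Next I would substitute \eqref{four-1} evaluated at $\xi^-=\frac{|\xi|}{2}(\kappa-\sigma)$ and $\xi^+=\frac{|\xi|}{2}(\kappa+\sigma)$. Using the relations $|\xi^\pm|=\frac{|\xi|}{2}|\kappa\pm\sigma|$, $\frac{\xi^\pm}{|\xi^\pm|}=\frac{\kappa\pm\sigma}{|\kappa\pm\sigma|}$, together with $|\xi^-|^2+|\xi^+|^2=|\xi|^2$, the two Gaussian factors collapse into a single $e^{-|\xi|^2/2}$ and the common radial factor $(|\xi|/\sqrt{2})^{2(n+\tilde{n})+l+\tilde{l}}$ can be pulled out. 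What is then left inside the integral over $\sigma$ is precisely $G^{m,\tilde{m}}_{n,\tilde{n},l,\tilde{l}}(\kappa)$ from \eqref{Gamma1}, giving
$$
\mathcal{F}\big(\sqrt{\mu}\,{\bf \Gamma}(\varphi_{n,l,m},\varphi_{\tilde{n},\tilde{l},\tilde{m}})\big)(\xi)
=A_{n,l}A_{\tilde{n},\tilde{l}}\Big(\frac{|\xi|}{\sqrt{2}}\Big)^{2(n+\tilde{n})+l+\tilde{l}}e^{-\frac{|\xi|^2}{2}}\,G^{m,\tilde{m}}_{n,\tilde{n},l,\tilde{l}}(\kappa).
$$

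The heart of the argument is then Lemma \ref{lemma:5.1}, which I would invoke to expand the angular function $G^{m,\tilde{m}}_{n,\tilde{n},l,\tilde{l}}(\kappa)$ as a finite sum $\sum_{k=0}^{k_0(l,\tilde{l},m,\tilde{m})}c^{k}\,Y^{m+\tilde{m}}_{l+\tilde{l}-2k}(\kappa)$, with $k_0$ as in \eqref{k_0}; by the orthonormality of the spherical harmonics the coefficients are the projections $c^{k}=\int_{\mathbb{S}^2_{\kappa}}G^{m,\tilde{m}}_{n,\tilde{n},l,\tilde{l}}(\kappa)\,\overline{Y^{m+\tilde{m}}_{l+\tilde{l}-2k}}(\kappa)\,d\kappa$. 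The crucial point that makes the triangular structure close up is that the exponent $2(n+\tilde{n})+l+\tilde{l}$ of $|\xi|$ is \emph{independent} of $k$, whereas for the target function $\varphi_{n+\tilde{n}+k,\,l+\tilde{l}-2k,\,m+\tilde{m}}$ the exponent $2(n+\tilde{n}+k)+(l+\tilde{l}-2k)$ collapses to that very same value. Consequently each summand coincides with $\widehat{\sqrt{\mu}\varphi_{n+\tilde{n}+k,\,l+\tilde{l}-2k,\,m+\tilde{m}}}$ up to the ratio $A_{n,l}A_{\tilde{n},\tilde{l}}/A_{n+\tilde{n}+k,\,l+\tilde{l}-2k}$.

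I would then conclude by applying the inverse Fourier transform and dividing by $\sqrt{\mu}$, reading off the claimed identity term by term. The only genuine difficulty is entirely absorbed into Lemma \ref{lemma:5.1}, which supplies both the admissible range $0\le k\le k_0(l,\tilde{l},m,\tilde{m})$ and the selection rule $m^\star=m+\tilde{m}$ for the harmonic content of $G^{m,\tilde{m}}_{n,\tilde{n},l,\tilde{l}}$; since that lemma is already available, the remaining work—the cancellation of the Gaussians, the factorization of the radial powers, and the verification of the $k$-independence of the $|\xi|$-exponent—is purely routine bookkeeping.
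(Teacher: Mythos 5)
Your proposal is correct and follows essentially the same route as the paper's own proof: Bobylev's formula with the loss term vanishing since $\widehat{\sqrt{\mu}\varphi_{n,l,m}}(0)=0$ for $l\geq1$, substitution of \eqref{four-1} to extract the factor $A_{\tilde{n},\tilde{l}}A_{n,l}\,e^{-|\xi|^2/2}(|\xi|/\sqrt{2})^{2(n+\tilde{n})+l+\tilde{l}}\,G^{m,\tilde{m}}_{n,\tilde{n},l,\tilde{l}}(\kappa)$, expansion of $G$ as a finite Laplace series via Lemma \ref{lemma:5.1}, and inversion of the Fourier transform term by term. Your explicit remark that the $|\xi|$-exponent $2(n+\tilde{n}+k)+(l+\tilde{l}-2k)$ is independent of $k$ is exactly the mechanism the paper uses implicitly to match each harmonic component with $\widehat{\sqrt{\mu}\varphi_{n+\tilde{n}+k,\,l+\tilde{l}-2k,\,m+\tilde{m}}}$.
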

This is exactly $(iii)$ of {\color{black} Proposition} \ref{expansion}.

\begin{proof}
Now we consider the case when $l\geq1\,\text{and}\,\tilde{l}\geq1$.\,\,
Since $\widehat{\sqrt{\mu}\varphi_{n,l,m}}(0)=0$,
we get from \eqref{four-1}-\eqref{F G phi phi}
\begin{align}
&\mathcal{F}(Q(\sqrt{\mu}\varphi_{n,l,m},
  \sqrt{\mu}\varphi_{\tilde{n},\tilde{l},\tilde{m}}))(\xi)
=A_{\tilde{n},\tilde{l}}A_{n,l}
e^{-\frac{|\xi|^2}{2}}
\left(\frac{|\xi|}{\sqrt{2}}\right)^{2(n+\tilde{n})+\tilde{l}+l}
\nonumber\\
&\qquad\times\,
\int_{\mathbb{S}^2}b(\kappa\cdot\sigma)
  Y^{m}_l\left(\frac{\kappa-\sigma}
  {|\kappa-\sigma|}\right) Y^{\tilde{m}}_{\tilde{l}}
  \left(\frac{\kappa+\sigma}{|\kappa+\sigma|}\right)
  \left(\frac{|\kappa-\sigma|}{2}\right)^{2n+l}
  \left(\frac{|\kappa+\sigma|}{2}\right)^{2\tilde{n}+\tilde{l}}\,
d\sigma\nonumber\\
&=A_{\tilde{n},\tilde{l}}A_{n,l}e^{-\frac{|\xi|^2}{2}}
\Big(\frac{|\xi|}{\sqrt{2}}\Big)^{2(n+\tilde{n})+\tilde{l}+l}
G^{m,\tilde{m}}_{\tilde{n},n,\tilde{l},l}(\kappa)\nonumber.
\end{align}
From {\color{black} Lemma} \ref{lemma:5.1},
$G^{m,\tilde{m}}_{n,\tilde{n},l,\tilde{l}}(\kappa)$
can be decomposed as a finite Laplace series
\begin{align*}
G^{m,\tilde{m}}_{n,\tilde{n},l,\tilde{l}}(\kappa)
&=\sum^{k_0(l,\tilde{l},m,\tilde{m})}_{k=0}
\left(\int_{\mathbb{S}^2_{\kappa}}G^{m,\tilde{m}}_{n,\tilde{n},l,\tilde{l}}(\kappa)
\overline{Y^{m+\tilde{m}}_{l+\tilde{l}-2k}}d\kappa\right)Y^{m+\tilde{m}}_{l+\tilde{l}-2k}(\kappa),
\end{align*}
where $k_0(l,\tilde{l},m,\tilde{m})$ was given in \eqref{k_0}.
By using this expansion, we derive
\begin{align}
\mathcal{F}&(Q(\sqrt{\mu}\varphi_{n,l,m},\sqrt{\mu}\varphi_{\tilde{n},\tilde{l},\tilde{m}}))(\xi)\nonumber\\
&=
\sum^{k_0(l,\tilde{l},m,\tilde{m})}_{k=0}
\frac{A_{\tilde{n},\tilde{l}}A_{n,l}}{A_{n+\tilde{n}+k,l+\tilde{l}-2k}}
\left(\int_{\mathbb{S}^2_{\kappa}}G^{m,\tilde{m}}_{n,\tilde{n},l,\tilde{l}}(\kappa)
\overline{Y^{m+\tilde{m}}_{l+\tilde{l}-2k}}
(\kappa)d\kappa\right)
\mathcal{F}(\sqrt{\mu}\varphi_{n+\tilde{n}+k,l+\tilde{l}-2k,m+\tilde{m}})\nonumber
\end{align}
and we conclude by taking the inverse Fourier transform.
This ends the proof of Proposition \ref{expansion nlm nlm}.
\end{proof}

\subsection{The proof of {\color{black} Proposition} \ref{expansion2}}

We now prove the following identity of Proposition \ref{expansion2}
\begin{equation*}
\sum_{|m|\leq\,l}\sum_{|\tilde{m}|\leq\,\tilde{l}}
  \mu^{m,\tilde{m},m_1^{\prime}}_{n,\tilde{n},l,\tilde{l},k_1}
  \overline{\mu^{m,\tilde{m},m_2^{\prime}}_{n,\tilde{n},l,\tilde{l},k_2}}
=
\sum_{|m|\leq\,l}\sum_{|\tilde{m}|\leq\,\tilde{l}}
\Big|
  \mu^{m,\tilde{m},m_1^{\prime}}_{n,\tilde{n},l,\tilde{l},k_1}
\Big|^2\delta_{k_1,k_2}\delta_{m'_1,m'_2}.
\end{equation*}
We state it in the following proposition with the notations
$G^{m,\tilde{m}}_{n,\tilde{n},l,\tilde{l}}(\kappa)$
given in \eqref{Gamma1}, since
from {\color{black} Proposition} \ref{expansion}, we have, for $|m'|\leq l+\tilde{l}-2k$
\begin{align*}
\mu^{m,\tilde{m},m^{\prime}}_{n,\tilde{n},l,\tilde{l},k}
&=
(-1)^k\Big(\frac{2\pi^{\frac{3}{2}}(n+\tilde{n}+k)!\Gamma(n+\tilde{n}+l+\tilde{l}-k+\frac{3}{2})}
{\tilde{n}!\Gamma(\tilde{n}+\tilde{l}+\frac{3}{2})n!\Gamma(n+l+\frac{3}{2})}\Big)^{\frac{1}{2}}
\\
&\qquad\times
\int_{\mathbb{S}^2_{\kappa}}
 G^{m,\tilde{m}}_{n,\tilde{n},l,\tilde{l}}(\kappa)
\overline{Y^{m^{\prime}}_{l+\tilde{l}-2k}}
(\kappa)d\kappa.
\end{align*}

\begin{proposition}\label{orth}
For $G^{m,\tilde{m}}_{n,\tilde{n},l,\tilde{l}}(\kappa)$
given in \eqref{Gamma1} and any integers
$n,\tilde{n} \geq0$,
{\color{black}$|m^{\prime}|\leq l^{\prime}$},
$|m^{\star}|\leq l^{\star}$,
we have
\begin{align*}
\sum_{|m|\leq l}\sum_{|\tilde{m}|\leq \tilde{l}}
&\left(
\int_{\mathbb{S}^2_{\kappa}}
  G^{m,\tilde{m}}_{n,\tilde{n},l,\tilde{l}}(\kappa)
  \overline{Y^{m^{\prime}}_{l^{\prime}}}(\kappa)
d\kappa\right)
\left(
\int_{\mathbb{S}^2_{\kappa}}
  \overline{G^{m,\tilde{m}}_{n,\tilde{n},l,\tilde{l}}(\kappa)}
  Y^{m^{\star}}_{l^{\star}}
  (\kappa)d\kappa\right)\nonumber\\
&=
\sum_{|m|\leq l}
\sum_{|\tilde{m}|\leq \tilde{l}}
\left|\int_{\mathbb{S}^2_{\kappa}}G^{m,\tilde{m}}_{n,\tilde{n},l,\tilde{l}}(\kappa)
\overline{Y^{m^{\prime}}_{l^{\prime}}}
(\kappa)d\kappa
\right|^2
\delta_{l^{\prime},l^{\star}}\delta_{m^{\prime},m^{\star}}.
\end{align*}
\end{proposition}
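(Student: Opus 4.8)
The plan is to reduce everything to a single rotation-invariant kernel. Introduce
\[
K(\kappa,\kappa')=\sum_{|m|\leq l}\sum_{|\tilde m|\leq\tilde l}
G^{m,\tilde m}_{n,\tilde n,l,\tilde l}(\kappa)\,
\overline{G^{m,\tilde m}_{n,\tilde n,l,\tilde l}(\kappa')},
\]
and note that, renaming the dummy variable of the second integral to $\kappa'$, the left-hand side of the proposition is exactly
\[
\int_{\mathbb{S}^2_\kappa}\int_{\mathbb{S}^2_{\kappa'}}
K(\kappa,\kappa')\,\overline{Y^{m'}_{l'}(\kappa)}\,Y^{m^\star}_{l^\star}(\kappa')
\,d\kappa\,d\kappa'.
\]
Throughout I use that, in the normalization \eqref{Ylm2}, $\overline{Y^m_l}=Y^{-m}_l$, so the addition theorem \eqref{addition theorem Y} reads $\sum_{|m|\leq l}Y^m_l(\alpha_1)\overline{Y^m_l(\alpha_2)}=\tfrac{2l+1}{4\pi}P_l(\alpha_1\cdot\alpha_2)$.

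First I would compute $K$ explicitly. Inserting the definition \eqref{Gamma1} of $G$, merging the two $\sigma$-integrals into one double integral over $\mathbb{S}^2\times\mathbb{S}^2$, and interchanging the finite sums over $m$ and $\tilde m$ with the integrals, the $m$-sum contracts $Y^m_l\!\big(\tfrac{\kappa-\sigma}{|\kappa-\sigma|}\big)\overline{Y^m_l\!\big(\tfrac{\kappa'-\sigma'}{|\kappa'-\sigma'|}\big)}$ into a multiple of $P_l\!\big(\tfrac{\kappa-\sigma}{|\kappa-\sigma|}\cdot\tfrac{\kappa'-\sigma'}{|\kappa'-\sigma'|}\big)$, and the $\tilde m$-sum similarly yields a multiple of $P_{\tilde l}\!\big(\tfrac{\kappa+\sigma}{|\kappa+\sigma|}\cdot\tfrac{\kappa'+\sigma'}{|\kappa'+\sigma'|}\big)$. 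This leaves a double integral whose integrand involves only $b(\kappa\cdot\sigma)$, $b(\kappa'\cdot\sigma')$, the moduli $|\kappa\pm\sigma|$, $|\kappa'\pm\sigma'|$ and these two Legendre factors. Replacing $(\kappa,\kappa')$ by $(R\kappa,R\kappa')$ with $R\in SO(3)$ and substituting $\sigma\mapsto R\sigma$, $\sigma'\mapsto R\sigma'$ (a measure-preserving change of variables) leaves every dot product and modulus unchanged, whence $K(R\kappa,R\kappa')=K(\kappa,\kappa')$: the kernel is invariant under the diagonal action of $SO(3)$.

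To conclude I would use the elementary fact that the orbits of the diagonal $SO(3)$-action on $\mathbb{S}^2\times\mathbb{S}^2$ are precisely the level sets of $\kappa\cdot\kappa'$, so $K(\kappa,\kappa')=\Phi(\kappa\cdot\kappa')$ for a single-variable function $\Phi$. Expanding $\Phi(t)=\sum_{L}a_L P_L(t)$ in Legendre polynomials and applying \eqref{addition theorem Y} once more gives $K(\kappa,\kappa')=\sum_{L}a_L\tfrac{4\pi}{2L+1}\sum_{|M|\leq L}Y^M_L(\kappa)\overline{Y^M_L(\kappa')}$. Substituting this into the double integral above and using the orthonormality of $\{Y^m_l\}$ collapses the whole expression to $a_{l'}\tfrac{4\pi}{2l'+1}\,\delta_{l',l^\star}\delta_{m',m^\star}$. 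This already exhibits the left-hand side as a multiple of $\delta_{l',l^\star}\delta_{m',m^\star}$; specializing to $(l^\star,m^\star)=(l',m')$ identifies the constant with $\sum_{m,\tilde m}\big|\int_{\mathbb{S}^2_\kappa}G^{m,\tilde m}_{n,\tilde n,l,\tilde l}\,\overline{Y^{m'}_{l'}}\big|^2$, which is exactly the asserted identity.

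The routine steps — interchanging the finite $m,\tilde m$-sums with the integrals and the final orthonormality bookkeeping — are harmless. The conceptual core, and the part I expect to demand the most care, is the contraction that produces the manifestly diagonal-invariant form of $K$ together with the reduction to a function of $\kappa\cdot\kappa'$; one must also check that $K$ is genuinely integrable in spite of the non-cutoff singularity of $b$ in \eqref{beta}, so that the Legendre ($L^2$) expansion of $\Phi$ and all the interchanges are legitimate.
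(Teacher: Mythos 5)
Your proof is correct, and it takes a genuinely different route from the paper's. The paper proves Proposition \ref{orth} by direct computation: it parametrizes $\sigma$ in a frame adapted to $\kappa$ (formulas \eqref{G=}--\eqref{int GY=}), contracts the $m$- and $\tilde m$-sums with the addition theorem exactly as you do, but then needs a dedicated lemma (Lemma \ref{lem int P+P-}) to expand the azimuthal average of the product $P_l(\cdots)P_{\tilde l}(\cdots)$ into $\sum_{q,\tilde q} b^{q,\tilde q}_{l,\tilde l}(\theta_1)\,P_q(\kappa\cdot\gamma^-)P_{\tilde q}(\kappa\cdot\gamma^+)$, followed by a second use of the addition theorem, the Funk--Hecke formula \eqref{funk-hecke}, and finally the orthogonality $\int_{\mathbb{S}^2}Y^{m^\star}_{l^\star}\overline{Y^{m^{\prime}}_{l^{\prime}}}\,d\kappa=\delta_{l^{\prime},l^\star}\delta_{m^{\prime},m^\star}$. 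You bypass all of that machinery: after the same initial contraction, you observe that the kernel $K(\kappa,\kappa')$ is invariant under the diagonal $SO(3)$-action (rotation invariance of $d\sigma$ together with invariance of every dot product and modulus in the integrand), hence zonal, $K=\Phi(\kappa\cdot\kappa')$, and a zonal kernel is diagonal in the spherical-harmonic basis --- which is precisely the assertion, with the diagonal constant pinned down by specializing $(l^\star,m^\star)=(l',m')$. This is shorter and more conceptual. What the paper's heavier computation buys, however, is the \emph{explicit} formula for $\sum_{m,\tilde m}\bigl|\int G^{m,\tilde m}_{n,\tilde n,l,\tilde l}\,\overline{Y^{m'}_{l'}}\bigr|^2$ recorded in Remark \ref{rem sum |GY|} (the function $B_{l,\tilde l,l'}(\theta_1,\theta_2)$), which is reused verbatim to prove Propositions \ref{orth2} and \ref{prop reduc} and hence the eigenvalue estimates of Section \ref{S5-2}; your soft argument determines the coefficient $a_{l'}$ only abstractly, so while it fully proves this proposition it cannot substitute for the computation elsewhere in the paper. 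Your two flagged technical points are indeed the only ones needing care, and both are fine: absolute convergence holds because $b$ is supported in $\cos\theta\geq0$, so the only singularity is at $\sigma\to\kappa$, where the factor $(|\kappa-\sigma|/2)^{2n+l}$ damps $\beta(\theta)\approx|\theta|^{-1-2s}$ whenever $2n+l>2s$ --- automatic in every application of the proposition, where $l,\tilde l\geq1$ and $n+l,\tilde n+\tilde l\geq2$; and then $K$ is continuous, hence $\Phi\in L^2([-1,1])$ and $K\in L^2(\mathbb{S}^2\times\mathbb{S}^2)$, which legitimizes the term-by-term integration of the Legendre expansion against $\overline{Y^{m'}_{l'}}\otimes Y^{m^\star}_{l^\star}$.
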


\begin{proof}
We recall the definition \eqref{Gamma1} of
$G^{m,\tilde{m}}_{n,\tilde{n},l,\tilde{l}}
  (\kappa)$
\begin{align*}
G^{m,\tilde{m}}_{n,\tilde{n},l,\tilde{l}}(\kappa)
&=\int_{\mathbb{S}^2}
b(\kappa\cdot\sigma)
\left(|\kappa-\sigma|/2\right)^{2n+l}
\left(|\kappa+\sigma|/2\right)^{2\tilde{n}+
\tilde{l}}\nonumber\\
&\quad\qquad\times
Y^{m}_l
  \left(\frac{\kappa-\sigma}{|\kappa-\sigma|}\right)
Y^{\tilde{m}}_{\tilde{l}}
  \left(\frac{\kappa+\sigma}{|\kappa+\sigma|}\right)\,
d\sigma
\end{align*}
and we consider the transform \eqref{k,k1,k2}-\eqref{tran}
for an unit vector $\sigma$
\begin{align*}
\sigma
&= \kappa\cos\theta + \kappa^1\sin\theta\cos\phi
 + \kappa^2\sin\theta\sin\phi
\\
&= \kappa\cos\theta + \kappa^{\bot}(\phi) \, \sin\theta
\end{align*}
with $\theta\in[0,\frac{\pi}{2}]$ and $\phi\in[0,2\pi]$.\,\,
Therefore, using \eqref{k+s}-\eqref{k-s}, the change of variable
$\theta = 2\, \theta_1$, the odd-even parity of $P_l^m$
and the definition  \eqref{beta} of $\beta$ we find
\begin{align}\label{G=}
\begin{split}
G&^{m,\tilde{m}}_{n,\tilde{n},l,\tilde{l}}(\kappa)
=
\int_{|\theta_1|\leq\frac{\pi}{4}}
  \beta(\theta_1)(\sin\theta_1)^{2n+l}
  (\cos\theta_1)^{2\tilde{n}+\tilde{l}} \quad\times
\\
&\quad
\int^{2\pi}_{0}
  Y^m_l(\kappa\sin\theta_1-\kappa^{\bot}(\phi_1)\cos\theta_1)
  Y^{\tilde{m}}_{\tilde{l}}
  (\kappa\cos\theta_1+\kappa^{\bot}(\phi_1)\sin\theta_1)
\frac{d\phi_1}{2\pi}
d\theta_1
\end{split}
\end{align}
and
\begin{align}\label{int GY=}
&\int_{\mathbb{S}^2_{\kappa}}
  G^{m,\tilde{m}}_{n,\tilde{n},l,\tilde{l}}
  (\kappa)\overline{Y^{m^{\prime}}_{l'}}(\kappa)
d\kappa\nonumber\\
&\quad\quad=
\int_{|\theta_1|\leq\frac{\pi}{4}}
  \beta(\theta_1)(\sin\theta_1)^{2n+l}
  (\cos\theta_1)^{2\tilde{n}+\tilde{l}}
  \quad\times\int^{2\pi}_{0}
\\
&\quad\Big(\int_{\mathbb{S}^2_\kappa} Y^m_l(\kappa\sin\theta_1-\kappa^{\bot}(\phi_1)\cos\theta_1)
   Y^{\tilde{m}}_{\tilde{l}}
  (\kappa\cos\theta_1+\kappa^{\bot}(\phi_1)\sin\theta_1)
  \overline{Y^{m^{\prime}}_{l'}}(\kappa)
d\kappa
\Big)
\frac{d\phi_1}{2\pi}
d\theta_1.\nonumber
\end{align}
Using equivalent notations of \eqref{k,k1,k2}-\eqref{tran}
{\color{black}
for an unit vector $\sigma$ expanded in
another orthonormal frame $(\gamma,\gamma^1,\gamma^2)$,
\begin{align*}
\sigma
&=\gamma   \cos2\theta_2 +
  \gamma^1 \sin2\theta_2 \cos\phi_2 + \gamma^2\sin 2\theta_2\sin\phi_2
\\
&=\gamma \cos2\theta_2 + \gamma^{\bot}(\phi_2) \sin2\theta_2
\end{align*}
with $\theta_2\in[0,\pi/4], \phi_2\in[0,2\pi]$, }
we have also
\begin{align*}
&  \int_{\mathbb{S}^2_{\gamma}}
    \overline{G^{m,\tilde{m}}_{n,\tilde{n},l,\tilde{l}}(\gamma)}
    Y^{m^{\star}}_{l^{\star}}(\gamma)
  d\gamma\\
&=
\int_{|\theta_2|\leq\frac{\pi}{4}}
  \beta(\theta_2)(\sin\theta_2)^{2n+l}
  (\cos\theta_2)^{2\tilde{n}+\tilde{l}} \quad\times
\int^{2\pi}_{0}
\int_{\mathbb{S}^2_\gamma}
\\
&
\,\,\,\,  \overline{Y^m_l}(\gamma\sin\theta_2-\gamma^{\bot}(\phi_2)\cos\theta_2)
  \overline{Y^{\tilde{m}}_{\tilde{l}}}
  (\gamma\cos\theta_2+\gamma^{\bot}(\phi_1)\sin\theta_2)
  Y^{m^{\star}}_{l^{\star}}(\gamma)
d\gamma
\frac{d\phi_2}{2\pi}
d\theta_2.
\end{align*}
The goal of this proposition is to compute the following term :
\begin{align*}
{\bf A} = \sum_{|m|\leq l}
\sum_{|\tilde{m}|\leq \tilde{l}}
\left(
  \int_{\mathbb{S}^2_{\kappa}}
    G^{m,\tilde{m}}_{n,\tilde{n},l,\tilde{l}}(\kappa)
    \overline{Y^{m^{\prime}}_{l^{\prime}}}(\kappa)
  d\kappa
\right)
\left(
  \int_{\mathbb{S}^2_{\gamma}}
    \overline{G^{m,\tilde{m}}_{n,\tilde{n},l,\tilde{l}}(\gamma)}
    Y^{m^{\star}}_{l^{\star}}(\gamma)
  d\gamma
\right)
\end{align*}
From the addition theorem \eqref{addition theorem Y},
we find
\begin{align*}
&\sum_{|m|\leq l}
  Y^{m}_l
    (\kappa\sin\theta_1-\kappa^{\bot}(\phi_1)\cos\theta_1)
  \overline{Y^{m}_l}
    (\gamma\sin\theta_2-\gamma^{\bot}(\phi_2)\cos\theta_2)\\
&=\frac{2l+1}{4\pi}
P_l(
  (\kappa\sin\theta_1-\kappa^{\bot}(\phi_1)\cos\theta_1)\cdot
  (\gamma\sin\theta_2-\gamma^{\bot}(\phi_2)\cos\theta_2))
\end{align*}
and
{\color{black}\begin{align*}
&\sum_{|\tilde{m}|\leq {\tilde{l}}}
  Y^{\tilde{m}}_{\tilde{l}}
    (\kappa\cos\theta_1+\kappa^{\bot}(\phi_1)\sin\theta_1)
  \overline{Y^{\tilde{m}}_{\tilde{l}}}
    (\gamma\cos\theta_2+\gamma^{\bot}(\phi_2)\sin\theta_2)\\
&=\frac{2\tilde{l}+1}{4\pi}
P_{\tilde{l}}(
  (\kappa\cos\theta_1+\kappa^{\bot}(\phi_1)\sin\theta_1)\cdot
  (\gamma\cos\theta_2+\gamma^{\bot}(\phi_2)\sin\theta_2)
).
\end{align*}
}
We then plug the two previous identities into the expression of ${\bf A}$ and
we directly derive
\begin{align}\label{A=SSGY}
\begin{split}
{\bf A} &=
\frac{2l+1}{4\pi}\frac{2\tilde{l}+1}{4\pi}
\int_{|\theta_1|\leq\frac{\pi}{4}}
  \left(\beta(\theta_1)(\sin\theta_1)^{2n+l}
  (\cos\theta_1)^{2\tilde{n}+\tilde{l}}\right)
\\
&\quad\times
\int_{|\theta_2|\leq\frac{\pi}{4}}
\left(\beta(\theta_2)(\sin\theta_2)^{2n+l}
  (\cos\theta_2)^{2\tilde{n}+\tilde{l}}
\right)
{\bf B_1}(\theta_1,\theta_2)
d\theta_1 \, d\theta_2
\end{split}
\end{align}
where
\begin{align}\label{B1}
{\bf B_1}(\theta_1,\theta_2) =
  \int_{\mathbb{S}^2_{\gamma}}\int_{\mathbb{S}^2_{\kappa}}
  \int^{2\pi}_{0}
    {\bf B_2}(\kappa,\gamma,\theta_1,\theta_2,\phi_2)
  \frac{d\phi_2}{2\pi}  \,
  {\color{black}\overline{Y^{m^{\prime}}_{l^{\prime}}}(\kappa)Y^{m^{\star}}_{l^{\star}}(\gamma)}
  d\kappa d\gamma
\end{align}
and
\begin{align*}
&{\bf B_2}(\kappa,\gamma,\theta_1,\theta_2,\phi_2)
= \int^{2\pi}_{0}
P_l\left(
  (\kappa\sin\theta_1-\kappa^{\bot}(\phi_1)\cos\theta_1)\cdot
  \gamma^-
\right)\\
&\quad\times
P_{\tilde{l}}\left(
  (\kappa\cos\theta_1+\kappa^{\bot}(\phi_1)\sin\theta_1)\cdot
  \gamma^+
\right)
\frac{d\phi_1}{2\pi}.
\end{align*}
Here, $\gamma^+$ and $\gamma^-$ are defined by
(and depend on $\gamma$, $\theta_2$, $\phi_2$)
\begin{align}\label{g+,g-}
\gamma^+ = \gamma\cos\theta_2
+ \gamma^{\bot}(\phi_2)\sin\theta_2,\quad
  \gamma^- = \gamma\sin\theta_2
- \gamma^{\bot}(\phi_2)\cos\theta_2.
\end{align}
From {\color{black} lemma} \ref{lem int P+P-}
(proved after the Proposition), we have
\begin{align*}
{\bf B_2}(\kappa,\gamma,\theta_1,\theta_2,\phi_2)
&=\sum_{0\leq q\leq l} \, \sum_{0\leq \tilde{q} \leq\tilde{l}}
  b^{q,\tilde{q}}_{l,\tilde{l}}(\theta_1)
  P_{q}(\kappa\cdot\gamma^-)
  P_{\tilde{q}}(\kappa\cdot\gamma^+)
\end{align*}
where $b^{q,\tilde{q}}_{l,\tilde{l}}(\theta_1)$
is a continuous function dependent on $\theta_1$.

Therefore from \eqref{B1} we deduce
\begin{align}\label{sph}
{\bf B_1}(\theta_1,\theta_2)
&=\sum_{0\leq q \leq l} \, \sum_{0\leq \tilde{q} \leq\tilde{l}}
  b^{q,\tilde{q}}_{l,\tilde{l}}(\theta_1) \,
  {\bf B}_{q,\tilde{q}}(\theta_2)
\end{align}
where
\begin{align}\label{B_qq}
{\bf B}_{q,\tilde{q}}(\theta_2) =
  \int_{\mathbb{S}^2_{\gamma}}\int_{\mathbb{S}^2_{\kappa}}
  \left(\int^{2\pi}_{0}
    P_{q}(\kappa\cdot\gamma^-) \,
    P_{\tilde{q}}(\kappa\cdot\gamma^+) \,
  \frac{d\phi_2}{2\pi} \right)\,
   {\color{black}\overline{Y^{m^{\prime}}_{l^{\prime}}}(\kappa)Y^{m^{\star}}_{l^{\star}}(\gamma)} \,
  d\kappa d\gamma.
\end{align}
{\color{black}
Since $(\gamma,\gamma^1,\gamma^2)$ is an orthonormal basis in $\mathbb{R}^3$, then for any unit vector $\kappa$,
we can find $\theta\in[0,\pi]$
and $\phi\in[0,2\pi]$ such that
$$\kappa\cdot\gamma=\cos\theta; \kappa\cdot\gamma^1=\sin\theta\cos\phi; \kappa\cdot\gamma^2=\sin\theta\sin\phi.$$
Therefore (recall that $\gamma^\pm$ defined in \eqref{g+,g-}
depend on $\gamma$, $\theta_2$, $\phi_2$),
\begin{align*}
\kappa\cdot\gamma^-&=(\kappa\cdot\gamma)\sin\theta_2-\kappa\cdot\gamma^1\cos\theta_2\cos\phi_2-\kappa\cdot\gamma^2\cos\theta_2\sin\phi_2\\
&=\cos\theta\sin\theta_2+\sin\theta\cos\theta_2\cos(\phi-\phi_2-\pi);\\
\kappa\cdot\gamma^+&=(\kappa\cdot\gamma)\cos\theta_2+\kappa\cdot\gamma^1\sin\theta_2\cos\phi_2+\kappa\cdot\gamma^2\sin\theta_2\sin\phi_2\\
&=\cos\theta\cos\theta_2+\sin\theta\sin\theta_2\cos(\phi-\phi_2)
\end{align*}
We use again two times the addition theorem \eqref{addition theorem}:
\begin{align*}
P_{q}(\kappa\cdot\gamma^-)&=\sum_{|m|\leq q} a_{q,m}P^{|m|}_q(\cos\theta)P^{|m|}_q(\sin\theta_2)e^{im(\phi-\phi_2-\pi)}\\
&=\sum_{|m|\leq q} a_{q,m}P^{|m|}_q(\kappa\cdot\gamma)P^{|m|}_q(\sin\theta_2)e^{im(\phi-\phi_2-\pi)}\\
P_{\tilde{q}}(\kappa\cdot\gamma^+)&=\sum_{|\tilde{m}|\leq \tilde{q}}a_{\tilde{q},\tilde{m}}P^{|\tilde{m}|}_{\tilde{q}}(\kappa\cdot\gamma){\color{black}P^{|\tilde{m}|}_{\tilde{q}}(\cos\theta_2)}e^{i\tilde{m}(\phi-\phi_2)},
\end{align*}
where $a_{q,m}$, $a_{\tilde{q},\tilde{m}}$ was defined in the beginning of this section.
Then
\begin{align*}
&\int^{2\pi}_{0}
  P_{q}(\kappa\cdot\gamma^-(\phi_2))
  P_{\tilde{q}}(\kappa\cdot\gamma^+(\phi_2))
\frac{d\phi_2}{2\pi}
 \\
& =\quad
\sum_{|k|\leq\min(q,\tilde{q})}
  a_{q,k}
  a_{\tilde{q},k}
  P^{|k|}_{q}(\kappa\cdot\gamma)
  P^{|k|}_{q}(\sin\theta_2)(-1)^k
  P^{|k|}_{\tilde{q}}(\kappa\cdot\gamma)
  P^{|k|}_{\tilde{q}}(\cos\theta_2).
\end{align*}
}
Since
$P^{|k|}_{q}(\kappa\cdot\gamma)
P^{|k|}_{\tilde{q}}(\kappa\cdot\gamma)$
is a continuous function of $\kappa\cdot\gamma$, we apply
the Funk-Hecke Formula \eqref{funk-hecke} and obtain
{\color{black}\[
\int_{\mathbb{S}^2}P^{|k|}_{q}
  (\kappa\cdot\gamma) P^{|k|}_{\tilde{q}}
  (\kappa\cdot\gamma) Y^{m^{\star}}_{l^{\star}}(\gamma)
d\gamma
=
\left( 2\pi \, \int^1_{-1}P^{|k|}_{q}(x)
  P^{|k|}_{\tilde{q}}(x) P_{l^{\star}}(x)dx
\right)
Y^{m^{\star}}_{l^{\star}}(\kappa).
\]}
Combining the two previous relations into \eqref{B_qq},
we obtain
\begin{align}\label{B_qq 2}
{\bf B}_{q,\tilde{q}}(\theta_2)
&=
\sum_{|k|\leq\min(q,\tilde{q})}
  a_{q,k}
  a_{\tilde{q},k}
  P^{|k|}_{q}(\sin\theta_2)(-1)^k
  P^{|k|}_{\tilde{q}}(\cos\theta_2)
\nonumber\\
&\quad\times\left(2\pi \int^1_{-1}P^{|k|}_{q}(x)
  P^{|k|}_{\tilde{q}}(x) P_{l^{\star}}(x)dx
\right)
{\color{black}\int_{\mathbb{S}^2_{\kappa}}
  Y^{m^{\star}}_{l^{\star}}(\kappa)
  \overline{Y^{m^{\prime}}_{l^{\prime}}}(\kappa)
  d\kappa}.
\end{align}
{\color{black}Finally}, if $(l^{\star},m^{\star}) \neq (l^{\prime},m^{\prime})$,
the orthogonality of the spherical harmonics implies that
${\bf B}_{q,\tilde{q}}=0$ for all $q$ and $\tilde{q}$, and so on for ${\bf B_1}$ and ${\bf A}$.
This concludes the proof of {\color{black} Proposition} \ref{orth}.
\end{proof}

\begin{remark}\label{rem sum |GY|}
From the previous proof, in the special case
$(l^{\star},m^{\star}) = (l^{\prime},m^{\prime})$,
we have from \eqref{A=SSGY}, \eqref{sph} and \eqref{B_qq 2}
\begin{align*}
&\sum_{|m|\leq l}
\sum_{|\tilde{m}|\leq \tilde{l}}
\left|\int_{\mathbb{S}^2_{\kappa}}G^{m,\tilde{m}}_{n,\tilde{n},l,\tilde{l}}(\kappa)
\overline{Y^{m^{\prime}}_{l^{\prime}}}
(\kappa)d\kappa
\right|^2
\\
&=
\frac{2l+1}{4\pi}\frac{2\tilde{l}+1}{4\pi}
\int_{|\theta_1|\leq\frac{\pi}{4}}
  \left(\beta(\theta_1)(\sin\theta_1)^{2n+l}
  (\cos\theta_1)^{2\tilde{n}+\tilde{l}}\right)
\\
&\quad\times
\int_{|\theta_2|\leq\frac{\pi}{4}}
\left(\beta(\theta_2)(\sin\theta_2)^{2n+l}
  (\cos\theta_2)^{2\tilde{n}+\tilde{l}}
\right)
B_{l,\tilde{l},l^{\prime}}(\theta_1,\theta_2)
d\theta_1 \, d\theta_2
\end{align*}
where $B_{l,\tilde{l},l^{\prime}}(\theta_1,\theta_2)$ is defined by
\begin{align*}
&B_{l,\tilde{l},l^{\prime}}(\theta_1, \theta_2)
=
\sum_{0\leq q\leq l}\sum_{0\leq \tilde{q}\leq\tilde{l}}
b^{q,\tilde{q}}_{l,\tilde{l}}(\theta_1)
\quad \times
\\
&\sum_{|k|\leq \min(q,\tilde{q})}
a_{q,k}
a_{\tilde{q},k}
P^{|k|}_{q}(\sin\theta_2)\ (-1)^k
P^{|k|}_{\tilde{q}}(\cos\theta_2)\,
\left(2\pi
\int^1_{-1}
P^{|k|}_{q}(x)\,
P^{|k|}_{\tilde{q}}(x)\,
P_{l^{\prime}}(x) \,
dx\right).
\end{align*}
\end{remark}

We now prove the following lemma used in the previous proposition \ref{orth}.

\begin{lemma}\label{lem int P+P-}
For any $l$, $\tilde{l}$, $q$, $\tilde{q} \in \mathbb{N}$,
there exists a continuous function
$b^{q,\tilde{q}}_{l,\tilde{l}}(\theta)$ such that
for any real $\theta$, $\phi$ and unit vectors
$\kappa$, $\eta^+$, $\eta^-$, {\color{black} $(\eta^+,\eta^-)$ being orthogonal}, we have
\begin{align*}
\int^{2\pi}_{0}
&P_l\left(
  (\kappa\sin\theta-\kappa^{\bot}(\phi)\cos\theta)\cdot
  \eta^-
\right)\\
&\quad\times
P_{\tilde{l}}\left(
  (\kappa\cos\theta+\kappa^{\bot}(\phi)\sin\theta)\cdot
  \eta^+
\right)
\frac{d\phi}{2\pi}
\\
&=\sum_{0\leq q\leq l} \, \sum_{0\leq \tilde{q} \leq\tilde{l}}
  b^{q,\tilde{q}}_{l,\tilde{l}}(\theta) \,
  P_{q}(\kappa\cdot\eta^-)
  P_{\tilde{q}}(\kappa\cdot\eta^+)
\end{align*}
where the coefficients {\color{black} $b_{l,\tilde{l}}^{q,\tilde{q}} \equiv 0$}
if $(q,\tilde{q}) \neq (l-2q_1,\tilde{l}-2q_2)$
for integers $q_1$, $q_2$ satisfying $0\leq 2q_1\leq l$, $0\leq 2q_2\leq \tilde{l}$ .
\end{lemma}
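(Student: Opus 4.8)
The plan is to reduce the $\phi$-integral to a rotational average about the axis $\kappa$ and then to track degrees and parities. Denote the left-hand side by $F$, and set $a=\kappa\cdot\eta^-$, $c=\kappa\cdot\eta^+$. First I expand the two arguments as
\[
(\kappa\sin\theta-\kappa^{\bot}(\phi)\cos\theta)\cdot\eta^- = a\sin\theta-\cos\theta\,p(\phi),\qquad
(\kappa\cos\theta+\kappa^{\bot}(\phi)\sin\theta)\cdot\eta^+ = c\cos\theta+\sin\theta\,\tilde p(\phi),
\]
where $p(\phi)=\kappa^{\bot}(\phi)\cdot\eta^-$ and $\tilde p(\phi)=\kappa^{\bot}(\phi)\cdot\eta^+$. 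Since $\kappa^{\bot}(\phi)$ sweeps the unit circle of the plane orthogonal to $\kappa$, I may write $p(\phi)=r^-\cos(\phi-\psi^-)$ and $\tilde p(\phi)=r^+\cos(\phi-\psi^+)$ with $r^-=\sqrt{1-a^2}$, $r^+=\sqrt{1-c^2}$. The hypothesis $\eta^+\cdot\eta^-=0$ forces the projections onto $\kappa^{\bot}$ to satisfy $\eta^{+}_{\bot}\cdot\eta^{-}_{\bot}=-ac$, hence $\cos(\psi^+-\psi^-)=-ac/(r^+r^-)$; this is the only point at which orthogonality of $(\eta^+,\eta^-)$ is used.

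Next I expand. Each Legendre polynomial contains only monomials of one parity, $P_l(x)=\sum_{j}d_{l,j}x^{l-2j}$, so by the binomial theorem $P_l(a\sin\theta-\cos\theta\,p(\phi))$ is a finite sum of terms $a^{\alpha}p(\phi)^{\beta}$ with $\alpha+\beta\le l$ and $\alpha+\beta\equiv l\pmod 2$ and coefficients polynomial in $\sin\theta,\cos\theta$; likewise the second factor is a sum of terms $c^{\gamma}\tilde p(\phi)^{\delta}$ with $\gamma+\delta\le\tilde l$, $\gamma+\delta\equiv\tilde l\pmod 2$. It then suffices to evaluate $\int_0^{2\pi}p(\phi)^{\beta}\tilde p(\phi)^{\delta}\,\frac{d\phi}{2\pi}$. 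By shift-invariance in $\phi$ and evenness in $\psi^+-\psi^-$, this integral depends only on $r^\pm$ and $\cos(\psi^+-\psi^-)$; expanding the cosines in exponentials shows it vanishes unless $\beta+\delta$ is even, and otherwise equals a sum of terms $(-ac)^{k}(1-a^2)^{(\beta-k)/2}(1-c^2)^{(\delta-k)/2}$ with $0\le k\le\min(\beta,\delta)$ and $k\equiv\beta\equiv\delta\pmod 2$. Because $\beta-k$ and $\delta-k$ are then even, each such term is a genuine polynomial in $a$ and $c$, whose $a$-degree is $\le\beta$ with parity $\beta$ and whose $c$-degree is $\le\delta$ with parity $\delta$.

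Assembling these, and noting that the angular average just computed depends only on $a$ and $c$, the full $\phi$-average $F$ becomes a polynomial in $a=\kappa\cdot\eta^-$ of degree $\le l$ and parity $\equiv l\pmod2$, and in $c=\kappa\cdot\eta^+$ of degree $\le\tilde l$ and parity $\equiv\tilde l\pmod2$, with coefficients continuous (indeed polynomial) in $\theta$. Since $\{P_q\}_{q\ge0}$ is a basis of the one-variable polynomials and $P_q$ has parity $q$, expanding in the tensor basis $P_q(a)P_{\tilde q}(c)$ gives
\[
F=\sum_{0\le q\le l}\ \sum_{0\le\tilde q\le\tilde l}b^{q,\tilde q}_{l,\tilde l}(\theta)\,P_q(\kappa\cdot\eta^-)\,P_{\tilde q}(\kappa\cdot\eta^+),
\]
where $b^{q,\tilde q}_{l,\tilde l}(\theta)=0$ unless $q=l-2q_1$ and $\tilde q=\tilde l-2q_2$ for nonnegative integers $q_1,q_2$, which is exactly the asserted triangular restriction.

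The main obstacle is the middle step: showing that $\int_0^{2\pi}p^\beta\tilde p^\delta\,\frac{d\phi}{2\pi}$ is an honest polynomial in $a$ and $c$ with the stated degree and parity bounds. This rests on two facts that must be verified carefully — the orthogonality of $\eta^+$ and $\eta^-$, which alone pins down $\cos(\psi^+-\psi^-)$ in terms of $a,c$ and removes any residual angular freedom, and the parity matching $\beta\equiv\delta\pmod2$ together with $k$ of the same parity, which guarantees that the a priori half-integer powers of $1-a^2$ and $1-c^2$ collapse to polynomials. The degenerate configurations $a,c=\pm1$ (where $r^\pm=0$) require no separate discussion, since the resulting relation is a polynomial identity in $a$ and $c$.
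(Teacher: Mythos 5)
Your proof is correct, and it reaches the lemma by a genuinely more elementary route than the paper's. The paper expands both Legendre factors with the addition theorem \eqref{addition theorem} in the frame $(\kappa,\kappa^1,\kappa^2)$, so that the $\phi$-integration pairs the azimuthal modes $e^{ik\phi}$ with $e^{-ik\phi}$ and leaves the single sum $\sum_{0\le k\le\min(l,\tilde l)} c_1^k(\theta)\,\bigl(\frac{\mathrm{d}^k P_l}{\mathrm{d}x^k}\bigr)(\kappa\cdot\eta^-)\,\bigl(\frac{\mathrm{d}^k P_{\tilde l}}{\mathrm{d}x^k}\bigr)(\kappa\cdot\eta^+)\,V_k$; the orthogonality of $(\eta^+,\eta^-)$ then enters exactly once, to show $V_k=[A+iB]^k+[A-iB]^k$ with $A=(\kappa\cdot\eta^+)(\kappa\cdot\eta^-)$ and $B^2=1-(\kappa\cdot\eta^+)^2-(\kappa\cdot\eta^-)^2$, hence a polynomial in $a=\kappa\cdot\eta^-$, $c=\kappa\cdot\eta^+$; finally monomials are regrouped into Legendre polynomials via \eqref{x^l=sum Pl} with the same parity bookkeeping you use. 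You bypass the addition theorem and the associated Legendre functions entirely: writing $p(\phi)=r^-\cos(\phi-\psi^-)$, $\tilde p(\phi)=r^+\cos(\phi-\psi^+)$ and Fourier-analyzing $\int_0^{2\pi}p^\beta\tilde p^\delta\,\frac{d\phi}{2\pi}$ is the scalar counterpart of the paper's mode-pairing, and your identity $r^+r^-\cos(\psi^+-\psi^-)=-ac$ is exactly the paper's computation of $A$ (with $r^+r^-\sin(\psi^+-\psi^-)=\pm B$ accounting for the fact that only the even powers $B^{2r}$ survive, which is your parity collapse of the half-integer powers of $1-a^2$ and $1-c^2$). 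What each approach buys: the paper's version produces explicit coefficients $b^{q,\tilde q}_{l,\tilde l}(\theta)$ built from $P^{|k|}_l(\sin\theta)\,P^{|k|}_{\tilde l}(\cos\theta)$, which are reused downstream (Remark \ref{rem sum |GY|}, Propositions \ref{orth} and \ref{orth2}); your version proves existence, continuity, and the triangular support $(q,\tilde q)=(l-2q_1,\tilde l-2q_2)$ with minimal machinery but less explicit coefficients. Your closing continuity/density remark correctly disposes of the degenerate configurations $r^\pm=0$, where $\psi^\pm$ is undefined; this is a point worth making explicit, since the paper's own parametrization of $\eta^\pm$ by angles $(\theta^\pm,\phi^\pm)$ has the same degeneracy and passes over it in silence.
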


\begin{proof}
We apply the addition theorem \eqref{addition theorem}
in the frame $(\kappa,\kappa^1,\kappa^2)$
and the relation \eqref{Ylm2}.
\begin{align*}
P_{\tilde{l}}\left(
  (\kappa\cos\theta+\kappa^{\bot}(\phi)\sin\theta)\cdot
  \eta^+
\right)
&=
\sum_{|{\tilde{k}}|\leq {\tilde{l}}}
  a_{\tilde{l},\tilde{k}}
  P^{|\tilde{k}|}_{\tilde{l}}(\cos\theta)
  \left(\frac{\mathrm{d}^{|\tilde{k}|}
  P_{\tilde{l}}}{\mathrm{d}x^{|\tilde{k}|}}
  \right)(\kappa\cdot\eta^+) \,
  e^{i\,\tilde{k}\,\phi} \, U^+_{\tilde{k}} ,
\\
P_l\left(
  (\kappa\sin\theta-\kappa^{\bot}(\phi)\cos\theta)
  \cdot\eta^-
\right)
&=
P_l\left(
  \left(\kappa\sin(\theta)
  +\kappa^{\bot}\left(\phi+\pi\right)
  \cos(\theta)\right)\cdot
  \eta^-
\right)
\\
&=
\sum_{|k|\leq l}
  a_{l,k}
  P^{|k|}_{l}(\sin\theta)
  \left({\color{black}\frac{\mathrm{d}^{|k|}
  P_l}{\mathrm{d}x^{|k|}}
  }\right)(\kappa\cdot\eta^-) \,
  e^{i\,k\,(\phi+\pi)} \, U^-_{k}
\end{align*}
where
\begin{align*}
U_k^+ = \left((\kappa^1\cdot\eta^+)
 - i\operatorname{sgn}(k) (\kappa^2\cdot\eta^+)\right)^{|k|},
\,\,
U_k^- = \left((\kappa^1\cdot\eta^-)
 - i\operatorname{sgn}(k) (\kappa^2\cdot\eta^-)\right)^{|k|}.
\end{align*}
{\color{black} Since the integrals are zero when $\tilde{k}\not=-k$,
we derive}
\begin{align*}
{\bf M}
&= \int^{2\pi}_{0}
P_l\left(
  (\kappa\sin\theta-\kappa^{\bot}(\phi)\cos\theta)\cdot
  \eta^-
\right) \,
P_{\tilde{l}}\left(
  (\kappa\cos\theta+\kappa^{\bot}(\phi)\sin\theta)\cdot
  \eta^+
\right)
{\color{black}\frac{d\phi}{2\pi}}
\\
&=\sum_{|k|\leq\min(l,\tilde{l})}
  c_1^k(\theta)
  \left({\color{black}\frac{\mathrm{d}^{|k|}
  P_l}{\mathrm{d}x^{|k|}}
  }\right)(\kappa\cdot\eta^-)
  \left({\color{black}\frac{\mathrm{d}^{|k|}
  P_{\tilde{l}}}{\mathrm{d}x^{|k|}}
  }\right)(
    \kappa\cdot\eta^+)\nonumber
 (-1)^k U^-_{-k}  U^+_{k}
\end{align*}
where
{\color{black}
\begin{align*}
c_1^k(\theta) =
  a_{l,k} a_{\tilde{l},-k}
  P^{|k|}_l(\sin\theta)
  P^{|k|}_{\tilde{l}}(\cos\theta).
\end{align*}
}
We then write
\begin{align}\label{M}
{\bf M}
=\sum_{0 \leq k \leq\min(l,\tilde{l})}
  c_1^k(\theta)
   \left({\color{black}\frac{\mathrm{d}^k
  P_l}{\mathrm{d}x^{k}}}\right)(\kappa\cdot\eta^-)
  \left({\color{black}\frac{\mathrm{d}^kP_{\tilde{l}}}{\mathrm{d}x^k}}\right)
  (\kappa\cdot\eta^+)
 V_k
\end{align}
where
$V_0=1$ and $V_k$ is defined for $k\geq1$ by
\begin{align*}
V_k =
&\bigg\{
  \left[
  (\kappa^1\cdot\eta^+
  -i\kappa^2\cdot\eta^+)
  (-\kappa^1\cdot\eta^-
  -i\kappa^2\cdot\eta^-)
   \right]^{k}
\\
&+\left[
  (\kappa^1\cdot\eta^+
  + i\kappa^2\cdot\eta^+)
  (-\kappa^1\cdot\eta^-
  + i\kappa^2\cdot\eta^-)
  \right]^{k}
\bigg\}.
\end{align*}
{\color{black}
We claim that $V_k$ is polynomial type :
$V_k=p_k\bigr((\kappa \cdot \eta^+),(\kappa \cdot \eta^-)\bigl)$
where
\begin{align*} 
p_k(x,y)&= \left(x\,y+i\sqrt{1-x^2-y^2}\right)^k
+ \left(x\,y-i\sqrt{1-x^2-y^2}\right)^k
\\
&= 2  \sum_{0\leq 2r \leq k}
(-1)^r \, \binom{k}{2r} \, x^{k-2r} \,y^{k-2r} \, (1 - x^2-y^2)^r.
\end{align*}
Indeed, we observe that, there exist two real numbers $A$, $B$ such that
\begin{align*}
 &(\kappa^1\cdot\eta^+-i\kappa^2\cdot\eta^+)
 (-\kappa^1\cdot\eta^--i\kappa^2\cdot\eta^-)=A+iB, \\
&(\kappa^1\cdot\eta^++ i\kappa^2\cdot\eta^+)
  (-\kappa^1\cdot\eta^-+ i\kappa^2\cdot\eta^-)=A-iB.
\end{align*}
If we can show that
\begin{align}
A &= (\kappa \cdot \eta^-) (\kappa \cdot \eta^+)
  \label{calcul 1}
\\
B^2
&= 1 - (\kappa\cdot\eta^-)^2 -(\kappa\cdot\eta^+)^2,
   \label{calcul 2}
\end{align}
then it follows that
\begin{align}\label{V_k}
V_k
&= \left[ A + i B \right]^{k} + \left[ A - i B \right]^{k}
\nonumber\\
&= 2  \sum_{0\leq 2r \leq k}
(-1)^r \, \binom{k}{2r} \, A^{k-2r} \, B^{2r}
\nonumber\\
&= 2  \sum_{0\leq 2r \leq k}
(-1)^r \, \binom{k}{2r} \, (\kappa\cdot\eta^+ )^{k-2r}\,
(\kappa \cdot\eta^-)^{k-2r} \,
( 1 - ( \kappa\cdot\eta^+)^2 - (\kappa\cdot\eta^- )^2 )^{r}.
\end{align}
Let us now prove \eqref{calcul 1}, \eqref{calcul 2}.
There exist reals $\phi^+$, $\phi^-$, $\theta^+$ and $\theta^-$ such that
\begin{align*}
\eta^+ &=\cos\theta^+ \kappa +
  \sin\theta^+ \cos\phi^+ \kappa^1 + \sin\theta^+ \sin\phi^+ \kappa^2
\\
\eta^- &=\cos\theta^- \kappa +
  \sin\theta^- \cos\phi^- \kappa^1 + \sin\theta^- \sin\phi^- \kappa^2.
\end{align*}
The orthogonality of $\eta^+$ and $\eta^-$ implies that
\begin{equation}\label{orthog eta}
\cos\theta^+ \cos\theta^- + \sin\theta^+ \sin\theta^- \cos(\phi^+ - \phi^-) = 0.
\end{equation}
We then have
\begin{align*}
&( \kappa^1 \cdot \eta^+ - i\kappa^2 \cdot \eta^+) \,
 (-\kappa^1 \cdot \eta^-  - i\kappa^2 \cdot \eta^-)
\\
&\quad=( \sin\theta^+ \cos\phi^+ - i \, \sin\theta^+ \sin\phi^+) \,
       (-\sin\theta^- \cos\phi^- - i \, \sin\theta^- \sin\phi^-)
\\
&\quad=A + i B.
\end{align*}
- Computation of $A$:
\begin{align*}
A&=
 - \sin\theta^+ \cos\phi^+ \sin\theta^- \cos\phi^-
 - \sin\theta^+ \sin\phi^+ \sin\theta^- \sin\phi^-
\\
&= - \sin\theta^+ \sin\theta^- \cos(\phi^+ - \phi^-)
\\
&= \cos\theta^+ \cos\theta^- \quad\quad\text{(from \eqref{orthog eta})}
\\
&= ( \kappa\cdot\eta^+)  \, (\kappa\cdot\eta^- ).
\end{align*}
- Computation of $B$:
\begin{align*}
B&=
 - \sin\theta^+ \cos\phi^+ \sin\theta^- \sin\phi^-
 {\color{black}+\sin\theta^+ \sin\phi^+ \sin\theta^- \cos\phi^-}
\\
&=  \sin\theta^+ \sin\theta^- \sin(\phi^+ - \phi^-).
\end{align*}
Using again \eqref{orthog eta}
\begin{align*}
B^2
&= \sin^2\theta^+ \sin^2\theta^- \sin^2(\phi^+ - \phi^-)
\\
&=  \sin^2\theta^+ \sin^2\theta^-
  - \sin^2\theta^+ \sin^2\theta^- \cos^2(\phi^+ - \phi^-)
\\
&= (1-\cos^2\theta^+) \, (1-\cos^2\theta^-) - \cos^2\theta^+ \, \cos^2\theta^-
\\
&= 1 - ( \kappa\cdot\eta^+)^2 - ( \kappa\cdot\eta^-)^2.
\end{align*}
This ends the proof of \eqref{calcul 1}, \eqref{calcul 2}.\\
From the expression of \eqref{V_k}, $V_k$ can be re-written as
\begin{align*}
V_k&=\sum_{0\leq2r_1,2r_2\leq k}a_{k,r_1,r_2}(\kappa \cdot \eta^-)^{k-2r_1}\,(\kappa \cdot \eta^+)^{k-2r_2}
\end{align*}
where $a_{k,r_1,r_2}$ are constants dependent on $k,r_1,r_2$.  Recall that $P_l(x)$ is a l-order polynomial of $x$ in Sec. 1 of Chap. III in \cite{San},
$$P_l(x)=\sum^{[\frac{l}{2}]}_{m=0}(-1)^m\frac{1}{2^l}\binom{l}{m}\binom{2l-2m}{l}x^{l-2m},$$
we obtain
\begin{align*}
&\frac{\mathrm{d}^k
  P_l}{\mathrm{d}x^k}(\kappa\cdot\eta^-)=\sum_{0\leq 2m_1\leq l-k} b_{l,k,m_1}(\kappa \cdot \eta^-)^{l-k-2m_1};
\\
&\frac{\mathrm{d}^kP_{\tilde{l}}}{\mathrm{d}x^k}(\kappa\cdot\eta^+)=\sum_{0\leq 2m_2\leq \tilde{l}-k} b_{\tilde{l},k,m_2}(\kappa \cdot \eta^+)^{\tilde{l}-k-2m_2},
\end{align*}
where $b_{l,k,m_1},b_{\tilde{l},k,m_2}$ are constants.
We plug the expression of $V_k, \frac{\mathrm{d}^k
  P_l}{\mathrm{d}x^k}, \frac{\mathrm{d}^kP_{\tilde{l}}}{\mathrm{d}x^k}$ into \eqref{M},
\begin{align*}
{\bf M}
&=\sum_{0 \leq k \leq\min(l,\tilde{l})}
  c_1^k(\theta)\sum_{0\leq2r_1,2r_2\leq k}a_{k,r_1,r_2}\\
&\qquad\times\sum_{0\leq 2m_1\leq l-k}\sum_{0\leq 2m_2\leq \tilde{l}-k}b_{l,k,m_1} b_{\tilde{l},k,m_2}
   (\kappa \cdot \eta^-)^{l-2r_1-2m_1}(\kappa \cdot \eta^+)^{\tilde{l}-2r_2-2m_2}.
\end{align*}
Exchanging the order of the summation of ${\bf M}$, one can verify that
\begin{align*}
{\bf M}
&=\sum_{0\leq 2j_1\leq l}\sum_{0\leq 2j_2\leq \tilde{l}}(\kappa \cdot \eta^-)^{l-2j_1}(\kappa \cdot \eta^+)^{\tilde{l}-2j_2}\\
&\times\Bigg[\sum_{0 \leq k \leq\min(l,\tilde{l})}\sum_{\substack{r_1+m_1=j_1\\0\leq2r_1\leq k,0\leq 2m_1\leq l-k}}
  \sum_{\substack{r_2+m_2=j_2\\0\leq2r_2\leq k,0\leq 2m_2\leq \tilde{l}-k}}c_1^k(\theta)a_{k,r_1,r_2}b_{l,k,m_1} b_{\tilde{l},k,m_2}
   \Bigg].
\end{align*}
We use again the formula \eqref{x^l=sum Pl} for $(\kappa \cdot \eta^-)^{l-2j_1}, (\kappa \cdot \eta^+)^{\tilde{l}-2j_2}$ that,
\begin{align*}
(\kappa \cdot \eta^-)^{l-2j_1}  &=
\sum_{0\leq p_1 \leq \frac{l}{2}-j_1} c_{l-2j_1,p_1}  P_{l-2j_1-2p_1}(\kappa \cdot \eta^-);\\
(\kappa \cdot \eta^+)^{\tilde{l}-2j_2}  &=
\sum_{0\leq p_2 \leq \frac{\tilde{l}}{2}-j_2} c_{\tilde{l}-2j_2,p_2}  P_{\tilde{l}-2j_2-2p_2}(\kappa \cdot \eta^+),
\end{align*}
and exchange the order of the summation of ${\bf M}$ again,
there exists a continuous coefficients
$b^{q,\tilde{q}}_{l,\tilde{l}}(\theta)$ such that
\begin{align*}
{\bf M}
=\sum_{0\leq 2q_1\leq l} \, \sum_{0\leq 2q_2\leq\tilde{l}}
  b^{l-2q_1,\tilde{l}-2q_2}_{l,\tilde{l}}(\theta) \,
  P_{l-2q_1}(\kappa\cdot\eta^-)
  P_{\tilde{l}-2q_2}(\kappa\cdot\eta^+).
\end{align*}
This conclude the proof of  Lemma \ref{lem int P+P-}
and Proposition \ref{orth}.}\\

\end{proof}

\subsection{Reduction of the expression of the non-linear {\color{black} eigenvalues}}

We derive in the following Propositions \ref{orth2} and \ref{prop reduc}
some simplifications of the expression of the non-linear
eigenvalue $ \mu^{m,\tilde{m},m_1^{\prime}}_{n,\tilde{n},l,\tilde{l},k_1}$, which will be used in the next Section~\ref{S5-2}.

\begin{proposition}\label{orth2}
For $G^{m,\tilde{m}}_{n,\tilde{n},l,\tilde{l}}(\kappa)$ given in \eqref{Gamma1} and
any integers
$n,\tilde{n} \geq0$,
$|m|\leq l$, $|m^{\prime}|\leq l^{\prime}$,
we have
\begin{align}\label{crit2}
\begin{split}
&\sum_{|m|\leq l}
\sum_{|\tilde{m}|\leq \tilde{l}}
\left|
  \int_{\mathbb{S}^2_{\kappa}}
  G^{m,\tilde{m}}_{n,\tilde{n},l,\tilde{l}}(\kappa)
  \overline{Y^{m^{\prime}}_{l^{\prime}}}(\kappa)
  d\kappa
\right|^2
=
\frac{2l+1}{4\pi}\frac{2\tilde{l}+1}{4\pi}
\\
&\int_{|\theta_1|\leq\frac{\pi}{4}}
  \beta(\theta_1)(\sin\theta_1)^{2n+l}
  (\cos\theta_1)^{2\tilde{n}+\tilde{l}}
\int_{|\theta_2|\leq\frac{\pi}{4}}
  \beta(\theta_2)(\sin\theta_2)^{2n+l}
  (\cos\theta_2)^{2\tilde{n}+\tilde{l}}
\\
&\times
\left(
2\pi
\int^1_{-1}
  F_{l,\tilde{l}}(x,\theta_1,\theta_2)
  P_{l^{\prime}}(x)
dx\right)
d\theta_2
d\theta_1
\end{split}
\end{align}
where
{\color{black}
\begin{align}\label{F}
\begin{split}
F_{l,\tilde{l}}(x,\theta_1,\theta_2)
& =
\int^{2\pi}_0\int^{2\pi}_0
  P_l\Big(\tau^1(\theta_2,\phi_2)J(x)\Big(\tau^1(\theta_1,\phi_1)\Big)^T
  \Big)\\
& \qquad\times
  P_{\tilde{l}}\Big(\tau(\theta_2,\phi_2)J(x)\Big( \tau(\theta_1,\phi_1)\Big)^T
  \Big)
\frac{d\phi_1}{2\pi}
\frac{d\phi_2}{2\pi},
\end{split}
\end{align}
}
$J(x)$ is the matrix function
$$
J(x)=\left(
    \begin{array}{ccc}
      x & -\sqrt{1-x^2} & 0 \\
      \sqrt{1-x^2} & x & 0 \\
      0& 0 & 1
    \end{array}
  \right)
$$
and $\tau(\theta,\phi)$,\,
$\tau^1(\theta,\phi)$ are the vectors
\begin{align*}
&\tau(\theta,\phi)=
  (\cos\theta,\sin\theta\cos\phi,\sin\theta\sin\phi),\\
&\tau^1(\theta,\phi)=
  (\sin\theta,-\cos\theta\cos\phi,-\cos\theta\sin\phi),
\end{align*}
{\color{black}the column vector $X^T$ is the transposition of the row vector $X=(x_1,x_2,x_3)$.}
\end{proposition}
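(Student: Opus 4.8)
The plan is to specialize the computation of the quantity $\mathbf{A}$ carried out in the proof of Proposition \ref{orth} to the diagonal case $(l^{\star},m^{\star})=(l^{\prime},m^{\prime})$, but to bypass the reduction through Lemma \ref{lem int P+P-}. Rather than expanding the product $P_l\cdot P_{\tilde l}$ into single Legendre polynomials, I would keep this product intact, perform \emph{both} azimuthal integrations in $\phi_1$ and $\phi_2$ to assemble the kernel $F_{l,\tilde l}$ of \eqref{F}, and only afterwards integrate against the spherical harmonics. Concretely, starting from \eqref{A=SSGY}--\eqref{B1} with $Y^{m^{\star}}_{l^{\star}}=Y^{m^{\prime}}_{l^{\prime}}$ and inserting the definition of $\mathbf{B_2}$, Fubini's theorem lets me write
\begin{align*}
\mathbf{B_1}(\theta_1,\theta_2)=\int_{\mathbb{S}^2_\gamma}\int_{\mathbb{S}^2_\kappa}\Phi(\kappa,\gamma,\theta_1,\theta_2)\,\overline{Y^{m^{\prime}}_{l^{\prime}}}(\kappa)\,Y^{m^{\prime}}_{l^{\prime}}(\gamma)\,d\kappa\,d\gamma,
\end{align*}
where $\Phi$ denotes the double $\phi$-integral of $P_l(\cdots)\,P_{\tilde l}(\cdots)$.

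The key step is the identification $\Phi(\kappa,\gamma,\theta_1,\theta_2)=F_{l,\tilde l}(\kappa\cdot\gamma,\theta_1,\theta_2)$. The point is that, in the frame $(\kappa,\kappa^1,\kappa^2)$, the two vectors $\kappa\sin\theta_1-\kappa^{\bot}(\phi_1)\cos\theta_1$ and $\kappa\cos\theta_1+\kappa^{\bot}(\phi_1)\sin\theta_1$ have coordinates exactly $\tau^1(\theta_1,\phi_1)$ and $\tau(\theta_1,\phi_1)$, and likewise $\gamma^-,\gamma^+$ of \eqref{g+,g-} have $(\gamma,\gamma^1,\gamma^2)$-coordinates $\tau^1(\theta_2,\phi_2)$ and $\tau(\theta_2,\phi_2)$. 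Hence the arguments of $P_l$ and $P_{\tilde l}$ are the bilinear forms $\vec u^{\,T}R\,\vec v$, with $R_{ij}=e^\kappa_i\cdot e^\gamma_j$ the overlap matrix of the two frames. Since the integrand depends on $\phi_1,\phi_2$ only through $\kappa^{\bot}(\phi_1)$ and $\gamma^{\bot}(\phi_2)$, and these are integrated over a full period, I may rotate $\kappa^1,\kappa^2$ and $\gamma^1,\gamma^2$ within the planes $\kappa^{\perp},\gamma^{\perp}$ without altering the value; choosing $\kappa^2=\gamma^2$ to be a unit normal to the plane spanned by $\kappa,\gamma$ and $\kappa^1,\gamma^1$ in that plane reduces the transition to a planar rotation by the angle between $\kappa$ and $\gamma$, i.e. to the matrix $J(\kappa\cdot\gamma)\in SO(3)$. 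With the orientation matching \eqref{F}, the two arguments become precisely $\tau^1(\theta_2,\phi_2)\,J(\kappa\cdot\gamma)\,(\tau^1(\theta_1,\phi_1))^T$ and $\tau(\theta_2,\phi_2)\,J(\kappa\cdot\gamma)\,(\tau(\theta_1,\phi_1))^T$, so after the $\phi$-integrations $\Phi$ is exactly $F_{l,\tilde l}(\kappa\cdot\gamma,\theta_1,\theta_2)$.

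With this identification $\mathbf{B_1}$ depends on $\kappa,\gamma$ only through $\kappa\cdot\gamma$, and I would finish by two orthogonality arguments. First, as $F_{l,\tilde l}(\,\cdot\,,\theta_1,\theta_2)$ is a continuous real function, the Funk--Hecke formula \eqref{funk-hecke} applied to the $\kappa$-integral (with $\overline{Y^{m^{\prime}}_{l^{\prime}}}$, still a degree-$l^{\prime}$ harmonic) gives
\begin{align*}
\int_{\mathbb{S}^2_\kappa}F_{l,\tilde l}(\kappa\cdot\gamma,\theta_1,\theta_2)\,\overline{Y^{m^{\prime}}_{l^{\prime}}}(\kappa)\,d\kappa=\Big(2\pi\int_{-1}^1 F_{l,\tilde l}(x,\theta_1,\theta_2)\,P_{l^{\prime}}(x)\,dx\Big)\,\overline{Y^{m^{\prime}}_{l^{\prime}}}(\gamma),
\end{align*}
and then $\int_{\mathbb{S}^2_\gamma}|Y^{m^{\prime}}_{l^{\prime}}(\gamma)|^2\,d\gamma=1$; substituting back into \eqref{A=SSGY} yields precisely \eqref{crit2}. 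I expect the main obstacle to be this geometric reduction of the overlap matrix to $J(x)$: one must justify carefully that the azimuthal rotations of the two adapted frames are harmless, owing to the full-period integration in $\phi_1,\phi_2$, and fix the orientation of the common normal so that the transition matrix is $J(x)$ in the exact row/column arrangement of \eqref{F} rather than its transpose. The degenerate configurations $\kappa\cdot\gamma=\pm1$, where the common plane is undefined, form a null set on which the integrand stays bounded and may be ignored.
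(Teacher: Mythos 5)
Your proof is correct, but it takes a genuinely different route from the paper's at the decisive step. Both arguments begin identically: you specialize \eqref{A=SSGY}--\eqref{B1} to the diagonal case $(l^{\star},m^{\star})=(l^{\prime},m^{\prime})$, i.e.\ you reuse the addition-theorem summation over $m,\tilde{m}$ from the proof of Proposition \ref{orth}. From there the paper expands the $\phi_1$-average of $P_l(\cdots)P_{\tilde{l}}(\cdots)$ through Lemma \ref{lem int P+P-} into $\sum b^{q,\tilde{q}}_{l,\tilde{l}}(\theta_1)P_q(\kappa\cdot\gamma^-)P_{\tilde{q}}(\kappa\cdot\gamma^+)$, evaluates the resulting spherical integrals by the addition theorem and Funk--Hecke to obtain $B_{l,\tilde{l},l^{\prime}}(\theta_1,\theta_2)$ of Remark \ref{rem sum |GY|}, and then verifies separately --- by redoing the same expansion in the explicit frame $\kappa_x=(x,\sqrt{1-x^2},0)$ --- that $2\pi\int_{-1}^1 F_{l,\tilde{l}}(x,\theta_1,\theta_2)P_{l^{\prime}}(x)\,dx$ equals the same quantity $B_{l,\tilde{l},l^{\prime}}$. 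You instead keep the product $P_lP_{\tilde{l}}$ intact, observe that the doubly azimuthally averaged kernel $\Phi(\kappa,\gamma,\theta_1,\theta_2)$ is independent of the choice of the adapted frames (full-period integration turns in-plane rotations of $(\kappa^1,\kappa^2)$ and $(\gamma^1,\gamma^2)$ into shifts of $\phi_1,\phi_2$) and rotation-invariant, hence zonal, and you identify it as $F_{l,\tilde{l}}(\kappa\cdot\gamma,\theta_1,\theta_2)$ by computing the overlap matrix of the two adapted frames, which is exactly $J(\kappa\cdot\gamma)$; a single application of the Funk--Hecke formula \eqref{funk-hecke} in the $\kappa$-variable and the normalization $\int_{\mathbb{S}^2}|Y^{m^{\prime}}_{l^{\prime}}|^2\,d\gamma=1$ then yield \eqref{crit2}. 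Your route buys a conceptual explanation of why $J(x)$ appears in \eqref{F} (it is the relative rotation between the frames adapted to $\kappa$ and $\gamma$) and bypasses the combinatorial coefficients $b^{q,\tilde{q}}_{l,\tilde{l}}$ entirely for this proposition; the paper's route is more computational but stays within the expansion machinery it needs anyway for the off-diagonal vanishing in Proposition \ref{orth}. The two delicate points you flag are indeed the only ones, and both are fine: Funk--Hecke applies to $\overline{Y^{m^{\prime}}_{l^{\prime}}}=Y^{-m^{\prime}}_{l^{\prime}}$ since it is still a degree-$l^{\prime}$ harmonic, and the $J$ versus $J^T$ ambiguity from the orientation of the adapted frames is harmless even without your careful orientation choice, because the substitution $\phi_i\mapsto\pi-\phi_i$ in the full-period integrals of \eqref{F} conjugates $J(x)$ by $\operatorname{diag}(1,-1,1)$, exchanging $J(x)$ with $J(x)^T$ while leaving $F_{l,\tilde{l}}$ unchanged.
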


\begin{remark}
We remark that in the formula \eqref{crit2},
the right hand side is independent of $m'$.
Therefore this implies
\begin{align}\label{GYm=Gm0}
\begin{split}
\sum_{|m|\leq l}
\sum_{|\tilde{m}|\leq \tilde{l}}
\left|
  \int_{\mathbb{S}^2_{\kappa}}
  G^{m,\tilde{m}}_{n,\tilde{n},l,\tilde{l}}(\kappa)
  \overline{Y^{m^{\prime}}_{l^{\prime}}}(\kappa)
  d\kappa
\right|^2
&=
\sum_{|m|\leq l}
\sum_{|\tilde{m}|\leq \tilde{l}}
\left|
  \int_{\mathbb{S}^2_{\kappa}}
  G^{m,\tilde{m}}_{n,\tilde{n},l,\tilde{l}}(\kappa)
  \overline{Y^{0}_{l^{\prime}}}(\kappa)
  d\kappa
\right|^2
\\
&=
\sum_{|q|\leq \min(l,\tilde{l})}
\left|
  \int_{\mathbb{S}^2_{\kappa}}
  G^{q,-q}_{n,\tilde{n},l,\tilde{l}}(\kappa)
  \overline{Y^{0}_{l^{\prime}}}(\kappa)
  d\kappa
\right|^2,
\end{split}
\end{align}
since from \eqref{nonlinear} the integral vanishes if $m+\tilde{m}\neq0$ .
\end{remark}

\begin{proof}
We will prove that
\begin{align*}
2\pi
\int^1_{-1}
  &F_{l,\tilde{l}}(x,\theta_1,\theta_2)
  P_{l^{\prime}}(x)
dx
= B_{l,\tilde{l},l^{\prime}}(\theta_1, \theta_2)
\end{align*}
where $B_{l,\tilde{l},l^{\prime}}(\theta_1, \theta_2)$
is given in the Remark \ref{rem sum |GY|}
and {\color{black}then} conclude.

We express the terms of $ F_{l,\tilde{l}}(x,\theta_1,\theta_2)$
given in \eqref{F}.  {\color{black}Constructing an orthonormal frame in $\mathbb{R}^3$ with respect to x ($|x|\leq1$) such that
$$\kappa_x = (x, \sqrt{1-x^2}, 0),\quad \kappa^1_x=(-\sqrt{1-x^2},x,0),\quad \kappa^2_x=(0,0,1),$$
we note from \eqref{k orthog}
\begin{equation*}
\kappa_x^{\bot}(\phi)=\kappa^1_x\cos\phi+\kappa^2_x\sin\phi=(-\sqrt{1-x^2} \cos\phi, x \cos\phi, \sin\phi).
\end{equation*}
One can verify that
\begin{align*}
& J(x)\Big(\tau^1(\theta_1,\phi_1)\Big)^T\\
&=
\left(
\begin{array}{ccc}
              x &  -\sqrt{1-x^2} & 0 \\
  \sqrt{1-x^2} &             x & 0 \\
              0 &             0 & 1
\end{array}
\right)
\left(
\begin{array}{c}
              \sin\theta_1 \\
  -\cos\theta_1 \cos\phi_1  \\
  -\cos\theta_1 \sin\phi_1
\end{array}
\right)
\\
&=
\left(
    \begin{array}{c}
     x \sin\theta_1 + \sqrt{1-x^2}\cos\theta_1 \cos\phi_1  \\
     \sqrt{1-x^2} \sin\theta_1 -  x \cos\theta_1 \cos\phi_1\\
     -\cos\theta_1 \sin\phi_1
    \end{array}
\right)
\\
&= \sin\theta_1 \, \kappa^T_x - \cos\theta_1 \, \Big(\kappa_x^{\bot}(\phi_1)\Big)^T.
\end{align*}
Similary, we have
\begin{align*}
J(x)\Big(\tau(\theta_1,\phi_1)\Big)^T
= \cos\theta_1 \, \kappa^T_x + \sin\theta_1 \, \Big(\kappa_x^{\bot}(\phi_1)\Big)^T.
\end{align*}
Therefore, we have
\begin{align}\label{another-express}
\begin{split}
& \tau^1(\theta_2,\phi_2)J(x)\Big(\tau^1(\theta_1,\phi_1)\Big)^T=(\sin\theta_1 \, \kappa_x - \cos\theta_1 \, \kappa_x^{\bot}(\phi_1))\cdot\tau^1(\theta_2,\phi_2)\\
&\tau(\theta_2,\phi_2)
    J(x)\Big(\tau(\theta_1,\phi_1)\Big)^T= (\cos\theta_1 \, \kappa_x + \sin\theta_1 \, \kappa_x^{\bot}(\phi_1))\cdot\tau(\theta_2,\phi_2).
\end{split}
\end{align}
}
From lemma \ref{lem int P+P-} and the above equality \eqref{another-express} , we deduce
\begin{align*}
&\int^{2\pi}_0
 P_l\Big(
 \tau^1(\theta_2,\phi_2)J(x)\Big(\tau^1(\theta_1,\phi_1)\Big)^T
  \Big)
  P_{\tilde{l}}\Big(
\tau(\theta_2,\phi_2)
    J(x)\Big(\tau(\theta_1,\phi_1)\Big)^T
  \Big)
\frac{d\phi_1}{2\pi}
\\
&\qquad=
\sum_{0\leq q \leq l} \, \sum_{0\leq \tilde{q} \leq\tilde{l}}
  b^{q,\tilde{q}}_{l,\tilde{l}}(\theta_1) \,
  P_{q}(\kappa_x\cdot\tau^1(\theta_2,\phi_2))
  P_{\tilde{q}}(\kappa_x\cdot\tau(\theta_2,\phi_2)).
\end{align*}
Therefore using \eqref{F}
\begin{align*}
2\pi
&\int^1_{-1}
  F_{l,\tilde{l}}(x,\theta_1,\theta_2)
  P_{l^{\prime}}(x)
dx
=
\sum_{0\leq q \leq l} \,
\sum_{0\leq \tilde{q} \leq\tilde{l}}
  b^{q,\tilde{q}}_{l,\tilde{l}}(\theta_1)
\times
\\
&
2\pi
\int^1_{-1}
\int^{2\pi}_0
  P_{q}(\kappa_x\cdot\tau^1(\theta_2,\phi_2))
  P_{\tilde{q}}(\kappa_x\cdot\tau(\theta_2,\phi_2))
  P_{l^{\prime}}(x)
\frac{d\phi_2}{2\pi}
dx.
\end{align*}
{\color{black}Applying} the addition theorem \eqref{addition theorem}
\begin{align*}
P_{\tilde{q}}(\kappa_x\cdot\tau(\theta_2,\phi_2))
&=\sum_{|\tilde{k}|\leq\tilde{q}}
a_{\tilde{q},k}
P^{|\tilde{k}|}_{\tilde{q}}(x)\,
P^{|\tilde{k}|}_{\tilde{q}}(\cos\theta_2)\,e^{i\tilde{k}\phi_2},
\\
P_{q}(\kappa_x\cdot{\color{black}\tau^1(\theta_2,\phi_2)})&={\color{black}P_q(x\sin\theta_2+\sqrt{1-x^2}\cos\theta_2\cos(\phi_2+\pi))}\\
&=\sum_{|k|\leq q}
a_{q,k}
P^{|k|}_{q}(x)\,
P^{|k|}_{q}(\sin\theta_2)\,e^{i k(\phi_2+\pi)} \\
&={\color{black}\sum_{|k|\leq q}
a_{q,k}
P^{|k|}_{q}(x)\,
P^{|k|}_{q}(\sin\theta_2)\,e^{i k\phi_2}(-1)^k
}
\end{align*}
{\color{black}where we use $e^{ik\pi}=(\cos\pi+i\sin\pi)^k=(-1)^{k}$ in the last equality,}
from the remark \ref{rem sum |GY|}, we find
\begin{align*}
2\pi
&\int^1_{-1}
  F_{l,\tilde{l}}(x,\theta_1,\theta_2)
  P_{l^{\prime}}(x)
dx
=
\sum_{0\leq q\leq l}\sum_{0\leq \tilde{q}\leq\tilde{l}}
b^{q,\tilde{q}}_{l,\tilde{l}}(\theta_1)
\quad \times
\\
&\sum_{|k|\leq \min(q,\tilde{q})}
  a_{q,k}
  a_{\tilde{q},k}
  (-1)^k P^{|k|}_{q}(\sin\theta_2)\
  P^{|k|}_{\tilde{q}}(\cos\theta_2)\,
  2\pi
  \int^1_{-1}
    P^{|k|}_{q}(x)\,
    P^{|k|}_{\tilde{q}}(x)\,
    P_{l^{\prime}}(x) \,
  dx,
\\
&= B_{l,\tilde{l},l^{\prime}}(\theta_1, \theta_2).
\end{align*}
This ends the proof of the formula \eqref{crit2}.
\end{proof}

The following Proposition will provide a convenient expression
to estimate the nonlinear eigenvalue
$\mu^{m,\tilde{m},m_1^{\prime}}_{n,\tilde{n},l,\tilde{l},k_1}$ in Section \ref{S5-2}.

\begin{proposition}\label{prop reduc}
For $G^{m,\tilde{m}}_{n,\tilde{n},l,\tilde{l}}(\kappa)$
given in \eqref{Gamma1},
and any integers
$n,\tilde{n} \geq0$,
$|m|\leq l$, $|m^{\prime}|\leq l^{\prime}$,
we have
\begin{align}\label{reduc}
\begin{split}
\sum_{|m|\leq l}\sum_{|\tilde{m}|\leq \tilde{l}}
&\left|\int_{\mathbb{S}^2_{\kappa}}G^{m,\tilde{m}}_{n,\tilde{n},l,\tilde{l}}(\kappa)
\overline{Y^{m^{\prime}}_{l^{\prime}}}
(\kappa)d\kappa\right|^2
\\
&=
\sum_{|q|\leq\min(l,\tilde{l})}
  \left(
  \left(\frac{4\pi}{2l^{\prime}+1}\right)^{\frac12}
  G^{q,-q}_{n,\tilde{n},l,\tilde{l}}(e_1)
  \right)
  \,
  \left(\int_{\mathbb{S}^2_{\kappa}}
    G^{q,-q}_{n,\tilde{n},l,\tilde{l}}(\kappa)
    \overline{Y^{0}_{l'}}(\kappa)
  d\kappa\right),
\end{split}
\end{align}
where $e_1=(1,0,0)$ and
\begin{align*}
G^{q,-q}_{n,\tilde{n},l,\tilde{l}}(e_1)
&=(-1)^q
{
\left(\frac{2l+1}
           {4\pi}
\right)^{\frac12}
\left(\frac{2\tilde{l}+1}
           {4\pi}
\right)^{\frac12}
\left(\frac{(l-|q|)!}{(l+|q|)!}\right)^{\frac12}
\left(\frac{(\tilde{l}-|q|)!}
    {(\tilde{l}+|q|)!}\right)^{\frac12}
} \,\,
\\
&\quad\times\int_{|\theta|\leq\frac{\pi}{4}}
  \beta(\theta)
  (\sin\theta)^{2n+l}
  (\cos\theta)^{2\tilde{n}+\tilde{l}}
  P^{|q|}_{l}(\sin\theta)
  P^{|q|}_{\tilde{l}}(\cos\theta)
d\theta.
\end{align*}
\end{proposition}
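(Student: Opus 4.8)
The plan is to combine the two preceding results. Proposition \ref{orth2}, via the Remark that follows it (formula \eqref{GYm=Gm0}), already tells us that the left-hand side of \eqref{reduc} collapses to a sum over the single index $q$:
\begin{align*}
\sum_{|m|\leq l}\sum_{|\tilde{m}|\leq \tilde{l}}
\left|\int_{\mathbb{S}^2_{\kappa}}G^{m,\tilde{m}}_{n,\tilde{n},l,\tilde{l}}(\kappa)
\overline{Y^{m^{\prime}}_{l^{\prime}}}(\kappa)d\kappa\right|^2
=\sum_{|q|\leq\min(l,\tilde{l})}
\left|\int_{\mathbb{S}^2_{\kappa}}G^{q,-q}_{n,\tilde{n},l,\tilde{l}}(\kappa)
\overline{Y^{0}_{l'}}(\kappa)d\kappa\right|^2,
\end{align*}
the point being that the coefficient is $m'$-independent, so we may take $m'=0$ and only the diagonal terms $\tilde m=-m$ survive. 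Hence it suffices to identify each squared modulus $|\int G^{q,-q}\,\overline{Y^0_{l'}}|^2$ with the announced product. First I would observe that $\int_{\mathbb{S}^2}G^{q,-q}_{n,\tilde{n},l,\tilde{l}}(\kappa)\overline{Y^0_{l'}}(\kappa)\,d\kappa$ is a real number up to a computable phase: indeed $Y^0_{l'}$ is real-valued, and the structure of $G^{q,-q}$ (obtained from \eqref{Gamma1} with $m=-\tilde m=q$) makes the integral real, so the squared modulus is the square of the integral itself.

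The main work is then to evaluate the prefactor, i.e. to show that the other factor $\int G^{q,-q}\,\overline{Y^0_{l'}}$ equals $(4\pi/(2l'+1))^{1/2}\,G^{q,-q}_{n,\tilde{n},l,\tilde{l}}(e_1)$ times that same integral — in other words, to prove the pointwise evaluation formula
\begin{align*}
G^{q,-q}_{n,\tilde{n},l,\tilde{l}}(e_1)
&=(-1)^q
\left(\tfrac{2l+1}{4\pi}\right)^{\frac12}
\left(\tfrac{2\tilde{l}+1}{4\pi}\right)^{\frac12}
\left(\tfrac{(l-|q|)!}{(l+|q|)!}\right)^{\frac12}
\left(\tfrac{(\tilde{l}-|q|)!}{(\tilde{l}+|q|)!}\right)^{\frac12}
\\
&\quad\times\int_{|\theta|\leq\frac{\pi}{4}}
\beta(\theta)(\sin\theta)^{2n+l}(\cos\theta)^{2\tilde{n}+\tilde{l}}
P^{|q|}_{l}(\sin\theta)P^{|q|}_{\tilde{l}}(\cos\theta)\,d\theta.
\end{align*}
To get this I would substitute $\kappa=e_1$ into the definition \eqref{Gamma1}, build the orthonormal frame $(e_1,e_1^1,e_1^2)$ as in \eqref{k,k1,k2} and parametrise $\sigma=e_1\cos\theta+e_1^\perp(\phi)\sin\theta$, use \eqref{k+s}--\eqref{k-s} to write $(e_1\pm\sigma)/|e_1\pm\sigma|$ explicitly, insert the closed form \eqref{Ylm2} of $Y^q_l$ and $Y^{-q}_{\tilde l}$, and carry out the $\phi$-integration, which kills all but the $e^{i q\phi}e^{-iq\phi}$ cross term and produces the normalisation constants $N_{l,q}N_{\tilde l,q}$ together with the associated Legendre functions $P^{|q|}_l(\sin\theta)$, $P^{|q|}_{\tilde l}(\cos\theta)$; the change of variable $\theta\mapsto 2\theta$ and the definition \eqref{beta} of $\beta$ then give the $\theta$-integral over $[-\pi/4,\pi/4]$, and the factor $(-1)^q$ appears from $\cos\tfrac\theta2$, $\sin\tfrac\theta2$ parity as in \eqref{k-s}.

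Finally, to assemble \eqref{reduc} I would note that the square $\big(\int G^{q,-q}\overline{Y^0_{l'}}\big)^2$ can be written as a product of two equal factors, and then rewrite one of them using the identity $\int_{\mathbb{S}^2}G^{q,-q}\overline{Y^0_{l'}}\,d\kappa=(4\pi/(2l'+1))^{1/2}\,G^{q,-q}(e_1)$. This last identity is exactly the integral form \eqref{addition theorem int} of the addition theorem evaluated at $\kappa=e_1$, combined with $Y^0_{l'}(e_1)=((2l'+1)/4\pi)^{1/2}P_{l'}(1)=((2l'+1)/4\pi)^{1/2}$: applying the projection $\int G^{q,-q}\overline{Y^0_{l'}}\,Y^0_{l'}$ reconstructs the $l'$-Laplace component of $G^{q,-q}$ and evaluating at $e_1$ extracts precisely $(4\pi/(2l'+1))^{1/2}G^{q,-q}(e_1)$. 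The main obstacle I anticipate is purely bookkeeping in the pointwise evaluation at $e_1$: keeping the normalisation constants, the sign $(-1)^q$, and the half-angle substitution consistent, so that the product of the two rewritten factors reproduces the right-hand side of \eqref{reduc} term by term in $q$. No new analytic idea is needed beyond the addition theorem, the Funk–Hecke framework already set up, and the explicit form \eqref{Ylm2}.
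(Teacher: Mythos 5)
Your reduction of the left-hand side via \eqref{GYm=Gm0} is correct, and your pointwise evaluation of $G^{q,-q}_{n,\tilde{n},l,\tilde{l}}(e_1)$ would go through exactly as you sketch it (it is the computation the paper performs from \eqref{G=}). The genuine gap is in your final assembly step: the identity
$\int_{\mathbb{S}^2_\kappa}G^{q,-q}_{n,\tilde{n},l,\tilde{l}}(\kappa)\,\overline{Y^{0}_{l'}}(\kappa)\,d\kappa
=\bigl(\tfrac{4\pi}{2l'+1}\bigr)^{1/2}G^{q,-q}_{n,\tilde{n},l,\tilde{l}}(e_1)$,
which you invoke to rewrite one factor of the square, is false in general. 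The integral form \eqref{addition theorem int} of the addition theorem applies only to a \emph{pure} degree-$l'$ spherical harmonic, whereas by Lemma \ref{lemma:5.1} the function $G^{q,-q}_{n,\tilde{n},l,\tilde{l}}$ is a combination $\sum_k c_k\,Y^{0}_{l+\tilde{l}-2k}$ with $0\le k\le\min(l,\tilde{l})$. Evaluating at $e_1$ therefore returns $\sum_k c_k\bigl(\tfrac{2(l+\tilde{l}-2k)+1}{4\pi}\bigr)^{1/2}$, a mixture of \emph{all} zonal components, not the single coefficient $c_{k'}$ with $l'=l+\tilde{l}-2k'$: evaluation at a point is not a projection onto the $l'$-Laplace component. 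Since your justification is uniform in $l'$, it would give $c_{l'}=\bigl(\tfrac{4\pi}{2l'+1}\bigr)^{1/2}G(e_1)$ for every admissible $l'$; plugging this back into the Laplace series at $e_1$ forces $G(e_1)=N\,G(e_1)$, with $N$ the number of nonzero components, hence $G(e_1)=0$ whenever $N\ge2$. This fails already for $l=\tilde{l}=1$, $q=0$, where $G^{0,0}$ has generically nonzero components of degrees $0$ and $2$.

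This also explains why \eqref{reduc} is stated as an equality of \emph{sums} over $q$: the right-hand side is not termwise equal to $\sum_q\bigl|\int G^{q,-q}\overline{Y^{0}_{l'}}\bigr|^2$; the two sums coincide (by \eqref{GYm=Gm0} together with the proposition itself) while the individual $q$-terms differ. The paper's proof never passes through your pointwise identity. It starts instead from the double-angle representation \eqref{crit2} of Proposition \ref{orth2}, expands $F_{l,\tilde{l}}(x,\theta_1,\theta_2)$ by the addition theorem into a sum over $q$ of a product of a $\theta_2$-dependent factor and a $\theta_1$-dependent factor, and only then recognizes the $\theta_2$-integral against $\beta(\theta_2)(\sin\theta_2)^{2n+l}(\cos\theta_2)^{2\tilde{n}+\tilde{l}}$ as $G^{q,-q}_{n,\tilde{n},l,\tilde{l}}(e_1)$ via \eqref{G=}, and the $\theta_1$-integral as $\bigl(\tfrac{4\pi}{2l'+1}\bigr)^{1/2}\int G^{q,-q}\overline{Y^{0}_{l'}}$ via \eqref{int GY=} and Lemma \ref{lem reduc}. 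To repair your argument you must reproduce this factorization of the double $(\theta_1,\theta_2)$ integral; the shortcut through \eqref{addition theorem int} cannot work.
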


\begin{proof}
For $0\leq\,k\leq\min(l,\tilde{l})$ and $|m^{\prime}|\leq l'$, we deduce from \eqref{crit2} that,
\begin{align}\label{I=SS SGY^2}
&{\bf I} = \sum_{|m|\leq l}
\sum_{|\tilde{m}|\leq \tilde{l}}
\left|
\int_{\mathbb{S}^2_{\kappa}}
  G^{m,\tilde{m}}_{n,\tilde{n},l,\tilde{l}}(\kappa)
  \overline{Y^{m^{\prime}}_{l'}}
(\kappa)
d\kappa
\right|^2
=\frac{2l+1}{4\pi} \frac{2\tilde{l}+1}{4\pi}\quad\times
\nonumber\\
&\,\,\,\,
\int_{|\theta_1|\leq\frac{\pi}{4}}
  \beta(\theta_1)
  (\sin\theta_1)^{2n+l}(\cos\theta_1)^{2\tilde{n}+\tilde{l}}
  \quad\times
\\
&\,\,
\int_{|\theta_2|\leq\frac{\pi}{4}}
  \beta(\theta_2)
  (\sin\theta_2)^{2n+l}
  (\cos\theta_2)^{2\tilde{n}+\tilde{l}}
  \left(
    2\pi
    \int^1_{-1}
      F_{l,\tilde{l}}(x,\theta_1,\theta_2) \,
      P_{l^{\prime}}(x)dx
  \right)
d\theta_2{\color{black}d\theta_1},\nonumber
\end{align}
where $F_{l,\tilde{l}}(x,\theta_1,\theta_2)$
was defined in \eqref{F}, such that
\begin{align*}
F_{l,\tilde{l}}(x,\theta_1,\theta_2)&=
\int^{2\pi}_0
\int^{2\pi}_0
  P_l\Big(
  \tau^1(\theta_2,\phi_2)J(x)\Big(\tau^1(\theta_1,\phi_1)\Big)^T
  \Big)\\
& \qquad\times
  P_{\tilde{l}}\Big(
  \tau(\theta_2,\phi_2)J(x)\Big( \tau(\theta_1,\phi_1)\Big)^T
  \Big)
\frac{d\phi_1}{2\pi}
\frac{d\phi_2}{2\pi}.
\end{align*}
We inherit the notation in Proposition \ref{orth2}, apply the formula \eqref{another-express} and the addition theorem \eqref{addition theorem Y}, then
\begin{align*}
& P_l\Big(
  \tau^1(\theta_2,\phi_2)J(x)\Big(\tau^1(\theta_1,\phi_1)\Big)^T
  \Big)= {\textstyle \frac{4\pi}{2\,l+1} }
\sum^{l}_{q=-l}
Y^{q}_{l}(\sin\theta_1 \, \kappa_x - \cos\theta_1 \, \kappa_x^{\bot}(\phi_1))\,
Y^{-q}_{l}(\tau^1(\theta_2,\phi_2))
\\
&P_{\tilde{l}}\Big(
  \tau(\theta_2,\phi_2)J(x)\Big( \tau(\theta_1,\phi_1)\Big)^T
  \Big)
= {\textstyle \frac{4\pi}{2\,\tilde{l}+1} }
\sum^{\tilde{l}}_{\tilde{k}=-\tilde{l}}
Y^{\tilde{q}}_{\tilde{l}}(\cos\theta_1 \, \kappa_x + \sin\theta_1 \, \kappa_x^{\bot}(\phi_1))\,
Y^{-\tilde{q}}_{\tilde{l}}(\tau(\theta_2,\phi_2)).
\end{align*}
Since
\begin{align*}
&Y^{-q}_{l}(\tau^1(\theta_2,\phi_2)) =
\left(\frac{2l+1}{4\pi}\right)^{\frac12}
\left(\frac{(l-|q|)!}{(l+|q|)!}\right)^{\frac12}
 P^{|q|}_{l}(\sin\theta_2)
 e^{-iq(\phi_2+\pi)}
\\
&Y^{-\tilde{q}}_{\tilde{l}}(\tau(\theta_2,\phi_2)) =
{\color{black}\left(\frac{2\tilde{l}+1}{4\pi}\right)^{\frac12}}
\left(\frac{(\tilde{l}-|\tilde{q}|)!}
      {(\tilde{l}+|\tilde{q}|)!}\right)^{\frac12}
 P^{|\tilde{q}|}_{\tilde{l}}(\cos\theta_2)
 e^{-i\tilde{q}\phi_2}
\end{align*}
we find that
\begin{align*}
&F_{l,\tilde{l}}(x,\theta_1,\theta_2)\\
&=
\sum_{|q|\leq \min(l,\tilde{l})}
  \left(\frac{4\pi}{2l+1}\right)^{\frac12}
  \left(\frac{4\pi}{2\tilde{l}+1}\right)^{\frac12}
  \left(\frac{(l-|q|)!}{(l+|q|)!}\right)^{\frac12}
  \left(\frac{(\tilde{l}-|q|)!}{(\tilde{l}+|q|)!}\right)^{\frac12}P^{|q|}_{l}(\sin\theta_2) (-1)^q
\\
&\quad
\times P^{|q|}_{\tilde{l}}(\cos\theta_2)
\int^{2\pi}_0
  Y^{q}_{l}(\sin\theta_1 \, \kappa_x - \cos\theta_1 \, \kappa_x^{\bot}(\phi_1))
  Y^{-q}_{\tilde{l}}(\cos\theta_1 \, \kappa_x + \sin\theta_1 \, \kappa_x^{\bot}(\phi_1))
\frac{d\phi_1}{2\pi}.
\end{align*}
We plug the previous relation into \eqref{I=SS SGY^2}
and we get
\begin{align*}
{\bf I}&=
\sum_{|q|\leq \min(l,\tilde{l})}
\int_{|\theta_2|\leq\frac{\pi}{4}}
  \beta(\theta_2)
  (\sin\theta_2)^{2n+l}
  (\cos\theta_2)^{2\tilde{n}+\tilde{l}}
  P^{|q|}_{l}(\sin\theta_2)
  P^{|q|}_{\tilde{l}}(\cos\theta_2)
d\theta_2
\\
&\quad\times(-1)^q
\left(\frac{2l+1}{4\pi}\right)^{\frac12}
\left(\frac{2\tilde{l}+1}{4\pi}\right)^{\frac12}
\left(\frac{(l-|q|)!}{(l+|q|)!}\right)^{\frac12}
\left(\frac{(\tilde{l}-|q|)!}
    {(\tilde{l}+|q|)!}\right)^{\frac12}
\\
&\quad
\times\int_{|\theta_1|\leq\frac{\pi}{4}}
  \beta(\theta_1)
  (\sin\theta_1)^{2n+l}(\cos\theta_1)^{2\tilde{n}+\tilde{l}}
\quad\times\int^{2\pi}_0
  2\pi
  \int^1_{-1}
\\
&Y^{q}_{l}(\sin\theta_1 \, \kappa_x - \cos\theta_1 \, \kappa_x^{\bot}(\phi_1))
  Y^{-q}_{\tilde{l}}(\cos\theta_1 \, \kappa_x + \sin\theta_1 \, \kappa_x^{\bot}(\phi_1))
    P_{l^{\prime}}(x)
  dx
\frac{d\phi_1}{2\pi}
d\theta_1.
\end{align*}
On one hand, from \eqref{G=}
\begin{align*}
&G^{q,-q}_{n,\tilde{n},l,\tilde{l}}(e_1)
=
\int_{|\theta_2|\leq\frac{\pi}{4}}
  \beta(\theta_2)(\sin\theta_2)^{2n+l}
  (\cos\theta_2)^{2\tilde{n}+\tilde{l}} \quad\times
\nonumber\\
&\quad
\int^{2\pi}_{0}
  Y^q_l(e_1\sin\theta_2 - e_1^{\bot}(\phi_2)\cos\theta_2)
  Y^{-q}_{\tilde{l}}
  (e_1\cos\theta_2 + e_1^{\bot}(\phi_2)\sin\theta_2)
\frac{d\phi_2}{2\pi}
d\theta_2
\\
&=
(-1)^q{\textstyle
\left(\frac{2l+1}{4\pi}\right)^{\frac12}
\left(\frac{2\tilde{l}+1}{4\pi}\right)^{\frac12}
\left(\frac{(l-|q|)!}{(l+|q|)!}\right)^{\frac12}
\left(\frac{(\tilde{l}-|q|)!}
    {(\tilde{l}+|q|)!}\right)^{\frac12}
}
\\
&\quad\times\int_{|\theta_2|\leq\frac{\pi}{4}}
  \beta(\theta_2)
  (\sin\theta_2)^{2n+l}
  (\cos\theta_2)^{2\tilde{n}+\tilde{l}}P^{|q|}_{l}(\sin\theta_2)
  P^{|q|}_{\tilde{l}}(\cos\theta_2)
d\theta_2.
\end{align*}
On the other hand, from \eqref{int GY=} and from \eqref{S Yq Y-q}
of the next lemma \ref{lem reduc},
\begin{align*}
&\int_{\mathbb{S}^2_{\kappa}}
  G^{q,-q}_{n,\tilde{n},l,\tilde{l}}
  (\kappa)\overline{Y^{0}_{l'}}(\kappa)
d\kappa
=
\int_{|\theta_1|\leq\frac{\pi}{4}}
  \beta(\theta_1)(\sin\theta_1)^{2n+l}
  (\cos\theta_1)^{2\tilde{n}+\tilde{l}}
\int^{2\pi}_{0}
\int_{\mathbb{S}^2_\kappa}
\\
&
\,\,\,\,  Y^q_l(\kappa\sin\theta_1-\kappa^{\bot}(\phi_1)\cos\theta_1)
  Y^{-q}_{\tilde{l}}
  (\kappa\cos\theta_1+\kappa^{\bot}(\phi_1)\sin\theta_1)
  \overline{Y^{0}_{l'}}(\kappa)
d\kappa
\frac{d\phi_1}{2\pi}
d\theta_1
\\
&=
\left(\frac{2l^{\prime}+1}{4\pi}\right)^{\frac12}
\int_{|\theta_1|\leq\frac{\pi}{4}}
  \beta(\theta_1)
  (\sin\theta_1)^{2n+l}(\cos\theta_1)^{2\tilde{n}+\tilde{l}}
\quad\times\int^{2\pi}_0
  2\pi
  \int^1_{-1}
\\
&
Y^{q}_{l}(\sin\theta_1 \, \kappa_x - \cos\theta_1 \, \kappa_x^{\bot}(\phi_1))
  Y^{-q}_{\tilde{l}}(\cos\theta_1 \, \kappa_x + \sin\theta_1 \, \kappa_x^{\bot}(\phi_1))
    P_{l^{\prime}}(x)
  dx
\frac{d\phi_1}{2\pi}
d\theta_1.
\end{align*}
Combining the three previous relations leads to \eqref{reduc}, and this concludes the proof of the Proposition.
\end{proof}

We now prove the following technical lemma.

\begin{lemma}\label{lem reduc}
For any integers
$l,\tilde{l},l^{\prime}\geq0$ and
$|q|\leq l$, we have
\begin{equation}\label{S Yq Y-q}
\begin{split}
&\left(\frac{4\pi}{2l^{\prime}+1}\right)^{\frac12}\int_{\mathbb{S}^2_\kappa}
  Y^q_l(\kappa\sin\theta_1-\kappa^{\bot}(\phi_1)\cos\theta_1)
  Y^{-q}_{\tilde{l}}
  (\kappa\cos\theta_1+\kappa^{\bot}(\phi_1)\sin\theta_1)
  \overline{Y^{0}_{l'}}(\kappa)
d\kappa
\\
&={\color{black}2\pi
\int_{-1}^1
   Y^{q}_{l}(\sin\theta_1 \, \kappa_x - \cos\theta_1 \, \kappa_x^{\bot}(\phi_1))
  Y^{-q}_{\tilde{l}}(\cos\theta_1 \, \kappa_x + \sin\theta_1 \, \kappa_x^{\bot}(\phi_1))
    P_{l^{ \prime}}(x)
dx}.
\end{split}
\end{equation}
\end{lemma}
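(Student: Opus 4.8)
The plan is to read the left-hand side of \eqref{S Yq Y-q} as the projection of the integrand onto the zonal harmonic $P_{l^{\prime}}$ and then to collapse the two-dimensional spherical integral to a one-dimensional one by exploiting an axial symmetry. First I would record that, since the polar axis in the convention \eqref{Ylm2} is $e_1=(1,0,0)$ and the order is $0$, one has $\overline{Y^{0}_{l^{\prime}}(\kappa)}=Y^{0}_{l^{\prime}}(\kappa)=\left(\frac{2l^{\prime}+1}{4\pi}\right)^{1/2}P_{l^{\prime}}(\kappa\cdot e_1)$. Multiplying by $\left(\frac{4\pi}{2l^{\prime}+1}\right)^{1/2}$ cancels the normalizing constants, so the left-hand side equals $\int_{\mathbb{S}^2_{\kappa}}H(\kappa)\,P_{l^{\prime}}(\kappa\cdot e_1)\,d\kappa$, where
\[
H(\kappa)=Y^{q}_{l}\left(\kappa\sin\theta_1-\kappa^{\bot}(\phi_1)\cos\theta_1\right)\,
          Y^{-q}_{\tilde{l}}\left(\kappa\cos\theta_1+\kappa^{\bot}(\phi_1)\sin\theta_1\right).
\]

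The heart of the argument is to prove that $H$ is invariant under the rotations $R_\alpha$ about the $e_1$-axis. The key point is that the moving frame of \eqref{k,k1,k2} is covariant under such rotations: writing $\kappa$ in the spherical coordinates $(\theta_0,\phi_0)$ attached to $e_1$, the rotation $R_\alpha$ acts by $\phi_0\mapsto\phi_0+\alpha$ and sends the orthonormal triple $(\kappa,\kappa^1,\kappa^2)$ to $(R_\alpha\kappa,R_\alpha\kappa^1,R_\alpha\kappa^2)$; hence $R_\alpha\kappa^{\bot}(\phi_1)$ is exactly the frame vector attached to $R_\alpha\kappa$, and the two spherical-harmonic arguments in $H(R_\alpha\kappa)$ are the $R_\alpha$-images of those in $H(\kappa)$. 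Since a rotation about the polar axis $e_1$ multiplies $Y^{m}_{l}$ by the phase $e^{im\alpha}$, the two factors acquire $e^{iq\alpha}$ and $e^{-iq\alpha}$, whose product is $1$. This phase cancellation — which is precisely the reason the opposite orders $q$ and $-q$ must appear together — gives $H(R_\alpha\kappa)=H(\kappa)$, so $H$ depends only on $x=\kappa\cdot e_1$. I expect this covariance-plus-cancellation step to be the main conceptual obstacle; the rest is routine.

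Finally I would slice the sphere along the $e_1$-axis: writing $\kappa=R_\psi\kappa_x$ with $\kappa_x=(x,\sqrt{1-x^2},0)$, $x\in[-1,1]$, $\psi\in[0,2\pi]$, the surface measure disintegrates as $d\kappa=dx\,d\psi$, while $P_{l^{\prime}}(\kappa\cdot e_1)=P_{l^{\prime}}(x)$ and, by the invariance just proved, $H(\kappa)=H(\kappa_x)$. Integrating out $\psi$ produces the factor $2\pi$ and yields
\[
\int_{\mathbb{S}^2_{\kappa}}H(\kappa)\,P_{l^{\prime}}(\kappa\cdot e_1)\,d\kappa
=2\pi\int_{-1}^{1}H(\kappa_x)\,P_{l^{\prime}}(x)\,dx,
\]
which is the right-hand side of \eqref{S Yq Y-q} once the frame $(\kappa_x,\kappa_x^1,\kappa_x^2)$ of Proposition \ref{orth2} is used at the representative $\kappa_x$. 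The one genuine bookkeeping subtlety, which I would treat carefully, is to match this chosen frame at $\kappa_x$ with the canonical one built from \eqref{k,k1,k2}: the orientations of the second frame vector differ by a sign, and one must check that this is absorbed in $H(\kappa_x)$ directly, or else after the $q\mapsto-q$ symmetric summation in Proposition \ref{prop reduc}. Everything else reduces to the change of variables above.
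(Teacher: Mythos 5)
Your proposal is correct and follows essentially the same route as the paper: the paper verifies your rotational-invariance step by writing out the components of the two arguments in the moving frame \eqref{k,k1,k2} and checking directly that the phases $e^{i\phi}$ and $e^{-i\phi}$ produced by the factors $(\sigma_2^-+i\,\operatorname{sgn}(q)\,\sigma_3^-)^{|q|}$ and $(\sigma_2^+-i\,\operatorname{sgn}(q)\,\sigma_3^+)^{|q|}$ cancel — exactly the $e^{iq\alpha}e^{-iq\alpha}=1$ cancellation you formulate via frame covariance — and then slices the sphere with $x=\cos\theta$, picking up the factor $2\pi$, just as you do. The frame-orientation subtlety you flag is genuine and the paper passes over it silently: at $\phi_0=0$ the frame \eqref{k,k1,k2} gives $\kappa^2=(0,0,-1)$ while Proposition \ref{orth2} takes $\kappa_x^2=(0,0,1)$, so $\kappa_x^{\bot}(\phi_1)$ is the canonical $\kappa^{\bot}(-\phi_1)$ and the bracket above gets conjugated (it is genuinely complex, with imaginary part proportional to $\sin\phi_1$), and, as you anticipate, this discrepancy is not absorbed pointwise but is harmless in the way the lemma is used, disappearing under the $q\leftrightarrow-q$ summation and the $\phi_1$-integration in Proposition \ref{prop reduc} (equivalently, one should read $\kappa_x^{\bot}$ in the canonical orientation).
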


\begin{proof}
We consider
\begin{align*}
  {\bf I} &=
  \int_{\mathbb{S}^2_\kappa}
  Y^q_l(\kappa\sin\theta_1-\kappa^{\bot}(\phi_1)\cos\theta_1)
  Y^{-q}_{\tilde{l}}
  (\kappa\cos\theta_1+\kappa^{\bot}(\phi_1)\sin\theta_1)
  \overline{Y^{0}_{l'}}(\kappa)
d\kappa.
\end{align*}
From \eqref{Ylm2} we have
\begin{align*}
Y^q_l(\kappa\sin\theta_1-\kappa^{\bot}(\phi_1)\cos\theta_1)
&= N_{l,q}
   \left( \frac{d^{|q|} P_l}{dx^{|q|}}\right)(\sigma_1^-) \,
  (\sigma_2^- + i\, \operatorname{sgn}(q) \, \sigma_3^-)^{|q|},
\\
Y^{-q}_{\tilde{l}}
  (\kappa\cos\theta_1+\kappa^{\bot}(\phi_1)\sin\theta_1)
&= N_{\tilde{l},q}
   \left( \frac{d^{|q|} P_{\tilde{l}}}{dx^{|q|}}\right)
     (\sigma_1^+) \,
  (\sigma_2^+ - i\, \operatorname{sgn}(q) \, \sigma_3^+)^{|q|},
\end{align*}
where
\begin{align*}
\sigma^-&= \kappa\sin\theta_1-\kappa^{\bot}(\phi_1)\cos\theta_1
= (\sigma_1^-,\sigma_2^-,\sigma_3^-)
\\
\sigma^+
&= \kappa\cos\theta_1+\kappa^{\bot}(\phi_1)\sin\theta_1
=(\sigma_1^+,\sigma_2^+,\sigma_3^+) .
\end{align*}
Noting {\color{black}$\kappa  = (\cos\theta,\sin\theta\cos\phi,\sin\theta\sin\phi)$ with $\theta\in[0,\pi]$ and $\phi\in[0.2\pi]$, and
$$\kappa^1=(-\sin\theta,\cos\theta\cos\phi,\cos\theta\sin\phi),\quad \kappa^2=(0,\sin\phi,-\cos\phi),$$
and using the definition \eqref{k orthog}
of $\kappa^{\bot}(\phi_1)$,
we have
\begin{align*}
\sigma_1^-=&\cos\theta\sin\theta_1
+\sin\theta\cos\theta_1\cos\phi_1,\\
\sigma_2^-=&\sin\theta\cos\phi\sin\theta_1
-\cos\theta\cos\phi\cos\theta_1\cos\phi_1-\sin\phi\cos\theta_1\sin\phi_1,\\
\sigma_3^-=&\sin\theta\sin\phi\sin\theta_1
 -\cos\theta\sin\phi\cos\theta_1\cos\phi_1
 +\cos\phi\cos\theta_1\sin\phi_1,\\
\end{align*}
and
\begin{align*}
\sigma_1^+=&\cos\theta\cos\theta_1-\sin\theta\sin\theta_1\cos\phi_1,\\
\sigma_2^+=&\sin\theta\cos\phi\cos\theta_1+\cos\theta\cos\phi\sin\theta_1\cos\phi_1+\sin\phi\sin\theta_1\sin\phi_1,\\
\sigma_3^+=&\sin\theta\sin\phi\cos\theta_1
  +\cos\theta\sin\phi\sin\theta_1\cos\phi_1
  -\cos\phi\sin\theta_1\sin\phi_1.
\end{align*}
Therefore
\begin{align*}
&(\sigma_2^- + i\,  \sigma_3^-)\\
&=(\sin\theta\cos\phi\sin\theta_1-\cos\theta\cos\phi\cos\theta_1\cos\phi_1-\sin\phi\cos\theta_1\sin\phi_1)\\
&\quad+i(\sin\theta\sin\phi\sin\theta_1-\cos\theta\sin\phi\cos\theta_1\cos\phi_1+\cos\phi\cos\theta_1\sin\phi_1)\\
&=(\sin\theta\sin\theta_1-\cos\theta\cos\theta_1\cos\phi_1)e^{i\phi}+i\cos\theta_1\sin\phi_1e^{i\phi}\\
&=(\sin\theta\sin\theta_1-\cos\theta\cos\theta_1\cos\phi_1+i\cos\theta_1\sin\phi_1)e^{i\phi};\\
&(\sigma_2^+ - i\,  \, \sigma_3^+)\\
&=(\sin\theta\cos\phi\cos\theta_1+\cos\theta\cos\phi\sin\theta_1\cos\phi_1+\sin\phi\sin\theta_1\sin\phi_1)\\
&\quad-i(\sin\theta\sin\phi\cos\theta_1+\cos\theta\sin\phi\sin\theta_1\cos\phi_1-\cos\phi\sin\theta_1\sin\phi_1)\\
&=(\sin\theta\cos\theta_1+\cos\theta\sin\theta_1\cos\phi_1+i\sin\theta_1\sin\phi_1)e^{-i\phi}.
\end{align*}}
Direct computations lead to
\begin{align*}
(\sigma_2^- + i\, \sigma_3^-)
(\sigma_2^+ - i\, \sigma_3^+)
&=(\sin\theta\sin\theta_1-\cos\theta\cos\theta_1\cos\phi_1+i\cos\theta_1\sin\phi_1)\\
&\quad\times(\sin\theta\cos\theta_1+\cos\theta\sin\theta_1\cos\phi_1+i\sin\theta_1\sin\phi_1),
\end{align*}
which does not depend of $\phi$.
Since $\sigma_1^\pm$ do not depend also on $\phi$,
we get with the change of variable $x=\cos\theta$
\begin{align*}
{\bf I}
&=
2\pi
\int_{0}^{\pi}
  Y^q_l(\kappa_{\theta,0}\sin\theta_1-\kappa_{\theta,0}^{\bot}(\phi_1)\cos\theta_1)
\\
&\quad\quad\quad\quad
  Y^{-q}_{\tilde{l}}
  (\kappa_{\theta,0}\cos\theta_1
    +\kappa_{\theta,0}^{\bot}(\phi_1)\sin\theta_1)
  \overline{Y^{0}_{l'}}(\kappa_{\theta,0})
\, \sin\theta \, d\theta
\\
&=
2\pi
\int_{-1}^1
    Y^{q}_{l}(\sin\theta_1 \, \kappa_x - \cos\theta_1 \, \kappa_x^{\bot}(\phi_1))
  Y^{-q}_{\tilde{l}}(\cos\theta_1 \, \kappa_x + \sin\theta_1 \, \kappa_x^{\bot}(\phi_1))
    \overline{Y^{0}_{l^{ \prime}}}(\kappa_x) dx\\
&={\color{black}
2\pi\left(\frac{2l^{\prime}+1}{4\pi}\right)^{\frac12}
\int_{-1}^1
    Y^{q}_{l}(\sin\theta_1 \, \kappa_x - \cos\theta_1 \, \kappa_x^{\bot}(\phi_1))
  Y^{-q}_{\tilde{l}}(\cos\theta_1 \, \kappa_x + \sin\theta_1 \, \kappa_x^{\bot}(\phi_1))
    P_{l^{ \prime}}(x)
dx}.
\end{align*}
This concludes the proof of {\color{black} Lemma} \ref{lem reduc} and Proposition \ref{prop reduc}.
\end{proof}


\setcounter{section}{5}

\section{The estimates of the non linear eigenvalues }\label{S5-2}

In this section, we prove {\color{black} Proposition} \ref{est}, we need the following fundamental result of Gamma function.
It is well known of the stirling's formula (see 12.33 {\color{black}of Chap. XII} in \cite{Whit}, \cite{Rudin}) that,
$$
\Gamma(x+1)=
\sqrt{2\pi x} \,
\Big(\frac{x}{e}\Big)^xe^{\frac{\nu(x)}{12x}},\,\text{for}\,x\geq1,
$$
where $0<\nu(x)<1$.
{\color{black} Therefore we derive directly the following useful estimate}.

Let $a,\,b$ be two fixed constant,  for any $x>0$, with $|b-a|\leq\,x+b$, $x+a\geq1$, $x+b\geq1$,\,we have
\begin{align}\label{Gamma function2}
\frac{\Gamma(x+a+1)}{\Gamma(x+b+1)}\leq\,C_{a,b}(x+a)^{a-b},
\end{align}
where $C_{a,b}$ is dependent only on $a,\,b.$
We also recall the definition of the Beta function
\begin{align}\label{Beta function}
B(x,y) = \int^1_0
 t^{x-1} (1-t)^{y-1} dt
=\frac{\Gamma(x)\Gamma(y)}{\Gamma(x+y)}.
\end{align}

\subsection{The estimate for the radially symmetric terms }
We first give the estimate of $|\lambda^{rad, 1}_{n,\tilde{n},\tilde{l}}|^2$, and $|\lambda^{rad, 2}_{n,\tilde{n},l}|^2$, which is $1),\,2)$ in Proposition \ref{est}.  Recall that
{\color{black}
\begin{align*}
\lambda^{rad, 1}_{n,\tilde{n},\tilde{l}}&=\frac{1}{\sqrt{4\pi}}\frac{A_{\tilde{n},\tilde{l}}
A_{n,0}}{A_{n+\tilde{n},\tilde{l}}}\int_{|\theta|\leq\frac{\pi}{4}}
\beta(\theta)(\sin\theta)^{2n}(\cos\theta)^{2\tilde{n}+\tilde{l}}
P_{\tilde{l}}(\cos\theta)d\theta,\\
\lambda^{rad, 2}_{n,\tilde{n},l}&=\frac{1}{\sqrt{4\pi}}\frac{A_{\tilde{n},0}A_{n,l}}{A_{n+\tilde{n},l}}
\int_{|\theta|\leq\frac{\pi}{4}}\beta(\theta)(\sin\theta)^{2n+l}
(\cos\theta)^{2\tilde{n}}P_{l}(\sin\theta)d\theta
\end{align*}
}
where
$$
A_{n,l}=(-i)^l(2\pi)^\frac{3}{4}\left(\frac{1}{\sqrt{2}n!
\Gamma(n+l+\frac{3}{2})}\right)^\frac{1}{2}.
$$

\begin{lemma}\label{est1-b}
For $n\geq1$, $\tilde{n},\tilde{l}\in\mathbb{N}$,
\begin{equation}\label{estrad1-b}
|\lambda^{rad, 1}_{n,\tilde{n},\tilde{l}}|^2\lesssim\tilde{n}^s(\tilde{n}
+\tilde{l})^sn^{-\frac{5}{2}-2s}.
\end{equation}
For all $\tilde{n}\geq1$, $n,l\in\mathbb{N}$, $n+l\geq2$,
\begin{equation}\label{estrad2-b}
|\lambda^{rad, 2}_{n,\tilde{n},l}|^2\lesssim \frac{\tilde{n}^{2s}}{(n+1)^s(n+l)^{\frac{5}{2}+s}}.
\end{equation}
\end{lemma}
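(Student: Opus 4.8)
The plan is to establish \eqref{estrad1-b} and \eqref{estrad2-b} in parallel, since each nonlinear eigenvalue factors as a ratio of Gamma functions times a scalar angular integral, and these two pieces are controlled by the two tools recorded above: Stirling's bound \eqref{Gamma function2} for the Gamma ratio, and the Beta function \eqref{Beta function} for the integral. Inserting the explicit value of $A_{n,l}$ into the definitions of $\lambda^{rad,1}_{n,\tilde n,\tilde l}$ and $\lambda^{rad,2}_{n,\tilde n,l}$ shows that $|\lambda^{rad,1}_{n,\tilde n,\tilde l}|^2$ is a positive constant multiple of
\[
\frac{(n+\tilde n)!\,\Gamma(n+\tilde n+\tilde l+\tfrac32)}{\tilde n!\,\Gamma(\tilde n+\tilde l+\tfrac32)\,n!\,\Gamma(n+\tfrac32)}\Big(\int_{|\theta|\le\frac\pi4}\beta(\theta)(\sin\theta)^{2n}(\cos\theta)^{2\tilde n+\tilde l}P_{\tilde l}(\cos\theta)\,d\theta\Big)^2,
\]
with the analogous identity for $\lambda^{rad,2}$, whose integrand is $(\sin\theta)^{2n+l}(\cos\theta)^{2\tilde n}P_{l}(\sin\theta)$ and whose Gamma factor is $\frac{(n+\tilde n)!\,\Gamma(n+\tilde n+l+\frac32)}{\tilde n!\,\Gamma(\tilde n+\frac32)\,n!\,\Gamma(n+l+\frac32)}$.

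For the angular integral I would pass to the Beta variable $t=\sin^2\theta\in[0,\tfrac12]$. Then $(\sin\theta)^{2n}=t^{n}$ and $(\cos\theta)^{2\tilde n+\tilde l}=(1-t)^{\tilde n+\tilde l/2}$, while \eqref{beta} gives $\beta(\theta)\le C\theta^{-1-2s}$ and $d\theta=\frac{dt}{2\sqrt{t(1-t)}}$ with $\theta\ge\sqrt t$, so that $\beta(\theta)\,d\theta\lesssim t^{-1-s}\,dt$ on $[0,\tfrac12]$. Bounding crudely $|P_{\tilde l}(\cos\theta)|\le1$, the first angular integral is dominated by
\[
J(a,b)=\int_0^{1/2}t^{a}(1-t)^{b}\,dt,\qquad a=n-1-s,\quad b=\tilde n+\tfrac{\tilde l}{2}-\tfrac12,
\]
while the second reduces to the same object with $a=n+\tfrac l2-1-s$ and $b=\tilde n-\tfrac12$.

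The heart of the matter, and the step I expect to be the main obstacle, is to estimate $J(a,b)$ sharply enough. Simply bounding $J(a,b)\le B(a+1,b+1)=\frac{\Gamma(a+1)\Gamma(b+1)}{\Gamma(a+b+2)}$ by the complete Beta integral is far too wasteful when $a\gg b$: the mass of $t^{a}(1-t)^{b}$ then concentrates near $t=1$, outside $[0,\tfrac12]$, and this bound only produces the power $n^{-1}$ in \eqref{estrad1-b}, whereas $n^{-5/2-2s}$ is required. I would therefore split according to the maximizer $t^\ast=\frac{a}{a+b}$ of the integrand. If $t^\ast\le\tfrac12$ (that is $a\le b$, the regime $\tilde n+\tilde l/2\gtrsim n$), the peak lies in the interval and $J(a,b)\le B(a+1,b+1)$ is of the correct order. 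If $t^\ast>\tfrac12$ (that is $a>b$), the integrand is increasing on $[0,\tfrac12]$, and estimating it by its endpoint value yields $J(a,b)\lesssim 2^{-(a+b)}$; this is the decisive bound, since it retains the factor $2^{-b}$ that the complete Beta integral discards. The two estimates agree up to polynomial factors at the transition $a=b$, so together they cover all $(n,\tilde n,\tilde l)$.

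It then remains to substitute these bounds into the factored expressions and to carry out the Stirling bookkeeping via \eqref{Gamma function2}. In each regime the exponential factors produced by the Gamma ratio and by $J(a,b)$ must, and do, cancel, leaving a ratio of powers. Reading off the exact polynomial exponents $-\tfrac52-2s$ for \eqref{estrad1-b} and $-\tfrac52-s$ (together with the separate factor $(n+1)^{-s}$) for \eqref{estrad2-b} is the delicate point: it requires tracking the half-integer shifts and the subleading $\sqrt{2\pi x}$ terms of Stirling's formula, keeping $n$ and $l$ resolved separately for $\lambda^{rad,2}$. The growth factors $\tilde n^s(\tilde n+\tilde l)^s$ in \eqref{estrad1-b} and $\tilde n^{2s}$ in \eqref{estrad2-b} arise precisely from the $\Gamma(\,\cdot\,\pm s)$ contributions generated by the singular weight $t^{-1-s}$. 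Once these powers are matched in both regimes, the claimed estimates \eqref{estrad1-b} and \eqref{estrad2-b} follow.
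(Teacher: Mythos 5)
Your reduction (factor out the Gamma ratio, substitute $t=\sin^2\theta$, use $\beta(\theta)\,d\theta\lesssim t^{-1-s}\,dt$ and $|P_l|\le 1$) matches the paper's, and your diagnosis of the naive bound is accurate: replacing the truncated integral $J(a,b)=\int_0^{1/2}t^a(1-t)^b\,dt$ by the complete Beta function with the \emph{original} exponents does only give $n^{-1}$. But your two-regime fix has a genuine gap in the regime $a>b$ near the transition. There you bound $J(a,b)\lesssim 2^{-(a+b)}$ by the endpoint value; when $a\approx b\approx n$ the true size is $J\asymp 4^{-(a+b)/2}\,(a+b)^{-1/2}$ (the integrand is Gaussian of width $\asymp n^{-1/2}$ at the endpoint $t=\tfrac12$), so you overestimate by $\sqrt n$ per integral, hence by $n$ in $|\lambda^{rad,1}|^2$. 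This is fatal because the claimed bound is \emph{saturated} exactly in that zone: take $\tilde l=0$, $\tilde n=n-2$, so $a=n-1-s>b=n-\tfrac52$ (your second regime). Stirling gives for the prefactor and the true integral
\begin{align*}
\frac{(2n-2)!\,\Gamma(2n-\tfrac12)}{n!\,(n-2)!\,\Gamma(n+\tfrac32)\,\Gamma(n-\tfrac12)}\asymp 16^{\,n}n^{-3/2},
\qquad
J\asymp B(n-s,n-\tfrac32)\asymp 4^{-n}n^{-1/2},
\end{align*}
so $|\lambda^{rad,1}_{n,n-2,0}|^2\asymp 16^{\,n}n^{-3/2}\cdot 16^{-n}n^{-1}=n^{-5/2}$, which is exactly the right-hand side of \eqref{estrad1-b} (there $\tilde n^s(\tilde n+\tilde l)^s n^{-5/2-2s}\asymp n^{-5/2}$). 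Your endpoint bound replaces $J^2\asymp 16^{-n}n^{-1}$ by $16^{-n}$ and therefore proves only $n^{-3/2}$. Your own caveat that the two regimes ``agree up to polynomial factors at the transition'' is precisely the concession you cannot afford: the lemma is a polynomial bound that is sharp at the transition, so there is no slack to absorb a lost power of $n$. A repair along your lines would require the genuine Laplace-type refinement $J(a,b)\lesssim 4^{-(a+b)/2}\min\bigl((a-b)^{-1},(a+b)^{-1/2}\bigr)$ for $a>b$, plus careful matching across the zone $0<a-b\lesssim\sqrt{n\log n}$ — real additional work that your sketch does not contain. (The same issue recurs verbatim for \eqref{estrad2-b} around $\tilde n\approx n+\tfrac l2$.)

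For contrast, the paper avoids any regime splitting by a different and much lighter use of the truncation. It applies Cauchy--Schwarz to the truncated integral with an \emph{asymmetric} split of the exponents, combined with the cheap inequality $(1-t)^{b}\le 2^{c}(1-t)^{b+c}$ valid on $[0,\tfrac12]$ (this one step is where the restriction to $t\le\tfrac12$ is exploited, at the cost of a constant):
\begin{align*}
\Bigl(\int_0^{1/2}t^{n-1-s}(1-t)^{\tilde n+\frac{\tilde l}{2}-\frac12}dt\Bigr)^2
\lesssim
\Bigl(\int_0^{1/2}t^{n-1-s}(1-t)^{\tilde n+s}dt\Bigr)
\Bigl(\int_0^{1/2}t^{n-1-s}(1-t)^{\tilde n+\tilde l+\frac12+s}dt\Bigr).
\end{align*}
Each factor may then be bounded by a \emph{complete} Beta function, because the split is engineered so that the resulting denominators $\Gamma(n+\tilde n+1)=(n+\tilde n)!$ and $\Gamma(n+\tilde n+\tilde l+\tfrac32)$ cancel the large numerator factors of the Gamma prefactor identically, uniformly in all parameters; what survives is the ratio $\Gamma(n-s)^2\Gamma(\tilde n+1+s)\Gamma(\tilde n+\tilde l+\tfrac32+s)\big/\bigl(n!\,\Gamma(n+\tfrac32)\,\tilde n!\,\Gamma(\tilde n+\tilde l+\tfrac32)\bigr)$, which \eqref{Gamma function2} converts into $\tilde n^s(\tilde n+\tilde l)^s n^{-5/2-2s}$ in one pass. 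So the complete Beta bound is not intrinsically too weak, as your plan assumes — it is only too weak when applied directly to $J(a,b)^2$ with the original exponents; after the asymmetric Cauchy--Schwarz redistribution it yields the sharp polynomial bound everywhere, including the transition zone where your scheme breaks down.
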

\begin{proof}
We estimate $|\lambda^{rad, 1}_{n,\tilde{n},\tilde{l}}|^2$.\,\,
{\color{black} From the assumption on} $\beta(\theta)$
$$\beta(\theta)\approx|\sin\theta|^{-1-2s},$$
and {\color{black} the inequality} {\color{black}$|P_{\tilde{l}}(x)|\leq1$ for any $|x|\leq1$},  we have,
\begin{align*}
|\lambda^{rad, 1}_{n,\tilde{n},\tilde{l}}|^2
&\lesssim
\frac{(n+\tilde{n})!\Gamma(n+\tilde{n}+\tilde{l}+\frac{3}{2})}
{n!\tilde{n}!\Gamma(n+\frac{3}{2})\Gamma(\tilde{n}+\tilde{l}+\frac{3}{2})}
\left(\int_0^{\frac{\pi}{4}}
(\sin\theta)^{2n-1-2s}(\cos\theta)^{2\tilde{n}+\tilde{l}}
d\theta
\right)^2.
\end{align*}
{\color{black} Using} the Cauchy-Schwarz inequality and the Beta Function
\eqref{Beta function},
we derive that
\begin{align*}
&\left(\int^{\frac{\pi}{4}}_{0}
  (\sin\theta)^{2n-1-2s}(\cos\theta)^{2\tilde{n}+\tilde{l}} \, d\theta\right)^2={\color{black}\frac{1}{4}}\Big(\int^{\frac{1}{2}}_0t^{n-1-s}(1-t)^{\tilde{n}+\frac{\tilde{l}}{2}-\frac{1}{2}}dt\Big)^2
  \\
&{\color{black}\leq
\frac{1}{4}\,
\left(\int^{\frac{1}{2}}_0t^{n-1-s}(1-t)^{\tilde{n}+s}dt\right)\times\left(\int^{\frac{1}{2}}_0t^{n-1-s}(1-t)^{\tilde{n}+\tilde{l}-1-s}dt\right)}
\\
&{\color{black}\leq
\frac{1}{4}
\left(\int^{\frac{1}{2}}_0t^{n-1-s}(1-t)^{\tilde{n}+s}dt\right)\times 2^{\frac32+2s}}\,\left(\int^{\frac{1}{2}}_0t^{n-1-s}(1-t)^{\tilde{n}+\tilde{l}+\frac{1}{2}+s}dt\right)
\\
&\lesssim\frac{
  (\Gamma(n-s))^2
  \Gamma(\tilde{n}+1+s)\Gamma(\tilde n + \tilde l+\frac{3}{2}+s)}
{(n+\tilde{n})!\Gamma(n+\tilde n + \tilde l+\frac{3}{2})}.
\end{align*}
Then,
\begin{align}\label{rad1}
\begin{split}
|\lambda^{rad, 1}_{n,\tilde{n},\tilde{l}}|^2&\lesssim
\frac{(n+\tilde{n})!\Gamma(n+\tilde{n}+\tilde{l}+\frac{3}{2})}{n!\tilde{n}!
\Gamma(n+\frac{3}{2})\Gamma(\tilde{n}+\tilde{l}+\frac{3}{2})}\\
&\qquad\times\frac{
  (\Gamma(n-s))^2
  \Gamma(\tilde{n}+1+s)\Gamma(\tilde n + \tilde l+\frac{3}{2}+s)}
{(n+\tilde{n})!\Gamma(n+\tilde n + \tilde l+\frac{3}{2})}\\
&\lesssim\frac{\Gamma(n-s)\Gamma(n-s)\Gamma(\tilde{n}+1+s)\Gamma(\tilde{n}+\tilde{l}+\frac{3}{2}+s)}{n!\Gamma(n+\frac{3}{2})\tilde{n}!\Gamma(\tilde{n}+\tilde{l}+\frac{3}{2})}.
\end{split}
\end{align}
We deduce from the formula \eqref{Gamma function2} with $x=n$, $a=-s$, $b=0$,
$$
\frac{\Gamma(n-s+1)}{n!}\lesssim\frac{1}{(n-s)^s}.
$$
{\color{black} From the following} recurrence formula
for {\color{black}  the} Gamma function
$$
\Gamma(n+1-s)=(n-s)\Gamma(n-s),
$$
we obtain
\begin{align*}
\frac{\Gamma(n-s)}{n!}=\frac{1}{(n-s)}\frac{\Gamma(n+1-s)}
{\Gamma(n+1)}\lesssim\frac{1}{(n-s)^{s+1}}\lesssim\frac{1}{n^{1+s}}.
\end{align*}
Using $x=n$, $a=-s$, $b=\frac{1}{2}$ in \eqref{Gamma function2},
$$\frac{\Gamma(n+1-s)}{\Gamma(n+\frac{3}{2})}\lesssim\frac{1}{n^{\frac{1}{2}+s}},$$
and {\color{black} the} recurrence formula $\Gamma(n+1-s)=(n-s)\Gamma(n-s)$,
\begin{align*}
\frac{\Gamma(n-s)}{\Gamma(n+\frac{3}{2})}=\frac{1}{n-s}\frac{\Gamma(n+1-s)}
{\Gamma(n+\frac{3}{2})}\lesssim\frac{1}{(n-s)n^{\frac{1}{2}+s}}\lesssim\frac{1}{n^{\frac{3}{2}+s}}.
\end{align*}
Using $x=\tilde{n}+1$, $a=s$,\,$b=0$ in \eqref{Gamma function2}, we have
\begin{align*}
&\frac{\Gamma(\tilde{n}+1+s)}{\tilde{n}!}=\frac{\tilde{n}+1}{\tilde{n}+1+s}\frac{\Gamma(\tilde{n}+2+s)}{(\tilde{n}+1)!}\lesssim\tilde{n}^s.
\end{align*}
Using $x=\tilde{n}+\tilde{l}+\frac{1}{2}$,\,$a=s$,\,$b=0$,
\begin{align*}
\frac{\Gamma(\tilde{n}+\tilde{l}+\frac{3}{2}+s)}{\Gamma(\tilde{n}+\tilde{l}+\frac{3}{2})}\lesssim(\tilde{n}+\tilde{l}+\frac{1}{2}+s)^s\lesssim(\tilde{n}+\tilde{l})^s.
\end{align*}
Substitute {\color{black} the previous estimates} into \eqref{rad1}, we obtain
$$
|\lambda^{rad, 1}_{n,\tilde{n},\tilde{l}}|^2\lesssim\tilde{n}^s(\tilde{n}+\tilde{l})^sn^{-\frac{5}{2}-2s}.
$$
This is the formula of \eqref{estrad1-b}

Analogously, for the term $|\lambda^{rad, 2}_{n,\tilde{n},l}|^2$, we use the Cauchy-Schwarz inequality
\begin{align*}
|\lambda^{rad, 2}_{n,\tilde{n},l}|^2&\approx
\frac{(n+\tilde{n})!\Gamma(n+\tilde{n}+l+\frac{3}{2})}{n!\tilde{n}!
\Gamma(n+l+\frac{3}{2})\Gamma(\tilde{n}+\frac{3}{2})}{\color{black}\left(\int^{\frac{\pi}{4}}_{0}(\sin\theta)^{2n+l-1-2s}(\cos\theta)^{2\tilde{n}}d\theta\right)^2}\\
&=\frac{1}{4}\frac{(n+\tilde{n})!\Gamma(n+\tilde{n}+l+\frac{3}{2})}{n!\tilde{n}!
\Gamma(n+l+\frac{3}{2})\Gamma(\tilde{n}+\frac{3}{2})}{\color{black}\left(\int^{\frac{1}{2}}_{0}t^{n+\frac{l}{2}-1-s}(1-t)^{\tilde{n}-\frac{1}{2}}dt\right)^2}\\
&\lesssim\frac{(n+\tilde{n})!\Gamma(n+\tilde{n}+l+\frac{3}{2})}{n!\tilde{n}!
\Gamma(n+l+\frac{3}{2})\Gamma(\tilde{n}+\frac{3}{2})}\\
&\quad\times{\color{black}\left(\int^{\frac{1}{2}}_0t^{n+l-2-s}(1-t)^{\tilde{n}+\frac{3}{2}+s}dt\right)\times\left(\int^{\frac{1}{2}}_0t^{n-s}(1-t)^{\tilde{n}+s}dt\right)}\\
&\leq \frac{(n+\tilde{n})!\Gamma(n+\tilde{n}+l+\frac{3}{2})}{n!\tilde{n}!
\Gamma(n+l+\frac{3}{2})\Gamma(\tilde{n}+\frac{3}{2})}\\
&\quad\times{\color{black}
\left(\frac{\Gamma(n+l-1-s)\Gamma(\tilde{n}+\frac{5}{2}+s)}{\Gamma(n+\tilde{n}+l+\frac{3}{2})}\right)\times
\left(\frac{\Gamma(n+1-s)\Gamma(\tilde{n}+1+s)}{\Gamma(\tilde{n}+n+2)}\right)}\\
&\lesssim\frac{\Gamma(n+1-s)\Gamma(n+l-1-s)\Gamma(\tilde{n}+1+s)\Gamma(\tilde{n}+\frac{5}{2}+s)}{(n+\tilde{n}+1)n!\Gamma(n+l+\frac{3}{2})\tilde{n}!\Gamma(\tilde{n}+\frac{3}{2})}.
\end{align*}
{\color{black}
We deduce from the formula \eqref{Gamma function2} with {\color{red}$x=n+1\geq2$, $a=-s, b=0,$}
$$\frac{\Gamma(n+2-s)}{(n+1)!}\lesssim \frac{1}{(n+1-s)^s}\lesssim \frac{1}{(n+1)^s}.$$
This inequality is also right for $n=0$ $(\text{indeed}, \,\frac{\Gamma(n+2-s)}{(n+1)!}\mid_{n=0}\lesssim1)$.
{\color{red}By using the recurrence formula
$\Gamma(n+2-s)=(n+1-s)\Gamma(n+1-s),$ we conclude that
 $$\frac{\Gamma(n+1-s)}{n!}=\frac{n+1}{n+1-s}\frac{\Gamma(n+2-s)}{(n+1)!}\lesssim \frac{1}{(n+1)^s}.$$
 }Consider the assumption that $n,l\in \mathbb{N}, n+l\geq2,$ using the formula \eqref{Gamma function2} with {\color{red}$x=n+l, a=-s, b=\frac{1}{2}$,
$$\frac{\Gamma(n+l+1-s)}{\Gamma(n+l+\frac{3}{2})}\lesssim\frac{1}{(n+l-s)^{\frac{1}{2}+s}}.$$
By using the recurrence formula
$$\Gamma(n+l+1-s)=(n+l-s)(n+l-1-s)\Gamma(n+l-1-s),$$
we obtain,
$$\frac{\Gamma(n+l-1-s)}{\Gamma(n+l+\frac{3}{2})}=\frac{1}{(n+l-s)(n+l-1-s)}\frac{\Gamma(n+l+1-s)}{\Gamma(n+l+\frac{3}{2})}\lesssim\frac{1}{(n+l)^{\frac{5}{2}+s}}.$$
}Finally, under the assumption of $\tilde{n}\geq1$, we use again \eqref{Gamma function2} with $x=\tilde{n}, a=s, b=0$, then
$$\frac{\Gamma(\tilde{n}+1+s)}{\tilde{n}!}\lesssim(\tilde{n}+s)^s\lesssim\tilde{n}^s.$$
Using  the formula \eqref{Gamma function2} with $x=\tilde{n}, a=\frac{3}{2}+s, b=\frac{1}{2}$, we obtain,
$$\frac{\Gamma(\tilde{n}+\frac{5}{2}+s)}{\Gamma(\tilde{n}+\frac{3}{2})}\lesssim(\tilde{n}+\frac{3}{2}+s)^{1+s}\lesssim \tilde{n}^{1+s}.$$
}
{\color{black}Therefore, under the assumption of $\tilde{n}\geq1$, $n,l\in\mathbb{N}$, $n+l\geq2$, we conclude}
$$
|\lambda^{rad, 2}_{n,\tilde{n},l}|^2\lesssim\frac{\tilde{n}^{1+2s}}{(n+1)^s(n+l)^{\frac{5}{2}+s}(n+\tilde{n}+1)}\leq\frac{\tilde{n}^{2s}}{(n+1)^s(n+l)^{\frac{5}{2}+s}}.
$$
This ends the proof of \eqref{estrad2-b}.
\end{proof}

\subsection{The estimate for the general terms}
In the proof of $3)$ in Proposition \ref{est}, we need the following technical Lemma.
Recall that
$$
A_{n,l}=(-i)^l(2\pi)^\frac{3}{4}\left(\frac{1}{\sqrt{2}n!
\Gamma(n+l+\frac{3}{2})}\right)^\frac{1}{2},
$$
and
\begin{align*}
G^{m,\tilde{m}}_{n,\tilde{n},l,\tilde{l}}(\kappa)&=
\int_{\mathbb{S}^2}b(\kappa\cdot\sigma)\Big(|\kappa-\sigma|/2\Big)^{2n+l}
\Big(|\kappa+\sigma|/2\Big)^{2\tilde{n}+\tilde{l}}\\
&\quad\qquad\times
Y^{m}_l\Big(\frac{\kappa-\sigma}
{|\kappa-\sigma|}\Big) Y^{\tilde{m}}_{\tilde{l}}\Big(\frac{\kappa+\sigma}{|\kappa+\sigma|}\Big)\, d\sigma.
\end{align*}
Then {\color{black} recalling} the notation in Proposition \ref{expansion}, we have
$$
\mu^{m,\tilde{m},m^{\prime}}_{n,\tilde{n},l,\tilde{l},k}=
\frac{A_{\tilde{n},\tilde{l}}A_{n,l}}{A_{n+\tilde{n}+k,l+\tilde{l}-2k}}
\left(\int_{S^2_{\kappa}}G^{m,\tilde{m}}_{n,\tilde{n},l,\tilde{l}}(\kappa)
\overline{Y^{m^{\prime}}_{l+\tilde{l}-2k}}
(\kappa)d\kappa\right),
$$
It follows that,
\begin{align}\label{present}
\begin{split}
\sum_{|m|\leq\,l}\sum_{|\tilde{m}|\leq\,\tilde{l}}\left|\mu^{m,\tilde{m},m^{\prime}}_{n,\tilde{n},l,\tilde{l},k}\right|^2
&=\Big|\frac{A_{\tilde{n},\tilde{l}}A_{n,l}}{A_{n+\tilde{n}+k,l+\tilde{l}-2k}}
\Big|^2\\
&\qquad\times\sum_{|m|\leq\,l}\sum_{|\tilde{m}|\leq\,\tilde{l}}
\left|\int_{S^2_{\kappa}}G^{m,\tilde{m}}_{n,\tilde{n},l,\tilde{l}}(\kappa)
\overline{Y^{m^{\prime}}_{l+\tilde{l}-2k}}
(\kappa)d\kappa\right|^2.
\end{split}
\end{align}
In the next Lemma we estimate
$$\sum_{|m|\leq l}\sum_{|\tilde{m}|\leq \tilde{l}}\Big|\int_{\mathbb{S}^2_{\kappa}}G^{m,\tilde{m}}_{n,\tilde{n},l,\tilde{l}}(\kappa)
\overline{Y^{m^{\prime}}_{l+\tilde{l}-2k}}
(\kappa)d\kappa\Big|^2.$$

\begin{lemma}\label{cr}
For $0\leq k\leq\min(l,\tilde{l})$,\,$|m^{\prime}|\leq l+\tilde{l}-2k$,\,we have
\begin{align}\label{cr2}
\begin{split}
&\sum_{|m|\leq l}\sum_{|\tilde{m}|\leq \tilde{l}}\left|\int_{\mathbb{S}^2_{\kappa}}G^{m,\tilde{m}}_{n,\tilde{n},l,\tilde{l}}(\kappa)
\overline{Y^{m^{\prime}}_{l+\tilde{l}-2k}}
(\kappa)d\kappa\right|^2
\\
&\lesssim
\frac{\tilde{l}\sqrt{l}}{l+\tilde{l}-2k+1}
\left(
\int_0^{\frac{\pi}{4}}
  \beta(\theta)(\sin\theta)^{2n+l}(\cos\theta)^{2\tilde{n}+\tilde{l}}
d\theta
\right)^2.
\end{split}
\end{align}
\end{lemma}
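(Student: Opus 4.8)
The plan is to combine the two different expressions already available for the left-hand side of \eqref{cr2} and then extract the decay from a pointwise bound on spherical harmonics. Write $l'=l+\tilde l-2k$ and abbreviate the quantity to be estimated by
\[
\mathbf{L}=\sum_{|m|\le l}\sum_{|\tilde m|\le\tilde l}\Big|\int_{\mathbb{S}^2_\kappa}G^{m,\tilde m}_{n,\tilde n,l,\tilde l}(\kappa)\,\overline{Y^{m'}_{l'}}(\kappa)\,d\kappa\Big|^2 .
\]
By \eqref{GYm=Gm0} we may replace $Y^{m'}_{l'}$ by $Y^0_{l'}$ and collapse the double sum to a single one, so that $\mathbf{L}=\sum_{|q|\le\min(l,\tilde l)}|b_q|^2$ with $b_q=\int_{\mathbb{S}^2_\kappa}G^{q,-q}_{n,\tilde n,l,\tilde l}(\kappa)\overline{Y^0_{l'}}(\kappa)\,d\kappa$. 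On the other hand, Proposition~\ref{prop reduc} rewrites the \emph{same} quantity as $\mathbf{L}=\sum_{|q|\le\min(l,\tilde l)}a_q b_q$, where $a_q=\big(\tfrac{4\pi}{2l'+1}\big)^{1/2}G^{q,-q}_{n,\tilde n,l,\tilde l}(e_1)$. Since $\mathbf{L}\ge0$, the Cauchy--Schwarz inequality gives $\mathbf{L}=\big|\sum_q a_qb_q\big|\le(\sum_q|a_q|^2)^{1/2}\mathbf{L}^{1/2}$, whence
\[
\mathbf{L}\le\sum_{|q|\le\min(l,\tilde l)}|a_q|^2=\frac{4\pi}{2l'+1}\sum_{|q|\le\min(l,\tilde l)}\big|G^{q,-q}_{n,\tilde n,l,\tilde l}(e_1)\big|^2 .
\]
This is the crucial reduction: it removes the dependence on $m'$ and leaves only the diagonal values $G^{q,-q}(e_1)$, for which Proposition~\ref{prop reduc} supplies a closed formula.

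Next I would insert that formula and recognise the factors $N_{l,q}P^{|q|}_l(\sin\theta)$ and $N_{\tilde l,q}P^{|q|}_{\tilde l}(\cos\theta)$ as values of the normalised spherical harmonics $Y^q_l$, $Y^q_{\tilde l}$. Writing $w(\theta)=\beta(\theta)(\sin\theta)^{2n+l}(\cos\theta)^{2\tilde n+\tilde l}$, the Cauchy--Schwarz inequality in the variable $\theta$ gives, for each $q$,
\[
\Big|\int_{|\theta|\le\frac\pi4}w\,\big(N_{l,q}P^{|q|}_l(\sin\theta)\big)\big(N_{\tilde l,q}P^{|q|}_{\tilde l}(\cos\theta)\big)\,d\theta\Big|^2\le\Big(\int_{|\theta|\le\frac\pi4}w\,d\theta\Big)\int_{|\theta|\le\frac\pi4}w\,\big(N_{l,q}P^{|q|}_l(\sin\theta)\big)^2\big(N_{\tilde l,q}P^{|q|}_{\tilde l}(\cos\theta)\big)^2 d\theta .
\]
I would then sum over $q$, exchange the sum with the $\theta$-integral, pull out $\max_{|q|}\big(N_{l,q}P^{|q|}_l(\sin\theta)\big)^2$ from the degree-$l$ factor, and sum the remaining degree-$\tilde l$ factor exactly by the addition theorem \eqref{addition theorem Y} at coincident points, $\sum_{|q|\le\tilde l}\big(N_{\tilde l,q}P^{|q|}_{\tilde l}(\cos\theta)\big)^2=\sum_{|q|\le\tilde l}|Y^q_{\tilde l}|^2=\frac{2\tilde l+1}{4\pi}$. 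This yields
\[
\frac{4\pi}{2l'+1}\sum_{|q|}\big|G^{q,-q}(e_1)\big|^2\lesssim\frac{2\tilde l+1}{2l'+1}\Big(\int_{|\theta|\le\frac\pi4}w\,d\theta\Big)\int_{|\theta|\le\frac\pi4}w\,\Big(\max_{|q|\le l}|Y^q_l(\sigma_\theta)|^2\Big)\,d\theta,
\]
where $\sigma_\theta$ denotes a unit vector of colatitude $\arccos(\sin\theta)$.

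It remains to bound $\max_{|q|\le l}|Y^q_l(\sigma_\theta)|^2$. Since $\theta\in[0,\frac\pi4]$ forces $\sin\theta\in[0,\frac1{\sqrt2}]$, the point $\sigma_\theta$ has colatitude in $[\frac\pi4,\frac\pi2]$, a compact subset of $(0,\pi)$ keeping it away from both poles. The heart of the matter is the sharp $L^\infty$ bound $\max_{|q|\le l}|Y^q_l(\sigma)|^2\lesssim\sqrt l$, uniformly for $\sigma$ whose colatitude stays away from $0$ and $\pi$ (equivalently $\max_{|q|\le l}\frac{(l-|q|)!}{(l+|q|)!}(P^{|q|}_l(x))^2\lesssim l^{-1/2}$ for $|x|\le\frac1{\sqrt2}$). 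Granting this, the last integral is $\lesssim\sqrt l\int w$, and since $\frac{(2\tilde l+1)\sqrt l}{2l'+1}\lesssim\frac{\tilde l\sqrt l}{l+\tilde l-2k+1}$ while $\int_{|\theta|\le\pi/4}w=2\int_0^{\pi/4}w$, one recovers \eqref{cr2}. I expect this spherical-harmonic bound to be the main obstacle: the trivial estimate $|Y^q_l|^2\le\frac{2l+1}{4\pi}\sim l$ (read off directly from the addition theorem) is off by exactly the decisive factor $\sqrt l$, and the gain is genuinely sharp, being saturated by the highest-weight harmonics $Y^{\pm l}_l$, which concentrate near the equator with $|Y^{\pm l}_l|\sim l^{1/4}$. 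Proving this uniform bound — via Hilb/WKB asymptotics of the associated Legendre functions, or by an explicit estimate of $\frac{(l-|q|)!}{(l+|q|)!}(P^{|q|}_l(x))^2$ away from $x=\pm1$ — is where the real work lies. The asymmetry $\tilde l\sqrt l$ in \eqref{cr2} reflects precisely that the saving $\sqrt l$ can only be harvested from the degree-$l$ factor carrying the bulk argument $\sin\theta$, whereas the degree-$\tilde l$ factor with the near-polar argument $\cos\theta$ must simply be summed.
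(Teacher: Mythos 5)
Your opening reduction is step for step the paper's own: you invoke Proposition \ref{prop reduc} together with \eqref{GYm=Gm0}, apply Cauchy--Schwarz in $q$ to absorb one factor of $\mathbf{L}$ and obtain $\mathbf{L}\le\frac{4\pi}{2(l+\tilde l-2k)+1}\sum_{|q|\le\min(l,\tilde l)}\bigl|G^{q,-q}_{n,\tilde n,l,\tilde l}(e_1)\bigr|^2$, and then apply Cauchy--Schwarz in $\theta$ against the weight $w(\theta)=\beta(\theta)(\sin\theta)^{2n+l}(\cos\theta)^{2\tilde n+\tilde l}$; all of this coincides with the paper's proof of Lemma \ref{cr}. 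The divergence --- and the genuine gap --- is in how you then estimate the resulting $q$-sum $\Sigma(\theta)=\sum_{|q|\le\min(l,\tilde l)}a_{l,q}\,a_{\tilde l,q}\,\bigl(P^{|q|}_l(\sin\theta)\bigr)^2\bigl(P^{|q|}_{\tilde l}(\cos\theta)\bigr)^2$. You bound the degree-$l$ factor by its maximum over $q$ and resum the degree-$\tilde l$ factor exactly via the addition theorem at coincident points, which commits you to the uniform estimate $\max_{|q|\le l}a_{l,q}\bigl(P^{|q|}_l(x)\bigr)^2\lesssim l^{-1/2}$ for $|x|\le 1/\sqrt2$, i.e.\ $\max_{|q|\le l}|Y^q_l|^2\lesssim\sqrt l$ away from the poles. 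You state this bound, correctly observe that it is sharp (saturated by $q$ near $l$), and then defer its proof to ``Hilb/WKB asymptotics''. That deferral is precisely where the proof is missing: the bound is true, but it is a nontrivial uniform inequality for associated Legendre functions --- uniform in \emph{both} $q$ and $x$, including the turning-point regime $|q|\approx l\sin\theta$ where bulk WKB breaks down and an Airy-type transition layer must be controlled. It is of Haagerup--Schlichtkrull/Loh\"ofer type and lies strictly beyond the classical toolkit the paper draws on; asserting it without proof leaves a hole at the decisive step.

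It is instructive that the paper never needs this. It keeps the two factors together and applies the addition theorem \eqref{addition theorem} \emph{twice}, resumming the $q$-series into $\Sigma(\theta)=\int_0^{2\pi}P_l\bigl(\sin^2\theta+\cos^2\theta\cos\phi\bigr)\,P_{\tilde l}\bigl(\cos^2\theta+\sin^2\theta\cos\phi\bigr)\frac{d\phi}{2\pi}$, after which only elementary ingredients are required: $|P_{\tilde l}|\le1$, Bernstein's classical inequality $\sqrt l\,(1-x^2)^{1/4}|P_l(x)|\le 4\sqrt{2/\pi}$ for the \emph{ordinary} Legendre polynomial (quoted from Sansone), the pointwise lower bound $1-\bigl(\sin^2\theta+\cos^2\theta\cos\phi\bigr)^2\ge\cos^4\theta\,(1-\cos^2\phi)$, and the integrability of $(1-\cos^2\phi)^{-1/4}$; these give $\Sigma(\theta)\lesssim l^{-1/2}$ uniformly for $\theta\in[0,\pi/4]$, hence \eqref{cr2}. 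Note that this averaged bound is exactly the $q$-summed consequence of your pointwise claim, since $\sum_{|q|\le\tilde l}a_{\tilde l,q}\bigl(P^{|q|}_{\tilde l}(\cos\theta)\bigr)^2=P_{\tilde l}(1)=1$: your lemma is strictly stronger than what the conclusion needs, purchased at a much higher price. To repair your write-up you must either actually prove the uniform sup bound (via Szeg\H{o}/Hilb asymptotics with uniform error control, or by citing a uniform Jacobi-polynomial inequality) or, more economically, perform the $q$-summation before estimating, as the paper does.
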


\begin{proof}
For $0\leq\,k\leq\min(l,\tilde{l})$
and $|m^{\prime}|\leq l+\tilde{l}-2k$,
we deduce from Proposition \ref{prop reduc} that
{\color{black}
\begin{align*}
&\sum_{|m|\leq l}
\sum_{|\tilde{m}|\leq\tilde{l}}
\left|
\int_{\mathbb{S}^2_{\kappa}}
  G^{m,\tilde{m}}_{n,\tilde{n},l,\tilde{l}}(\kappa)
  \overline{Y^{m^{\prime}}_{l+\tilde{l}-2k}}(\kappa)
d\kappa
\right|^2
\\
&=\sum_{|q|\leq\min(l,\tilde{l})}
  \left(
  \left(\frac{4\pi}{2(l+\tilde{l}-2k)+1}\right)^{\frac12}
  G^{q,-q}_{n,\tilde{n},l,\tilde{l}}(e_1)
  \right)
  \,
  \left(\int_{\mathbb{S}^2_{\kappa}}
    G^{q,-q}_{n,\tilde{n},l,\tilde{l}}(\kappa)
    \overline{Y^{0}_{l+\tilde{l}-2k}}(\kappa)
  d\kappa\right),
\end{align*}
where $e_1=(1,0,0)$ and
\begin{align*}
G^{q,-q}_{n,\tilde{n},l,\tilde{l}}(e_1)
&=(-1)^q
{
\left(\frac{2l+1}
           {4\pi}
\right)^{\frac12}
\left(\frac{2\tilde{l}+1}
           {4\pi}
\right)^{\frac12}
\left(\frac{(l-|q|)!}{(l+|q|)!}\right)^{\frac12}
\left(\frac{(\tilde{l}-|q|)!}
    {(\tilde{l}+|q|)!}\right)^{\frac12}
} \,\,
\\
&\quad\times\int_{|\theta|\leq\frac{\pi}{4}}
  \beta(\theta)
  (\sin\theta)^{2n+l}
  (\cos\theta)^{2\tilde{n}+\tilde{l}}
  P^{|q|}_{l}(\sin\theta)
  P^{|q|}_{\tilde{l}}(\cos\theta)
d\theta.
\end{align*}
It follows from the Cauchy-Schwarz inequality that
\begin{align*}
&\sum_{|m|\leq l}\sum_{|\tilde{m}|\leq\tilde{l}}
\left|
\int_{\mathbb{S}^2_{\kappa}}
  G^{m,\tilde{m}}_{n,\tilde{n},l,\tilde{l}}(\kappa)
  \overline{Y^{m^{\prime}}_{l+\tilde{l}-2k}}
    (\kappa)d\kappa
\right|^2\nonumber\\
&\leq\Bigg(\sum_{|q|\leq\min(l,\tilde{l})}
\frac{4\pi}{2(l+\tilde{l}-2k)+1}
  \Big|G^{q,-q}_{n,\tilde{n},l,\tilde{l}}(e_1)
  \Big|^2\Bigg)^{\frac{1}{2}}\,\Bigg(\sum_{|q|\leq\min(l,\tilde{l})}\Big|\int_{\mathbb{S}^2_{\kappa}}
    G^{q,-q}_{n,\tilde{n},l,\tilde{l}}(\kappa)
    \overline{Y^{0}_{l+\tilde{l}-2k}}(\kappa)
  d\kappa\Big|^2\Bigg)^{\frac{1}{2}}.
\end{align*}
We observe from \eqref{GYm=Gm0} that, for any $|m'|\leq l+\tilde{l}-2k$,
$$
\sum_{|m|\leq l}
\sum_{|\tilde{m}|\leq\tilde{l}}
\left|
\int_{\mathbb{S}^2_{\kappa}}
  G^{m,\tilde{m}}_{n,\tilde{n},l,\tilde{l}}(\kappa)
  \overline{Y^{m^{\prime}}_{l+\tilde{l}-2k}}(\kappa)
d\kappa
\right|^2
=\sum_{|q|\leq \min(l,\tilde{l})}
\left|
\int_{\mathbb{S}^2_{\kappa}}
  G^{q,-q}_{n,\tilde{n},l,\tilde{l}}(\kappa)
  \overline{Y^{0}_{l+\tilde{l}-2k}}(\kappa)
d\kappa
\right|^2,
$$
then
\begin{align*}
&\sum_{|m|\leq l}\sum_{|\tilde{m}|\leq\tilde{l}}
\left|
\int_{\mathbb{S}^2_{\kappa}}
  G^{m,\tilde{m}}_{n,\tilde{n},l,\tilde{l}}(\kappa)
  \overline{Y^{m^{\prime}}_{l+\tilde{l}-2k}}
    (\kappa)d\kappa
\right|^2\nonumber\\
&\leq\sum_{|q|\leq\min(l,\tilde{l})}
\frac{4\pi}{2(l+\tilde{l}-2k)+1}
  \Bigg|G^{q,-q}_{n,\tilde{n},l,\tilde{l}}(e_1)
  \Bigg|^2\\
  &=\frac{(2l+1)(2\tilde{l}+1)}{4\pi[2(l+\tilde{l}-2k)+1]}
\sum_{|q|\leq\min(l,\tilde{l})}
\frac{(l-|q|)!}{(l+|q|)!}
\frac{(\tilde{l}-|q|)!}
    {(\tilde{l}+|q|)!}\\
&\quad\times\left(\int_{|\theta|\leq\frac{\pi}{4}}
  \beta(\theta)
  (\sin\theta)^{2n+l}
  (\cos\theta)^{2\tilde{n}+\tilde{l}}
  P^{|q|}_{l}(\sin\theta)
  P^{|q|}_{\tilde{l}}(\cos\theta)
d\theta\right)^2
\end{align*}
Using the Cauchy-Schwarz inequality, we have
\begin{align*}
&\sum_{|q|\leq\min(l,\tilde{l})}
\frac{(l-|q|)!}{(l+|q|)!}
\frac{(\tilde{l}-|q|)!}
    {(\tilde{l}+|q|)!}\left(\int_{|\theta|\leq\frac{\pi}{4}}
  \beta(\theta)
  (\sin\theta)^{2n+l}
  (\cos\theta)^{2\tilde{n}+\tilde{l}}
  P^{|q|}_{l}(\sin\theta)
  P^{|q|}_{\tilde{l}}(\cos\theta)
d\theta\right)^2\\
&\leq\sum_{|q|\leq\min(l,\tilde{l})}
  \frac{(l-|q|)!}{(l+|q|)!}\frac{(\tilde{l}-|q|)!}{(\tilde{l}+|q|)!}
\left(
\int_{|\theta|\leq\frac{\pi}{4}}
 \Big| \beta(\theta)(\sin\theta)^{2n+l}
  (\cos\theta)^{2\tilde{n}+\tilde{l}}\Big|
d\theta
\right)
\\
&\qquad\times
  \Bigg(
\int_{|\theta|\leq\frac{\pi}{4}}
  \Big|\beta(\theta)(\sin\theta)^{2n+l}(\cos\theta)^{2\tilde{n}+\tilde{l}}\Big|
\Bigg[\Big|P^{|q|}_l(\sin\theta)\Big|^2\Big|P^{|q|}_{\tilde{l}}(\cos\theta)\Big|^2
\Bigg]
d\theta
\Bigg)\\
&=4
\left(
\int^{\frac{\pi}{4}}_0
  \beta(\theta)(\sin\theta)^{2n+l}
  (\cos\theta)^{2\tilde{n}+\tilde{l}}
d\theta
\right)\times\Bigg(
\int^{\frac{\pi}{4}}_0
\beta(\theta)(\sin\theta)^{2n+l}(\cos\theta)^{2\tilde{n}+\tilde{l}}
\\
&\qquad\times
\Bigg[\sum_{|q|\leq\min(l,\tilde{l})}
  \frac{(l-|q|)!}{(l+|q|)!}\frac{(\tilde{l}-|q|)!}{(\tilde{l}+|q|)!}\Big|P^{|q|}_l(\sin\theta)\Big|^2\Big|P^{|q|}_{\tilde{l}}(\cos\theta)\Big|^2
\Bigg]
d\theta
\Bigg)
\end{align*}
For $\theta\in[0,\frac{\pi}{4}]$, by using the addition theorem \eqref{addition theorem} twice times,
\begin{align*}
  &P_l\big((\sin\theta)^2+(\cos\theta)^2\cos\phi\big)=\sum_{|q|\leq l}\frac{(l-|q|)!}{(l+|q|)!}
  P^{|q|}_l(\sin\theta)P^{|q|}_l(\sin\theta)e^{iq\phi};\\
  &P_{\tilde{l}}\big((\cos\theta)^2+(\sin\theta)^2\cos\phi\big)
  =\sum_{|\tilde{q}|\leq \tilde{l}}\frac{(\tilde{l}-|\tilde{q}|)!}{(\tilde{l}+|\tilde{q}|)!}
  P^{|\tilde{q}|}_{\tilde{l}}(\cos\theta)P^{|\tilde{q}|}_{\tilde{l}}(\cos\theta)e^{i\tilde{q}\phi},
\end{align*}
we obtain that,
\begin{align*}
  &\int^{2\pi}_{0}P_l\big((\sin\theta)^2+(\cos\theta)^2\cos\phi\big)P_{\tilde{l}}\big((\cos\theta)^2
  +(\sin\theta)^2\cos\phi\big)\frac{ d\phi}{2\pi}\\
  &=\sum_{|q|\leq\min(l,\tilde{l})}
\frac{(l-|q|)!}{(l+|q|)!}
\frac{(\tilde{l}-|q|)!}
    {(\tilde{l}+|q|)!}\Big|P^{|q|}_{l}(\sin\theta)\Big|^2
  \Big|P^{|q|}_{\tilde{l}}(\cos\theta)\Big|^2.
\end{align*}
It follows that
  \begin{align}\label{bb}
&\sum_{|m|\leq l}\sum_{|\tilde{m}|\leq\tilde{l}}
\left|
\int_{\mathbb{S}^2_{\kappa}}
  G^{m,\tilde{m}}_{n,\tilde{n},l,\tilde{l}}(\kappa)
  \overline{Y^{m^{\prime}}_{l+\tilde{l}-2k}}
    (\kappa)d\kappa
\right|^2\nonumber\\
&\leq\frac{(2l+1)(2\tilde{l}+1)}{\pi[2(l+\tilde{l}-2k)+1]}\nonumber\\
&\quad\times
\left(
\int^{\frac{\pi}{4}}_0
  \beta(\theta)(\sin\theta)^{2n+l}
  (\cos\theta)^{2\tilde{n}+\tilde{l}}
d\theta
\right)\times
\Bigg(
\int^{\frac{\pi}{4}}_0
  \beta(\theta)(\sin\theta)^{2n+l}
  (\cos\theta)^{2\tilde{n}+\tilde{l}}
\nonumber\\
&\quad\times\,
\left[\int^{2\pi}_0
  P_l\big((\sin\theta)^2+(\cos\theta)^2\cos\phi\big)
  P_{\tilde{l}}\big((\cos\theta)^2
  +(\sin\theta)^2\cos\phi\big)
\frac{d\phi}{2\pi}
\right]
d\theta
\Bigg).
\end{align}
From the formula (14) of Sec.10.3 in Chap.III in \cite{San}
$$
|\sqrt{l}\sqrt[4]{1-x^2}P_l(x)|
\leq
4\sqrt{\frac{2}{\pi}},\quad\forall\,-1\leq~x\leq1,
$$
then for $l\geq1$, we have
\begin{align*}
\Big|P_l\big((\sin\theta)^2+(\cos\theta)^2\cos\phi\big)\Big|\leq 4\sqrt{\frac{2}{\pi l}}\frac{1}{\sqrt[4]{1-\big((\sin\theta)^2+(\cos\theta)^2\cos\phi\big)^2}}.
\end{align*}
Since
\begin{align*}
&1-\big((\sin\theta)^2+(\cos\theta)^2\cos\phi\big)^2
\\
&=(1+(\sin\theta)^2+(\cos\theta)^2\cos\phi)\Big(1-\big((\sin\theta)^2+(\cos\theta)^2\cos\phi\big)\Big)\\
&\geq\Big((\cos\theta)^2(1+\cos\phi)\Big)\Big((\cos\theta)^2(1-\cos\phi)\Big)\\
&=(\cos\theta)^4(1-(\cos\phi)^2),
\end{align*}
we can estimate
\begin{align*}
\Big|P_l\big((\sin\theta)^2+(\cos\theta)^2\cos\phi\big)\Big|
\leq 4\sqrt{\frac{2}{\pi l}}\frac{1}{|\cos\theta|}\frac{1}{\sqrt[4]{1-(\cos\phi)^2}}.
\end{align*}
}
Recall that $|P_{\tilde{l}}(x)|\leq1$ for $|x|\leq1$,\,we can derive
\begin{align*}
&
\int^{2\pi}_0P_l
\Big(
  (\sin\theta)^2+(\cos\theta)^2\cos\phi\Big)
  P_{\tilde{l}}\Big((\cos\theta)^2+(\sin\theta)^2
  \cos\phi\Big)\frac{d\phi }{2\pi}\\
&\lesssim
\frac{1}{\sqrt{l}}\frac{1}{|\cos\theta|}
\int^{2\pi}_0\frac{1}{\sqrt[4]{1-(\cos\phi)^2}}d\phi\\
&\lesssim
\frac{1}{\sqrt{l}}\frac{1}{|\cos\theta|}.
\end{align*}
For $0\leq\theta\leq\frac{\pi}{4}$,
we have $\cos\theta\geq\frac{\sqrt{2}}{2}$,
substitute the above result into the formula \eqref{bb}, we conclude that,\,for $l\geq1,\tilde{l}\geq1$ with $0\leq\,k\leq\min(l,\tilde{l})$,
\begin{align*}
&\sum_{|m|\leq l}\sum_{|\tilde{m}|
\leq \tilde{l}}
\left|
\int_{\mathbb{S}^2_{\kappa}}
  G^{m,\tilde{m}}_{n,\tilde{n},l,\tilde{l}}(\kappa)
  \overline{Y^{m^{\prime}}_{l+\tilde{l}-2k}}(\kappa)
d\kappa
\right|^2\\
&\lesssim{\color{black}\frac{(2l+1)(2\tilde{l}+1)}{[2(l+\tilde{l}-2k)+1]}\frac{1}{\sqrt{l}}\left(
\int_0^{\frac{\pi}{4}}
  \beta(\theta)(\sin\theta)^{2n+l}(\cos\theta)^{2\tilde{n}+\tilde{l}}
d\theta
\right)^2}\\
&\lesssim\frac{\tilde{l}\sqrt{l}}{l+\tilde{l}-2k+1}
\left(
\int_0^{\frac{\pi}{4}}
  \beta(\theta)(\sin\theta)^{2n+l}(\cos\theta)^{2\tilde{n}+\tilde{l}}
d\theta
\right)^2.
\end{align*}
This ends the proof of \eqref{cr2}.
\end{proof}

For $l\geq1$,\,$\tilde{l}\geq1$ with $0\leq\,k\leq\min(l,\tilde{l})$,
we denote $\lambda^k_{n,\tilde{n},\tilde{l},l}$
\begin{align}\label{lambdadef}
\begin{split}
\lambda^k_{n,\tilde{n},\tilde{l},l}
&=
\frac{\tilde{l}\sqrt{l}}{l+\tilde{l}-2k+1}
\left(
  \frac{A_{\tilde{n},\tilde{l}}A_{n,l}}
       {A_{n+\tilde{n}+k,l+\tilde{l}-2k}}
\right)^2\\
&\qquad\times
\left(
\int_0^{\frac{\pi}{4}}
  \beta(\theta)(\sin\theta)^{2n+l}(\cos\theta)^{2\tilde{n}+\tilde{l}}
d\theta
\right)^2.
\end{split}
\end{align}
It follows from \eqref{present} and \eqref{cr2} that,
for $0\leq\,k\leq\min(l,\tilde{l})$,\,with $|m^{\prime}|\leq l+\tilde{l}-2k$,
\begin{align*}
\sum_{|m|\leq~l}\sum_{|\tilde{m}|\leq~\tilde{l}} \left|\mu^{m,\tilde{m},m^{\prime}}_{n,\tilde{n},l,\tilde{l},k}\right|^2\lesssim\lambda^k_{n,\tilde{n},\tilde{l},l}.
\end{align*}
Then we obtain
\begin{align*}
\sum_{\substack{n+\tilde{n}+k=n^{\star}\\n+l\geq2,\tilde{n}+\tilde{l}\geq2\\n\geq0,\tilde{n}\geq0}}
&\sum_{\substack{l+l-2k=l^{\star}\\l\geq1,\tilde{l}\geq1\\0\leq\,k\,\leq\min(l,\tilde{l})}}
\left(\sum_{|m|\leq~l}\sum_{|\tilde{m}|\leq~\tilde{l}} \frac{|\mu^{m,\tilde{m},m^{\star}}_{n,\tilde{n},l,\tilde{l},k}|^2}{\lambda_{\tilde{n},\tilde{l}}}\right)
 \\
&\lesssim\sum_{\substack{n+\tilde{n}+k=n^{\star}\\n+l\geq2,\tilde{n}+\tilde{l}\geq2\\n\geq0,\tilde{n}\geq0}}
\sum_{\substack{l+l-2k=l^{\star}\\l\geq1,\tilde{l}\geq1\\0\leq\,k\,\leq\min(l,\tilde{l})}}
\frac{\lambda^k_{n,\tilde{n},\tilde{l},l}}{\lambda_{\tilde{n},\tilde{l}}}.
\end{align*}
The proof of $3)$ in Proposition \ref{est} is reduced to prove
\begin{align}\label{es2}
\sum_{\substack{n+\tilde{n}+k=n^{\star}\\n+l\geq2,\tilde{n}+\tilde{l}\geq2\\n\geq0,\tilde{n}\geq0}}
\sum_{\substack{l+l-2k=l^{\star}\\l\geq1,\tilde{l}\geq1\\0\leq\,k\,\leq\min(l,\tilde{l})}}
\frac{\lambda^k_{n,\tilde{n},\tilde{l},l}}{\lambda_{\tilde{n},\tilde{l}}}\leq\,C\lambda_{n^{\star},l^{\star}}.
\end{align}

\begin{lemma}\label{est-b}
For $n,\tilde{n},\tilde{l},l\in\mathbb{N}$\,with $n+l\geq2$ and $\tilde{n}+\tilde{l}\geq2$,\,let $s_0=\min(1-s,s)$, we have
\begin{equation}\label{estlambda0-b}
\lambda^{0}_{n,\tilde{n},l,\tilde{l}}\lesssim
\frac{\tilde{l}\sqrt{l}}{l+\tilde{l}+1}\frac{(\tilde{n}+\tilde{l})^{2s+s_0}}{(\tilde{n}+1)^{s_0}
(n+1)^{1-s_0}(n+l)^{\frac{3}{2}+2s+s_0}}
.\,\
\end{equation}
In addition, for $1\leq\,k\leq~\min(l,\tilde{l})$, we have the following estimate
\begin{equation}\label{estlambdak-b}
\lambda^{k}_{n,\tilde{n},l,\tilde{l}}\lesssim\frac{\tilde{l}\sqrt{l}}{l+\tilde{l}-2k+1}\frac{\tilde{n}^s(\tilde{n}+
\tilde{l})^s}{(n+1)^s(n+l+1)^{\frac{5}{2}+s}}.
\end{equation}
\end{lemma}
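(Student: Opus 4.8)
The plan is to reduce $\lambda^k_{n,\tilde n,l,\tilde l}$ to a product of an explicit ratio of Gamma functions and the square of a one–dimensional Beta–type integral, and then estimate each factor separately. First I would insert the definition of $A_{n,l}$ into \eqref{lambdadef}; a direct computation gives
\begin{align*}
\left(\frac{A_{\tilde n,\tilde l}A_{n,l}}{A_{n+\tilde n+k,l+\tilde l-2k}}\right)^2
=2\pi^{3/2}\,\frac{(n+\tilde n+k)!\,\Gamma(n+\tilde n+l+\tilde l-k+\tfrac32)}{\tilde n!\,\Gamma(\tilde n+\tilde l+\tfrac32)\,n!\,\Gamma(n+l+\tfrac32)}.
\end{align*}
For the integral, using the assumption \eqref{beta} on $\beta$ and the change of variable $t=\sin^2\theta$ exactly as in the proof of Lemma \ref{est1-b}, one has
\begin{align*}
\int_0^{\frac\pi4}\beta(\theta)(\sin\theta)^{2n+l}(\cos\theta)^{2\tilde n+\tilde l}\,d\theta
\lesssim\int_0^{1/2}t^{\,n+\frac l2-1-s}(1-t)^{\,\tilde n+\frac{\tilde l}2-\frac12}\,dt=:I .
\end{align*}
Thus everything is reduced to bounding $I^2$ and cancelling the two numerator factors $(n+\tilde n+k)!$ and $\Gamma(n+\tilde n+l+\tilde l-k+\tfrac32)$.

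Second, I would estimate $I^2$ by Cauchy--Schwarz, writing the integrand as a product $t^{a_1}(1-t)^{b_1}\cdot t^{a_2}(1-t)^{b_2}$ and bounding $I^2$ by the product of the two integrals $\int_0^{1/2}t^{2a_i}(1-t)^{2b_i}\,dt$. The exponents $a_1,a_2,b_1,b_2$ are chosen so that, after the elementary estimate $(1-t)^{-c}\le 2^{c}$ on $[0,\tfrac12]$ (used to raise the $(1-t)$--powers before enlarging $\int_0^{1/2}$ to $\int_0^1$), each factor becomes a full Beta integral $B(2a_i+1,2b_i+1)=\Gamma(2a_i+1)\Gamma(2b_i+1)/\Gamma(2a_i+2b_i+2)$ from \eqref{Beta function} whose denominator is, respectively, of the form $\Gamma(n+\tilde n+\cdots)$ and $\Gamma(n+\tilde n+l+\tilde l+\cdots)$. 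These two denominators cancel $(n+\tilde n+k)!$ and $\Gamma(n+\tilde n+l+\tilde l-k+\tfrac32)$ in the prefactor, leaving a ratio of Gamma functions depending separately on $(n,l)$ and on $(\tilde n,\tilde l)$. The constraints $n+l\ge2$ and $\tilde n+\tilde l\ge2$ guarantee that all arguments of the Beta functions are positive and that \eqref{Gamma function2} is applicable.

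Third, converting each of the surviving Gamma ratios into powers via the Stirling estimate \eqref{Gamma function2} (and the recurrence $\Gamma(x+1)=x\,\Gamma(x)$) yields the stated bounds. For $1\le k\le\min(l,\tilde l)$ the natural split gives $\tilde n^{s}(\tilde n+\tilde l)^{s}$ in the numerator and $(n+1)^{s}(n+l+1)^{5/2+s}$ in the denominator, which is \eqref{estlambdak-b}. For $k=0$ the extra factor $\Gamma(n+\tilde n+l+\tilde l+\tfrac32)$ (without the shift by $k$) forces a less favourable distribution of the singular exponent $-1-2s$ coming from $\beta$ between the two Cauchy--Schwarz factors; interpolating between the two extreme admissible choices produces the exponent $s_0=\min(1-s,s)$ and the bound \eqref{estlambda0-b}.

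The main obstacle is exactly this last bookkeeping step: one must track the exponents through the Cauchy--Schwarz splitting so that the remaining Gamma ratios produce \emph{precisely} the powers of $(n+1)$, $(n+l)$, $\tilde n$ and $(\tilde n+\tilde l)$ needed for the summability used in Proposition \ref{est}, while keeping every Beta--function argument positive throughout. The appearance of $s_0=\min(1-s,s)$ in the case $k=0$, which has to be uniform for all $0<s<1$, is the delicate point and is what dictates the precise choice of split.
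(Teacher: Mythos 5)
Your overall architecture (inserting the definition of $A_{n,l}$ to get the Gamma-ratio prefactor, the change of variable $t=\sin^2\theta$ reducing the angular integral to a Beta-type integral, Cauchy--Schwarz splitting, then \eqref{Gamma function2} together with the recurrence $\Gamma(x+1)=x\Gamma(x)$) is indeed the paper's, and your prefactor computation is correct. For $k=0$ your description matches the paper's proof exactly: there the two Beta denominators $(n+\tilde n)!$ and $\Gamma(n+\tilde n+l+\tilde l+\frac32)$ do cancel the prefactor numerators precisely, and the delicate point is, as you say, the $s_0$-dependent distribution of exponents between the two factors $t^{n-1+s_0}(1-t)^{\tilde n-s_0}$ and $t^{n+l-2s-1-s_0}(1-t)^{\tilde n+\tilde l+\frac12+2s+s_0}$, which is what produces \eqref{estlambda0-b} uniformly in $0<s<1$.

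For $1\le k\le\min(l,\tilde l)$, however, your plan has a genuine gap. Exact cancellation of $(n+\tilde n+k)!$ and $\Gamma(n+\tilde n+l+\tilde l-k+\frac32)$ forces $k/2$-shifted exponents in the Cauchy--Schwarz split: up to the $2^c$-adjustments the Beta pairs must be $(n+\frac k2,\,\tilde n+\frac k2+1)$ and $(n+l-\frac k2-2s,\,\tilde n+\tilde l-\frac k2+\frac32+2s)$. Two things then go wrong. First, positivity fails: your claim that $n+l\ge2$ and $\tilde n+\tilde l\ge2$ keep all Beta arguments positive is false --- for $k=1$, $n=l=1$ and $s\ge\frac34$ the argument $n+l-\frac k2-2s=\frac32-2s\le0$ and the second Beta integral diverges. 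This is precisely why the paper uses this exact-cancellation split only in Lemma \ref{relambda}, restricted to $k\ge20$, where $n+l\ge l\ge k$ gives $n+l-\frac k2-2s\ge\frac k2-2s\ge8$. Second, even where the split is admissible, the surviving numerators $\Gamma(n+\frac k2)$, $\Gamma(\tilde n+\frac k2+1)$, $\Gamma(n+l-\frac k2-2s)$, $\Gamma(\tilde n+\tilde l-\frac k2+\frac32+2s)$ all depend on $k$, and passing from them to the $k$-free right-hand side of \eqref{estlambdak-b} is not an application of \eqref{Gamma function2}: it needs product estimates of the type \eqref{leq1} and \eqref{important}, which exploit $k\le\min(l,\tilde l)$ multiplicatively. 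The paper's actual proof of \eqref{estlambdak-b} avoids both problems by using a $k$-independent split, whose Beta denominators are $(n+\tilde n+1)!$ and $\Gamma(n+\tilde n+l+\tilde l+\frac12)$ and therefore do \emph{not} cancel the prefactor; the heart of the case $k\ge1$ is then the uncancelled ratio bound
\begin{equation*}
\frac{(n+\tilde n+k)!\,\Gamma(n+\tilde n+l+\tilde l-k+\tfrac32)}{(n+\tilde n+1)!\,\Gamma(n+\tilde n+l+\tilde l+\tfrac12)}
=\prod_{j=2}^{k}\frac{n+\tilde n+j}{n+\tilde n+l+\tilde l-k+j-\tfrac12}\le1,
\end{equation*}
valid because $l+\tilde l-k-\frac12\ge k-\frac12$ when $k\le\min(l,\tilde l)$. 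This combinatorial step is the essential use of the constraint $k\le\min(l,\tilde l)$ in the lemma; your proposal invokes that constraint only for Beta positivity, so the case $k\ge1$ does not close as written.
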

\begin{remark}
We divide the proof of the estimate of $\lambda^{k}_{n,\tilde{n},l,\tilde{l}}$ into two cases: $k=0$ and $k\geq1$.
{\color{black}The reason is that, when we estimate $\lambda^{k}_{n,\tilde{n},l,\tilde{l}}$ in case $k\geq1$, there is a term
$$\frac{(n+\tilde{n}+k)!\Gamma(n+\tilde{n}+l+\tilde{l}-k+\frac{3}{2})}
{(n+\tilde{n}+1)!\Gamma(n+\tilde{n}+l+\tilde{l}+\frac{1}{2})}\leq1;$$
when $k=0$ and $l+\tilde{l}\gg\,n+\tilde{n}$, by using the recurrence formula $\Gamma(x+1)=x\Gamma(x)$, this term satisfies
$$\frac{(n+\tilde{n}+k)!\Gamma(n+\tilde{n}+l+\tilde{l}-k+\frac{3}{2})}
{(n+\tilde{n}+1)!\Gamma(n+\tilde{n}+l+\tilde{l}+\frac{1}{2})}\Bigg|_{k=0}=\frac{n+\tilde{n}+l+\tilde{l}+\frac{1}{2}}{n+\tilde{n}+1}\gg1.$$
}
\end{remark}
\begin{proof} Firstly, we consider the case $k\geq1$.
By using the Cauchy-Schwarz inequality and the Beta Function
\eqref{Beta function}
we derive that, for $n+l\geq2,$
\begin{align*}
&|\int^{\frac{\pi}{4}}_{0}\beta(\theta)(\sin\theta)^{2n+l}(\cos\theta)^{2\tilde{n}+\tilde{l}}d\theta|^2\approx|\int^{\frac{1}{2}}_0t^{n+\frac{l}{2}-1-s}(1-t)^{\tilde{n}+\frac{\tilde{l}}{2}+s}dt|^2\\
&\leq\frac{\Gamma(n+1-s)\Gamma(\tilde{n}+1+s)}{(n+\tilde{n}+1)!}
\frac{\Gamma(n+l-1-s)\Gamma(\tilde{n}+\tilde{l}+\frac{3}{2}+s)}{\Gamma(n+\tilde{n}+l+\tilde{l}+\frac{1}{2})}.
\end{align*}
Then, we can express
\begin{align*}
\lambda^k_{n,\tilde{n},l,\tilde{l}}
&\lesssim\frac{\tilde{l}\sqrt{l}}{l+\tilde{l}-2k+1}\frac{(n+\tilde{n}+k)!\Gamma(n+\tilde{n}+l+\tilde{l}-k+\frac{3}{2})}
{n!\Gamma(n+l+\frac{3}{2})\tilde{n}!\Gamma(\tilde{n}+\tilde{l}+\frac{3}{2})}\\
&\qquad\times\frac{\Gamma(n+1-s)\Gamma(\tilde{n}+1+s)}{(n+\tilde{n}+1)!}
\frac{\Gamma(n+l-1-s)\Gamma(\tilde{n}+\tilde{l}+\frac{3}{2}+s)}{\Gamma(n+\tilde{n}+l+\tilde{l}+\frac{1}{2})}\\
&=\frac{\tilde{l}\sqrt{l}}{l+\tilde{l}-2k+1}\frac{(n+\tilde{n}+k)!\Gamma(n+\tilde{n}+l+\tilde{l}-k+\frac{3}{2})}{(n+\tilde{n}+1)!\Gamma(n+\tilde{n}+l+\tilde{l}+\frac{1}{2})}\\
&\qquad\times
\frac{\Gamma(n+1-s)\Gamma(\tilde{n}+1+s)\Gamma(n+l-1-s)\Gamma(\tilde{n}+\tilde{l}+\frac{3}{2}+s)
}
{n!\Gamma(n+l+\frac{3}{2})\tilde{n}!\Gamma(\tilde{n}+\tilde{l}+\frac{3}{2})}.
\end{align*}
We deduce from the formula \eqref{Gamma function2} with $x=n+1$, $a=-s$, $b=0$,
$$
\frac{\Gamma(n+1-s+1)}{(n+1)!}\lesssim\frac{1}{(n+1-s)^s},
$$
and the recurrence formula $\Gamma(n+1-s)=\frac{1}{n+1-s}\Gamma(n+2-s)$,
\begin{align*}
\frac{\Gamma(n+1-s)}{n!}=\frac{n+1}{(n+1-s)}\frac{\Gamma(n+1-s+1)}
{(n+1)!}\lesssim\frac{1}{(n+1)^s}.
\end{align*}
Using $x=n+l-1$, $a=-s$, $b=\frac{3}{2}$ in \eqref{Gamma function2},
$$\frac{\Gamma(n+l-s)}{\Gamma(n+l+\frac{3}{2})}=\frac{\Gamma(n+l-1-s+1)}{\Gamma(n+l-1+\frac{3}{2}+1)}\lesssim\frac{1}{(n+l-1)^{\frac{3}{2}+s}},$$
and {\color{black} the} recurrence formula $\Gamma(n+l-1-s)=\frac{\Gamma(n+l-s)}{n+l-1-s}$,
\begin{align*}
\frac{\Gamma(n+l-1-s)}{\Gamma(n+l+\frac{3}{2})}=\frac{1}{n+l-1-s}\frac{\Gamma(n+l-s)}
{\Gamma(n+l+\frac{3}{2})}\lesssim\frac{1}{(n+l+1)^{\frac{5}{2}+s}}.
\end{align*}
Using $x=\tilde{n}+1$, $a=s$,\,$b=0$ in \eqref{Gamma function2}, we have
\begin{align*}
\frac{\Gamma(\tilde{n}+2+s)}{(\tilde{n}+1)!}\lesssim(\tilde{n}+1+s)^s.
\end{align*}
{\color{black}
By using the recurrence formula $\Gamma(x+1)=x\Gamma(x)$,
$$\frac{\Gamma(\tilde{n}+1+s)}{\tilde{n}!}=\frac{\tilde{n}+1}{\tilde{n}+1+s}\frac{\Gamma(\tilde{n}+2+s)}{(\tilde{n}+1)!}\lesssim \tilde{n}^s.$$
}
Using $x=\tilde{n}+\tilde{l}+\frac{1}{2}$,\,$a=s$,\,$b=0$ in \eqref{Gamma function2}, and $\tilde{n}+\tilde{l}\geq2$,
\begin{align*}
\frac{\Gamma(\tilde{n}+\tilde{l}+\frac{3}{2}+s)}{\Gamma(\tilde{n}+\tilde{l}+\frac{3}{2})}
\lesssim(\tilde{n}+\tilde{l}+\frac{1}{2}+s)^s\lesssim(\tilde{n}+\tilde{l})^s.
\end{align*}
Therefore, we obtain that, $n+l\geq2$, $\tilde{n}+\tilde{l}\geq2$
\begin{align*}
\lambda^k_{n,\tilde{n},l,\tilde{l}}&\lesssim\frac{\tilde{l}\sqrt{l}}{l+\tilde{l}-2k+1}\frac{\tilde{n}^s(\tilde{n}+\tilde{l})^s}{(n+1)^s(n+l+1)^{\frac{5}{2}+s}}
\\
&\qquad\times\frac{(n+\tilde{n}+k)!\Gamma(n+\tilde{n}+l+\tilde{l}-k+\frac{3}{2})}
{(n+\tilde{n}+1)!\Gamma(n+\tilde{n}+l+\tilde{l}+\frac{1}{2})}.
\end{align*}
{\color{black}
When $k=1$, we observe that,
$$\frac{(n+\tilde{n}+k)!\Gamma(n+\tilde{n}+l+\tilde{l}-k+\frac{3}{2})}
{(n+\tilde{n}+1)!\Gamma(n+\tilde{n}+l+\tilde{l}+\frac{1}{2})}=1.$$
Now consider the case $k\geq2$, we use again the recurrence formula $\Gamma(x+1)=x\Gamma(x)$,
\begin{align*}
&
\frac{(n+\tilde{n}+k)!\Gamma(n+\tilde{n}+l+\tilde{l}-k+\frac{3}{2})}
{(n+\tilde{n}+1)!\Gamma(n+\tilde{n}+l+\tilde{l}+\frac{1}{2})}\\
&=\frac{(n+\tilde{n}+k)(n+\tilde{n}+k-1)\times\cdots\times(n+\tilde{n}+2)}
{(n+\tilde{n}+l+\tilde{l}-\frac{1}{2})(n+\tilde{n}+l+\tilde{l}-\frac{3}{2})\times\cdots\times(n+\tilde{n}+l+\tilde{l}-k+\frac{3}{2})}\\
&=\prod^{k}_{j=2}\frac{n+\tilde{n}+j}{n+\tilde{n}+l+\tilde{l}-k+j-\frac{1}{2}}.
\end{align*}
Since $2\leq k\leq min(l,\tilde{l})$, one has,
$$l+\tilde{l}-k-\frac{1}{2}\geq k-\frac{1}{2}\geq\frac{3}{2}.$$
This shows that, for any $2\leq j\leq k$,
$$\frac{n+\tilde{n}+j}{n+\tilde{n}+l+\tilde{l}-k+j-\frac{1}{2}}\leq \frac{n+\tilde{n}+j}{n+\tilde{n}+j+\frac{3}{2}}<1.$$
Therefore, we conclude, for $k\geq1$,
$$\frac{(n+\tilde{n}+k)!\Gamma(n+\tilde{n}+l+\tilde{l}-k+\frac{3}{2})}
{(n+\tilde{n}+1)!\Gamma(n+\tilde{n}+l+\tilde{l}+\frac{1}{2})}\leq1.$$}
We obtain the formula \eqref{estlambdak-b}.

For the estimate \eqref{estlambda0-b},\,we assume that $n+l\geq2$,\,$\tilde{n}+\tilde{l}\geq2$\,and \,$s_0=\min(1-s,s)$.\,\,
By using the Cauchy-Schwarz inequality and the Beta Function \eqref{Beta function},
we obtain
\begin{align*}
&|\int^{\frac{\pi}{4}}_{0}\beta(\theta)(\sin\theta)^{2n+l}(\cos\theta)^{2\tilde{n}+\tilde{l}}d\theta|^2
\approx|\int^{\frac{1}{2}}_{0}t^{n+\frac{l}{2}-1-s}(1-t)^{\tilde{n}+\frac{\tilde{l}}{2}+\frac{1}{4}+s}dt|^2\\
&\leq\int^{\frac{1}{2}}_0t^{n-1+s_0}(1-t)^{\tilde{n}-s_0}dt\times\int^{\frac{1}{2}}_0t^{n+l-2s-1-s_0}
(1-t)^{\tilde{n}+\tilde{l}+\frac{1}{2}+2s+s_0}dt\\
&\leq\frac{\Gamma(n+s_0)\Gamma(\tilde{n}+1-s_0)}{(n+\tilde{n})!}\frac{\Gamma(n+l-2s-s_0)\Gamma(\tilde{n}+\tilde{l}+\frac{3}{2}+2s+s_0)}{\Gamma(n+\tilde{n}+l+\tilde{l}+\frac{3}{2})}.
\end{align*}
Therefore,
\begin{align*}
&\lambda^0_{n,\tilde{n},l,\tilde{l}}=\frac{\tilde{l}\sqrt{l}}{l+\tilde{l}+1}
\frac{(n+\tilde{n})!\Gamma(n+\tilde{n}+l+\tilde{l}+\frac{3}{2})}{n!\tilde{n}!\Gamma(n+l+\frac{3}{2})\Gamma(\tilde{n}+\tilde{l}+\frac{3}{2})}\\
&\qquad\times\frac{\Gamma(n+s_0)\Gamma(\tilde{n}+1-s_0)}{(n+\tilde{n})!}\frac{\Gamma(n+l-2s-s_0)\Gamma(\tilde{n}+\tilde{l}+\frac{3}{2}+2s+s_0)}{\Gamma(n+\tilde{n}+l+\tilde{l}+\frac{3}{2})}\\
&\lesssim\frac{\tilde{l}\sqrt{l}}{l+\tilde{l}+1}
\frac{\Gamma(n+s_0)\Gamma(\tilde{n}+1-s_0)\Gamma(n+l-2s-s_0)
\Gamma(\tilde{n}+\tilde{l}+\frac{3}{2}+2s+s_0)}{n!\tilde{n}!\Gamma(n+l+\frac{3}{2})\Gamma(\tilde{n}+\tilde{l}+\frac{3}{2})}.
\end{align*}
{\color{black}
Applying the estimate \eqref{Gamma function2} with $x=n+1, a=s_0-1,b=0$,
$$\frac{\Gamma(n+1+s_0)}{(n+1)!}\lesssim\frac{1}{(n+s_0)^{1-s_0}},$$
by using the recurrence formula $\Gamma(x+1)=x\Gamma(x),$
$$\frac{\Gamma(n+s_0)}{n!}=\frac{n+1}{n+s_0}\frac{\Gamma(n+1+s_0)}{(n+1)!}\lesssim\frac{n+1}{n+s_0}\frac{1}{(n+s_0)^{1-s_0}}\lesssim\frac{1}{(n+1)^{1-s_0}}.$$
Applying the estimate \eqref{Gamma function2} with $x=\tilde{n}+1, a=-s_0, b=0$,
$$\frac{\Gamma(\tilde{n}+2-s_0)}{(\tilde{n}+1)!}\lesssim\frac{1}{(\tilde{n}+1-s_0)^{s_0}},$$
by using the recurrence formula $\Gamma(x+1)=x\Gamma(x),$
$$\frac{\Gamma(\tilde{n}+1-s_0)}{\tilde{n}!}
=\frac{\tilde{n}+1}{\tilde{n}+1-s_0}\frac{\Gamma(\tilde{n}+2-s_0)}{(\tilde{n}+1)!}\lesssim\frac{1}{(\tilde{n}+1-s_0)^{s_0}}
\lesssim\frac{1}{(\tilde{n}+1)^{s_0}}.$$
Since $s_0=\min(s,1-s)$, we have $2s+s_0\leq 1-s+2s=1+s\leq2$.   Using the estimate \eqref{Gamma function2} with $x=n+l, a=-2s-s_0+1, b=\frac{1}{2}$, considering that $n+l\geq2$, then $x+a\geq1$, $x+b\geq1$, we can derive that,
$$\frac{\Gamma(n+l+2-2s-s_0)}{\Gamma(n+l+\frac{3}{2})}\lesssim \frac{1}{(n+l+1-2s-s_0)^{2s+s_0-\frac{1}{2}}}.$$
By using the recurrence formula $\Gamma(x+1)=x\Gamma(x)$ that
\begin{align*}
&\frac{\Gamma(n+l-2s-s_0)}{\Gamma(n+l+\frac{3}{2})}\\
&=\frac{1}{(n+l-2s-s_0)(n+l+1-2s-s_0)}\frac{\Gamma(n+l+2-2s-s_0)}{\Gamma(n+l+\frac{3}{2})}\\
&\lesssim \frac{1}{(n+l+1-2s-s_0)^{2s+s_0+\frac{3}{2}}}\lesssim\frac{1}{(n+l)^{2s+s_0+\frac{3}{2}}}.
\end{align*}
Finally, applying the estimate \eqref{Gamma function2} with $x=\tilde{n}+\tilde{l}+\frac{1}{2}, a=2s+s_0, b=0$
$$\frac{\Gamma(\tilde{n}+\tilde{l}+\frac{3}{2}+2s+s_0)}{\Gamma(\tilde{n}+\tilde{l}+\frac{3}{2})}\lesssim(\tilde{n}+\tilde{l})^{2s+s_0}.$$
Combining the above four estimates together,} we derive
$$\lambda^{0}_{n,\tilde{n},l,\tilde{l}}\lesssim
\frac{\tilde{l}\sqrt{l}}{l+\tilde{l}+1}\frac{(\tilde{n}+\tilde{l})^{2s+s_0}}{(\tilde{n}+1)^{s_0}
(n+1)^{1-s_0}(n+l)^{\frac{3}{2}+2s+s_0}}.
$$
This concludes the proof of Lemma \ref{est-b}.
\end{proof}

The estimate in \eqref{estlambdak-b} is not enough
accurate in proof of $3)$
in Proposition \ref{est}.\,\,To this end, we provide a more optimal estimate of $\lambda^k_{n,\tilde{n},l,\tilde{l}}$ for large $k$ in the following Lemma.
\begin{lemma}\label{relambda}
{\color{black}For any $0<\omega<1$ and $k\geq 20$, we have the following estimates
\begin{align*}
&(i) \,\,\,\, \lambda^k_{n,\tilde{n},l,\tilde{l}}
\lesssim
\frac{\tilde{l}\sqrt{l}}{l+\tilde{l}-2k+1}
\frac{(\tilde{n}+\tilde{l})^{2s}}{(n+l+1)^{\frac{5}{2}+2s}}e^{-\frac{1}{8}k^{\omega}}
\,\,\,
\text{when}\,n+l\leq\,k^{1-\omega}l;
\\
&(ii) \,\,\, \lambda^k_{n,\tilde{n},l,\tilde{l}}
\lesssim
\frac{\tilde{l}\sqrt{l}}{l+\tilde{l}-2k+1}
\frac{(\tilde{n}+\tilde{l})^{2s}}{(n+l+1)^{\frac{5}{2}+2s}}
\,\,\,\,\,\,\,\,\,\,\,\,\,\,
\text{when}\,n+l\geq\,k^{1-\omega}l.
\end{align*}
}
\end{lemma}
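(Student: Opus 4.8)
The plan is to return to the definition \eqref{lambdadef} of $\lambda^k_{n,\tilde n,l,\tilde l}$, insert the explicit values of $|A_{n,l}|^2$, and estimate the angular integral by a Beta function as in the proof of Lemma \ref{est-b}, but this time retaining the \emph{symmetric} factor $\Gamma(n+\frac l2-s)^2$ instead of breaking it up by Cauchy--Schwarz. Writing $N=n+\tilde n$ and $L=l+\tilde l$, the change of variables $t=\sin^2\theta$ together with $\beta(\theta)\approx(\sin\theta)^{-1-2s}$ and $\int_0^{1/2}\le\int_0^1$ gives, via \eqref{Beta function},
$$\int_0^{\pi/4}\beta(\theta)(\sin\theta)^{2n+l}(\cos\theta)^{2\tilde n+\tilde l}\,d\theta\lesssim\frac{\Gamma(n+\frac l2-s)\,\Gamma(\tilde n+\frac{\tilde l}2+\frac12)}{\Gamma(N+\frac L2+\frac12-s)},$$
so that, inserting the $A$-coefficients,
$$\lambda^k_{n,\tilde n,l,\tilde l}\lesssim\frac{\tilde l\sqrt l}{l+\tilde l-2k+1}\,\frac{\Gamma(n+\frac l2-s)^2}{n!\,\Gamma(n+l+\frac32)}\,\frac{\Gamma(\tilde n+\frac{\tilde l}2+\frac12)^2}{\tilde n!\,\Gamma(\tilde n+\tilde l+\frac32)}\,\frac{(N+k)!\,\Gamma(N+L-k+\frac32)}{\Gamma(N+\frac L2+\frac12-s)^2}.$$

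Case $(ii)$ is the easy one and does not use this refinement. When $n+l\ge k^{1-\omega}l$ one has $n+1\gtrsim l$, with a constant depending only on $\omega$ since $k^{1-\omega}/(k^{1-\omega}-1)$ is bounded for $k\ge 20$; hence $n+l+1\lesssim n+1$ and $(n+1)^s\gtrsim(n+l+1)^s$. Feeding this into the already established estimate \eqref{estlambdak-b} and using $\tilde n^s(\tilde n+\tilde l)^s\le(\tilde n+\tilde l)^{2s}$ immediately yields $(ii)$.

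For case $(i)$ I would take logarithms of the displayed bound and apply Stirling's formula \eqref{Gamma function2}. The exponential content is governed by $-\Delta$, where $\Delta$ is a combination of three convexity gaps of $g(x)=x\log x$: two \emph{favourable} gaps $\frac{l^2}{4n+2l}$ and $\frac{\tilde l^2}{4\tilde n+2\tilde l}$ (from $\Gamma(n+\frac l2-s)^2/[n!\,\Gamma(n+l+\frac32)]$ and its tilde-analogue) and one \emph{unfavourable} cross gap $\frac{(l+\tilde l-2k)^2}{4N+2L}$ (from $(N+k)!\,\Gamma(N+L-k+\frac32)/\Gamma(N+\frac L2+\frac12-s)^2$). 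The algebraic heart is the Engel-form Cauchy--Schwarz inequality $\frac{l^2}{2n+l}+\frac{\tilde l^2}{2\tilde n+\tilde l}\ge\frac{(l+\tilde l)^2}{2N+L}$ which, since $(l+\tilde l)^2-(l+\tilde l-2k)^2=4k(l+\tilde l-k)$ and $l+\tilde l-k\ge l\ge k$, gives $\Delta\gtrsim\frac{k(l+\tilde l-k)}{2N+L}$. Combined with $l\ge k$ and the hypothesis $n+l\le k^{1-\omega}l$ (so $2n+l\lesssim k^{1-\omega}l$), this already produces $\Delta\gtrsim k^\omega$ whenever $\tilde n$ is not too large.

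The main obstacle is the uniformity in $\tilde n$ (and $\tilde l$): the favourable and unfavourable gaps nearly cancel at leading order, and the Titu bound degrades when $\tilde n$ is huge, precisely because the cross gap then becomes negligible. I would resolve this by a complementary estimate valid for large $\tilde n$: there $\frac{(l+\tilde l-2k)^2}{4N+2L}\to 0$, so $\Delta$ is controlled below by the single gap $\frac{l^2}{4n+2l}\gtrsim\frac{l}{k^{1-\omega}}\ge\frac{k^\omega}{4}$, again by $l\ge k$ and $n+l\le k^{1-\omega}l$. Patching the two regimes across the boundary $\tilde n\sim k^{1-\omega}(l+\tilde l)$ --- the delicate near-cancellation zone --- and absorbing the polynomial Stirling corrections into the slack (which is why the stated exponent is $\frac18 k^\omega$ rather than the $\frac14 k^\omega$ one reads off the leading terms) finishes the argument; the surviving polynomial factors are then bounded by $\frac{\tilde l\sqrt l}{l+\tilde l-2k+1}\,\frac{(\tilde n+\tilde l)^{2s}}{(n+l+1)^{5/2+2s}}$ exactly as in the derivation of \eqref{estlambdak-b}.
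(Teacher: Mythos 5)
Your master inequality for $\lambda^k_{n,\tilde n,l,\tilde l}$ (symmetric Beta bound, keeping $\Gamma(n+\frac l2-s)^2$ and the cross block $(N+k)!\,\Gamma(N+L-k+\frac32)/\Gamma(N+\frac L2+\frac12-s)^2$, $N=n+\tilde n$, $L=l+\tilde l$) is correct, and your case $(ii)$ is sound: $n+l\geq k^{1-\omega}l$ does give $n+l+1\lesssim_\omega n+1$, so \eqref{estlambdak-b} yields $(ii)$ with $\omega$-dependent constants, which is harmless since the paper only uses $\omega=\frac14$. (The paper proves $(ii)$ differently, by the same telescoping-product argument as \eqref{leq1}, but your shortcut is legitimate.) Case $(i)$, however, has a genuine gap, in fact two. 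First, the claim that the polynomial Stirling corrections can be ``absorbed into the slack'' between $\frac14k^\omega$ and $\frac18k^\omega$ is false: for fixed $k$ that slack is a bounded constant, whereas matching the polynomial contents of your three blocks against the target prefactor leaves an excess of order $\bigl(\frac{N+L}{\tilde n+\tilde l}\bigr)^{\frac32+2s}$, which is unbounded. Concretely, take $n=\tilde n=0$, $\tilde l=k$, $l\to\infty$: the Engel-proxy gain $\frac{k(L-k)}{2N+L}$ stays below $2k$, the slack is a fixed constant, but the excess grows like $(l/k)^{\frac32+2s}$. The inequality is rescued there only by the non-quadratic part of the exact $\log\Gamma$ convexity gap — in that corner $\Gamma(\tfrac l2-s)^2/\Gamma(\tfrac{l+k}2+\tfrac12-s)^2\asymp l^{-(k+1)}$, a gain $\asymp k\log(l/k)$ — which the proxies $\frac{h^2}{m+h}$ discard. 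Since the favourable and unfavourable gaps nearly cancel, working with proxies known only up to multiplicative constants is not permissible anyway: you may not subtract two quantities each controlled only up to $\asymp$.

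Second, the large-$\tilde n$ regime is asserted rather than proved: ``the cross gap then becomes negligible'' is not quantified, and the ``patching across the boundary $\tilde n\sim k^{1-\omega}(l+\tilde l)$'' is exactly the near-cancellation zone you flag — naming it does not close it. There is a clean uniform fix inside your own scheme: replace the proportional Engel comparison by the split $l+\tilde l-2k=(l-k)+(\tilde l-k)$, so that
\begin{equation*}
\frac{(l+\tilde l-2k)^2}{2(2N+L)}\leq\frac{(l-k)^2}{2(2n+l)}+\frac{(\tilde l-k)^2}{2(2\tilde n+\tilde l)},
\qquad\text{hence}\qquad
\Delta\geq\frac{k(2l-k)}{2(2n+l)}\geq\frac{kl}{4(n+l)}\geq\frac{k^{\omega}}{4},
\end{equation*}
uniformly in $\tilde n,\tilde l$, with no case distinction. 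This is, in disguise, what the paper does: it applies Cauchy--Schwarz \emph{inside} the Beta integral with $k$-dependent split exponents, so the cross Gamma factors cancel exactly against the $A$-coefficients in \eqref{lambdadef}; the tilde block is then $\leq1$ by an exact telescoping product using $\tilde l\geq k$ (see \eqref{leq1}), the prefactors $(n+l+1)^{-\frac52-2s}$ and $(\tilde n+\tilde l)^{2s}$ come out exactly from fixed-shift ratios \eqref{Gamma function2}, and the remaining $n$-block is bounded by $e^{-\frac{lk}{8(n+l)}}$ again by a telescoping product plus $\log(1+x)\leq x$ (see \eqref{important}) — entirely exact manipulations, so no asymptotic error bookkeeping ever arises. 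As proposed, your case $(i)$ does not constitute a proof; repaired along either of these lines it would.
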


\begin{proof}
Recall from \eqref{lambdadef} that
\begin{align*}
\lambda^k_{n,\tilde{n},l,\tilde{l}}&\approx\frac{\tilde{l}\sqrt{l}}{l+\tilde{l}-2k+1}\frac{(n+\tilde{n}+k)!\Gamma(n+\tilde{n}+l+\tilde{l}-k+\frac{3}{2})}
{n!\Gamma(n+l+\frac{3}{2})\tilde{n}!\Gamma(\tilde{n}+\tilde{l}+\frac{3}{2})}\\
&\qquad\times\Big(\int^{\frac{1}{2}}_0t^{n+\frac{l}{2}-1-s}(1-t)^{\tilde{n}+\frac{\tilde{l}}{2}}dt\Big)^2.
\end{align*}
By using the Beta Function \eqref{Beta function}
and the Cauchy -Schwarz inequality,
\begin{align*}
&\Big(\int^{\frac{1}{2}}_0t^{n+\frac{l}{2}-1-s}
  (1-t)^{\tilde{n}+\frac{\tilde{l}}{2}}dt\Big)^2
\leq
2^{\frac{1}{2}+2s}
\Big(
\int^{\frac{1}{2}}_0
  t^{n+\frac{l}{2}-1-s}
  (1-t)^{\tilde{n}+\frac{\tilde{l}}{2}+\frac{1}{4}+s}
dt
\Big)^2
\\
&\leq
2^{\frac{1}{2}+2s}
\frac{\Gamma(n+\frac{k}{2})\Gamma(\tilde{n}+\frac{k}{2}+1)}{\Gamma(n+\tilde{n}+k+1)}\times
\frac{\Gamma(n+l-\frac{k}{2}-2s)
\Gamma(\tilde{n}+\tilde{l}-\frac{k}{2}+\frac{3}{2}+2s)}
{\Gamma(n+\tilde{n}+l+\tilde{l}-k+\frac{3}{2})},
\end{align*}
we obtain
\begin{align*}
\lambda^k_{n,\tilde{n},l,\tilde{l}}&\lesssim
\frac{\tilde{l}\sqrt{l}}{l+\tilde{l}-2k+1}
\frac{(n+\tilde{n}+k)!\Gamma(n+\tilde{n}+l+\tilde{l}-k+\frac{3}{2})}
{n!\Gamma(n+l+\frac{3}{2})\tilde{n}!\Gamma(\tilde{n}+\tilde{l}+\frac{3}{2})}
\\
&\qquad\times\frac{\Gamma(n+\frac{k}{2})\Gamma(\tilde{n}+\frac{k}{2}+1)}{\Gamma(n+\tilde{n}+k+1)}\frac{\Gamma(n+l-\frac{k}{2}-2s)\Gamma(\tilde{n}+\tilde{l}-\frac{k}{2}+\frac{3}{2}+2s)}
{\Gamma(n+\tilde{n}+l+\tilde{l}-k+\frac{3}{2})}\\
&=\frac{\sqrt{l}\tilde{l}\Gamma(n+\frac{k}{2})\Gamma(\tilde{n}+\frac{k}{2}+1)\Gamma(n+l-\frac{k}{2}-2s)\Gamma(\tilde{n}+\tilde{l}-\frac{k}{2}+\frac{3}{2}+2s)}
{(l+\tilde{l}-2k+1)n!\Gamma(n+l+\frac{3}{2})\tilde{n}!\Gamma(\tilde{n}+\tilde{l}+\frac{3}{2})}.
\end{align*}
{\color{black} In case $k\ge 20$,  remind that $l\geq\min(l,\tilde{l})\geq\,k$, we have},
$$
n+l-\frac{k}{2}-1\geq\frac{k}{2}-1\geq9,
$$
Let $x=n+l-\frac{k}{2}-1$, $a=-2s$, $b=\frac{5}{2}$ in formula \eqref{Gamma function2}, we then derive
\begin{align*}
\frac{\Gamma(n+l-\frac{k}{2}-2s)}{\Gamma(n+l-\frac{k}{2}+\frac{5}{2})}\lesssim
\frac{1}{(n+l-\frac{k}{2}-1-2s)^{\frac{5}{2}+2s}}\lesssim
\frac{1}{(n+l+1)^{\frac{5}{2}+2s}}.
\end{align*}
When we choose $x=\tilde{n}+\tilde{l}-\frac{k}{2}+\frac{1}{2}$,\,$a=2s$,\,$b=0$\,in formula \eqref{Gamma function2}, then
$$\frac{\Gamma(\tilde{n}+\tilde{l}-\frac{k}{2}+\frac{3}{2}+2s)}{\Gamma(\tilde{n}+\tilde{l}-\frac{k}{2}+\frac{3}{2})}\lesssim(\tilde{n}+\tilde{l})^{2s}.
$$
Therefore,  we can verify that for $k\ge 20$
\begin{align*}
\lambda^k_{n,\tilde{n},l,\tilde{l}}&\lesssim\frac{\tilde{l}\sqrt{l}}{l+\tilde{l}-2k+1}\frac{(\tilde{n}+\tilde{l})^{2s}}{(n+l+1)^{\frac{5}{2}+2s}}
\frac{\Gamma(n+\frac{k}{2})\Gamma(n+l-\frac{k}{2}+\frac{5}{2})}{n!\Gamma(n+l+\frac{3}{2})}\\
&\qquad\times
\frac{\Gamma(\tilde{n}+\frac{k}{2}+1)\Gamma(\tilde{n}+\tilde{l}-\frac{k}{2}+\frac{3}{2})}
{\tilde{n}!\Gamma(\tilde{n}+\tilde{l}+\frac{3}{2})}.
\end{align*}
{\color{black}
We claim that, for $k\ge 20$,
\begin{equation}\label{leq1}
\frac{\Gamma(\tilde{n}+\frac{k}{2}+1)\Gamma(\tilde{n}+\tilde{l}-\frac{k}{2}+\frac{3}{2})}
{\tilde{n}!\Gamma(\tilde{n}+\tilde{l}+\frac{3}{2})}\leq1.
\end{equation}
Indeed, if $k$ is even, by using the recurrence formula $\Gamma(x+1)=x\Gamma(x)$ and $\Gamma(\tilde{n}+\frac{k}{2}+1)=(\tilde{n}+\frac{k}{2})!$, one has,
\begin{align*}
&\frac{\Gamma(\tilde{n}+\frac{k}{2}+1)\Gamma(\tilde{n}+\tilde{l}-\frac{k}{2}+\frac{3}{2})}
{\tilde{n}!\Gamma(\tilde{n}+\tilde{l}+\frac{3}{2})}\\
&=\frac{(\tilde{n}+\frac{k}{2})(\tilde{n}+\frac{k}{2}-1)\times\cdots\times(\tilde{n}+1)}
{(\tilde{n}+\tilde{l}+\frac{1}{2})(\tilde{n}+\tilde{l}-\frac{1}{2})\times\cdots\times(\tilde{n}+\tilde{l}-\frac{k}{2}+\frac{3}{2})}\\
&=\prod^{[\frac{k}{2}]}_{j=1}\frac{\tilde{n}+\frac{k}{2}+1-j}{\tilde{n}+\tilde{l}+\frac{3}{2}-j}.
\end{align*}
For any $1\leq j\leq [\frac{k}{2}]$, recall that $\tilde{l}\geq k>\frac{k}{2}$, we obtain,
$$\frac{\tilde{n}+\frac{k}{2}+1-j}{\tilde{n}+\tilde{l}+\frac{3}{2}-j}\leq1.$$
Then the formula \eqref{leq1} holds.
If $k$ is odd, by using the recurrence formula $\Gamma(x+1)=x\Gamma(x)$ and $\Gamma(\tilde{n}+\tilde{l}-\frac{k}{2}+\frac{3}{2})
=(\tilde{n}+\tilde{l}-\frac{k}{2}+\frac{1}{2})!$,
\begin{align*}
&\frac{\Gamma(\tilde{n}+\frac{k}{2}+1)\Gamma(\tilde{n}+\tilde{l}-\frac{k}{2}+\frac{3}{2})}
{\tilde{n}!\Gamma(\tilde{n}+\tilde{l}+\frac{3}{2})}\\
&=\frac{(\tilde{n}+\tilde{l}-\frac{k}{2}+\frac{1}{2})(\tilde{n}+\tilde{l}-\frac{k}{2}-\frac{1}{2})\times\cdots\times(\tilde{n}+1)}
{(\tilde{n}+\tilde{l}+\frac{1}{2})(\tilde{n}+\tilde{l}-\frac{1}{2})\times\cdots\times(\tilde{n}+\frac{k}{2}+1)}\\
&=\prod^{\tilde{l}-\frac{k}{2}+\frac{1}{2}}_{j=1}\frac{\tilde{n}+\tilde{l}-\frac{k}{2}+\frac{3}{2}-j}{\tilde{n}+\tilde{l}+\frac{3}{2}-j}.
\end{align*}
We can observe directly that
$$\frac{\tilde{n}+\tilde{l}-\frac{k}{2}+\frac{3}{2}-j}{\tilde{n}+\tilde{l}+\frac{3}{2}-j}\leq1,$$
Therefore, we conclude that the formula \eqref{leq1} holds for all $k\ge 20$.
}
Thus, we  derive
\begin{align}\label{mid}
\lambda^k_{n,\tilde{n},l,\tilde{l}}\lesssim\frac{\tilde{l}\sqrt{l}}{l+\tilde{l}-2k+1}\frac{(\tilde{n}+\tilde{l})^{2s}}{(n+l+1)^{\frac{5}{2}+2s}}
\frac{\Gamma(n+\frac{k}{2})\Gamma(n+l-\frac{k}{2}+\frac{5}{2})}{n!\Gamma(n+l+\frac{3}{2})}.
\end{align}
Now consider the formula
$$\frac{\Gamma(n+\frac{k}{2})\Gamma(n+l-\frac{k}{2}+\frac{5}{2})}{n!\Gamma(n+l+\frac{3}{2})},$$
{\color{black}we claim that, for $k\ge20$,
\begin{align}\label{important}
&\frac{\Gamma(n+\frac{k}{2})\Gamma(n+l-\frac{k}{2}+\frac{5}{2})}{n!\Gamma(n+l+\frac{3}{2})}\leq\,e^{-\frac{lk}{8(n+l)}}.
\end{align}
}{\color{black}Firstly, we assume $k$ is even.
{\color{black} Reminding} that $k\leq\min(l,\tilde{l})$,  we can set $l=l_1+k$ with $l_1\geq0$.}
Using the recurrence formula $\Gamma(x+1)=x\Gamma(x)\,$ for $x>0$ and
$$\Gamma(n+\frac{k}{2})=(n+\frac{k}{2}-1)!$$
we obtain
\begin{align*}
&\frac{\Gamma(n+\frac{k}{2})\Gamma(n+l-\frac{k}{2}+\frac{5}{2})}{n!\Gamma(n+l+\frac{3}{2})}\\
&=\frac{(n+\frac{k}{2}-1)!\Gamma(n+l_1+\frac{k}{2}+\frac{5}{2})}{n!\Gamma(n+l_1+k+\frac{3}{2})}\\
&=\frac{(n+\frac{k}{2}-1)(n+\frac{k}{2}-2)\times\cdots\times(n+1)}{(n+l_1+k+\frac{1}{2})(n+l_1+k-\frac{1}{2})\times\cdots\times(n+l_1+\frac{k}{2}+\frac{5}{2})}\\
&=\prod^{\frac{k}{2}-1}_{j=1}\frac{n+j}{n+l_1+\frac{k}{2}+\frac{3}{2}+j}.
\end{align*}
Applying the elementary inequality, for any $1\leq j\leq \frac{k}{2}-1$,
$$\frac{n+j}{n+l_1+\frac{k}{2}+\frac{3}{2}+j}\leq \frac{n+\frac{k}{2}}{n+l_1+k+\frac{3}{2}}\leq\frac{n+\frac{k}{2}}{n+l_1+k}$$
and
$$
\frac{k}{2}-1\geq\frac{k}{4}\,\, \text{for}\,\, k\ge 20,
$$
{\color{black} we get}
\begin{align*}
\prod^{\frac{k}{2}-1}_{j=1}\frac{n+j}{n+l_1+\frac{k}{2}+{\color{black}\frac{3}{2}}+j}\leq
\Big(\frac{n+\frac{k}{2}}{n+l}\Big)^{\frac{k}{2}-1}\leq\Big(1-\frac{\frac{k}{2}+l_1}{n+l}\Big)^{\frac{k}{4}}=e^{\frac{k}{4}\log(1-\frac{\frac{k}{2}+l_1}{n+l})}.
\end{align*}
{\color{black} Using} the inequality,
$$\log(1+x)\leq x,\,\forall\,x>-1,$$
and {\color{black} recalling} that $l=l_1+k$ with $l_1\geq0$,
$$\frac{l}{2}\leq l_1+\frac{k}{2}<l\leq n+l,$$
we have
$$\frac{k}{4}\log(1-\frac{\frac{k}{2}+l_1}{n+l})\leq -\frac{k}{4}\frac{l_1+\frac{k}{2}}{n+l}\leq-\frac{kl}{8(n+l)}.$$
It follows that
\begin{align*}
\prod^{\frac{k}{2}-1}_{j=1}\frac{n+j}{n+l_1+\frac{k}{2}+{\color{black}\frac{3}{2}}+j}\leq e^{\frac{k}{4}\log(1-\frac{\frac{k}{2}+l_1}{n+l})}\leq\,e^{-\frac{lk}{8(n+l)}}.
\end{align*}
Therefore, {\color{black}the formula \eqref{important} follows when $k$ is even.}

{\color{black}Analogously, when $k$ is odd, we set $l=l_1+k$ with $l_1\geq0$.
Using the recurrence formula $\Gamma(x+1)=x\Gamma(x)\,$ for $x>0$ and
$$\Gamma(n+l-\frac{k}{2}+\frac{5}{2})=(n+l-\frac{k}{2}+\frac{3}{2})!=(n+l_1+\frac{k}{2}+\frac{3}{2})!$$
{\color{black} we obtain}
\begin{align*}
&\frac{\Gamma(n+\frac{k}{2})\Gamma(n+l-\frac{k}{2}+\frac{5}{2})}{n!\Gamma(n+l+\frac{3}{2})}\\
&=\frac{\Gamma(n+\frac{k}{2})(n+l_1+\frac{k}{2}+\frac{3}{2})!}{n!\Gamma(n+l_1+k+\frac{3}{2})}\\
&=\frac{(n+l_1+\frac{k}{2}+\frac{3}{2})(n+l_1+\frac{k}{2}+\frac{1}{2})\times\cdots\times(n+1)}{(n+l_1+k+\frac{1}{2})(n+l_1+k-\frac{1}{2})\times\cdots\times(n+\frac{k}{2})}\\
&=\prod^{l_1+\frac{k}{2}+\frac{3}{2}}_{j=1}\frac{n+j}{n+\frac{k}{2}-1+j}.
\end{align*}
{\color{black} Since $k\ge 20$, we have the elementary inequality : for any $1\leq j\leq l_1+\frac{k}{2}+\frac{3}{2}$, }
$$\frac{n+j}{n+\frac{k}{2}-1+j}\leq \frac{n+l_1+\frac{k}{2}+\frac{3}{2}}{n+l_1+k+\frac{1}{2}}\leq \frac{n+l_1+\frac{3k}{4}}{n+l_1+k} .$$
Therefore
\begin{align*}
\prod^{l_1+\frac{k}{2}+\frac{3}{2}}_{j=1}\frac{n+j}{n+\frac{k}{2}-1+j}\leq
\Big(\frac{n+l_1+\frac{3k}{4}}{n+l_1+k}\Big)^{{\color{red}l_1+\frac{k}{2}+\frac{3}{2}}}\leq\Big(1-\frac{\frac{k}{4}}{n+l}\Big)^{{\color{red}l_1+\frac{k}{2}+\frac{3}{2}}}=e^{({\color{red}l_1+\frac{k}{2}+\frac{3}{2}})\log(1-\frac{k}{4(n+l)})}.
\end{align*}
{\color{black} Using} the inequality,
$$\log(1+x)\leq x,\,\forall\,x>-1,$$
and {\color{black} recalling} that $l=l_1+k$ with $l_1\geq0$, $k\ge 20$
$$\frac{l}{2}\leq {\color{red}l_1+\frac{k}{2}+\frac{3}{2}}<l\leq n+l,$$
we have
$$({\color{red}l_1+\frac{k}{2}+\frac{3}{2}})\log(1-\frac{k}{4(n+l)})\leq -\frac{k}{4(n+l)}({\color{red}l_1+\frac{k}{2}+\frac{3}{2}})\leq-\frac{kl}{8(n+l)}.$$
Therefore, the formula \eqref{important} holds true for $k$ is odd.
This ends the proof of the formula \eqref{important}.
}For $0<\omega<1$, if
$$\frac{k^{1-\omega}l}{n+l}>1,$$
then
$$\frac{\Gamma(n+\frac{k}{2})\Gamma(n+l-\frac{k}{2}+\frac{5}{2})}{n!\Gamma(n+l+\frac{3}{2})}\lesssim\,e^{-\frac{1}{8}k^{\omega}}.$$
This implies that, when $n+l<k^{1-\omega}l$,
\begin{align*}
\lambda^k_{n,\tilde{n},l,\tilde{l}}&\lesssim\frac{\tilde{l}\sqrt{l}}{l+\tilde{l}-2k+1}
\frac{(\tilde{n}+\tilde{l})^{2s}}{(n+l+1)^{\frac{5}{2}+2s}}e^{-\frac{1}{8}k^{\omega}}.
\end{align*}
This is the result of the estimate $(i)$.\,\,\\
If $n+l\geq\,k^{1-\omega}l$, we deduce from the same estimate as \eqref{leq1} that,
$$\frac{\Gamma(n+\frac{k}{2})\Gamma(n+l-\frac{k}{2}+\frac{5}{2})}{n!\Gamma(n+l+\frac{3}{2})}\leq1.$$
The estimate $(ii)$ follows from the estimate \eqref{mid}.\,\, This ends the proof of Lemma \ref{relambda}.
\end{proof}

\subsection{The proof of $3)$ in {\color{black} Proposition} \ref{est}}

\begin{proof}
For $\lambda^k_{n,\tilde{n},l,\tilde{l}}$ defined in \eqref{lambdadef}, by the analysis in \eqref{es2}, we {\color{black} only} need to prove
$$\sum_{\substack{n+\tilde{n}+k=n^{\star}\\n+l\geq2,\tilde{n}+\tilde{l}\geq2\\n\geq0,\tilde{n}\geq0}}
\sum_{\substack{l+l-2k=l^{\star}\\l\geq1,\tilde{l}\geq1\\0\leq\,k\,\leq\min(l,\tilde{l})}}
\frac{\lambda^k_{n,\tilde{n},\tilde{l},l}}{\lambda_{\tilde{n},\tilde{l}}}\leq\,C\lambda_{n^{\star},l^{\star}}.$$

The constraint of the above summation is
\begin{align*}
\Lambda_{n^{\star},l^{\star}}
&=\Big\{
(n,\tilde{n},l,\tilde{l},k)\in\mathbb{N}^5;\,n+\tilde{n}+k=n^{\star},\,l+\tilde{l}-2k=l^{\star},\\
&\qquad l\geq1,\tilde{l}\geq1,0\leq k\leq \min(l,\tilde{l}),\,\,n+l\geq2,\,\tilde{n}+\tilde{l}\geq2
\Big\},
\end{align*}
{\color{black}
which is a subset of a hyperplane of dimension 3.
By using Lemma \ref{relambda} with $\omega=\frac{1}{4}$, we divide the sets into four parts,
\begin{align*}
\Lambda_{n^{\star},l^{\star}}&=\Lambda^1_{n^{\star},l^{\star}}\bigcup\Lambda^2_{n^{\star},l^{\star}}\bigcup\Lambda^3_{n^{\star},l^{\star}}\bigcup\Lambda^4_{n^{\star},l^{\star}}
\end{align*}
with the sets
\begin{align*}
\Lambda^1_{n^{\star},l^{\star}}=&\Big\{
(n,\tilde{n},l,\tilde{l},k)\in\mathbb{N}^5;\,n+\tilde{n}=n^{\star},\,l+\tilde{l}=l^{\star},k=0\\
&\qquad\qquad\qquad\qquad l\geq1,\tilde{l}\geq1,\,n+l\geq2,\,\tilde{n}+\tilde{l}\geq2
\Big\};\\
\Lambda^2_{n^{\star},l^{\star}}=&\Big\{
(n,\tilde{n},l,\tilde{l},k)\in\mathbb{N}^5;\,n+\tilde{n}+k=n^{\star},\,l+\tilde{l}-2k=l^{\star},\\
&\qquad\qquad l\geq1,\tilde{l}\geq1,1\leq {\color{red}k\leq \min(19,l,\tilde{l})},\,n+l\geq2,\,\tilde{n}+\tilde{l}\geq2
\Big\};\\
\Lambda^3_{n^{\star},l^{\star}}
=&\Big\{
(n,\tilde{n},l,\tilde{l},k)\in\mathbb{N}^5;\,n+\tilde{n}+k=n^{\star},\,l+\tilde{l}-2k=l^{\star},n+l<k^{\frac{3}{4}}l,\\
&\qquad\qquad l\geq1,\tilde{l}\geq1,20\leq k\leq\min(l,\tilde{l}),\,n+l\geq2,\,\tilde{n}+\tilde{l}\geq2
\Big\};\\
\Lambda^4_{n^{\star},l^{\star}}
=&\Big\{
(n,\tilde{n},l,\tilde{l},k)\in\mathbb{N}^5;\,n+\tilde{n}+k=n^{\star},\,l+\tilde{l}-2k=l^{\star},n+l\geq k^{\frac{3}{4}}l,\\
&\qquad\qquad l\geq1,\tilde{l}\geq1,20\leq k\leq\min(l,\tilde{l}),\,n+l\geq2,\,\tilde{n}+\tilde{l}\geq2
\Big\}.
\end{align*}
Then the summation can be divided into four terms corresponding to the sets.
\begin{align*}
&\sum_{(n,\tilde{n},l,\tilde{l},k)\in\Lambda_{n^{\star},l^{\star}}}
\frac{\lambda^k_{n,\tilde{n},\tilde{l},l}}{\lambda_{\tilde{n},\tilde{l}}}\\
&=\sum_{(n,\tilde{n},l,\tilde{l})\in\Lambda^1_{n^{\star},l^{\star}}}
\frac{\lambda^0_{n,\tilde{n},\tilde{l},l}}{\lambda_{\tilde{n},\tilde{l}}}
+\sum_{(n,\tilde{n},l,\tilde{l},k)\in\Lambda^2_{n^{\star},l^{\star}}}
\frac{\lambda^k_{n,\tilde{n},\tilde{l},l}}{\lambda_{\tilde{n},\tilde{l}}}
+\sum_{(n,\tilde{n},l,\tilde{l},k)\in\Lambda^3_{n^{\star},l^{\star}}}
\frac{\lambda^k_{n,\tilde{n},\tilde{l},l}}{\lambda_{\tilde{n},\tilde{l}}}
+\sum_{(n,\tilde{n},l,\tilde{l},k)\in\Lambda^4_{n^{\star},l^{\star}}}
\frac{\lambda^k_{n,\tilde{n},\tilde{l},l}}{\lambda_{\tilde{n},\tilde{l}}}\\
&=\bf{K_1}+\bf{K_2}+\bf{K_3}+\bf{K_4}.
\end{align*}
}
{\color{black}By using \eqref{estlambda0-b} in Lemma \ref{est-b}} with $s_0=\min(s,1-s)$, one can estimate $\bf{K_1}$ as follows,
\begin{align*}
{\bf{K_1}}\lesssim\sum_{\substack{n+\tilde{n}=n^{\star}\\n+l\geq2,\tilde{n}+\tilde{l}\geq2\\n\geq0,\tilde{n}\geq0}}
\sum_{\substack{l+l=l^{\star}\\l\geq1,\tilde{l}\geq1}}\frac{\tilde{l}\sqrt{l}}{l+\tilde{l}+1}\frac{(\tilde{n}+\tilde{l})^{2s+s_0}}{(\tilde{n}+1)^{s_0}
(n+1)^{1-s_0}(n+l)^{\frac{3}{2}+2s+s_0}\lambda_{\tilde{n},\tilde{l}}}.
\end{align*}
We claim that, for $(\tilde{n},\tilde{l})\in \mathbb{N}^2$, $\tilde{n}+\tilde{l}\geq2$,
\begin{equation}\label{leq2}
(\tilde{n}+\tilde{l})^{2s}\leq{\color{black}2(\tilde{n}+1)^s\big(\tilde{n}^s+\tilde{l}^{2s}\big)}.
\end{equation}
{\color{black}
Indeed, the formula \eqref{leq2} holds for $\tilde{n}=0$ or $\tilde{l}=0$ under the assumption of $\tilde{n}+\tilde{l}\geq2$.
Now we assume $\tilde{n}\geq1$ and $\tilde{l}\geq1$.  In fact, for $0<s<1$,
$$(\tilde{n}+\tilde{l})^{2s}\leq{\color{black}2(\tilde{n}^{2s}+\tilde{l}^{2s})\leq 2(\tilde{n}+1)^s\big(\tilde{n}^s+\tilde{l}^{2s}\big)}.$$
This ends the proof of the formula \eqref{leq2}.
By using the inequality $\lambda_{\tilde{n},\tilde{l}}\gtrsim \tilde{n}^s+\tilde{l}^{2s}$ in \eqref{eq:3.111} and \eqref{leq2}, we have
{\color{black}$$\frac{(\tilde{n}+\tilde{l})^{2s}}{\lambda_{\tilde{n},\tilde{l}}}\lesssim(\tilde{n}+1)^s.$$}
Then
\begin{align*}
\bf{K_1}&\lesssim\sum_{\substack{n+\tilde{n}=n^{\star}\\n+l\geq2,\tilde{n}+\tilde{l}\geq2\\n\geq0,\tilde{n}\geq0}}
\sum_{\substack{l+l=l^{\star}\\l\geq1,\tilde{l}\geq1}}
\frac{\tilde{l}\sqrt{l}}{l+\tilde{l}+1}
\frac{(\tilde{n}+1)^s(\tilde{n}+\tilde{l})^{s_0}}{(\tilde{n}+1)^{s_0}
(n+1)^{1-s_0}(n+l)^{\frac{3}{2}+2s+s_0}}
\\
&\lesssim
\sum_{\substack{n+\tilde{n}=n^{\star}\\n\geq0,\tilde{n}\geq0}}
\sum_{\substack{l+\tilde{l}=l^{\star}\\l\geq1,\tilde{l}\geq1}}
\frac{(\tilde{n}+1)^s}{(n+1)^{1-
s_0}(n+l)^{1+2s+s_0}}\Big(1+\frac{\tilde{l}}{\tilde{n}+1}\Big)^{s_0}\\
&\lesssim
\sum_{\substack{n+\tilde{n}=n^{\star}\\n\geq0,\tilde{n}\geq0}}
\sum_{\substack{l+\tilde{l}=l^{\star}\\l\geq1,\tilde{l}\geq1}}
\frac{(\tilde{n}+1)^s}{(n+1)^{1+s
}(n+l)^{1+s}}\Big(1+\frac{\tilde{l}}{\tilde{n}+1}\Big)^{s_0}.
\end{align*}
 Considering $0<s_0=\min(1-s,s)<1$ and
using the elementary inequality,
$$\Big(1+\frac{\tilde{l}}{\tilde{n}+1}\Big)^{s_0}\leq1+\Big(\frac{\tilde{l}}{\tilde{n}+1}\Big)^{s_0},$$
we have
\begin{align*}
\bf{K_1}
&\lesssim\sum_{\substack{n+\tilde{n}=n^{\star}\\n\geq0,\tilde{n}\geq0}}
\sum_{\substack{l+\tilde{l}=l^{\star}\\l\geq1,\tilde{l}\geq1}}
\frac{(\tilde{n}+1)^s}{(n+1)^{1+s
}(n+l)^{1+s}}\Bigg(1+\Big(\frac{\tilde{l}}{\tilde{n}+1}\Big)^{s_0}\Bigg)
\end{align*}
Recall that $s_0=\min(s,1-s)$, if $s_0=s$, one gets
$$(\tilde{n}+1)^s\Bigg(1+\Big(\frac{\tilde{l}}{\tilde{n}+1}\Big)^{s_0}\Bigg)=(\tilde{n}+1)^s+(\tilde{l})^s;$$
if $s_0<s$, from Young's inequality,
{\color{red}
$$\tilde{l}^{s_0}(\tilde{n}+1)^{s-s_0}\leq \frac{s-s_0}{s}(\tilde{n}+1)^{s}+\frac{s_0}{s}\tilde{l}^{s},$$
}we conclude that,
$$(\tilde{n}+1)^s\Bigg(1+\Big(\frac{\tilde{l}}{\tilde{n}+1}\Big)^{s_0}\Bigg)=(\tilde{n}+1)^s+\tilde{l}^{s_0}(\tilde{n}+1)^{s-s_0}\lesssim(\tilde{n}+1)^{s}+\tilde{l}^{s}.$$
Therefore,
\begin{align*}
\bf{K_1}
&\lesssim\sum_{\substack{n+\tilde{n}=n^{\star}\\n\geq0,\tilde{n}\geq0}}
\sum_{\substack{l+\tilde{l}=l^{\star}\\l\geq1,\tilde{l}\geq1}}
\frac{(\tilde{n}+1)^s+\tilde{l}^s}{(n+1)^{1+s
}(n+l)^{1+s}}\\
&\lesssim
 (n^{\star}+l^{\star})^s
 \Bigg[\sum_{\substack{n+\tilde{n}=n^{\star}\\n\geq0,\tilde{n}\geq0}}\frac{1}{(n+1)^{1+s}}
\sum_{\substack{l+\tilde{l}=l^{\star}\\l\geq1,\tilde{l}\geq1}}
\frac{1}{(n+l)^{1+s}}
\Bigg]\\
&\lesssim
 (n^{\star}+l^{\star})^s
 \Bigg[\sum_{\substack{n+\tilde{n}=n^{\star}\\n\geq0,\tilde{n}\geq0}}\frac{1}{(n+1)^{1+s}}
\sum_{\substack{l+\tilde{l}=l^{\star}\\l\geq1,\tilde{l}\geq1}}
\frac{1}{l^{1+s}}
\Bigg]
\end{align*}
The set $\{(n,\tilde{n})\in\mathbb{N}^2, n+\tilde{n}=n^{\star}\}$ is a subset of hyperplane of dimension 1,
$$\sum_{\substack{n+\tilde{n}=n^{\star}\\n\geq0,\tilde{n}\geq0}}\frac{1}{(n+1)^{1+s}}=\sum^{n^{\star}}_{n=0}\frac{1}{(n+1)^{1+s}}\leq\sum^{+\infty}_{n=1}\frac{1}{n^{1+s}}<+\infty.$$
Similarly,
$$\sum_{\substack{l+\tilde{l}=l^{\star}\\l\geq1,\tilde{l}\geq1}}
\frac{1}{l^{1+s}}=\sum^{l^{\star}-1}_{l=1}\frac{1}{l^{1+s}}\leq{\color{red}\sum^{+\infty}_{l=1}}\frac{1}{l^{1+s}}<+\infty.$$
Therefore,
\begin{align*}
\bf{K_1}&\lesssim (n^{\star}+l^{\star})^s.
\end{align*}
The estimate of the second term $\bf{K_2}$ :\\
{\color{black}By using \eqref{estlambdak-b} in Lemma \ref{est-b}}, we have
\begin{align*}
\bf{K_2}&\lesssim\sum^{\min(19,n^{\star})}_{k=1}
\sum_{\substack{n+\tilde{n}=n^{\star}-k\\n+l\geq2,\tilde{n}+\tilde{l}\geq2\\n\geq0,\tilde{n}\geq0}}
\sum_{\substack{l+\tilde{l}=l^{\star}+2k\\l\geq\,k,\tilde{l}\geq\,k}}
\frac{\tilde{l}\sqrt{l}}{l+\tilde{l}-2k+1}
\frac{\tilde{n}^s(\tilde{n}+\tilde{l})^{s}}{(n+1)^s(n+l+1)^{\frac{5}{2}+s}\lambda_{\tilde{n},\tilde{l}}}
\end{align*}
Applying the inequality \eqref{eq:3.111} that $\lambda_{\tilde{n},\tilde{l}}\gtrsim (\tilde{n}+\tilde{l})^{s}$, one can re-estimated $\bf{K_2}$ as
\begin{align*}
\bf{K_2}&\lesssim\sum^{\min(19,n^{\star})}_{k=1}
\sum_{\substack{n+\tilde{n}=n^{\star}-k\\n+l\geq2,\tilde{n}+\tilde{l}\geq2\\n\geq0,\tilde{n}\geq0}}
\sum_{\substack{l+\tilde{l}=l^{\star}+2k\\l\geq\,k,\tilde{l}\geq\,k}}
\frac{\tilde{l}\sqrt{l}}{l+\tilde{l}-2k+1}
\frac{\tilde{n}^s}{(n+1)^s(n+l+1)^{\frac{5}{2}+s}}\\
&\lesssim(n^{\star})^{s}\sum^{19}_{k=1}\sum^{n^{\star}-k}_{n=0}
\sum^{l^{\star}+k}_{l=k}\frac{l^{\star}+2k-l}{l^{\star}+1}
\frac{1}{(n+1)^s(n+l+1)^{2+s}}\\
&\lesssim(n^{\star})^{s}\sum^{19}_{k=1}\sum^{n^{\star}-k}_{n=0}
\sum^{l^{\star}+k}_{l=k}\frac{l^{\star}+19}{l^{\star}+1}
\frac{1}{(n+1)^s(n+l+1)^{2+s}}.
\end{align*}
Since
\begin{align*}
&\sum^{l^{\star}+k}_{l=k}\frac{1}{(n+l+1)^{2+s}}\\
&\leq\sum^{l^{\star}+k}_{l=k}\int^{l}_{l-1}\frac{1}{(n+x+1)^{2+s}}dx=\int^{l^{\star}+k}_{k-1}\frac{1}{(n+x+1)^{2+s}}dx\\
&=\frac{1}{1+s}\left(\frac{1}{(n+k)^{1+s}}-\frac{1}{(n+k+l^{\star}+1)^{1+s}}\right)\lesssim\frac{1}{(n+k)^{1+s}},
\end{align*}
we can estimate that
\begin{align*}
\bf{K_2}&\lesssim(n^{\star})^{s}\sum^{19}_{k=1}\sum^{n^{\star}-k}_{n=0}\frac{1}{(n+k)^{1+s}}\\
&\lesssim(n^{\star})^{s}\sum^{n^{\star}}_{n=0}\frac{1}{(n+1)^{1+s}}\lesssim(n^{\star})^{s}.
\end{align*}
Now we consider $k\geq 20$.  In this case, we assume $n^{\star}\geq20.$ Or else, $$\Lambda^3_{n^{\star},l^{\star}}=\varnothing,\Lambda^4_{n^{\star},l^{\star}}=\varnothing.$$
For the third term $\bf{K_3}$, by using $(i)$ of Lemma \ref{relambda} with $\omega=\frac{1}{4}$, we have
\begin{align*}
\bf{K_3}&\lesssim\sum^{n^{\star}}_{k=20}
\sum_{\substack{n+\tilde{n}=n^{\star}-k\\2\leq n+l<k^{\frac{3}{4}}l,\tilde{n}+\tilde{l}\geq2\\n\geq0,\tilde{n}\geq0}}
\sum_{\substack{l+\tilde{l}=l^{\star}+2k\\l\geq\,k,\tilde{l}\geq\,k}}
\frac{\tilde{l}\sqrt{l}}{l+\tilde{l}-2k+1}
\frac{(\tilde{n}+\tilde{l})^{2s}}{(n+l+1)^{\frac{5}{2}+2s}\lambda_{\tilde{n},\tilde{l}}}e^{-\frac{1}{8}k^{\frac{1}{4}}}
\end{align*}
Applying the inequality \eqref{eq:3.111} that $\lambda_{\tilde{n},\tilde{l}}\gtrsim (\tilde{n}+\tilde{l})^{s}$,  we estimate that,
\begin{align*}
\bf{K_3}&\lesssim\sum^{n^{\star}}_{k=20}
\sum_{\substack{n+\tilde{n}=n^{\star}-k\\2\leq n+l<k^{\frac{3}{4}}l,\tilde{n}+\tilde{l}\geq2\\n\geq0,\tilde{n}\geq0}}
\sum_{\substack{l+\tilde{l}=l^{\star}+2k\\l\geq\,k,\tilde{l}\geq\,k}}
\frac{\tilde{l}\sqrt{l}}{l+\tilde{l}-2k+1}
\frac{(\tilde{n}+\tilde{l})^{s}}{(n+l+1)^{\frac{5}{2}+2s}}e^{-\frac{1}{8}k^{\frac{1}{4}}}\\
&\lesssim\sum^{n^{\star}}_{k=20}
\sum^{n^{\star}-k}_{n=0}\sum^{l^{\star}+k}_{l=k}\frac{l^{\star}+k}{l^{\star}+1}
\frac{(n^{\star}+l^{\star})^{s}}{(n+l+1)^{2+2s}}
e^{-\frac{1}{4}k^{\frac{1}{4}}}\\
&=(n^{\star}+l^{\star})^{s}\sum^{n^{\star}}_{k=20}e^{-\frac{1}{4}k^{\frac{1}{4}}}\frac{l^{\star}+k}{l^{\star}+1}
\sum^{n^{\star}-k}_{n=0}\sum^{l^{\star}+k}_{l=k}\frac{1}{(n+l+1)^{2+2s}}.
\end{align*}
Use the estimate
\begin{align*}
&\sum^{l^{\star}+k}_{l=k}\frac{1}{(n+l+1)^{2+2s}}\\
&\leq\sum^{l^{\star}+k}_{l=k}\int^{l}_{l-1}\frac{1}{(n+x+1)^{2+2s}}dx=\int^{l^{\star}+k}_{k-1}\frac{1}{(n+x+1)^{2+2s}}dx\\
&=\frac{1}{1+2s}\left(\frac{1}{(n+k)^{1+2s}}-\frac{1}{(n+k+l^{\star}+1)^{1+2s}}\right)\\
&=\frac{1}{1+2s}\frac{(n+k+l^{\star}+1)^{1+2s}-(n+k)^{1+2s}}{(n+k)^{1+2s}(n+k+l^{\star}+1)^{1+2s}},
\end{align*}
and the mean value theorem for $f(x)=x^{1+2s}$,
$$(n+k+l^{\star}+1)^{1+2s}-(n+k)^{1+2s}=(1+2s)\big(n+k+\tau (l^{\star}+1)\big)^{2s}(l^{\star}+1),\,\,(0<\tau<1),$$
we obtain,
\begin{align*}
\bf{K_3}
&\lesssim(n^{\star}+l^{\star})^{s}\sum^{n^{\star}}_{k=1}e^{-\frac{1}{8}k^{\frac{1}{4}}}
\sum^{n^{\star}-k}_{n=0}\frac{l^{\star}+1}{(n+k)^{1+2s}(n+l^{\star}+k)}\times\frac{l^{\star}+k}{l^{\star}+1}\\
&\lesssim(n^{\star}+l^{\star})^{s}\sum^{n^{\star}}_{k=1}e^{-\frac{1}{4}k^{\frac{1}{4}}}
\sum^{n^{\star}}_{n=0}\frac{1}{(n+1)^{1+2s}}\\
&\lesssim(n^{\star}+l^{\star})^{s}.
\end{align*}
Finally, we estimate the remaining term $\bf{K_4}$.\,\,By using $(ii)$ of Lemma \ref{relambda} with $\omega=\frac{1}{4}$, we have
\begin{align*}
\bf{K_4}&\lesssim\sum^{n^{\star}}_{k=20}
\sum_{\substack{n+\tilde{n}=n^{\star}-k\\n+l\geq k^{\frac{3}{4}}l,\tilde{n}+\tilde{l}\geq2\\n\geq0,\tilde{n}\geq0}}
\sum_{\substack{l+\tilde{l}=l^{\star}+2k\\l\geq\,k,\tilde{l}\geq\,k}}
\frac{\tilde{l}\sqrt{l}}{l+\tilde{l}-2k+1}
\frac{(\tilde{n}+\tilde{l})^{2s}}{(n+l+1)^{\frac{5}{2}+2s}\lambda_{\tilde{n},\tilde{l}}}
\end{align*}
Applying the inequality \eqref{eq:3.111} that $\lambda_{\tilde{n},\tilde{l}}\gtrsim (\tilde{n}+\tilde{l})^{s}$,  we estimate that
\begin{align*}
\bf{K_4}&\lesssim\sum^{n^{\star}}_{k=20}
\sum_{\substack{n+\tilde{n}=n^{\star}-k\\n+l\geq k^{\frac{3}{4}}l,\tilde{n}+\tilde{l}\geq2\\n\geq0,\tilde{n}\geq0}}
\sum_{\substack{l+\tilde{l}=l^{\star}+2k\\l\geq\,k,\tilde{l}\geq\,k}}
\frac{\tilde{l}\sqrt{l}}{l+\tilde{l}-2k+1}
\frac{(\tilde{n}+\tilde{l})^{s}}{(n+l+1)^{\frac{5}{2}+2s}}
\end{align*}
Considering the  condition
$$n+l\geq\,k^{\frac{3}{4}}l,$$
we obtain
$$\frac{1}{(n+l)^{\frac{3}{2}}}\leq\,\frac{1}{k^{\frac{9}{8}}l^{\frac{3}{2}}}.$$
Then $\bf{K_4}$ can be rewritten as
\begin{align*}
\bf{K_4}&\lesssim\sum^{n^{\star}}_{k=20}
\sum_{\substack{n+\tilde{n}=n^{\star}-k\\n+l\geq\,k^{\frac{3}{4}}l,\tilde{n}+\tilde{l}\geq2\\n\geq0,\tilde{n}\geq0}}
\sum_{\substack{l+\tilde{l}=l^{\star}+2k\\l\geq\,k,\tilde{l}\geq\,k}}\frac{\tilde{l}\sqrt{l}}{l+\tilde{l}-2k+1}
\frac{(\tilde{n}+\tilde{l})^{s}}{(n+l)^{1+2s}k^{\frac{9}{8}}l^{\frac{3}{2}}}\\
&\lesssim\Big(n^{\star}+l^{\star}\Big)^s\sum^{n^{\star}}_{k=20}\frac{1}{k^{\frac{9}{8}}}
\sum^{n^{\star}-k}_{n=0}\sum^{l^{\star}+k}_{l=k}\frac{1}{{\color{red}(n+l)^{1+2s}}l}\frac{l^{\star}+k}{l^{\star}+1}\\
&
\lesssim
\Big(n^{\star}+l^{\star}\Big)^s\sum^{n^{\star}}_{k=20}\frac{1}{k^{\frac{9}{8}}}\Big[\sum^{n^{\star}-k}_{n=0}\sum^{l^{\star}+k}_{l=k}\frac{1}{(n+l)^{1+2s}l}
        +\sum^{n^{\star}-k}_{n=0}\sum^{l^{\star}+k}_{l=k}\frac{1}{(n+l)^{1+2s}l}\frac{k}{l^{\star}+1}\Big]
\end{align*}
It is obviously that, for $k\geq1$,
$$\sum^{n^{\star}-k}_{n=0}\sum^{l^{\star}+k}_{l=k}\frac{1}{(n+l)^{1+2s}l}
\leq\sum^{n^{\star}}_{n=0}\frac{1}{(n+1)^{1+s}}\sum^{l^{\star}+k}_{l=k}\frac{1}{l^{1+s}}<+\infty.$$
At the same time,
\begin{align*}
\sum^{n^{\star}-k}_{n=0}\sum^{l^{\star}+k}_{l=k}\frac{1}{(n+l)^{1+2s}l}\frac{k}{l^{\star}+1}&\leq\sum^{n^{\star}-k}_{n=0}\frac{1}{(n+1)^{1+2s}}\sum^{l^{\star}+k}_{l=k}\frac{1}{l^{\star}+1}\\
&\leq\sum^{n^{\star}-k}_{n=0}\frac{1}{(n+1)^{1+2s}}<+\infty.
\end{align*}
Substitute these two estimates into the above inequality of $\bf{K_4}$, we have
\begin{align*}
{\bf{K_4}}
\lesssim\Big(n^{\star}+l^{\star}\Big)^s
\sum^{n^{\star}}_{k=20}\frac{1}{k^{\frac{9}{8}}}\lesssim\Big(n^{\star}+l^{\star}\Big)^s.
\end{align*}
}
Combining the estimates of $\bf{K_1}$, $\bf{K_2}$, $\bf{K_3}$ and $\bf{K_4}$, using again \eqref{eq:3.111}
$$(n^{\star}+l^{\star})^s+(l^{\star})^{2s}\lesssim\lambda_{n^{\star},l^{\star}},$$
we {\color{black} derive}
$$\sum_{\substack{n+\tilde{n}+k=n^{\star}\\n+l\geq2,\tilde{n}+\tilde{l}\geq2\\n\geq0,\tilde{n}\geq0}}
\sum_{\substack{l+l-2k=l^{\star}\\l\geq1,\tilde{l}\geq1\\0\leq\,k\,\leq\min(l,\tilde{l})}}
\frac{\lambda^k_{n,\tilde{n},\tilde{l},l}}{\lambda_{\tilde{n},\tilde{l}}}\leq\,C\lambda_{n^{\star},l^{\star}}.$$
This ends the proof of $3)$ in Proposition \ref{est}.
\end{proof}


\section{Appendix}\label{S6}

The important known results but really needed for this paper are presented in this section. For the self-content of paper,
we will present some proof of those properties.

\subsection{Gelfand-Shilov {\color{black} spaces}}

The symmetric Gelfand-Shilov space $S^{\nu}_{\nu}(\mathbb{R}^3)$ can be characterized through the decomposition
into the Hermite basis $\{H_{\alpha}\}_{\alpha\in\mathbb{N}^3}$ and the harmonic oscillator $\mathcal{H}=-\triangle +\frac{|v|^2}{4}$.
{\color{black} For} more details, see Theorem 2.1 in the book \cite{GPR}
\begin{align*}
f\in S^{\nu}_{\nu}(\mathbb{R}^3)&\Leftrightarrow\,f\in C^\infty (\mathbb{R}^3),\exists\, \tau>0, \|e^{\tau\mathcal{H}^{\frac{1}{2\nu}}}f\|_{L^2}<+\infty;\\
&\Leftrightarrow\, f\in\,L^2(\mathbb{R}^3),\exists\, \epsilon_0>0,\,\,\Big\|\Big(e^{\epsilon_0|\alpha|^{\frac{1}{2\nu}}}(f,\,H_{\alpha})_{L^2}\Big)_{\alpha\in\mathbb{N}^3}\Big\|_{l^2}<+\infty;\\
&\Leftrightarrow\,\exists\,C>0,\,A>0,\,\,\|(-\triangle +\frac{|v|^2}{4})^{\frac{k}{2}}f\|_{L^2(\mathbb{R}^3)}\leq AC^k(k!)^{\nu},\,\,\,k\in\mathbb{N}
\end{align*}
where
$$H_{\alpha}(v)=H_{\alpha_1}(v_1)H_{\alpha_2}(v_2)H_{\alpha_3}(v_3),\,\,\alpha\in\mathbb{N}^3,$$
and for $x\in\mathbb{R}$,
$$H_{n}(x)=\frac{(-1)^n}{\sqrt{2^nn!\pi}}e^{\frac{x^2}{2}}\frac{d^n}{dx^n}(e^{-x^2})
=\frac{1}{\sqrt{2^nn!\pi}}\Big(x-\frac{d}{dx}\Big)^n(e^{-\frac{x^2}{2}}).$$
For the harmonic oscillator $\mathcal{H}=-\triangle +\frac{|v|^2}{4}$ of 3-dimension and $s>0$, we have
$$
\mathcal{H}^{\frac{k}{2}} H_{\alpha} = (\lambda_{\alpha})^{\frac{k}{2}}H_{\alpha},\,\, \lambda_{\alpha}=\sum^3_{j=1}(\alpha_j+\frac{1}{2}),\,\,k\in\mathbb{N},\,\alpha\in\mathbb{N}^3.
$$

\subsection{Fourier Transform of special functions}
For the eigenvector $\varphi_{n,l,{m}}$ given in \eqref{v}, Lerner,  Morimoto, Pravda-Starov and Xu in \cite{NYKC3} shows in Lemma 3.2 the Fourier transform of  $\sqrt{\mu}\varphi_{n,0,0}$.
Following this work, we provide the Fourier transform of $\sqrt{\mu}\varphi_{n,l,{m}}$.
\begin{lemma}\label{lem trans}
Let $\alpha,\,\kappa\in S^2$\,and\,$r>0$,\, then
\begin{equation}\label{siga}
\int_{S_\kappa^2}e^{ir\kappa\cdot\alpha}Y_l^{m}(\kappa)d\kappa=(2\pi)^\frac{3}{2}i^l\Big(\frac{1}{r}\Big)^\frac{1}{2}J_{l+\frac{1}{2}}(r)Y_l^{m}(\alpha),
\end{equation}
where $\kappa\cdot\alpha$ denote the scalar product\,and $J_{l+\frac{1}{2}}$ is the Bessel function of $l+\frac{1}{2}$ order.
\end{lemma}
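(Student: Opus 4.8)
The plan is to compute the Fourier transform of $e^{ir\kappa\cdot\alpha}$ over the sphere by reducing it to a one-dimensional integral via the Funk--Hecke formula \eqref{funk-hecke}. First I would observe that the function $f(x)=e^{irx}$, viewed as a function of $x=\kappa\cdot\alpha\in[-1,1]$, is continuous, so the Funk--Hecke formula applies directly with $\sigma=\alpha$ and the spherical harmonic $Y_l^m$. This immediately yields
\begin{equation*}
\int_{\mathbb{S}^2_\kappa}e^{ir\kappa\cdot\alpha}Y_l^m(\kappa)\,d\kappa
=\left(2\pi\int_{-1}^1 e^{irx}P_l(x)\,dx\right)Y_l^m(\alpha),
\end{equation*}
so that the angular dependence on $\alpha$ is correctly captured by $Y_l^m(\alpha)$, and the whole problem collapses to evaluating the scalar integral $\int_{-1}^1 e^{irx}P_l(x)\,dx$.

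The main step is then to identify this one-dimensional integral with a Bessel function. The key classical identity is the integral representation of the spherical Bessel function (equivalently, a Poisson-type representation of $J_{l+1/2}$): namely
\begin{equation*}
\int_{-1}^1 e^{irx}P_l(x)\,dx
=2\,i^l\,j_l(r)
=2\,i^l\sqrt{\frac{\pi}{2r}}\,J_{l+\frac{1}{2}}(r),
\end{equation*}
where $j_l$ is the spherical Bessel function of order $l$. I would treat this as a known special-function identity (it appears in standard references such as \cite{San} alongside the other Legendre and Bessel formulas already invoked in the paper). Substituting this into the Funk--Hecke output gives
\begin{equation*}
\int_{\mathbb{S}^2_\kappa}e^{ir\kappa\cdot\alpha}Y_l^m(\kappa)\,d\kappa
=2\pi\cdot 2\,i^l\sqrt{\frac{\pi}{2r}}\,J_{l+\frac{1}{2}}(r)\,Y_l^m(\alpha)
=(2\pi)^{\frac{3}{2}}\,i^l\Big(\frac{1}{r}\Big)^{\frac{1}{2}}J_{l+\frac{1}{2}}(r)\,Y_l^m(\alpha),
\end{equation*}
which is exactly \eqref{siga} after checking that the numerical constants combine as $2\pi\cdot 2\sqrt{\pi/2}=(2\pi)^{3/2}$.

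The hard part, if any, is justifying the Bessel representation rather than the Funk--Hecke reduction, which is purely mechanical. I would either cite the Poisson representation of $J_{l+1/2}$ directly, or derive it by induction on $l$ using the Rodrigues formula $P_l(x)=\frac{1}{2^l l!}\frac{d^l}{dx^l}(x^2-1)^l$ together with repeated integration by parts: integrating $\int_{-1}^1 e^{irx}P_l(x)\,dx$ by parts $l$ times shifts all derivatives onto $e^{irx}$, producing a factor $(ir)^l$ times $\int_{-1}^1 e^{irx}(1-x^2)^l\,dx$, and this last integral is a standard form of the Bessel function $J_{l+1/2}$. I would verify the base case $l=0$, where $\int_{-1}^1 e^{irx}\,dx=\frac{2\sin r}{r}=2\sqrt{\pi/(2r)}\,J_{1/2}(r)$ since $J_{1/2}(r)=\sqrt{2/(\pi r)}\sin r$, confirming the normalization. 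Care must be taken only with the $i^l$ phase factor and the half-integer Bessel normalization constant, but these are bookkeeping details rather than genuine obstacles.
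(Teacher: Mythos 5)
Your proof is correct, but it follows a genuinely different route from the paper. The paper starts from Bauer's plane-wave expansion $\sqrt{2r}\,e^{irz}=\sqrt{\pi}\sum_{k\ge 0}(2k+1)J_{k+\frac12}(r)\,i^k P_k(z)$ (cited from Watson), justifies term-by-term integration over the sphere via the explicit bound $|J_{k+\frac12}(r)|\le (r/2)^{k+\frac12}e^{r^2/4}/k!$, and then isolates the $k=l$ term using the addition theorem together with the orthogonality relation $\int_{\mathbb{S}^2_\kappa}P_k(\kappa\cdot\alpha)Y_l^m(\kappa)\,d\kappa=\frac{4\pi}{2l+1}Y_l^m(\alpha)\,\delta_{k,l}$. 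You instead invoke the Funk--Hecke formula \eqref{funk-hecke} in a single stroke --- which the paper itself states and uses elsewhere, and which applies to the complex-valued $f(x)=e^{irx}$ by linearity --- collapsing everything to the scalar integral $\int_{-1}^1 e^{irx}P_l(x)\,dx=2\,i^l\sqrt{\pi/(2r)}\,J_{l+\frac12}(r)$, for which your Rodrigues-plus-Poisson derivation (integrate by parts $l$ times to produce $\frac{(ir)^l}{2^l l!}\int_{-1}^1 e^{irx}(1-x^2)^l\,dx$, then apply the Poisson representation of $J_{l+\frac12}$) is sound, and your base case and the constant check $2\pi\cdot 2\sqrt{\pi/2}=(2\pi)^{3/2}$ are both right. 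The two arguments rest on essentially the same special-function content --- Bauer's expansion is equivalent, via Legendre orthogonality, to your one-dimensional identity --- but your version buys a cleaner proof: Funk--Hecke packages the addition theorem and orthogonality that the paper deploys by hand, and you avoid the convergence bookkeeping needed to interchange the infinite sum with the surface integral; the paper's version, in exchange, is self-contained modulo a single citation to Watson and does not need the Poisson representation.
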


\begin{proof}
Since for any real r and $|z|\leq1$,
\begin{align}\label{bessel}
\sqrt{2r}e^{irz}=\sqrt{\pi}\sum^{\infty}_{k=0}(2k+1)J_{k+\frac{1}{2}}(r)(i)^kP_k(z)\quad{\color{black}\text{(cf. $(1)$ of Section 11.5 in \cite{Watson})}}.
\end{align}
Substituting $z=\kappa\cdot\alpha$ into \eqref{bessel},
$$e^{ir\kappa\cdot\alpha}=\sqrt{\frac{\pi}{2r}}\sum^{\infty}_{k=0}(2k+1)J_{k+\frac{1}{2}}(r)i^kP_k(\kappa\cdot\alpha).$$
{\color{black}
By using the definition of the Bessel function $J_{k+\frac{1}{2}}(r)$, see $(8)$ of Sec. 3.1, Chap.III in \cite{Watson},
$$J_{k+\frac{1}{2}}(r)=\sum^{\infty}_{m=0}\frac{(-1)^m(r/2)^{2m+k+\frac{1}{2}}}{m!\Gamma(m+k+\frac{3}{2})}.$$
Then
$$|J_{k+\frac{1}{2}}(r)|\leq(|r|/2)^{k+\frac{1}{2}}\sum^{\infty}_{m=0}\frac{(|r|/2)^{2m}}{m!k!}\leq(|r|/2)^{k+\frac{1}{2}}\frac{1}{k!}e^{\frac{|r|^2}{4}}$$
}
Now since $|P_k(\kappa\cdot\alpha)|\leq1$ and for every $r>0$,
$$\sqrt{\frac{1}{r}}\sum^\infty_{k=0}(2k+1)|J_{k+\frac{1}{2}}(r)|\leq\sum^\infty_{k=0}\frac{(\frac{r}{2})^k}{k!}e^{\frac{r^2}{4}}\leq e^{\frac{r}{2}+\frac{r^2}{4}},$$
we obtain that
\begin{equation}\label{1}
\int_{S_\kappa^2}e^{ir\kappa\cdot\alpha}Y_l^{m}(\kappa)d\kappa=\sqrt{\frac{\pi}{2r}}\sum^{\infty}_{k=0}(2k+1)J_{k+\frac{1}{2}}(r)i^k\int_{S_\kappa^2}P_k(\kappa\cdot\alpha)Y_l^{m}(\kappa)d\kappa.
\end{equation}
Consider the addition theorem of spherical harmonics $(7-34)$ in Chapter 7 of \cite{JCSlater} that
$$
P_k(\kappa\cdot\alpha)=\frac{4\pi}{2k+1}\sum^k_{m=-k}[Y_k^{m}(\kappa)]^{\ast}Y_k^{m}(\alpha),
$$
where $[Y_k^{m}(\kappa)]^{\ast}$ is the conjugate of $Y_k^{m}(\kappa)$.\,\,Then
$$
\int_{S_\kappa^2}P_k(\kappa\cdot\alpha)Y_l^{m}(\kappa)d\kappa=\frac{4\pi}{2l+1}Y_l^{m}(\alpha)\delta_{k,l}.
$$
Substitute this addition formula into \eqref{1}, the formula \eqref{siga} follows.
\end{proof}

Lemma \ref{lem trans} is the basis for calculating the Fourier transform of $\sqrt{\mu}\varphi_{n,l,m}.$
\begin{lemma}\label{Fouriertransform}
Let $\varphi_{n,l,{m}}$ be the functions defined in \eqref{v}, then for $n,l\in\mathbb{N},\,|m|\leq l$, we have
\begin{equation}\label{four}
\widehat{\sqrt{\mu}\varphi_{n,l,{m}}}(\xi)=(-i)^l(2\pi)^\frac{3}{4}\Big(\frac{1}{\sqrt{2}n!\Gamma(n+l+\frac{3}{2})}\Big)^\frac{1}{2}\Big(\frac{|\xi|}{\sqrt{2}}\Big)^{2n+l}e^{-\frac{|\xi|^2}{2}}Y_l^{m}\Big(\frac{\xi}{|\xi|}\Big).
\end{equation}
\end{lemma}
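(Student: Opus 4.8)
The plan is to reduce the three‑dimensional Fourier integral to a one‑dimensional radial integral by separating variables in spherical coordinates, and then to evaluate that radial integral through the generating function of the Laguerre polynomials. First I would write, using \eqref{v} and $\sqrt{\mu}(v)=(2\pi)^{-3/4}e^{-|v|^2/4}$,
\[
\sqrt{\mu}\,\varphi_{n,l,m}(v)=D_{n,l}\,|v|^{l}e^{-|v|^2/2}L^{(l+1/2)}_n\!\Big(\tfrac{|v|^2}{2}\Big)Y^m_l\!\Big(\tfrac{v}{|v|}\Big),\qquad D_{n,l}=(2\pi)^{-3/4}\Big(\tfrac{n!}{\sqrt2\,\Gamma(n+l+3/2)}\Big)^{1/2}2^{-l/2}.
\]
Writing $v=\rho\alpha$, $\xi=r\kappa$ with $\rho=|v|$, $r=|\xi|$, $\alpha,\kappa\in\mathbb{S}^2$, and using the Fourier convention $\widehat f(\xi)=\int_{\mathbb{R}^3}e^{-iv\cdot\xi}f(v)\,dv$ (consistent with \eqref{four-2}, since $\sqrt{\mu}\,\varphi_{0,0,0}=\mu$ has $\widehat\mu=e^{-|\xi|^2/2}$), the angular part is exactly the integral treated in Lemma \ref{lem trans}. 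Applying \eqref{siga} with $s=\rho r$, together with the parity $Y^m_l(-\alpha)=(-1)^lY^m_l(\alpha)$ to account for the kernel $e^{-i\rho r\,\alpha\cdot\kappa}$, gives $\int_{\mathbb{S}^2}e^{-i\rho r\,\alpha\cdot\kappa}Y^m_l(\alpha)\,d\alpha=(2\pi)^{3/2}(-i)^l(\rho r)^{-1/2}J_{l+1/2}(\rho r)Y^m_l(\kappa)$, so that
\[
\widehat{\sqrt{\mu}\,\varphi_{n,l,m}}(\xi)=D_{n,l}(2\pi)^{3/2}(-i)^l\,Y^m_l(\kappa)\,r^{-1/2}\int_0^\infty \rho^{l+3/2}e^{-\rho^2/2}L^{(l+1/2)}_n\!\Big(\tfrac{\rho^2}{2}\Big)J_{l+1/2}(\rho r)\,d\rho.
\]

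The core of the argument is the radial identity
\[
\int_0^\infty \rho^{l+3/2}e^{-\rho^2/2}L^{(l+1/2)}_n\!\Big(\tfrac{\rho^2}{2}\Big)J_{l+1/2}(\rho r)\,d\rho=\frac{1}{2^n n!}\,r^{2n+l+1/2}e^{-r^2/2}.
\]
I would prove this by multiplying by $z^n$, summing over $n$, and invoking the generating function $\sum_n L^{(\alpha)}_n(x)z^n=(1-z)^{-\alpha-1}e^{-xz/(1-z)}$ with $\alpha=l+1/2$. The sum collapses the Laguerre factor into a single Gaussian of rescaled width $a=\tfrac{1}{2(1-z)}$, after which the remaining integral is the elementary Gaussian–Bessel integral $\int_0^\infty t^{\nu+1}e^{-at^2}J_\nu(bt)\,dt=\tfrac{b^\nu}{(2a)^{\nu+1}}e^{-b^2/4a}$ with $\nu=l+1/2$, $b=r$. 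A short computation yields $r^{l+1/2}e^{-r^2/2}e^{r^2z/2}$, whose coefficient of $z^n$ is precisely $\tfrac{1}{2^n n!}r^{2n+l+1/2}e^{-r^2/2}$. This is exactly the mechanism by which the factor‑of‑two mismatch between the Gaussian $e^{-\rho^2/2}$ and the Laguerre argument $\rho^2/2$ turns the polynomial into a pure monomial in $r$; it is the step I expect to be the main obstacle, and everything else is bookkeeping.

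Finally I would substitute the radial identity into the displayed expression and collect constants. The powers combine as $r^{-1/2}\cdot r^{2n+l+1/2}=r^{2n+l}=2^{(2n+l)/2}\big(r/\sqrt2\big)^{2n+l}$, and simplifying $D_{n,l}(2\pi)^{3/2}\tfrac{1}{2^n n!}$ against $2^{(2n+l)/2}$ reproduces exactly the constant and the monomial $\big(|\xi|/\sqrt2\big)^{2n+l}$ of \eqref{four}; in particular the identity $\big(\tfrac{n!}{\sqrt2\,\Gamma(n+l+3/2)}\big)^{1/2}\tfrac1{n!}=\big(\tfrac{1}{\sqrt2\,n!\,\Gamma(n+l+3/2)}\big)^{1/2}$ cleans up the Gamma‑factors. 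As a sanity check one recovers \eqref{four-2} in the case $l=0$, and the case $n=l=0$ reduces to the standard Gaussian transform, confirming the normalization.
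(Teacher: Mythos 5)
Your proposal is correct, and while it rests on the same two pillars as the paper's proof --- the angular reduction via Lemma~\ref{lem trans} and a radial Hankel-type integral against $J_{l+1/2}$ --- it runs the argument in the opposite direction and evaluates the radial integral by a different method. The paper starts from the candidate $H(\xi)=\big(\tfrac{|\xi|}{\sqrt2}\big)^{2n+l}e^{-|\xi|^2/2}Y^m_l(\xi/|\xi|)$, computes $\mathcal{F}^{-1}(H)$ with the kernel $e^{+iv\cdot\xi}$ (so Lemma~\ref{lem trans} applies verbatim, no parity argument needed), and identifies the resulting radial integral with $L^{(l+1/2)}_n$ by citing the Hankel-transform representation $(6.2.15)$ of \cite{Ge}; you instead compute the forward transform of $\sqrt{\mu}\,\varphi_{n,l,m}$ directly, absorb the sign in $e^{-iv\cdot\xi}$ through $Y^m_l(-\alpha)=(-1)^lY^m_l(\alpha)$, and \emph{prove} the radial identity yourself via the Laguerre generating function $\sum_n L^{(\alpha)}_n(x)z^n=(1-z)^{-\alpha-1}e^{-xz/(1-z)}$ combined with Weber's integral $\int_0^\infty t^{\nu+1}e^{-at^2}J_\nu(bt)\,dt=\tfrac{b^\nu}{(2a)^{\nu+1}}e^{-b^2/4a}$. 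These are of course the same identity read in opposite directions (your identity is the self-reciprocality of $\rho^{l}e^{-\rho^2/2}L^{(l+1/2)}_n(\rho^2/2)$ under the Hankel transform of order $l+1/2$, which is exactly what $(6.2.15)$ encodes after the substitution $t=\rho^2/2$), so what your route buys is self-containedness --- only the generating function and an elementary Gaussian--Bessel integral are quoted --- at the modest cost of justifying the interchange of $\sum_n$ and $\int_0^\infty$, which is routine by dominated convergence for $0<z<1$ using the bound $|J_{l+1/2}(r)|\lesssim (r/2)^{l+1/2}e^{r^2/4}/l!$ already recorded in the paper's proof of Lemma~\ref{lem trans}. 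I verified your constant bookkeeping: $D_{n,l}(2\pi)^{3/2}\tfrac{1}{2^nn!}\cdot 2^{(2n+l)/2}$ does collapse to $(2\pi)^{3/4}\big(\tfrac{1}{\sqrt2\,n!\,\Gamma(n+l+3/2)}\big)^{1/2}$, and your convention $\widehat f(\xi)=\int e^{-iv\cdot\xi}f(v)\,dv$ is the one forced by \eqref{four-2}, so the statement is recovered exactly.
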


\begin{proof}Define $H(\xi)=\Big(\frac{|\xi|}{\sqrt{2}}\Big)^{2n+l}e^{-\frac{|\xi|^2}{2}}Y_l^{m}\Big(\frac{\xi}{|\xi|}\Big)$, and by Lemma \ref{lem trans} with $r=|v||\xi|$, ${\color{black}\alpha=\frac{v}{|v|}, \kappa=\frac{\xi}{|\xi|}}$, we can compute the inverse Fourier transform of $H$,
\begin{align}
\mathcal{F}^{-1}(H)(v)&=\Big(\frac{1}{2\pi}\Big)^3\int_{\mathbb{R}^3}e^{iv\cdot\xi}\Big(\frac{|\xi|}{\sqrt{2}}\Big)^{2n+l}e^{-\frac{|\xi|^2}{2}}Y_l^{m}\Big(\frac{\xi}{|\xi|}\Big)d\xi\nonumber\\
&=\frac{1}{4\pi^3}\int^\infty_{0}\Big(\frac{\rho}{\sqrt{2}}\Big)^{2n+l+2}e^{-\frac{\rho^2}{2}}\left(\int_{S^2_\kappa}e^{i|v|\rho\kappa\cdot\alpha}Y_l^{m}(\kappa)d\kappa\right)d\rho\nonumber\\
&=i^l\Big(\frac{1}{2\pi}\Big)^\frac{3}{2}Y^{m}_l\Big(\frac{v}{|v|}\Big)\Big(\frac{2\sqrt{2}}{|v|}\Big)^\frac{1}{2}
\int^\infty_{0}\Big(\frac{\rho}{\sqrt{2}}\Big)^{2n+l+\frac{3}{2}}e^{-\frac{\rho^2}{2}}J_{l+\frac{1}{2}}(|v|\rho)d\rho.\nonumber
\end{align}
By using the standard formula, see (6.2.15) in \cite{Ge},
$$L^{(l+\frac{1}{2})}_n(x)=\frac{e^xx^{-\frac{l+\frac{1}{2}}{2}}}{n!}\int^{+\infty}_{0}t^{n+\frac{l+\frac{1}{2}}{2}}J_{l+\frac{1}{2}}(2\sqrt{xt})e^{-t}dt,$$ we have
\begin{equation*}
L^{(l+\frac{1}{2})}_n(\frac{|v|^2}{2})=\sqrt{2}\frac{e^{\frac{|v|^2}{2}}(\frac{|v|}{\sqrt{2}})^{-(l+\frac{1}{2})}}{n!}\int^{\infty}_{0}(\frac{\rho}{\sqrt{2}})^{2n+l+\frac{3}{2}}J_{l+\frac{1}{2}}(|v|\rho)e^{-\frac{\rho^2}{2}}d\rho.
\end{equation*}
Therefore,
$$
\mathcal{F}^{-1}(H)(v)=(\frac{1}{2\pi})^\frac{3}{2}n!i^le^{-\frac{|v|^2}{2}}\Big(\frac{|v|}{\sqrt{2}}\Big)^{l}L^{(l+\frac{1}{2})}_n\Big(\frac{|v|^2}{2}\Big)Y^{m}_l\Big(\frac{v}{|v|}\Big).
$$
Recall the expression of \eqref{v},\,one can verify
$$\sqrt{\mu}\varphi_{n,l,m}(v)=(-i)^l(2\pi)^\frac{3}{4}\Big(\frac{1}{\sqrt{2}n!\Gamma(n+l+\frac{3}{2})}\Big)^\frac{1}{2}\mathcal{F}^{-1}(H)(v).$$
Henceforth, \eqref{four} yields.
\end{proof}

\subsection{Spherical Harmonics}
The following results with respect to the sphe\-rical harmonics is significant.\,\,For $l,\,\tilde{l}\in\mathbb{N}$,\,$|m|\leq l$,\,$|\tilde{m}|\leq \tilde{l}$,\,
\begin{equation}\label{represent}
Y_l^{m}Y^{\tilde{m}}_{\tilde{l}}=\sum_{l^{\prime}}\sum_{m^{\prime}}\left(\int_{\mathbb{S}^2}Y_l^{m}(\kappa)Y^{\tilde{m}}_{\tilde{l}}(\kappa)Y^{-m^{\prime}}_{l^{\prime}}(\kappa)d\kappa\right)Y^{m^{\prime}}_{l^{\prime}}
\end{equation}
where $|m^{\prime}|\leq l^{\prime}$ and $\tilde{l}-l\leq l^{\prime}\leq\tilde{l}+l$.\,\, More explicitly, in order to  have a non-vanishing integral, the parameters $m^{\prime},\,l^{\prime}$ satisfy
\begin{equation}\label{con}
m^{\prime}=m+\tilde{m},\,\,l^{\prime}=l+\tilde{l}-2j\,\, \text{with}\,\,0\leq j\leq\min(l,\tilde{l}).
\end{equation}
For more details, {\color{black}see (86) in Chap.\,3 in \cite{Jones} or Theorem 2.1 of Sec.2, Chap.IV in \cite{CM}}.\\

\bigskip
\noindent {\bf Acknowledgements.} The first author would like to express his sincere thanks to School of mathematics and statistics of Wuhan University for the invitations.
The second author would like to thanks the invitation of ``F\'ederation de math\'ematiques de Normandie" and `` laboratoire de math\'ema\-tiques'' of the Universit\'e de Rouen.
The research of the second author and
the last author is supported partially by ``The Fundamental Research
Funds for Central Universities'' and the National Science Foundation
of China No. 11171261.
 Finally, the authors are grateful to the referees
for careful reading and many useful comments
and suggestions for improvement in the article.

\end{document}